\journal{Advances in Mathematics}
\theoremstyle{plain}
\newtheorem{theorem}{Theorem}[section]
\newtheorem{lemma}[theorem]{Lemma}
\newtheorem{proposition}[theorem]{Proposition}
\newtheorem{corollary}[theorem]{Corollary}
\theoremstyle{definition}
\newtheorem{definition}[theorem]{Definition}
\newtheorem{example}[theorem]{Example}
\theoremstyle{remark}
\newtheorem{remark}[theorem]{Remark}
\def\Mc{{\mathcal M}}
\def\Ec{{\mathcal E}}
\def\Qr{{P}^{2}}
\def\Homc{\mathcal Hom}
\def\Ob{{\mathrm Ob}}
\def\Bm{{\bf B}}
\def\dd{{\bf d}}
\def\Dop{{\mathcal D}}
\def\Z{{\mathbb Z}}
\def\C{{\mathbb C}}
\def\Q{{\mathbb Q}}
\def\N{{\mathbb N}}
\def\P{{P}}
\newcommand\uno{{\mathbbm 1}}
\DeclareMathOperator{\GL}{GL}
\DeclareMathOperator{\Vect}{\bf Vect}
\DeclareMathOperator{\Rep}{\bf Rep}
\DeclareMathOperator{\Ker}{Ker}
\DeclareMathOperator{\Spec}{Spec}
\DeclareMathOperator{\Hom}{Hom}
\DeclareMathOperator{\End}{End}
\DeclareMathOperator{\Aut}{Aut}
\DeclareMathOperator{\Gal}{Gal}
\newcommand{\Sets}{\mathbf{\bf Sets}}
\newcommand{\Fun}{\mathbf{Fun}}
\DeclareMathOperator{\id}{id}
\DeclareMathOperator{\spec}{Spec}
\newcommand{\gr}{{\rm gr}}
\DeclareMathOperator{\At}{At}
\DeclareMathOperator{\Img}{Im}
\DeclareMathOperator{\Char}{char}
\DeclareMathOperator{\fIsom}{Isom}
\DeclareMathOperator{\trdeg}{tr.deg}
\DeclareMathOperator{\Sym}{{\rm Sym}}
\DeclareMathOperator{\Der}{Der}
\DeclareMathOperator{\p}{\mathfrak{p}}
\DeclareMathOperator{\Mod}{\bf Mod}
\DeclareMathOperator{\Modf}{\bf Mod^{\it fg}}
\DeclareMathOperator{\Comod}{\bf Comod}
\DeclareMathOperator{\Vectf}{\bf Vect^{\it fg}}
\DeclareMathOperator{\Repf}{\bf Rep^{\it fg}}
\DeclareMathOperator{\Comodf}{\bf Comod^{\it fg}}
\DeclareMathOperator{\DModf}{\bf DMod^{\it fg}}
\DeclareMathOperator{\DMod}{\bf DMod}
\DeclareMathOperator{\DAlg}{\bf DAlg}
\DeclareMathOperator{\Alg}{\bf Alg}
\DeclareMathOperator{\PPV}{\bf PPV}
\DeclareMathOperator{\Mat}{Mat}
\DeclareMathOperator{\Isom}{\underline{Isom}}
\DeclareMathOperator{\Cat}{\mathcal{C}}
\DeclareMathOperator{\Frac}{Frac}
\newcommand{\Le}{\leqslant}
\newcommand{\Ge}{\geqslant}
\titlespacing\subsection{\parskip}%
           {12pt plus 6pt minus 3pt}%
           {8pt plus 6pt minus 3pt}%
\begin{document}

\begin{frontmatter}

\begin{keyword}
Differential algebra\sep Tannakian category\sep
Parameterized differential Galois theory\sep Atiyah extension
\MSC[2010] primary 12H05\sep secondary 12H20\sep 13N10\sep 20G05\sep 20H20\sep 34M15  
\end{keyword}

\title{Parameterized Picard--Vessiot extensions and Atiyah extensions}

\author{Henri Gillet}
\ead{gillet@uic.edu}
\address{University of Illinois at Chicago, Department of Mathematics, Statistics, and Computer Science, 851 S Morgan Street,
Chicago, IL 60607-7045}

\author{Sergey Gorchinskiy}
\ead{gorchins@mi.ras.ru}
\address{Steklov Mathematical Institute, Gubkina str. 8, Moscow, 119991, Russia}

\author{Alexey Ovchinnikov}
\ead{aovchinnikov@qc.cuny.edu}
\address{CUNY Queens College, Department of Mathematics, 65-30 Kissena Blvd,
Queens, NY 11367-1597, USA\\
CUNY Graduate Center, Department of Mathematics, 365 Fifth Avenue,
New York, NY 10016, USA}


\begin{abstract}
Generalizing Atiyah extensions, we introduce and study differential
abelian tensor categories over differential rings. By a
differential ring, we mean a commutative ring with an action of a
Lie ring by derivations. In particular, these derivations act on a differential category. A differential Tannakian theory is developed. The main application is to the
Galois theory of linear differential equations with parameters.
Namely, we show the existence of a parameterized Picard--Vessiot
extension and, therefore, the Galois correspondence for many differential fields with, possibly,
non-differentially closed fields of constants, that is, fields of
functions of parameters. Other applications include a substantially simplified test for a system of linear differential equations with parameters to be isomonodromic, which will appear in a separate paper. This application is based on differential categories developed in the present paper, and not just differential algebraic groups and their representations.
\end{abstract}

\end{frontmatter}

\tableofcontents

\section{Introduction}

Classical differential Galois theory studies symmetry
groups of solutions of linear differential equations, or
equivalently the groups of automorphisms of the corresponding
extensions of differential fields; the groups that arise are linear
algebraic groups over the field of constants. This theory, started in the 19th century by Picard and Vessiot, was put on a firm
modern footing by Kolchin~\cite{Kolchin1948}. In \cite{Landesman},
Landesman initiated a generalized differential Galois theory that uses
Kolchin's axiomatic approach~\cite{KolDAG} and realizes differential
algebraic groups as Galois groups. The parameterized Picard--Vessiot Galois theory considered by Cassidy and Singer in~\cite{PhyllisMichael} is a special case of the Landesman generalized differential Galois theory and studies symmetry groups of the solutions of linear differential equations whose coefficients contain parameters. This is done by
constructing a differential field containing the solutions and their
derivatives with respect to the parameters, called a parameterized
Picard--Vessiot (PPV) extension, and studying its group of
differential symmetries, called a parameterized differential Galois
group. The Galois groups that arise are linear differential algebraic
groups, which are defined by polynomial differential
equations in the parameters. Another approach to the Galois theory of systems of linear differential equations with parameters is given in~\cite{TG}, where the authors study Galois groups for generic values of the parameters.

The tradition in classical differential Galois theory has been to assume that the field of constants of the coefficient field is algebraically closed~\cite{Kolchin1948,Michael,Magid}. Cassidy and Singer follow the spirit of this tradition.
For example, as in \cite[Section~3]{PhyllisMichael}, consider
the differential equation $\partial_x f=\frac{t}{x}f$. The solutions
of this equation will be functions of $x$, which also depend on the
parameter $t$. If $x$ and $t$ are complex variables, these solutions
are of the form $a\cdot x^t$, $a\in\mathbb{C}(t)$, and the field
generated by the solutions together with their derivatives with
respect to both $x$ and~$t$ is
$\mathbb{C}{\left(x,t,x^t,\log(x)\right)}$.  The automorphisms of this
field over $\mathbb{C}(x,t)$ are given by non-zero elements $a$ in
$\mathbb{C}(t)$ that satisfy the differential equation
$\partial_t{\left(\frac{\partial_t(a)}{a}\right)}=0$. However, as explained in
\cite{PhyllisMichael}, this group does not have enough elements to
give a Galois correspondence between subgroups of the group of
automorphisms and intermediate differential fields. This leads
Cassidy and Singer to require that the field of
$\partial_x$-constants is a $\partial_t$-closed differential field
(or, more generally, that the field of functions of the parameters is
differentially closed).

Recall that a differential field is differentially
closed if it contains solutions of consistent systems of polynomial differential equations with coefficients in the field. However, this requirement is an obstacle to the practical applicability of the methods of the parameterized theory. A
similar phenomenon occurs in the classical differential Galois theory:
if the field of constants is not algebraically closed,
a Picard--Vessiot extension might not exist at all
(see the famous counterexample of
Seidenberg~\cite{SeidenbergExample}); therefore, there are no differential Galois group and Galois correspondence if this happens. Since the beginning of the theory~\cite{Kolchin1948}, it has been a major open problem in Picard--Vessiot
theory to determine to what extent one can avoid taking the
algebraic closure of the field of constants. In the present paper, we are able to
remove the assumption that the field of constants has to be
differentially closed in order to have a Galois correspondence in
the parameterized case.

With this aim, following~\cite{DeligneFS}, we use the Tannakian
approach to linear differential equations. In particular, in the
usual non-parameterized case \cite{Michael}, we show in
Theorem~\ref{theor-nonparmain} that, under a relatively existentially closed
assumption on the
field of constants (which includes the case of formally real fields with real closed fields of constants, as well as fields that
are purely transcendental extensions of the fields of constants),
one can always construct a Picard--Vessiot extension for a system of
linear differential equations. To treat the parameterized case,
which is our main interest, we develop a theory of differential
categories over differential rings and the corresponding theory of
differential Tannakian categories. Here, by a differential ring, we
mean a commutative ring together with a Lie ring acting on it by
derivations (this is also often called a Lie algebroid). The theory of
differential Tannakian categories allows us to show that a PPV
extensions exists under a much milder assumption (relatively differentially closed)
on the field of constants than being differentially closed,
Theorem~\ref{theor-main}. This assumption is satisfied by
many differential fields used in practice, Theorem~\ref{thm-main}.

The importance of the existence of a PPV extension is that it leads
to a Galois correspondence, Section~\ref{section-Galois}. The Galois
group is a differential  algebraic group
\cite{Cassidy,CassidyRep,KolDAG,OvchRecoverGroup,PhyllisMichaelJH,diffreductive,MinOvRepSL2,MichaelGmGa}    defined over the field of
constants, which, after passing to the differential closure,
coincides with the parameterized differential Galois group
from~\cite{PhyllisMichael}, Corollary~\ref{corol-Michael}. The
Galois correspondence, as usual, can be used to analyze how one may
build the extension, step-by-step, by adjoining solutions of
differential equations of lower order, corresponding to taking
intermediate extensions of the base field. For example, consider the
special function known as the incomplete Gamma function $\gamma$,
which is the solution of a second-order parameterized differential
equation \cite[Example~7.2]{PhyllisMichael} over $\Q(x,t)$. Knowing
the relevant Galois correspondence, one could show how to build the
differential field extension of $\Q(x,t)$ containing $\gamma$
without taking the (unnecessary and unnatural) differential closure
of $\Q(t)$.

The general nature of our approach will allow in the future to adapt
it to the Galois theory of linear difference equations, which has
numerous applications. Differential algebraic dependencies among
solutions of difference equations were studied in~\cite{CharlotteArxiv,CharlotteComp,CharlotteMichael,CharlotteLucia11,CharlotteLucia12,CharlotteLucia13,DiVizioHardouin:DescentandConfluence,HDV,LuciaSMF,Granier}.
Among many applications of the Galois theory, one has an algebraic
proof of the differential algebraic independence of the
Gamma function over $\C(x)$, \cite{CharlotteMichael} (the Gamma function
satisfies the difference equation $\Gamma(x+1)=x\cdot
\Gamma(x)$). Moreover, such a method leads to algorithms, given in
the above papers, that test differential algebraic dependency with
applications to solutions of even higher order difference equations
(hypergeometric functions, etc.). General results on the subject can
be found in~\cite{TakeuchiHA,Malgrange2,Pillay2004,Umemura2,Umemura,TrushinSplitting}.

It turns out that the results of the present paper, including the new theory of differential categories not restricted to the case of just one derivation, lead (see  \cite{GO}) to a new
understanding of isomonodromic systems of parameterized linear differential equations
\cite{ClaudineMichael2,ClaudineMichael,PhyllisMichael,MalgrangeSing2,MalgrangeSing,Sibuya} allowing one to substantially simplify the test for isomonodromicity generalizing the classical results \cite{JM,JMU}. This, in turn, has become a part of a new algorithm \cite{MiOvSi} in the PPV theory.

Let us compare the present paper with some previously known results.
The existence of a PV extension with a non-algebraically closed field
of constants was considered by a number of authors. In particular,
the case when the Galois group is~$\GL_n$, the base field is
formally real, and the constants are real closed was solved
positively in \cite{Sowa}, while the case of the
field~$\mathbb{R}(z)$ has been also studied in~\cite{Tobias2008}. In
the case of one derivation, differential Tannakian categories were
defined and studied in~\cite{OvchTannakian,difftann,Moshe}. In the
present paper, we define differential Tannakian categories over
fields that may have many derivations. A similar approach in the case of one derivation was independently developed in~\cite{Moshe2011}. Also, we do not choose a
basis of the space of derivations, allowing us to give a functorial
description of the constructions involved. One reason that this
generalization is needed was explained in~\cite{Besser}, in the
context of Coleman integration. The paper~\cite{diffreductive}
considers the case of several derivations but chooses a basis in the
space of derivations and uses a fiber functor to give the axioms of
a differential Tannakian category. On the contrary, the axioms in
the present paper need to be and are given independently of the
fiber functor.

It turns out  that \cite{Wibmer3}, in the case of one derivation, one
can relax the differentially closed assumption,  and just ask that
the field of constants be algebraically closed in order to guarantee
the existence of a PPV extension, by using the more straightforward
method of differential kernels~\cite{Lando2}. This approach was
initiated by M.~Wibmer who first applied difference
kernels~\cite{Cohn,Levin} to study differential equations with difference
parameters \cite{Wibmer2}. While not including all the cases
from~\cite{Wibmer3}, the method presented in our paper gives the
existence of PPV extensions in many other new situations important
for applications. For instance, in the case of the incomplete
Gamma function, if one used the differential kernels approach, one
would have to take the algebraic closure $\overline{\Q(t)}$ instead
of just $\Q(t)$.

Now we give more details about our method. To apply the Tannakian
approach in the case of parameterized linear differential equations,
one needs to develop a theory of differential Tannakian categories
over differential fields. For this, one needs first to describe what
a differential abelian tensor category is,
Definition~\ref{defin-intdifftens}. In other words, one needs to
define what  it means for a Lie ring of derivations of a field~$k$
to act on an abelian $k$-linear category. The main subtlety here is
that one cannot ``subtract'' functors in order to give a
straightforward definition. There are two ingredients needed to
overcome this difficulty. First, one uses the equivalence
established by Illusie~\cite{Illusie} between complete formal Hopf
algebroids and differential rings, Section~\ref{subsection-illusie}.
Then, one uses the formalism of the extension of scalars for
categories, Section~\ref{subsection-prelextscal}
and~\cite{Gaits},~\cite{Stalder},
in order to define the
action of a complete formal Hopf algebroid over $k$ on an abelian
$k$-linear category.  This leads to the notion of a differential
category. For example, the category of all modules over a
differential ring is a differential category. In this case, the
differential structure is given by the Atiyah
extension~\cite{AtiyahC}.

The approach to differential categories via the action of Hopf
algebroids on categories can be generalized to many other
situations, including the difference case, when the corresponding
Hopf algebroid is given by the difference ring itself. For the
purposes of this paper, it is in fact enough to consider only the
degree two quotient of the formal Hopf algebroid. Having introduced
differential categories, one defines differential Tannakian
categories, Definition~\ref{defin-diffTancat}, and proves a
differential version of the Tannaka duality between differential
Hopf algebroids and differential Tannakian categories,
Proposition~\ref{prop-reprdiffalg} and
Theorem~\ref{theor-diffTanncorr}.

The main non-trivial example of a differential category in this
paper is the category formed by parameterized systems of linear
differential equations, Section~\ref{sec:paramAtiyah}. In this case,
the differential structure is given by what could be called a
parameterized Atiyah extension. Based on this construction, one
shows that the category of PPV extensions is equivalent to the
category of  differential fiber functors,
Theorem~\ref{theor-PPVdff}. Thus, the problem of constructing a PPV
extension is equivalent to the problem of constructing a
differential fiber functor. For the latter, we use a geometric
approach. The main technical difficulty here is to obtain
flatness of a certain differential algebra over a differential ring
after localizing this ring by a non-zero element. In general, this
seems to be unknown, however we prove this result in the special
case of a Hopf algebroid, Theorem~\ref{theor-defindiffHopfalg},
which is enough for our purpose. As an auxiliary result, we prove that a differentially finitely generated differential Hopf algebra is a quotient of the ring of differential polynomials by a differentially finitely generated ideal (one does not need to take a radical), Lemma~\ref{lemma-finiteHopf}. Besides, Theorem~\ref{theor-defindiffHopfalg} implies the existence of a differential fiber functor for a differential Tannakian category over a differentially closed field. Finally, using simple
algebro-geometric considerations, we construct a differential fiber
functor, thus, providing a PPV extension in the case of
Theorem~\ref{thm-main}.

The paper is organized as follows. We start by describing our main
results in the non-parameterized  case,
Section~\ref{sec:mainnonparam}, and the main parameterized case,
Section~\ref{sec:mainparam}. The proofs for the parameterized case
are postponed until Section~\ref{sec:proofsofthemainresults}. In the
intermediate sections, we develop our main technique as follows. In
Section~\ref{section-maindefs}, we fix most notation used in the
paper (Section~\ref{subsection-notation}) and introduce differential
rings, algebras, modules, PPV extensions, and jet-rings using the
invariant language convenient for the proofs of the main results. We
then recall facts about extensions of scalars for categories and
introduce differential abelian tensor categories and differential functors in
Section~\ref{sec:differentialcategories}. We use this to define
parameterized Atiyah extensions in Section~\ref{sec:paramAtiyah} and
prove in Theorem~\ref{theor-PPVdff} that the categories of PPV
extensions and differential functors are equivalent.
Section~\ref{sec:diffHopfalgebroids} contains the main technical
ingredient, Theorem~\ref{theor-defindiffHopfalg}, needed for the
proofs of the main results shown in
Section~\ref{sec:proofsofthemainresults}. Finally, in
Section~\ref{section-nonclosedconstants}, we discuss the
parameterized differential Galois correspondence for arbitrary
fields of constants and the behavior of the Galois group under the
extensions of constants (see also~\cite{ClaudineMichael}). For the
convenience of the reader, we finish by giving the necessary
background on Hopf algebroids and the usual Tannakian categories in
the appendix, 
Section~\ref{sec:appendix}.

\section{Statement of the main results}\label{sec:allmainresults}

\subsection{Non-parameterized case}\label{sec:mainnonparam}

Following P.\:Deligne \cite{DeligneFS}, let us recall how Tannakian
categories can be used to construct (non-parameterized)
Picard--Vessiot extensions for systems of linear differential
equations. For simplicity, we consider differential fields with only
one derivation and we use a more common notation $(K,\partial)$
instead of $(K,K\cdot\partial)$ as in
Definition~\ref{defin-intdiffrings}. So, let $(K,\partial)$ be a
differential field with a derivation $\partial$ and the field of
constants $k:=K^\partial$ of characteristic zero. 

A system of linear
$\partial$-differential equations over~$K$ is the same as a
finite-dimensional differential module $M$ over the differential
field $(K,\partial)$. A Picard--Vessiot extension for $M$ is a
differential field extension $(K,\partial)\subset (L,\partial)$
without new $\partial$-constants such that there is a basis of
horizontal vectors in~$L\otimes_K\nolinebreak M$ over~$L$ and $L$ is
generated by their coordinates in a basis of $M$ over $K$ (see also
Definition~\ref{defin-PV}).

\begin{definition}\label{defin-arsimp}
A field $k$ is {\it existentially closed} in a field $F$ over $k$
if, for any finitely generated subalgebra~$R$ in $F$ over $k$, there
exists a morphism of $k$-algebras $R\to k$ (see \cite[Proposition 3.1.1]{Ershov} for the equivalence with a more
standard definition).
\end{definition}

Note that, if $F=k(X)$ for an irreducible variety $X$ over $k$, then
$k$ is existentially closed in $F$ if and only if the set of
$k$-rational points is Zariski dense in $X$. In particular, $k$ is
existentially closed in $F$ in the following cases:
\begin{itemize}
\item
the field $k$ is algebraically closed and $F$ is any field over $k$;
\item
the field $k$ is pseudo algebraically closed and is algebraically closed in $F$;
\item
the field $F$ is a subfield in a purely transcendental extension of $k$;
\item
the field $F$ is real with $k$ being real closed (in this case, one applies the
Artin--Lang homomorphism theorem,~\cite[Theorem 4.1.2]{BoCo}).
\end{itemize}
Also, there is a range of non-trivial examples coming from
various special geometrical considerations.
In the case when~$K$ is real, $k$ is real closed and the differential Galois group is $\GL_n$, the following result is also proved in~\cite{Sowa} by explicit methods.

\begin{theorem}\label{theor-nonparmain}
Suppose that $k$ is existentially closed in $K$. Then, for any
finite-dimensional differential module $(M,\nabla_M)$ over~$(K,\partial)$, there exists a Picard--Vessiot extension.
\end{theorem}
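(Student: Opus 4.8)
The plan is to construct the Picard--Vessiot extension as the fraction field of a differentially simple quotient of a universal solution algebra, and to reduce the only non-formal step---the absence of new constants---to the existence of a $k$-rational point, which the hypothesis supplies.

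First I would pass to the Tannakian category $\mathcal{C}:=\langle M\rangle^{\otimes}$ generated by $(M,\nabla_M)$ inside the finite-dimensional $(K,\partial)$-modules. Since $\End_{\mathcal{C}}(\one)=K^{\partial}=k$, this is a Tannakian category over $k$, and the forgetful functor $\omega\colon\mathcal{C}\to\Vect_K$ is a fiber functor over $K$. Choosing a $K$-basis of $M$ with connection matrix $A_0$, I form the \emph{universal solution algebra} $\mathcal{U}:=K[x_{ij},\det(x_{ij})^{-1}]$ with the derivation fixed by $\partial(x_{ij})=-\sum_{l}(A_0)_{il}x_{lj}$, so that the columns of $(x_{ij})$ are horizontal; thus $\Spec\mathcal{U}=\GL_{n,K}$ carries the tautological connection, and $\mathcal{U}$ is the $K$-incarnation of the torsor of trivializations of $\omega$. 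By Definition~\ref{defin-PV}, a Picard--Vessiot extension is the fraction field of any differentially simple quotient domain of $\mathcal{U}$ whose field of constants is $k$, so it suffices to produce one such quotient.

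The crux is the ring of constants $C:=\mathcal{U}^{\partial}$. The key lemma I would establish is that $C$ is \emph{finitely generated} over $k$, that $\mathcal{U}$ is faithfully flat over $C$, and that for every $k$-point $\chi\colon C\to k$ the fibre $R_{\chi}:=\mathcal{U}\otimes_{C,\chi}k$ is differentially simple with ring of constants exactly $k$---equivalently, a Picard--Vessiot ring for $M$. Since a finitely generated simple differential ring has the same constants as its fraction field, $\Frac(R_{\chi})$ is then a Picard--Vessiot extension, and it suffices to exhibit a single $k$-point of $\Spec C$. This finiteness, together with the torsor structure of the universal solution algebra over its ring of constants---a characteristic-zero, Kolchin-type analysis showing that specialising the constants along a $k$-point leaves no room for new ones---is the genuine obstacle; the remaining steps are formal.

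Finally, the hypothesis furnishes the required $k$-point. Evaluating the universal matrix at the identity, $x_{ij}\mapsto\delta_{ij}$, defines a $K$-algebra homomorphism $\mathcal{U}\to K$; restricting it to $C$ gives a $k$-algebra homomorphism $C\to K$ whose image $R\subseteq K$ is finitely generated over $k$ by the key lemma. Because $k$ is existentially closed in $K$, there is a $k$-algebra homomorphism $R\to k$, and the composite $C\to R\to k$ is the desired $k$-point of $\Spec C$. In this way the existentially closed hypothesis plays the clean role of descending the triviality of the constant torsor $\Spec C$ witnessed over $K$ (by the unit section) down to a triviality over $k$, thereby producing the Picard--Vessiot extension.
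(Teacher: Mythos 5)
Your reduction to the ``key lemma'' is exactly where the argument fails: that lemma is not just unproved, it is false. The ring $C=\mathcal{U}^{\partial}$ is the wrong object. Work over $K=\C(t)$, $k=\C$, where the theorem is classical and a Picard--Vessiot ring $R$ with constants $k$ and Galois group $G\subset\GL_n$ exists; then $\mathcal{U}\otimes_K R\cong R\otimes_k k[\GL_n]$ with the $\GL_n$-coordinates horizontal, and taking first $\partial$-invariants and then $G$-invariants gives $C\cong k[\GL_n]^{G}=\mathcal{O}(\GL_n/G)$, the ring of \emph{global functions on the homogeneous space}, which need not be (quasi-)affine. Concretely, for $Y'=\left(\begin{smallmatrix}2t&1\\0&3t^2\end{smallmatrix}\right)Y$ over $\C(t)$ the Galois group is the full Borel $B\subset\GL_2$ (two multiplicatively independent exponentials plus a non-elementary integral), so $C=\mathcal{O}(B\backslash\GL_2)=\mathcal{O}(\mathbb{P}^1)=\C$: your $\Spec C$ is a point, the fibre over its unique $k$-point is all of $\mathcal{U}$, and $\mathcal{U}$ is not differentially simple (it has transcendence degree $4$ over $K$, the Picard--Vessiot ring only $3$). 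Even for observable $G$, e.g.\ $G=\bigl\{\left(\begin{smallmatrix}a&b\\0&1\end{smallmatrix}\right)\bigr\}$, one finds $\Spec C\cong\mathbb{A}^2$ while $\Spec\mathcal{U}$ maps onto $\mathbb{A}^2\setminus\{0\}$ only: $\mathcal{U}$ is not faithfully flat over $C$, the fibre over the origin is the zero ring, and your use of Definition~\ref{defin-arsimp} gives no control on whether the produced $k$-point lies in the open image. Finally, finite generation of $C$ is itself not automatic: Nagata's counterexample to Hilbert's fourteenth problem yields subgroups $G\subset\GL_n$ with $k[\GL_n]^{G}$ not finitely generated.

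The underlying problem is that $\Spec\mathcal{U}=\GL_{n,K}$ is a torsor under $\GL_n$, not under the Galois group, so its ring of first integrals only remembers $\GL_n/G$ and loses the torsor one actually needs; the correct object is $\Isom^{\otimes}$ of two fiber functors on the \emph{whole} tensor category $\langle M\rangle_{\otimes}$, which is automatically affine and faithfully flat over its base---but defining it over $k$ presupposes a fiber functor over $k$, which is precisely what must be constructed. The paper breaks this circle differently: Proposition~\ref{prop-Deligne1} (Deligne's spreading-out) produces a fiber functor $\omega\colon\langle M\rangle_{\otimes}\to\Mod(R)$ over a finitely generated subalgebra $R\subset K$---this $R$, not your $C$, is what existential closedness is applied to---a $k$-point $R\to k$ then yields a fiber functor to $\Vect(k)$, and Proposition~\ref{prop-Deligne2} converts that into a Picard--Vessiot extension. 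To salvage your geometric picture you would have to replace $\GL_{n,K}$ by the torsor attached to the full tensor subcategory generated by $M$, at which point you are re-deriving the paper's Tannakian argument.
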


The construction of a Picard--Vessiot extension is based on the theory
of Tannakian categories (Section~\ref{subsection-prelTann}) and uses the following two results from \cite{DeligneFS}.

\begin{proposition}\cite[Proof of
Corollaire~6.20]{DeligneFS}\label{prop-Deligne1}
Let $\Cat$ be a Tannakian category over a field $k$ such that $\Cat$
is tensor generated by one object and there is a fiber functor
$\Cat\to \Vect(K)$
for a field extension $K\supset k$. Then there exists a finitely generated subalgebra
$R$ in $K$ over $k$ and a fiber functor
$\Cat\to\Mod(R)$.
\end{proposition}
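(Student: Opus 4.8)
The plan is to realize $\Cat$, after extension of scalars to $K$, as representations of an affine group scheme of finite type, to descend that group scheme to a finitely generated subalgebra $R\subseteq K$, and to read off the desired functor from the descent. First I set $V:=\omega(X)$ and $n:=\dim_K V$ and fix a $K$-basis of $V$, identifying $V$ with $K^n$. The fiber functor $\omega$ makes $\Cat\otimes_k K$ a neutral Tannakian category over $K$, and Tannaka duality gives an equivalence $\Cat\otimes_k K\simeq\Rep_K(G)$ with $G:=\underline{\Aut}^{\otimes}(\omega)$, under which $X$ corresponds to the tautological representation on $V$. Because $X$ tensor generates $\Cat$, this representation is faithful, so $G$ is a closed subgroup of $\GL(V)=\GL_{n,K}$ and in particular is of finite type over $K$.

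Next I would use Noetherianity to make the defining data finite. As $\GL_{n,K}$ is Noetherian, the Hopf ideal $I\subseteq\OO(\GL_{n,K})=K[x_{ij}][\det^{-1}]$ cutting out $G$ is generated by finitely many elements $f_1,\dots,f_m$. Let $R_0\subseteq K$ be the $k$-subalgebra generated by the finitely many coefficients of the $f_\ell$; it is a finitely generated $k$-algebra and a domain. The same equations define $G_{R_0}\subseteq\GL_{n,R_0}$ with $G_{R_0}\times_{R_0}K=G$, and $H_{R_0}:=\OO(G_{R_0})$ is a finitely generated $R_0$-algebra with $H_{R_0}\otimes_{R_0}K=\OO(G)$. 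By generic flatness I may invert a single nonzero $s\in R_0$ so that, writing $R:=R_0[1/s]$, the algebra $H_R$ is flat over $R$; then $G_R$ is a flat affine group scheme of finite type over $R$ with $G_R\times_R K=G$, and the forgetful functor $\Rep_R(G_R)\to\Mod(R)$ into finite locally free modules is a fiber functor over $R$.

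It then remains to lift $\Cat\to\Cat\otimes_k K\simeq\Rep_K(G)$ to a functor $\Cat\to\Rep_R(G_R)$. Here I use that the coaction on $V=K^n$ is the standard matrix coaction, whose entries are the images of the $x_{ij}$ and hence already lie in $H_R$; thus the coaction on every tensor construction $X^{\otimes a}\otimes(X^{\vee})^{\otimes b}$ is defined over $R$ on its evident free $R$-form. Since $X$ tensor generates $\Cat$, every object is a subquotient of a finite sum of such constructions and every morphism becomes, under $\omega$, a map of $\OO(G)$-comodules. Using flatness of $H_R$ and local freeness of the ambient $R$-forms, flat base change identifies $G_R$-equivariant $R$-maps with $G$-equivariant $K$-maps, so the structure maps, subobjects, and quotients all descend to $G_R$-stable finite locally free $R$-submodules and colinear maps. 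This produces an exact faithful $k$-linear tensor functor $\omega_R\colon\Cat\to\Rep_R(G_R)$ with $\omega_R\otimes_R K=\omega$, and composing with the forgetful functor gives the required fiber functor $\Cat\to\Mod(R)$.

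The main obstacle is that a fiber functor must be defined on all of $\Cat$, which has infinitely many objects and morphisms, while $R$ is required to be finitely generated, so one cannot simply spread out object by object. The hypothesis that $\Cat$ is tensor generated by a single object is exactly what overcomes this: it forces $G$ to be of finite type, so that the whole comodule structure is encoded by the finitely generated Hopf algebra $H_R$ together with the standard coaction on $V$, which is defined over $R$ from the start. After that, a single generic-flatness localization $R=R_0[1/s]$ suffices to make all subquotients descend with locally free values. The point demanding the most care is the flat-base-change compatibility—that $\Hom$ of comodules commutes with $-\otimes_R K$ and that exactness and faithfulness are preserved—since this is what guarantees that the descended $\omega_R$ is genuinely a fiber functor rather than merely an assignment on objects.
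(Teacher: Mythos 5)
The paper does not prove this proposition itself---it imports it from Deligne---so I am judging your argument on its own terms, with the paper's differential analogue (Theorem~\ref{theor-defindiffHopfalg} and its proof via Proposition~\ref{prop-localizealgebroid}) as the model of what the real mechanism looks like. Your reduction to a group scheme $G\subseteq\GL_{n,K}$ of finite type and the spreading-out of its Hopf algebra to a finitely generated $R_0\subseteq K$ are fine. The gap is in the final step, where you must produce $\omega_R$ on \emph{all} of $\Cat$. Your claim that ``flat base change identifies $G_R$-equivariant $R$-maps with $G$-equivariant $K$-maps, so the structure maps, subobjects, and quotients all descend'' does not hold. First, a $G$-stable $K$-subspace $W$ of $(R^N)\otimes_R K$ need not come from any $R$-submodule of $R^N$: already with $G$ trivial, $R=k$, $K=k(t)$, the line spanned by $(1,t)$ in $K^2$ meets $k^2$ only in $0$. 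Descending a single subobject forces you to enlarge $R$ (adjoin its Pl\"ucker coordinates), and $\Cat$ has infinitely many objects, so this cannot be done object by object inside one finitely generated $R$. Second, flat base change gives $\Hom_{G_R}(M,N)\otimes_R K\cong\Hom_G(M_K,N_K)$, which is an isomorphism only \emph{after} tensoring with $K$; it does not say that a morphism of $\Cat$, viewed under $\omega$ as a $G$-equivariant $K$-linear map, preserves the chosen $R$-lattices (again with $G$ trivial, $\Hom_{G_R}(R,R)=R\ne K$). So neither the objects nor the morphisms of $\Cat$ are shown to land in $\Rep_R(G_R)$.

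The underlying problem is that passing to the neutral Tannakian category $\Cat\otimes_k K\simeq\Rep_K(G)$ discards the $k$-linear structure that makes descent possible. Deligne's actual argument keeps it: $\omega$ gives a Hopf \emph{algebroid} $(K,H)$ over $k$ with $H$ faithfully flat over $K\otimes_k K$ and $\Cat\simeq\Comodf(K,H)$ (Theorem~\ref{theor-HopfalgDeligne}); since $\Cat$ is tensor generated by one object, $H$ is finitely generated over $K\otimes_k K$ (Proposition~\ref{prop-Tanngen}); one spreads $(K,H)$ out to $(R,A)$ and, after a single localization, proves $A$ faithfully flat over $R\otimes_k R$---this generic-flatness-for-groupoids step is the real work, and is exactly what Proposition~\ref{prop-localizealgebroid} does in the differential setting. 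Faithful flatness over $R\otimes_k R$ (not merely flatness of $\OO(G_R)$ over $R$, which is what your generic flatness provides) is then what makes $\Comodf(R,A)\to\Comodf(K,H)$ an \emph{equivalence}, transporting every object and every morphism of $\Cat$ down to $R$ at once, with values automatically projective over $R$. Without some substitute for that equivalence, your construction of $\omega_R$ is not established.
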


According to the notation of Section~\ref{subsection-prelTann}, $\langle M\rangle_\otimes$ is a full subcategory in the category of all differential
modules over~$(K,\partial)$ generated by subquotients of objects of type $M^{\otimes m}\otimes (M^{\vee})^{\otimes n}$. The following statement uses that  $\Char k = 0$, which implies that any algebraic group scheme over $k$ is smooth.

\begin{proposition}\cite[9.5, 9.6]{DeligneFS}\label{prop-Deligne2}
If there exists a fiber functor
\mbox{$\omega_0:\langle M\rangle_\otimes\to\Vect(k)$},
then there exists a Picard--Vessiot extension for $(M,\nabla_M)$.
\end{proposition}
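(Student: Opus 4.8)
The plan is to establish the Tannakian dictionary between a fiber functor $\omega_0$ to $\Vect(k)$ and a Picard--Vessiot extension for $M$. First I would invoke the standard Tannakian reconstruction: since $\langle M\rangle_\otimes$ is a Tannakian category over $k$ and $\omega_0$ is a fiber functor with values in finite-dimensional $k$-vector spaces, the pair $(\langle M\rangle_\otimes,\omega_0)$ is equivalent to $\Rep(G)$ for the affine group scheme $G:=\underline{\Aut}^\otimes(\omega_0)$ over $k$. Because $\Char k=0$, as noted just before the statement, $G$ is smooth. The category $\langle M\rangle_\otimes$ also carries its original forgetful fiber functor $\omega_K$ with values in $\Vect(K)$, namely sending a differential module to its underlying $K$-vector space.

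The key construction is to compare the two fiber functors $\omega_0$ and $\omega_K$ after base change. Concretely, I would form the functor of tensor isomorphisms $L:=\underline{\Isom}^\otimes(\omega_0\otimes_k K,\omega_K)$, an affine scheme over $K$ that is a torsor under the base-changed group $G_K=G\times_k K$. Since $G$ is smooth and $K$ has characteristic zero, this torsor is smooth over $K$; the coordinate ring $L$ of this $G_K$-torsor is the candidate Picard--Vessiot ring. The derivation $\partial$ on $K$ extends canonically to $L$: the differential module structure on objects of $\langle M\rangle_\otimes$ equips $\omega_K$ with a connection, while $\omega_0\otimes_k K$ is "horizontal" by construction (coming from $k=K^\partial$), so the comparison isomorphisms acquire a compatible $\partial$-action making $L$ a differential $K$-algebra. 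One then checks that on $L$ the tautological isomorphism trivializes all the connections, i.e.\ provides a full set of horizontal vectors in $L\otimes_K M$, so that $L$ is generated over $K$ by the entries of a fundamental solution matrix.

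The remaining point is to produce from $L$ an actual differential field extension with no new constants. Here I would pass to a suitable quotient: choose a maximal differential ideal of $L$ (equivalently, use that $L$ is a torsor to find a minimal closed differential subscheme), obtaining a differential integral domain whose field of fractions $\mathcal{L}$ is the desired Picard--Vessiot field. The torsor property guarantees that the construction adds no new constants, because the constants of $\mathcal{L}$ correspond to $G$-invariant functions, which are the $k$-points; one uses that $k$ is the field of constants together with the fact that invariants of the connection over a $G_K$-torsor descend to $k$. This yields the basis of horizontal vectors and the generation property, giving a Picard--Vessiot extension for $(M,\nabla_M)$.

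The hard part will be verifying the no-new-constants condition and the fieldness of the quotient \emph{simultaneously}, since over a non-algebraically-closed (or here merely characteristic-zero) base $k$ one must ensure that the maximal differential ideal chosen yields a domain whose ring of constants is exactly $k$ and not a proper extension. This is precisely where the smoothness of $G$ (hence of the torsor $L$) and the Tannakian structure are essential: smoothness ensures $L$ is reduced and that its differential spectrum behaves well, while the tensor-compatibility of $\omega_0$ forces the constants to be the $G$-invariants, pinning them down to $k$. I expect the delicate bookkeeping to lie in transporting the $\partial$-structure through the Tannakian equivalence and checking it agrees with the geometric $G_K$-torsor structure, rather than in any single algebraic identity.
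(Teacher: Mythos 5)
Your construction of the candidate ring --- Tannakian reconstruction of $G=\Aut^{\otimes}(\omega_0)$, the $G_K$-torsor $\Isom^{\otimes}(\omega_0\otimes_k K,\omega_K)$ of tensor isomorphisms between the two fiber functors, and the canonical extension of $\partial$ to its coordinate ring $A$ --- is exactly the route of \cite[9.2]{DeligneFS} that the proposition cites, and it is the same mechanism reproduced in the parameterized setting in the proof of Theorem~\ref{theor-PPVdff}. The gap is in your final step. Deligne's argument does \emph{not} pass to a quotient by a maximal differential ideal: the content of \cite[9.3]{DeligneFS} is that the full torsor ring $A$ is already differentially simple, hence already an integral domain (its nilradical, and in characteristic zero its minimal primes, are differential ideals, so differential simplicity forces a single minimal prime equal to zero), and that $\Frac(A)$ has no new constants. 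Differential simplicity is what the torsor structure actually buys you, via the Tannakian identification of differential ideals of $A$ with subobjects of the ind-object corresponding to the regular representation under the equivalence $\langle M\rangle_\otimes\simeq\Repf(G)$ induced by $\omega_0$. Once you quotient by a proper nonzero differential ideal you are no longer working with a $G_K$-torsor, so the ``torsor property'' you invoke afterwards is no longer available.

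Moreover, your justification of the no-new-constants condition --- ``the constants of $\mathcal L$ correspond to $G$-invariant functions'' --- presupposes the Galois correspondence you are in the process of constructing, so as stated it is circular. The actual argument pins the constants down by full faithfulness: a new constant of $\Frac(A)$ would produce a horizontal vector in $\omega_K(X)\otimes_K\Frac(A)$, for some $X$ in $\langle M\rangle_\otimes$, not accounted for by $\omega_0(X)$, contradicting the computation of $\Hom(\uno,X)$ in $\Repf(G)$ together with $\End(\uno)=k$. This distinction is not cosmetic: the maximal-differential-ideal route is precisely the Kolchin-style argument that fails over a non-algebraically-closed field of constants (Seidenberg's example) --- the quotient by a maximal differential ideal can acquire new constants, and neither smoothness of $G$ nor reducedness of the torsor rules this out. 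The whole point of the Tannakian proof, and of the paper built on it, is to avoid that step entirely by showing the unquotiented torsor ring already has the required properties.
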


\begin{proof}[Proof of Theorem~\ref{theor-nonparmain}]
We put $\Cat:=\langle M\rangle_{\otimes}$. By definition, the
category $\Cat$ is tensor generated by the object~$(M,\nabla_M)$.
Consider the fiber functor $\Cat\to\Vect(K)$ that forgets the
differential structure on a differential module over $(K,\partial)$.
By Proposition~\ref{prop-Deligne1}, there exist a finitely generated
subalgebra $R$ in~$K$ over~$k$ and a fiber functor
$\omega:\Cat\to\Mod(R)$. Since $k$ is existentially closed in~$K$,
there exists a homomorphism of $k$-algebras $R\to k$. As shown
\mbox{in~\cite[1.9]{DeligneFS}}, for any object $X$ in
$\Cat$, the $R$-module~$\omega(X)$ is finitely generated and
projective. Hence,
$$
\omega_0:\Cat\to\Vect(k),\quad X\mapsto
k\otimes_R\omega(X)
$$
is a fiber functor on $\Cat$. We conclude the proof by Proposition~\ref{prop-Deligne2}.
\end{proof}

The main goal of the present paper is to make a
parameterized analogue of the above reasoning.
As an application, we obtain a
construction of a parameterized Picard--Vessiot extension in
a range of cases when the constants are not differentially closed.

\subsection{Main results: parameterized case}\label{sec:mainparam}

The following is a parameterized analogue of
Theorem~\ref{theor-nonparmain}. We use notions and notation from Section~\ref{section-maindefs}.

\begin{theorem}\label{theor-main}
Let $(K,D_K)$ be a parameterized differential field
(Definition~\ref{defin-paramdifffield}) over a differential
field~$(k,D_k)$ (Definition~\ref{defin-intdiffrings}) with
\mbox{$\Char k = 0$}. Suppose that there is a splitting
$\widetilde{D}_k$ (Definition~\ref{defin-split}) of~$(K,D_K)$ over
$(k,D_k)$ such that $(k,D_k)$ is relatively differentially closed in
$\big(K,K\otimes_k \widetilde{D}_k\big)$
(Definition~\ref{defin-diffclosed}, Remark~\ref{rem-split}).

Then, for any finite-dimensional differential module (Definition~\ref{defin-diffmod}) over $(K,D_{K/k})$ (Definition~\ref{defin-paramdifffield}), there exists a parameterized Picard--Vessiot extension (Definition~\ref{defin-PPV}).
\end{theorem}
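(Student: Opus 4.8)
The plan is to mirror the non-parameterized argument of Theorem~\ref{theor-nonparmain}, replacing ordinary Tannakian categories and fiber functors by their differential analogues. First I would set $\Cat := \langle M\rangle_\otimes$, the differential Tannakian category tensor-generated by $(M,\nabla_M)$ inside the category of differential modules over $(K,D_{K/k})$, equipped with the differential structure coming from the parameterized Atiyah extension of Section~\ref{sec:paramAtiyah}. By Theorem~\ref{theor-PPVdff}, the category of PPV extensions for $M$ is equivalent to the category of differential fiber functors on $\Cat$, so the whole problem reduces to constructing a single differential fiber functor $\omega_0 : \Cat \to \Vect(k)$ that respects the differential structure induced by $(k,D_k)$.

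The construction of such a functor proceeds in two stages, paralleling Propositions~\ref{prop-Deligne1} and the descent step of Theorem~\ref{theor-nonparmain}. The forgetful functor gives a fiber functor $\Cat \to \Vect(K)$, and I would first descend it to a differential fiber functor valued in $\Mod(R)$ for some differential algebra $R$ over $(k,D_k)$ that is finitely generated as a differential algebra, using the splitting $\widetilde{D}_k$ to equip $R$ with a compatible action of the derivations. Here the representing object is a differential Hopf algebroid, and the key input is Theorem~\ref{theor-defindiffHopfalg}, which provides flatness of the relevant differential algebra after localization by a nonzero element; Lemma~\ref{lemma-finiteHopf} guarantees that one may work with a differentially finitely generated ideal without passing to radicals. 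The second stage is to specialize $R$ down to $k$: the relatively differentially closed hypothesis on $(k,D_k)$ in $\big(K,K\otimes_k\widetilde{D}_k\big)$ is precisely what yields a differential $k$-algebra homomorphism $R \to k$, which is the differential replacement for the existential-closedness step that produced $R \to k$ in the non-parameterized proof.

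Once the homomorphism $R \to k$ is in hand, I would define $\omega_0(X) := k \otimes_R \omega(X)$ and verify that this is a differential fiber functor: exactness and the tensor-compatibility follow as in \cite[1.9]{DeligneFS} from the fact that $\omega(X)$ is finitely generated projective over $R$, and the differential structure descends because the homomorphism $R \to k$ is a morphism of differential algebras, so it is compatible with the parameterized Atiyah extension. Applying the equivalence of Theorem~\ref{theor-PPVdff} to this $\omega_0$ then produces the desired PPV extension.

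The main obstacle will be the descent-and-flatness step, i.e.\ producing the intermediate differential algebra $R$ together with the flatness needed to make $\omega_0$ exact. In the non-parameterized case this is essentially Deligne's Proposition~\ref{prop-Deligne1}, but in the parameterized setting one cannot simply invoke it, because the differential structure (the action of the derivations via the splitting $\widetilde{D}_k$) must be carried along through the localization, and flatness of a differential algebra over a differential ring after inverting an element is not known in general. The resolution is to stay within the world of Hopf algebroids, where Theorem~\ref{theor-defindiffHopfalg} supplies exactly the flatness after localization that the argument requires; checking that the representing object of $\Cat$ is of this Hopf-algebroid form, and that the splitting and the relatively-differentially-closed hypothesis interact correctly to yield the differential specialization $R \to k$, is where the real work lies.
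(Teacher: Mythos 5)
Your proposal follows essentially the same route as the paper's proof: reduce via Theorem~\ref{theor-PPVdff} to constructing a differential fiber functor on $\langle M\rangle_{\otimes,D}$, use the splitting $\widetilde{D}_k$ to pass between the $D_k$- and $\widetilde{D}_k$-structures, invoke Theorem~\ref{theor-diffTanncorr} together with Theorem~\ref{theor-defindiffHopfalg} to descend the Hopf algebroid $(K,H)$ to a faithfully flat $(R,A)$ with $R$ differentially finitely generated, and then apply the relatively differentially closed hypothesis to specialize $R\to k$. The only caveat is notational: the generating subcategory must be $\langle M\rangle_{\otimes,D}$ (closed under the Atiyah functor $\At^1$), not the ordinary $\langle M\rangle_{\otimes}$, which your prose indicates you intend.
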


\begin{remark} The existence of a PPV extension implies the existence of a parameterized differential Galois group, which is a linear differential algebraic group, together with the Galois correspondence (Section~\ref{section-Galois}).
\end{remark}

\begin{remark}\label{rem-allow}
According to our definition of a parameterized differential field,
derivations from $D_k$ do not act on the field~$K$. Having the
splitting $\widetilde{D}_k$ from Theorem~\ref{theor-main}, we can
replace the differential field~$(k,D_k)$ with the differential field
${\left(k,\widetilde{D}_k\right)}$ so that derivations from
$\widetilde{D}_k$ act on $K$ (Remark~\ref{rem-split}). This allows
us to consider $\widetilde{D}_k$-Hopf algebroids of type $(K,H)$
over~$k$ and produce an analogue of the proof of
Theorem~\ref{theor-nonparmain}.
\end{remark}

Theorem~\ref{theor-main} is proved in Section~\ref{subsection-prooftheor-main}.
The following result describes two rather broad cases when the hypotheses of
Theorem~\ref{theor-main} are satisfied.

\begin{theorem}\label{thm-main}
Let $(K,D_K)$ be a parameterized differential field over a
differential field $(k,D_k)$ with $\Char k = 0$. Suppose that one of the following
conditions is satisfied:
\begin{enumerate}
\item\label{it:552}
There exists a splitting $\widetilde{D}_k$ of $(K,D_K)$ over $(k,D_k)$ such that
\begin{itemize}
\item the structure map \mbox{$D_K\to K\otimes_k
D_k$} induces an isomorphism between $\widetilde{D}_k$ and $1\otimes
D_k$,
\item The field $K$ is generated as a field by
$K_0:=K^{\widetilde{D}_k}$ and $k$,
\item the field $k_0 := k^{D_k}$ is existentially closed in $K_0$ (Definition~\ref{defin-arsimp}).
\end{itemize}
\item\label{it:551}
The field $k$ is existentially closed in $K$ and the map
\mbox{$D_{K/k}\to \Der_k(K,K)$} is an isomorphism.
\end{enumerate}
Then the parameterized differential field $(K,D_K)$ over $(k,D_k)$ satisfies the hypotheses of Theorem~\ref{theor-main}. Thus, for any finite-dimensional differential module over $(K,D_{K/k})$, there
exists a PPV extension.
\end{theorem}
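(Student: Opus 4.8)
The plan is to treat the two cases uniformly at the top level: in each I must (i) exhibit a splitting $\widetilde{D}_k$ of $(K,D_K)$ over $(k,D_k)$ and (ii) verify that $(k,D_k)$ is relatively differentially closed in $\big(K,K\otimes_k\widetilde{D}_k\big)$, that is, that every differentially finitely generated $\widetilde{D}_k$-stable $k$-subalgebra $R\subseteq K$ admits a \emph{differential} $k$-algebra homomorphism $R\to k$. The non-differential existential-closedness hypotheses (Definition~\ref{defin-arsimp}) will be the source of ordinary homomorphisms to $k$; the actual work is to arrange that such homomorphisms commute with the derivations.

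In case~\ref{it:552} the splitting is part of the hypothesis, so only (ii) remains. The isomorphism $\widetilde{D}_k\cong 1\otimes D_k$ means the derivations annihilate $K_0:=K^{\widetilde{D}_k}$ and act on $k\subseteq K$ through $D_k$; hence the subring $B\subseteq K$ generated by $K_0$ and $k$ (the image of $K_0\otimes_{k_0}k$) is $\widetilde{D}_k$-stable with $\partial(ab)=a\,\partial b$ for $a\in K_0$, $b\in k$. Writing differential generators of $R$ as $r_i=p_i/q_i$ with $p_i,q_i\in B$, and noting that taking $\widetilde{D}_k$-derivatives leaves the $K_0$-coefficients untouched, I get $R\subseteq (A_0\cdot k)[1/q]$ for a finitely generated $k_0$-subalgebra $A_0\subseteq K_0$ and $q=\prod_i q_i$. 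After enlarging $A_0$ so as to invert one nonzero $k_0$-coefficient of $q$, existential closedness of $k_0$ in $K_0$ supplies a $k_0$-homomorphism $\psi_0\colon A_0\to k_0$; the base change $\psi:=\psi_0\otimes_{k_0}\id_k$ is then a $k$-algebra map into $k$ that is automatically differential, since the derivations act only on the second tensor factor while $\psi_0$ lands in the constants $k_0$. By construction $\psi(q)\neq 0$, so $\psi$ extends to the localization and restricts to the desired differential homomorphism $R\to k$.

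In case~\ref{it:551}, first note that existential closedness of $k$ in $K$ forces $k$ to be algebraically closed in $K$ (a proper algebraic element would give a finitely generated subfield with no $k$-point). To build the splitting I pick a transcendence basis $\{x_i\}$ of $K/k$, lift each $\partial\in D_k$ to some $\delta\in D_K$ through the structure map, and correct $\delta$ by an element of $D_{K/k}\cong\Der_k(K,K)$ chosen to cancel $\delta(x_i)$; the surjectivity encoded in $D_{K/k}\cong\Der_k(K,K)$ is exactly what makes this correction available, and it yields a splitting for which the $x_i$ are $\widetilde{D}_k$-constants and $K_1:=k(\{x_i\})$ is $\widetilde{D}_k$-stable. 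Now each differential generator $r_j$ of $R$ is algebraic over $K_1$, and differentiating its minimal polynomial gives $\widetilde{\partial}r_j=-P_j^{\widetilde{\partial}}(r_j)/P_j'(r_j)\in K_1(r_j)$, so all $\widetilde{D}_k$-derivatives remain in a single finite extension and $R$ is contained in an ordinary finitely generated localized $k$-algebra $A[1/D]$, with $A=k[\{x_i\},r_j]$ and $D$ collecting the $P_j'(r_j)$. Existential closedness of $k$ in $K$ then produces an ordinary $k$-point of $\Spec A[1/D]$, and the key phenomenon is that at a simple root, guaranteed by inverting $D$, the identity $\widetilde{\partial}r_j=-P_j^{\widetilde{\partial}}(r_j)/P_j'(r_j)$ and the relation $P_j(r_j)=0$ are differentiated by the \emph{same} formula, so such a specialization intertwines $\widetilde{\partial}$ and $\partial$.

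The step I expect to be the genuine obstacle is this last one in case~\ref{it:551}: upgrading the ordinary homomorphism from existential closedness to a differential one. Concretely, the specialization is differential only if it is \emph{horizontal}, i.e.\ if it sends the transcendence basis $\{x_i\}$ into the field of constants $k_0=k^{D_k}$ (so that $\partial\phi(x_i)=0=\phi(\widetilde{\partial}x_i)$) while still lifting to a $k$-point of the algebraic fibre. Matching the Zariski density of $k$-points coming from ``$k$ existentially closed in $K$'' against this horizontality constraint, and in particular controlling the behaviour of the constants, is the delicate point; it is here that algebraic closedness of $k$ in $K$ together with the full relative-derivation hypothesis $D_{K/k}\cong\Der_k(K,K)$, which provides the needed freedom in the choice of splitting, must be brought to bear. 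Case~\ref{it:552}, by contrast, is essentially formal once the coefficient-and-localization bookkeeping is carried out.
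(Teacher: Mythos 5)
Your case~\ref{it:552} is essentially the paper's argument: you reduce $R$ to a localization of (the image of) $R_0\otimes_{k_0}k$ with $R_0\subset K_0$ finitely generated over $k_0$, produce a $k_0$-point of $R_0$ not killing the denominator, and observe that its base change to $k$ is automatically differential because the derivations act only through the second tensor factor. Your substitute for the paper's Lemma~\ref{lem-ratpointparam} (proved there by averaging over $\Aut(k/k_0)$) --- expanding $q$ as $\sum_j c_j\otimes\lambda_j$ with the $\lambda_j\in k$ linearly independent over $k_0$ and inverting one nonzero $c_{j_0}\in A_0$ before applying existential closedness --- is correct, provided you really impose that linear independence so that $\psi_0(c_{j_0})\neq 0$ forces $\sum_j\psi_0(c_j)\lambda_j\neq 0$; your phrase ``nonzero $k_0$-coefficient'' should read ``nonzero component in $A_0$'', and the well-definedness of $\psi_0\otimes_{k_0}\id_k$ on $B$ uses the injectivity of $K_0\otimes_{k_0}k\to K$ from Remark~\ref{rem-mainthm}, via Lemma~\ref{lemma-diffidealsR}.

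The gap is in case~\ref{it:551}, and it sits exactly where you flag it --- but the cure is not a density argument for ``horizontal'' points; it is a different choice of splitting. You take $\widetilde{D}_k$ to be the span of the lifts killing a transcendence basis, so $\dim_k\widetilde{D}_k=\dim_kD_k$ and $K\otimes_k\widetilde{D}_k$ omits $D_{K/k}$ entirely. With that choice, the structure map $D_\varphi:D_k\to k\otimes_RD_R\cong\widetilde{D}_k$ of any prospective morphism $(R,D_R)\to(k,D_k)$ over $(k,D_k)$ is forced to be the inverse of the bijection $\widetilde{D}_k\to D_k$, so $\varphi$ must literally intertwine $\partial$ with the fixed lift $\tilde\partial$; that is your horizontality constraint, it forces $\varphi(x_i)\in k_0$, and existential closedness of $k$ in $K$ gives no purchase on such a differential-algebraic condition. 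The paper instead takes $\widetilde{D}_k$ \emph{large}: a $k$-form of all of $D_K$, with $\dim_k\widetilde{D}_k=\dim_KD_K$, built from a commuting basis of the effective quotients (Proposition~\ref{prop-commbasis}) and a fiber product, so that $D:=\Ker\bigl(\widetilde{D}_k\to D_k\bigr)$ satisfies $K\otimes_kD\cong D_{K/k}\cong\Der_k(K,K)$.

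With the large splitting the verification succeeds for \emph{every} ordinary $k$-point in a dense open set. After enlarging $R$ so that $D_R\cong R\otimes_k\widetilde{D}_k$, shrinking it to a finitely generated algebra (Lemma~\ref{lemma-finitegener}), and localizing so that $R$ is smooth over $k$ with $R\otimes_kD\cong\Der_k(R,R)$, take any $k$-algebra map $f:R\to k$ from existential closedness. The defect $\delta-f\circ\theta_K\bigl(\tilde\partial\bigr)$, where $\delta:=\partial\circ f$, is a $k$-linear derivation from $R$ to $k$ (viewed as an $R$-module via $f$), hence lies in $k\otimes_R\Der_k(R,R)\cong D$; setting $D_\varphi(\partial):=\tilde\partial+\bigl(\text{this element}\bigr)$ makes $f$ differential, and uniqueness of the correction gives the Lie-ring compatibility. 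The idea you are missing is that a morphism of differential rings carries the structure map $D_\varphi$ as data to be \emph{chosen}, and choosing it point-by-point is what upgrades an arbitrary specialization to a differential one; that freedom exists only if the splitting contains a $k$-structure on $D_{K/k}$, which your minimal splitting discards.
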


Theorem~\ref{thm-main} is proved in Section~\ref{subsection-proofthm-main}.

\begin{remark}\label{rem-mainthm}
\hspace{0cm}
\begin{enumerate}
\item\label{rmk:333}
In general, fields generated by two subfields may have a
complicated structure. However, condition~\ref{it:552} in
Theorem~\ref{thm-main} implies that $K_0\otimes_{k_0}k$ is a domain
and $K=\Frac(K_0\otimes_{k_0}k)$. Indeed, by
Lemma~\ref{lemma-diffidealsR}, the dif\-fe\-ren\-tial algebra
$K_0\otimes_{k_0}k$ over $(k,D_k)$ is $D_k$-simple, that is, contains no $D_k$-ideals, whence the morphism
${K_0\otimes_{k_0}k\to K}$ is injective, which yields the required
statement.
\item\label{rmk:328}
Condition~\ref{it:551} in Theorem~\ref{thm-main} is equivalent to
requiring that $k$ be existentially closed in $K$, $\dim_K(D_{K/k})=\trdeg(K/k)$, and map the $D_{K/k}\to \Der(K,K)$ be injective.
\end{enumerate}
\end{remark}

Here is a series of examples that satisfy the hypotheses of Theorem~\ref{thm-main}.

\begin{example}
Let the bar over a field denote the algebraic closure. All fields
$K$ below are subfields in the algebraic closure of the field
$\C(x_1,\ldots,x_m,t_1,\ldots,t_n)$; all fields $k$ below are
subfields in the algebraic closure of the field
$\C(t_1,\ldots,t_n)$, and, except for Examples~\ref{ex:566} and~\ref{ex:584}, we put
$$
D_K:=K\cdot\partial_{x_1}+\ldots+K\cdot\partial_{x_m}+K\cdot\partial_{t_1}+
\ldots+K\cdot\partial_{t_n}, \quad D_k:=k\cdot
\partial_{t_1}+\ldots+k\cdot\partial_{t_n}.
$$
We obtain
\mbox{$D_{K/k}=K\cdot\partial_{x_1}+\ldots+K\cdot\partial_{x_m}$}.
In Examples~\ref{ex:372},~\ref{ex:566},~\ref{ex:373},~\ref{ex:584}, and~\ref{ex:374}, we put
$$\widetilde{D}_k:=k\cdot
\partial_{t_1}+\ldots+k\cdot\partial_{t_n}\subset D_K.$$ The
following parameterized differential fields $(K,D_K)$ over $(k,D_k)$
satisfy the hypotheses of Theorem~\ref{thm-main}:
\begin{enumerate}
\item\label{ex:372}
If $K=\Frac{\big(K_0\otimes_{\overline{\Q}} k\big)}$, where $K_0$ is a finite
extension of $\overline{\Q}(x_1,\ldots,x_m)$ and $k$ is an algebraic
extension of $\overline{\Q}(t_1,\ldots, t_n)$, then $(K,D_K)$
satisfies condition~\ref{it:552} with $k_0=\overline\Q$ being algebraically closed.
\item\label{ex:566}
If $K=\Frac{\big(K_0\otimes_{\overline{\Q}} k\big)}$, where $K_0$ is a finite
extension of $\overline{\Q}(x_1,x_2)$ and $k$ is an algebraic
extension of $\overline{\Q}(t_1,\ldots, t_n)$, then $(K,D_K)$
satisfies condition~\ref{it:552} with
$$
D_K:=K\cdot(\partial_{x_1}+x_2\partial_{x_2})+K\cdot\partial_{t_1}+
\ldots+K\cdot\partial_{t_n},\quad
D_{K/k}=K\cdot(\partial_{x_1}+x_2\partial_{x_2}),
$$
and with $k_0=\overline\Q$ being algebraically closed.
\item\label{ex:584}
If $K=k(x_1,\ldots,x_m)$, where $k$ is an algebraic extension of
$\Q(t_1,\ldots,t_n)$ such that $\Q$ is algebraically closed in $k$,
then $(K,D_K)$ satisfies condition~\ref{it:552} with
$K_0=\Q(x_1,\ldots,x_m)$, $k_0=\Q$.
\item\label{ex:374}
If $K=\Frac{\left(K_0\otimes_{\mathbb R} k\right)}$, where $K_0$ is
a finite extension of ${\mathbb R}(x_1,\ldots,x_m)$ such that $K_0$
a real field, and $k$ is an algebraic extension of ${\mathbb
R}(t_1,\ldots,t_n)$ such that $\mathbb R$ is algebraically closed in
$k$, then $(K,D_K)$ satisfies condition~\ref{it:552} with
$k_0={\mathbb R}$.
\item\label{ex:373}
If $K=\Frac{\left(K_0\otimes_{\mathbb R} k\right)}$, where $K_0$ is
a finite extension of ${\mathbb R}(x_1,\ldots,x_m)$ such that $K_0$
a real field, and $k$ is an algebraic extension of ${\mathbb
R}(t_1,t_2,t_3)$ such that $\mathbb R$ is algebraically closed in
$k$, then $(K,D_K)$ satisfies condition~\ref{it:552} with
\begin{align*}
&D_K:=K\cdot\partial_{x_1}+\ldots+K\cdot\partial_{x_m}+K\cdot{\big(t_1\partial_{t_1}+
\sqrt{2}t_2\partial_{t_2}+\sqrt{3}t_3\partial_{t_3}\big)},\\
&D_k:=k\cdot{\big(t_1\partial_{t_1}+
\sqrt{2}t_2\partial_{t_2}+\sqrt{3}t_3\partial_{t_3}\big)},
\end{align*}
and with $k_0={\mathbb R}$, \cite[Remark~4.9]{deSalas}.
\item\label{ex:371}
If $k$ is an algebraic closure of ${\mathbb Q}(t_1,\ldots,t_n)$ and
$K$ is a finite extension of $k(x_1,\ldots,x_m)$, then~$(K,D_K)$
satisfies condition~\ref{it:551}.
\item\label{ex:379}
If $k$ is a real closure of ${\mathbb Q}(t_1,\ldots,t_n)$ with
respect to some ordering and $K$ is a real finite extension of
$k(x_1,\ldots,x_m)$, then $(K,D_K)$ satisfies
condition~\ref{it:551}.
\end{enumerate}
\end{example}

\section{Differential rings and jet rings}\label{section-maindefs}

We do not claim any originality of most of the definitions and
constructions in this section, for example, see
\cite[Section~1.1]{Illusie}, \cite[\S1]{BeiBer},
\cite[9.9]{DeligneFS}, \cite{hubert03} for
Section~\ref{subsection-diffrings}, see any standard reference about
modules with connections for Section~\ref{subsection-diffmod}, see
\cite{PhyllisMichael} for Section~\ref{subsection-PPV},
\cite[Section~1.2,1.3]{Illusie}, \cite[\S2]{BerthelotOgus},
\cite[\S16]{EGAIV4}, \cite{RahimTom1,RahimTom2,Pillay} for
Section~\ref{subsection-defAtiyah} and
Section~\ref{subsection-diffobjexamps}, \cite{Illusie} for
Section~\ref{subsection-illusie}, and see any standard reference
about the Lie derivative for Section~\ref{subsection-Lieder}.
The definition of a differential object
(Definition~\ref{defin-diffobj}) generalizes the well-known notion
of a stratification on a sheaf, \cite{BerthelotOgus}. 

Only the
definition of a parameterized differential algebra
(Definition~\ref{defin-paramdifffield}) seems to be new. However,
we have decided to fix the notation and notions concerning
differential rings, differential modules over them, PPV extensions,
and jet rings. Note that the more commonly used name for the notion
from Definition~\ref{defin-intdiffrings} is a {\it Lie algebroid},
but we use the term {\it differential ring}, which seems to be more
standard in differential algebra. There is a direct generalization
of differential rings as defined below from rings to schemes
replacing modules by quasi-coherent sheaves.

\subsection{Notation}\label{subsection-notation}

First let us fix the notation that we use in the paper.

\begin{itemize}
\item
Given data $D$, we say that an object $O$ associated with $D$ is
{\it canonical} if its construction does not depend on the choice of
any additional structure on $D$ (for example, the choice of a basis
in a vector space). Usually, this implies that $O$ is functorial in
$D$ in the reasonable sense.
\item
All rings are assumed to be commutative and having a unit element.
\item
Denote the category of sets by $\Sets$.
\item
Given a non-zero element $f$ in a ring $R$, denote the localization of $R$ over
the multiplicative set formed by all natural powers of $f$ by $R_f$.
\item
Given two rings $R$ and $S$, denote their tensor product over $\Z$ by $R\otimes S$.
\item
Given a ring $R$ and two $R$-bimodules $M$ and $N$, their tensor product is denoted by $M\otimes_R
N$, where $M$ and~$N$ are considered with the
right and left $R$-module structures, respectively.
\item
For rings $R$ and $S$, denote the set of all derivations from $R$ to
$S$, that is, additive homomorphisms that satisfy the Leibniz rule,
by $\Der(R,S)$. If $R$ and $S$ are algebras over a ring $\kappa$,
denote the set of all $\kappa$-linear derivations from $R$ to $S$ by
$\Der_{\kappa}(R,S)$. Note that $\Der(R,S)$ and
$\Der_{\kappa}(R,S)$ have canonical $S$-module structures. Also,
$\Der(R,R)$ and $\Der_{\kappa}(R,R)$ are Lie rings.
\item
Given a ring homomorphism $R\to S$ and an $R$-module $M$, denote the
extension of scalars $S\otimes_R M$ also by $M_S$. If only one
$R$-module structure on $S$ is considered, we put the new scalars on
the left in the tensor product, that is, we use the notation
$S\otimes_R M$. If two $R$-module structures on $S$ are considered,
then we usually refer to them as right and left structures and use the
notations $S\otimes_R M$ and $M\otimes_R S$ for the corresponding
extensions of scalars.
\item
Given a ring homomorphism $R\to S$ and a morphism $f:M\to N$ of
$R$-modules, we denote 
the extension of scalars for $f$ from $R$ to $S$ by $\id_S\otimes f$, $S\otimes_R f$, or $f_S$, that is, we have
$$
S\otimes_R f:S\otimes_R M\to S\otimes_R N\quad\text{or}\quad
f_S:M_S\to N_S.
$$
\item
For a field $K$, denote the category of
vector spaces over $K$ by $\Vect(K)$. Denote the full subcategory of finite-dimensional $K$-vector spaces by~$\Vectf(K)$.
\item For a ring $R$, denote the category of
$R$-modules by $\Mod(R)$. Denote the full subcategory of finitely generated $R$-modules by~$\Modf(R)$.
\item
For a ring $R$, denote the category of $R$-algebras by $\Alg(R)$.
\item
For a Hopf algebra $A$ over a ring $R$, denote the category of
comodules over $A$ by $\Comod(A)$. Denote the full subcategory of
comodules over $A$ that are finitely generated as $R$-modules
by~$\Comodf(A)$.
\item
For an affine group scheme $G$ over a field~$k$, denote the
category of algebraic representations of $G$ over~$k$ by $\Rep(G)$
(they correspond to comodules over a Hopf algebra). Denote the full
subcategory of finite-dimensional representations of $G$ over $k$
by~$\Repf(G)$.
\item
Given a category $\Cat$ and objects $X$, $Y$ in $\Cat$, denote the set of morphisms from $X$ to $Y$ by $\Hom_{\Cat}(X,Y)$. Put
$
\End_{\Cat}(X):=\Hom_{\Cat}(X,X)$.
\item
Given exact sequences
\begin{align*}
\begin{CD}
0@>>> X@>{\alpha}>> Y@>{\beta}>> Z@>>> 0
\end{CD}\\
\begin{CD}
0@>>> X@>{\alpha'}>> Y'@>{\beta'}>> Z@>>> 0
\end{CD}
\end{align*}
in an abelian category, denote their Baer sum by $Y+_{\Bm} Y'$, that is, we have
$$
Y+_{\Bm} Y'=\Ker\big(\beta-\beta':Y\oplus Y'\to Z\big)\big/\Img\big(\alpha\oplus-\alpha':X\to Y\oplus Y'\big).
$$
\end{itemize}

\subsection{Differential rings}\label{subsection-diffrings}
\begin{definition}\label{defin-intdiffrings}
A {\it differential ring} is a triple $(R,D_R,\theta_R)$, where $R$
is a ring, $D_R$ is a finitely generated projective
$R$-mo\-du\-le together with a Lie bracket
$[\:\cdot\:,\cdot\:]:\nolinebreak D_R\times D_R\to D_R$, and
$\theta_R:D_R\to \Der(R,R)$ is a morphism of both $R$-modules and
Lie rings such that, for all $a\in R$ and $\partial_1,\partial_2\in
D_R$, we have
$$
[\partial_1,a\partial_2]-a[\partial_1,\partial_2]=\theta_R(\partial_1)(a)\,\partial_2.
$$
\end{definition}
For short, we usually omit $\theta_R$ in the notation. Thus, a
differential ring is denoted just by $(R,D_R)$, and
$$\partial(a):=\theta_R(\partial)(a)\quad a\in R,\ \partial\in
D_R.$$ Let~$R^{D_R}$ denote the subring of {\it $D_R$-constants},
that is, the set of all $a\in R$ such that, for any $\partial\in
D_R$, we have $\partial(a)=0$.

\begin{remark}
In most of the situations that we have here, it is enough to
consider differential rings $(R,D_R)$ with~$D_R$ being a finitely
generated free $R$-module.
\end{remark}

Recall that, for an $R$-module $M$, its second wedge power
$\wedge_R^2M$ is the quotient of $M\otimes_R M$ over the submodule
generated by all elements $m\otimes m$, where $m\in M$. Given
$m,n\in M$, the image of $m\otimes n$ under the natural map
$$M\otimes_R M\to \wedge^2_R M$$ is denoted by $m\wedge n$. There is a canonical morphism of $R$-modules
$$
\wedge^2_R{\left(M^\vee\right)}\to{\left(\wedge^2_R M\right)}^\vee\,,\quad p\wedge q\mapsto\left\{m\wedge n\mapsto p(m)q(n)-p(n)q(m)\right\},
$$
where $M^{\vee}:=\Hom_R(M,R)$. If $M$ is
finitely generated and projective, then~$\wedge_R^2M$ is also
finitely generated and projective and the above morphism $\wedge^2_R{\left(M^\vee\right)}\to {\left(\wedge^2_R M\right)}^\vee$ is an isomorphism.

\begin{definition}\label{defin-omega}
For a differential ring $(R,D_R)$, we put $\Omega_R:=D_R^{\vee}$ and
define additive maps
$$
\dd:R\to \Omega_R,
\quad
a\mapsto\{\partial\mapsto \partial(a)\}
$$
\begin{equation}\label{eq:dd}
\dd:\Omega_R\to {\wedge}_R^2\Omega_R,\quad
\omega\mapsto\{\partial_1\wedge\partial_2\mapsto
\partial_1(\omega(\partial_2))-\partial_2(\omega(\partial_1))-
\omega([\partial_1,\partial_2])\}
\end{equation}
for all $a\in R$, $\omega\in\Omega_R$ and $\partial_1,\partial_2\in
D_R$.
\end{definition}

In the notation of Definition~\ref{defin-omega}, for all $a,b\in R$ and $\omega\in\Omega_R$, we have $$\dd(ab)=a\,\dd b+b\,\dd a,\quad
\dd(a\omega)=a\,\dd\omega+\dd a\wedge\omega,\quad \text{and}\quad \dd(\dd(a))=0.$$

\begin{remark}\label{rmk-Ill1}
The map $\dd$ is well-defined for all wedge powers of $\Omega_R$,
$$
\dd:{\wedge}^i_R\Omega_R\to {\wedge}^{i+1}_R\Omega_R,
$$
and this defines a dg-ring structure
on~$\wedge^{\bullet}_R\Omega_R$. Actually, to define a differential
ring structure on $R$ with $D_R$ being a finitely generated
projective $R$-module is the same as to define a dg-ring structure
on~$\wedge^{\bullet}_R\Omega_R$ with the natural product structure
and grading, where, as above, $\Omega_R=\nolinebreak D_R^{\vee}$,
~\cite[Remarques~1.1.9\, b)]{Illusie}. Namely, given $\dd$, we
put $$\partial(a):= (\dd a)(\partial),$$ and define the Lie bracket
$[\partial_1,\partial_2]$ such that it satisfies the condition
$$
\omega([\partial_1,\partial_2])=\partial_1(\omega(\partial_2))
-\partial_2(\omega(\partial_1))-(\dd\omega)(\partial_1\wedge\partial_2)
$$
for all $a\in R$, $\partial,\partial_1,\partial_2\in D_R$, and $\omega\in\Omega_R$.
\end{remark}

\begin{example}\label{examp-diffring}
\hspace{0cm}
\begin{enumerate}
\item\label{en:650}
Let $R$ be the coordinate ring of a smooth affine variety $X$
over a field $k$ and put $D_R:=\Der_k(R,R)$. Then the pair $(R,D_R)$
is a differential ring with $\Omega_R$, $\wedge^2_R\Omega_R$, and
$\dd$ being the modules of differential $1$-, $2$-forms on $X$, and
the de Rham differential, respectively.
\item
Let $\partial_1,\ldots,\partial_n$ be formal symbols that denote commuting derivations
from a ring $R$ to itself (possibly, some of the $\partial_i$'s correspond to the zero derivation). Then the pair
$$
(R,R\cdot\partial_1\oplus\ldots \oplus R\cdot\partial_n)
$$
defines a differential ring.
\item
The data
$$(K,K\cdot(z\partial_x+\partial_y)+K\cdot\partial_z)$$ with
$K:=\C(x,y,z)$ and natural $\theta_K$ do not define a differential ring because of the lack of a Lie bracket.
\item
Let $\mathfrak g$ be a finite-dimensional Lie algebra over a field $K$. Then $(K,{\mathfrak g})$ is a differential field with the zero $\theta_K$.
\item
Let $R\hookrightarrow S$ be an embedding of rings and  $D_R$ be a
finitely generated projective $R$-sub\-mo\-dule and a Lie subring in
the $R$-module of all derivations $\partial:S\to S$ with
$\partial(R)\subset R$. Let $$\theta_R:D_R\to\nolinebreak\Der(R,R)$$
be defined by the restriction to $R$ of derivations from $S$ to
itself. Then~$(R,D_R,\theta_R)$ is a differential ring with,
possibly, non-trivial kernel and image of $\theta_R$.
\item\label{ex:455}
Let $(R,A)$ be a Hopf algebroid
(Section~\ref{subsection-prelHopfalg}). Put $$I:={\rm Ker}(e:A\to R)\quad\text{and}\quad \Omega_R:=I/I^2.$$ Then the cosimplicial ring structure on the
tensor powers of $A$ as an $R$-bimodule defines a dg-ring structure
on~$\wedge_R^{\bullet}\Omega_R$. Explicitly, for any $a\in R$, the
element $\dd a\in \Omega_R=I/I^2$ is the class of $$r(a)-l(a)\in I.$$
For any $\omega\in\Omega_R$, the element $$\dd \omega\in
\wedge_R^2\Omega_R$$ is defined as follows. Let $\tilde\omega\in I$
be such that its class in~$\Omega_R$ equals $\omega$. One takes the
class of the element
$$
\tilde\omega\otimes 1-\Delta(\tilde\omega)+1\otimes\tilde\omega\in I\otimes_R I
$$
in the quotient $\Omega_R\otimes_R\Omega_R$ and then one applies the
canonical map $$\Omega_R\otimes_R\Omega_R\to\wedge^2_R\Omega_R$$ to
obtain~$\dd\omega$. By Remark~\ref{rmk-Ill1}, the dg-ring structure
on $\Omega_R^{\bullet}$ defines a differential ring $(R,D_R)$ with
$D_R=\Omega_R^{\vee}$. See more details about this example
in~\cite[Proposition~1.2.8]{Illusie}.
\end{enumerate}
\end{example}

Note that, for any differential field $(K,D_K)$ with $\Char K= 0$
and injective \mbox{$\theta_K:D_K\to \Der(K,K)$}, there exists a
commuting basis for $D_K$ as shown in \cite[p.~12,
Proposition~6]{KolDAG} and Proposition~\ref{prop-commbasis}.
However, we prefer not to choose such a basis and to give
coordinate-free definitions and constructions. In particular, here
is a definition of a morphism between differential rings.

\begin{definition}\label{defin-morphdiffrings}
A {\it morphism between differential rings} $(R,D_R)\to(S,D_S)$ is a
pair $(\varphi,\varphi_*)$, where $\varphi:\nolinebreak R\to S$ is a
ring homomorphism and $ \varphi_*:\Omega_R\to \Omega_S $ is an
$R$-linear map such that $\varphi_*$ commutes with~$\dd$, that is,
for all $a\in R$, $\omega\in\Omega_R$, we have
$$\dd(\varphi(a))=\varphi_*(\dd a)\in\Omega_S\quad
\text{and}\quad\dd(\varphi_*(\omega))=\nolinebreak\varphi_*(\dd
\omega)\in\nolinebreak{\wedge}^2_S\Omega_S,$$ where we denote  the $R$-linear map
$${\wedge}^2_R\Omega_R\to{\wedge}^2_S\Omega_S$$ induced by
$\varphi_*$ for
short also by $\varphi_*$. The second condition,
$$\dd(\varphi_*(\omega))=\nolinebreak\varphi_*(\dd \omega),$$ is
called the {\it integrability condition}. For short, we sometimes omit $\varphi_*$ in the notation.
A morphism $(\varphi,\varphi_*)$ is {\it strict} if the $S$-linear morphism $$S\otimes_R\Omega_R\to \Omega_S$$ induced by $\varphi_*$ is an isomorphism.
\end{definition}

Taking the dual modules, one obtains an explicit definition of a morphism between differential rings in terms of derivations. The pair $(\varphi,\varphi_*)$ from Definition~\ref{defin-morphdiffrings} corresponds to a
pair $(\varphi,D_{\varphi})$, where $\varphi:\nolinebreak R\to\nolinebreak S$ is a ring homomorphism and
$$D_\varphi:D_S\to S\otimes_R D_R$$ is a morphism of $S$-modules. Sometimes we refer to $D_{\varphi}$ as a {\it structure map} associated with a morphism between differential rings. The first condition, $$\dd(\varphi(a))=\varphi_*(\dd a),$$ is equivalent to the equality
\begin{equation}\label{eq-diffmorph}
\partial(\varphi (a))=\sum _i
b_i\cdot\varphi(\partial_i(a))
\end{equation}
for all $a\in R$ and $\partial\in D_S$, where $$D_\varphi(\partial)=\sum _i b_i\otimes \partial_i,\quad b_i\in S,\ \partial_i\in D_R.$$ The integrability condition
is equivalent to the equality
\begin{equation}\label{eq-explicintegrmorph}
D_{\varphi}([\partial,\delta])=\sum _j \partial(c_j)\otimes\delta_j-
\sum _i \delta(b_i)\otimes\partial_i+
\sum _{i,j}b_ic_j\otimes {\left[\partial_i,\delta_j\right]}
\end{equation}
for all $\partial,\delta\in D_S$, where $$D_{\varphi}(\delta)=\sum _j c_j\otimes \delta_j,\quad c_j\in S,\ \delta_j\in D_R.$$ The morphism $(\varphi,\varphi_*)$ is strict if and only if~$D_{\varphi}$ is an isomorphism.

\begin{remark}\label{rmk-morphdiffring}
\hspace{0cm}
\begin{enumerate}
\item
In the notation of Definition~\ref{defin-morphdiffrings}, assume the injectivity of the canonical map $$S\otimes_R D_R\to\Der(R,S)$$ induced by the ring homomorphism $\varphi:R\to S$. Then it follows from~\eqref{eq-diffmorph} that the morphism $D_{\varphi}$, as well as $\varphi_*$, is unique if it exists. In particular, the above injectivity assumption holds if $R$ is a field and $\theta_R$ is injective.
\item\label{i:472}
In the notation of Definition~\ref{defin-morphdiffrings}, it follows from~\eqref{eq-explicintegrmorph} that  the $S$-submodule $D_{S/R}:=\nolinebreak{\rm Ker}(D_\varphi)$ in $D_S$ is closed under the Lie bracket, that is, if $D_\varphi(\partial)=D_\varphi(\delta)=0$, then
$$D_\varphi([\partial,\delta])=0.$$ Therefore, we obtain a differential ring $(S,D_{S/R})$ with the map $D_{S/R}\to\Der(S,S)$ induced by $\theta_S$.
\item
If $(R,\partial_R)$ and $(S,\partial_S)$ are two rings with
derivations, then a morphism of differential rings
$$(R,R\cdot\nolinebreak\partial_R)\to\nolinebreak (S,S\cdot\partial_S)$$
is given by a ring homomorphism $\varphi:R\to S$ and an element $b\in S$
such that, for any $a\in R$, we have
$$
\partial_S(\varphi (a))=b\cdot\varphi(\partial_R(a)).
$$
Thus, up to a rescaling, this is the usual definition of a morphism between
differential rings with one derivation.
\end{enumerate}
\end{remark}

\begin{example}\label{exampmorphdifffrings}
For a field $k$, consider the rings
$R:=k[x,y,z]$, $S:=k[x,y]$, the modules
$$
D_R:=R\cdot\partial_x+R\cdot\partial_y+R\cdot z\partial_z,\quad
D_S:=S\cdot\partial_x+S\cdot\partial_y,
$$
and the ring homomorphism $\varphi:R\to S$
being the quotient by the ideal $(z)\subset R$. Then we have
$$
\Omega_R=R\cdot \dd x+R\cdot\dd y+R\cdot(1/z)\,\dd z,
\quad
\Omega_S=S\cdot\dd x+S\cdot\dd y.
$$
Given polynomials $f,g\in S$, consider the morphism of $R$-modules
$$
\varphi_*:\Omega_R\to \Omega_S,
\quad
\dd x\mapsto \dd x,\quad \dd y\mapsto \dd y,\quad (1/z)\,\dd z\mapsto f\dd x+g\dd y.
$$
Then $(\varphi,\varphi_*)$ satisfies $$\varphi_*(\dd(a))=\dd(\varphi_*(a))$$
for all $a\in R$. Further, $(\varphi,\varphi_*)$ satisfies the integrability condition
if and only if $\partial_y f=\partial_x g$, because
$$
\dd{\left((1/z)\dd z\right)}=0,\quad \dd{\left(\varphi_*{\left((1/z)\dd z\right)}\right)}={\left(-\partial_y f+\partial_x g\right)}\cdot\dd x\wedge\dd y.
$$
\end{example}

\subsection{Differential algebras}

In the present paper, we consider several types of algebras over differential rings. The first type is the most general one.

\begin{definition}\label{defin-diffalg}
\hspace{0cm}
\begin{itemize}
\item
Given a morphism of differential rings $(R,D_R)\to (S,D_S)$, we say that $(S,D_S)$ is a {\it differential algebra} over $(R,D_R)$.
\item A~{\it morphism between differential algebras} over $(R,D_R)$
is a morphism between differential rings that commutes with the
given morphisms from $(R,D_R)$.
\end{itemize}
\end{definition}

\begin{definition}\label{defin-finitediff}
Given a differential ring $(S,D_S)$ and a morphism of rings $R\to S$, we say that $(S,D_S)$ is {\it differentially finitely generated} over $R$ if there are finite subsets
$\Sigma\subset S$ and $\Delta\subset D_S$ such that any element in $S$ can be represented as a polynomial with coefficients from $\Img(R\to S)$ in elements
of the form $$\left(\partial_{1}\cdot\ldots\cdot\partial_{n}\right)a,\ \ \partial_i\in \Delta,\ a\in\Sigma,$$ and the product stands for the composition of derivations.
\end{definition}

The following is a differential version of Definition~\ref{defin-arsimp}.

\begin{definition}\label{defin-diffclosed}
Let $(k,D_k)\to (K,D_K)$ be a morphism between differential fields. We say that $(k,D_k)$ is {\it relatively differentially closed} in $(K,D_K)$ if, for any differential subalgebra $(R,D_R)$ in $(K,D_K)$ over $(k,D_k)$ such that $(R,D_R)$ is differentially finitely generated over $k$ and the morphism $(R,D_R)\to (K,D_K)$ is strict, there is a morphism $(R,D_R)\to (k,D_k)$ of differential algebras over~$(k,D_k)$.
\end{definition}

The following type of algebras corresponds to the usual notion of a differential algebra.

\begin{definition}\label{defin-strictalg}
\hspace{0cm}
\begin{itemize}
\item
Given a strict morphism of differential rings $(R,D_R)\to (S,D_S)$, we say that $(S,D_S)$ is a $D_R$-{\it algebra} over~$(R,D_R)$ (or simply
over $R$).
\item
Denote the category of
$D_R$-algebras over $(R,D_R)$ by $\DAlg(R,D_R)$.
\item
If a $D_R$-algebra $(S,D_S)$ over a differential ring $(R,D_R)$ is differentially finitely generated over $R$, then we say that $S$ is
{\it $D_R$-finitely generated} over $R$.
\item
Denote the $D_R$-algebra freely $D_R$-generated over $R$ by the finite set $T_1,\ldots,T_n$, that is, the ring of $D_R$-polynomials in the differential indeterminates $T_1,\ldots,T_n$, by
$
R\{T_1,\ldots,T_n\}$.
\end{itemize}
\end{definition}

For short, we usually omit $D_S$ in the notation of a $D_R$-algebra over~$R$, because it is reconstructed by the isomorphism $$D_{\varphi}:D_S\stackrel{\sim}\longrightarrow S\otimes_R D_R.$$ Given $\partial\in D_R$ and $b\in S$, we put
$$
\partial(b):=\theta_S{\left(D_{\varphi}^{-1}(1\otimes\partial)\right)}(b).
$$
We have that $S$ is $D_R$-finitely generated if and only if
there is a finite subset $\Sigma\subset S$ such that any element in $S$
can be represented as a polynomial with coefficients from $\Img(R\to S)$
in elements of the form $$\left(\partial_{1}\ldots\partial_{n}\right)a,\quad \partial_i\in D_R,\ a\in\Sigma.$$ Equivalently, there is no smaller
$D_R$-subalgebra over $R$ in $S$ containing $\Sigma$.

\begin{definition}\label{defin-difffinpres}
A $D_R$-algebra $S$ over a differential ring $(R,D_R)$ is {\it of $D_R$-finite presentation over $R$} if there is an isomorphism of $D_R$-algebras over $R$
$$
S\cong R\{T_1,\ldots,T_n\}/I,
$$
where $I$ is a $D_R$-finitely generated ideal.
\end{definition}

The following type of algebras is needed to work with
parameterized differential equations.

\begin{definition}\label{defin-paramdifffield}
A differential algebra $(R,D_R)$ over a differential field $(k,D_k)$ is called {\it parameterized} if the structure map
$D_R\to R\otimes_k D_k$ is surjective and we have
$k=R^{D_{R/k}}$, where $D_{R/k}$ is the kernel of the structure map.
\end{definition}

Given a parameterized differential algebra $(R,D_R)$ over $(k,D_k)$,
one has the differential ring~$(R,D_{R/k})$
(Remark~\ref{rmk-morphdiffring}\eqref{i:472}).

\begin{definition}\label{defin-split}
A {\it splitting} of a parameterized differential algebra $(R,D_R)$
over a differential field~$(k,D_k)$ is a finite-dimensional
$k$-subspace $\widetilde{D}_k$ in $D_R$ closed under the Lie bracket
on $D_R$ such that the structure map $D_R\to R\otimes_k D_k$ induces
a surjection $$\widetilde{D}_k\to D_k\cong 1\otimes D_k.$$
\end{definition}

\begin{remark}\label{rem-split}
In the notation of Definition~\ref{defin-split}, put
$\widetilde{D}_R:=R\otimes_k \widetilde{D}_k$ and consider the
differential field~$\big(k,\widetilde{D}_k\big)$, where
$\widetilde{D}_k\to\Der(k,k)$ is defined as the composition
$$
\widetilde{D}_k\to D_k\to\Der(k,k).
$$
We obtain a commutative diagram of differential rings with the bottom horizontal morphism being strict:
$$
\begin{CD}
(k,D_k)@>>>(R,D_R)\\
@VVV@VVV\\
\big(k,\widetilde{D}_k\big)@>>>\big(R,\widetilde{D}_R\big).
\end{CD}
$$
\end{remark}

\begin{example}\label{exmp-commparamdf}
\hspace{0cm}
\begin{enumerate}
\item\label{ex:836}
Let
$$\big\{\partial_{x,1},\ldots\partial_{x,m},\tilde{\partial}_{t,1},\ldots,
\tilde{\partial}_{t,n}\big\}$$
be formal symbols that denote commuting derivations from a field $K$ to itself and let $k$ be the field of $\left\{\partial_{x,1},\ldots,\partial_{x,m}\right\}$-constants. Denote the restriction of $\tilde{\partial}_{t,i}$ from $K$ to $k$ by $\partial_{t,i}$, $1\Le i\Le n$. Then
$(K,D_K)$ is a parameterized differential field over $(k,D_k)$ with
\begin{align*}
&D_K:=K\cdot\partial_{x,1}\oplus\ldots\oplus
K\cdot\partial_{x,m}\oplus K\cdot\tilde{\partial}_{t,1}\oplus
\ldots\oplus K\cdot\tilde{\partial}_{t,n},\\
D_k&:=k\cdot\partial_{t,1}\oplus\ldots\oplus k\cdot\partial_{t,n},\quad
D_{K/k}=K\cdot\partial_{x,1}\oplus\ldots\oplus K\cdot\partial_{x,n}.
\end{align*}
\item
Let $\big(k,\widetilde{D}_k\big)$ be a differential field and 
$D_k$ be the image of the map
$$\theta_k:\widetilde{D}_k\to\Der(k,k).$$
Then~$\big(k,\widetilde{D}_k\big)$ is a parameterized
differential field over $(k,D_k)$.
\end{enumerate}
\end{example}

Actually, Example~\ref{exmp-commparamdf}\eqref{ex:836} is quite general as
the following statement shows.

\begin{proposition}\label{prop-commbasis}
Let $(K,D_K)$ be a parameterized differential field over a
differential field $(k,D_k)$ with $\Char k= 0$ and injective~$\theta_K$ and~$\theta_k$. Then we are in the case of
Example~\ref{exmp-commparamdf}\eqref{ex:836}, that is, there exists
a commuting basis
$$\big\{\partial_{x,1},\ldots\partial_{x,m},\tilde{\partial}_{t,1},\ldots,
\tilde{\partial}_{t,n}\big\}$$ of $D_K$ over $K$ such that
$$
D_k=k\cdot\partial_{t,1}+\ldots+k\cdot\partial_{t,n},\quad
D_{K/k}=K\cdot\partial_{x,1}+\ldots+K\cdot\partial_{x,n}\quad\text{where}\quad
\partial_{t,i}:=\tilde\partial_{t,i}|_k.
$$
\end{proposition}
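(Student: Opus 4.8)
The plan is to avoid any attempt to ``straighten'' derivations by solving auxiliary differential equations, and instead to realize the desired basis as the dual basis to a suitable system of coordinate functions, exploiting the elementary fact that derivations dual to honest elements of $K$ automatically commute. Concretely, suppose I can produce elements $x_1,\dots,x_m\in K$ and $t_1,\dots,t_n\in k$ so that the ``gradient'' map
$$
\nabla\colon D_K\longrightarrow K^{m+n},\qquad \partial\mapsto\big(\partial(x_1),\dots,\partial(x_m),\partial(t_1),\dots,\partial(t_n)\big),
$$
is an isomorphism of $K$-vector spaces. Let $\{\partial_{x,1},\dots,\partial_{x,m},\tilde\partial_{t,1},\dots,\tilde\partial_{t,n}\}$ be the basis of $D_K$ dual to the standard basis under $\nabla$, i.e. the unique derivations with $\partial_{x,i}(x_{i'})=\delta_{ii'}$, $\partial_{x,i}(t_j)=0$, $\tilde\partial_{t,j}(x_i)=0$, $\tilde\partial_{t,j}(t_{j'})=\delta_{jj'}$. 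Then the bracket of any two of these derivations sends every $x_i$ and every $t_j$ to $0$ (each is sent to a constant in the prime field, which every derivation kills), and that bracket again lies in $D_K$; since $\nabla$ is injective, all such brackets vanish, so this is a commuting basis. Thus the whole problem is reduced to the existence of coordinates compatible with the structure map.

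First I would construct the coordinates by treating $(K,D_{K/k})$ and $(k,D_k)$ separately, both being differential fields (Remark~\ref{rmk-morphdiffring}). Since $\theta_K$ is injective, no nonzero element of $D_{K/k}$ annihilates all of $K$; dually, via $\dd\colon K\to D_{K/k}^{\vee}$ this says that $\{\dd f:f\in K\}$ spans $D_{K/k}^{\vee}$ over $K$, so I may pick $x_1,\dots,x_m\in K$ for which the $K$-linear map $D_{K/k}\to K^m$, $\partial\mapsto(\partial(x_j))_j$, is an isomorphism (equivalently $\dd x_1,\dots,\dd x_m$ is a $K$-basis of $D_{K/k}^{\vee}$). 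The identical argument applied to $(k,D_k)$, using injectivity of $\theta_k$, produces $t_1,\dots,t_n\in k$ whose differentials form a basis of $D_k^{\vee}$, that is, a commuting coordinate basis $\partial_{t,1},\dots,\partial_{t,n}$ of $D_k$ with $\partial_{t,i}(t_j)=\delta_{ij}$.

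It then remains to assemble these and to verify that the dual basis is adapted to the structure map, and this is where the only genuinely delicate point lies. The crux is injectivity of $\nabla$: if $\partial\in D_K$ kills all $t_j$, then writing $D_\varphi(\partial)=\sum_p b_p\otimes\partial_{t,p}$ and using~\eqref{eq-diffmorph} gives $b_j=\partial(t_j)=0$, whence $\partial\in\Ker D_\varphi=D_{K/k}$ (Definition~\ref{defin-paramdifffield}); if in addition $\partial$ kills all $x_i$, then $\partial=0$ because the $\dd x_i$ span $D_{K/k}^{\vee}$. As $\dim_K D_K=m+n$ (the sequence $0\to D_{K/k}\to D_K\to K\otimes_k D_k\to 0$ is exact by Definition~\ref{defin-paramdifffield}), $\nabla$ is an isomorphism, and the resulting dual basis does the job: each $\partial_{x,i}$ lies in $D_{K/k}$ and the $m$ of them span it, while $D_\varphi(\tilde\partial_{t,j})=1\otimes\partial_{t,j}$ forces $\tilde\partial_{t,j}|_k=\partial_{t,j}$, so that $\{\partial_{t,j}\}$ is a $k$-basis of $D_k$. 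The main obstacle is precisely arranging that the coordinates split as $x_i\in K$ and $t_j\in k$ compatibly with $D_\varphi$; once this bookkeeping is in place the commutation is automatic, and crucially no solvability of differential equations---which would in general fail over a field $K$ that is not differentially closed---is ever invoked. This is the content, in the parameterized setting, of the commuting-basis result \cite[p.~12, Proposition~6]{KolDAG}.
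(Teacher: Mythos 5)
Your proof is correct, and it takes a genuinely different (though ultimately cognate) route from the paper's. The paper follows Kolchin: using $\Char k=0$ it picks transcendence bases of $K$ over $k$ and of $k$ over $\Q$, identifies $\Der_k(K,K)$ and $\Der(K,K)$ with infinite products of coordinate derivations, chooses maximal coordinate subspaces $U\subset V$ complementary to $D_{K/k}$ and $D_K$, and obtains the basis by projecting the finitely many remaining coordinate derivations into $D_K$ along $V$; commutativity holds because $V$ is a coordinate subspace, hence stable under bracketing, so the bracket of two projected derivations lands in $V\cap D_K=0$. You instead dualize from the start: injectivity of $\theta_K$ and $\theta_k$ plus finite-dimensionality give finitely many elements $x_i\in K$, $t_j\in k$ whose differentials span $\Omega_{K/k}$ and $\Omega_k$, and the basis of $D_K$ is the one dual to the resulting gradient isomorphism $\nabla:D_K\to K^{m+n}$; commutativity is then immediate because brackets kill the chosen coordinates and $\nabla$ is injective, and the compatibility with the structure map is a direct computation with~\eqref{eq-diffmorph}. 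The underlying mechanism is the same in both proofs (a bracket of derivations dual to honest field elements annihilates those elements and therefore vanishes), but your version is more elementary and self-contained: it avoids transcendence bases, separability, and infinite products, and in fact never visibly invokes $\Char k=0$, whereas the paper needs characteristic zero to extend the coordinate derivations across the algebraic extensions. What the paper's route buys is the explicit realization of $D_K$ inside the canonical coordinate decomposition of $\Der(K,K)$, which makes the connection with Kolchin's original statement transparent.
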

\begin{proof}
We follow the idea of the proof of \cite[p.~12,
Proposition~6]{KolDAG}. First, there are sets of formal
variables~$\{x_{\alpha}\}$ and $\{t_{\beta}\}$ such that $K$ is an
algebraic extension of the field $k(\{x_{\alpha}\})$ and $k$ is an
algebraic extension of the field~$\Q(\{t_{\beta}\})$. Since $\Char
k= 0$, these algebraic extensions are separable, whence there are
uniquely defined commuting derivations
$\left\{\partial_{x_{\alpha}}\right\}$ and
$\left\{\partial_{t_{\beta}}\right\}$ from $K$ to itself. Note that
we have
$$
\Der_k(K,K)=\prod _{\alpha}K\cdot \partial_{x_{\alpha}},\quad
\Der(K,K)=\prod _{\alpha}K\cdot\partial_{x_{\alpha}}\oplus
\prod _{\beta}K\cdot\partial_{t_{\beta}}.
$$
In what follows, by a {\it coordinate subspace} in $\Der(K,K)$, we mean a product of
some of (possibly, infinitely many)
$\left(K\cdot\partial_{x_{\alpha}}\right)$'s
and~$\big(K\cdot\partial_{t_{\beta}}\big)$'s.

Since $\theta_K$ is injective, we can consider $D_{K/k}$ and $D_K$
as $K$-subspaces in $\Der_k(K,K)$ and $\Der(K,K)$, respectively. Let
$U\subset \Der_k(K,K)$ be a maximal coordinate subspace such that
$$U\cap D_{K/k}=0.$$ Explicitly,~$U$ is spanned by some of
$\partial_{x_{\alpha}}$'s in the sense of infinite products. Since
$D_{K/k}$ is a finite-dimensional $K$-vector space, the composition
$$
D_{K/k}\to\nolinebreak \Der_k(K,K)\to \Der_k(K,K)/U
$$
is an isomorphism of $K$-vector spaces (finite-dimensionality of
$D_{K/k}$ is important here, because we allow~$U$ to be only a
coordinate subspace in $\Der_k(K,K)$, not an arbitrary one).

Further, let $V\subset \Der(K,K)$ be a maximal coordinate subspace
such that $V\cap D_K=0$ and $V\supset U$. Explicitly, the basis of
$V$ in the sense of infinite products, as above, is obtained by
adding some $\partial_{t_{\beta}}$'s to the basis of~$U$. Since~$\theta_k$ is injective, we have
$$
D_{K/k}=\Der_k(K,K)\cap D_K\subset\Der(K,K).
$$
Together with the finite-dimensionality of $D_K$ over $K$, this
implies that the composition
$$
D_{K}\to \Der(K,K)\to \Der(K,K)/V
$$
is an isomorphism of $K$-vector spaces. Denote this isomorphism by
$\alpha$.

Let $\pi$ be the composition
$$
\begin{CD}
\Der(K,K)\to \Der(K,K)/V@>{\alpha^{-1}}>> D_K.
\end{CD}
$$
Since $V\cap \Der_k(K,K)=U$, we have the following commutative diagram with
injective vertical maps
$$
\begin{CD}
D_{K/k}@>>>\Der_k(K,K)@>>>\Der_k(K,K)/U\\
@VVV@VVV@VVV\\
D_K@>>>\Der(K,K)@>>>\Der(K,K)/V.
\end{CD}
$$
Therefore, $$\pi(\Der_k(K,K))\subset D_{K/k}.$$
Finally, consider the finite sets of all indices $\{\alpha_i\}$ and
$\{\beta_j\}$ such that the corresponding
derivations~$\partial_{x_{\alpha_i}}$ and~$\partial_{t_{\beta_j}}$
do not belong to $V$. Then the elements
$$\partial_{x,i}:=\pi{\big(\partial_{\alpha_i}\big)},
\tilde\partial_{t,j}:=\pi{\big(\partial_{\beta_j}\big)}$$ form a
basis in $D_K$. Since the subspaces~$U$ and $V$ are coordinate, this
is a commuting basis, as required.
\end{proof}

\subsection{Differential modules}\label{subsection-diffmod}

We define differential modules as follows.

\begin{definition}\label{defin-diffmod}
\hspace{0cm}
\begin{itemize}
\item
A {\it $D_R$-module} over a differential ring $(R,D_R)$ (or simply over $R$) is a
pair $(M,\nabla_M)$, where $M$ is an $R$-module and
$$\nabla_M:M\to \Omega_R\otimes_R M$$
is an additive map such that, for all $a\in R$ and $m\in M$, we
have $$\nabla_M(am)=\dd a\otimes m+a\cdot\nabla_M(m)$$ and the composition
$$
\begin{CD}
M@>\nabla_M>> \Omega_R\otimes_R M@>\nabla_M>> {\wedge}^2_R
\Omega_R\otimes_R M
\end{CD}
$$
is zero, where $\nabla_M:\Omega_R\otimes_R M\to
{\wedge}^2_R \Omega_R\otimes_R M$ is defined by
$$\nabla_M(\omega\otimes m):=\dd\omega\otimes m-\omega\wedge\nabla_M(m)$$
for all $m\in M$ and $\omega\in\Omega_R$.
\item The condition
$\nabla_M\circ\nabla_M=0$ is called the {\it integrability condition}.
\item We put
$M^{D_R}:=\Ker\nabla_M$.
\item A {\it morphism between $D_R$-modules}
\mbox{$\Psi:(M,\nabla_M)\to(N,\nabla_N)$} is a morphism of $R$-modules
\mbox{$\Psi:M\to N$} that commutes with~$\nabla$. For short, we sometimes
omit $\nabla_M$ in the notation. Denote the category of
$D_R$-modules over $R$ by $\DMod(R,D_R)$. Denote the full
subcategory of $D_R$-modules over $R$ that are finitely generated as
$R$-modules by~$\DModf(R,D_R)$.
\end{itemize}
\end{definition}

Equivalently, a $D_R$-module over a differential ring $(R,D_R)$ is a
pair $(M,\rho_M)$, where $M$ is an $R$-module and $\rho_M:D_R\to
\End_{\Z}(M)$ is an $R$-linear morphism of Lie rings such that for
all $\partial\in D_R$, $a\in R$, and $m\in M$, we have
$$\rho_M(\partial)(am)=a\cdot\rho_M(\partial)(m)+\partial(a)\cdot m.$$
Further, an $R$-linear map $\Psi:M\to N$ is a morphism of
differential modules if and only if, for all $m\in M$ and
$\partial\in D_R$, we have
$$\Psi\left(\rho_M(\partial)(m)\right)=\rho_N(\partial)\left(\Psi(m)\right).$$
We sometimes omit
$\rho_M$ and use just $\partial(m)$ to denote $\rho_M(\partial)(m)$.

\begin{remark}
If $M$ and $D_R$ are finitely generated free $R$-modules, then a choice of
bases in $D_R$ and $M$ over $R$ gives an equivalent
definition of the $D_R$-module structure on $M$ in terms of connection matrices.
\end{remark}

\begin{definition}\label{defin-tenspr}
Given $D_R$-modules $(M,\nabla_M)$ and $(N,\nabla_N)$ over $R$, the $D_R$-module structures on the
{\it tensor product} \mbox{$M\otimes_R N$} and on the {\it internal Hom module}
$\Hom_R(M,N)$ are defined by
$$
\nabla_{M\otimes N}(m\otimes
n):=m\otimes\nabla_N(n)+\nabla_M(m)\otimes n\in \Omega_R\otimes_RM\otimes_R N,
$$
$$
\nabla_{\Hom(M,N)}(\Psi)(m):= \nabla_N(\Psi(m))-\Psi(\nabla_M(m))\in
 \Omega_R\otimes_RN
$$
for all $m\in M$, $n\in N$, and $\Psi\in\Hom_R(M,N)$ (we
use the canonical isomorphism $\Omega_R\otimes_RM\otimes_R N\cong M\otimes_R\Omega_R\otimes_R N$   and write $\Psi$ instead of $\id_{\Omega_R}\otimes_R\Psi$ to
be short).
\end{definition}

Note that $$(M\otimes_R N,\nabla_{M\otimes N})\quad \text{and}\quad
(\Hom_R(M,N),\nabla_{\Hom(M,N)})$$ are well-defined as $D_R$-modules
over~$R$, namely, the integrability condition holds for them. The
tensor product on $D_R$-modules defines a tensor category structure
on $\DMod(R,D_R)$ with the internal Hom object being defined as
above (Section~\ref{subsection-prelTann}).

\begin{remark}\label{remark-diffalg}
A $D_R$-algebra $S$ over $R$ is the same as an $R$-algebra $S$ with a $D_R$-module structure $\nabla_S$ on $S$ over~$R$ such that the unit and multiplication maps are morphisms of $D_R$-modules over $R$. Given $D_R$-algebras $S$ and $T$, we obtain a $D_R$-algebra structure on $S\otimes_R T$ following Definition~\ref{defin-tenspr}.
\end{remark}

The extension of scalars for differential modules is defined as follows.

\begin{definition}\label{defin-extscaldiffmod}
Let
$\varphi:(R,D_R)\to(S,D_S)$ be a morphism of differential rings
and  $(M,\nabla_M)$ be a $D_R$-module over $R$. Then the
{\it extension of scalars of $(M,\nabla_M)$ from $(R,D_R)$ to
$(S,D_S)$} is the $D_S$-module
$$
\left(M_S:=S\otimes_R M,\nabla_{M_S}\right),$$
where, for all $m\in M$ and $a\in S$, we
have:
$$
\nabla_{M_S}(a\otimes m):=a\cdot (\varphi_*\otimes\id_M)(\nabla_M(m))+\dd a\otimes m\in \Omega_S\otimes_R M=\Omega_S\otimes_S M_S.
$$
\end{definition}

Equivalently, for all $\partial\in D_S$, $m\in M$, and $a\in S$, we have
$$
\rho_{M_S}(\partial)(a\otimes m):=\sum _i (a
b_i)\otimes\rho_M(\partial_i)(m)+\partial(a)\otimes m\in M_S,
$$
where $$D_\varphi(\partial)=\sum _i{b_i\otimes\partial_i},\quad b_i\in
S,\ \partial_i\in D_R.$$
Note that $(M_S,\nabla_{M_S})$ is well-defined as a $D_S$-module over $S$, namely, the integrability condition holds for it.

\begin{example}
In the notation of Example~\ref{exampmorphdifffrings}, consider
the rank one $R$-module $M=R\cdot e$ with the $D_R$-module structure over~$R$ defined by
$$
\nabla_M(e):=(1/z)\,\dd z\otimes e.
$$
Then the pair $(M_S,\nabla_{M_S})$, with
$$
\nabla_{M_S}(e)=\dd x\otimes fe+\dd y\otimes ge,
$$
satisfies the integrability condition if and only if $\partial_y f=\partial_x g$,
that is, if and only if $\varphi_*$ satisfies the integrability condition.
\end{example}

\subsection{Parameterized Picard--Vessiot extensions}\label{subsection-PPV}

First, let us give the definition of a non-parameterized Picard--Vessiot extension in
terms of differential fields as defined above.

\begin{definition}\label{defin-PV}
Let $(K,D_K)$ be a differential field and $M$ be a
finite-dimensional $D_K$-module over $K$. A {\it Picard--Vessiot
extension for $M$}, or, shortly, {\it a PV extension for $M$}, is a $D_K$-field $(L,D_L)$ over $K$ (in
particular, we have a field extension $K\subset L$ and $D_L\cong
L\otimes_K D_K$) such that the following conditions are satisfied:
\begin{enumerate}
\item We have
$
K^{D_{K}}=L^{D_{L}}$.
\item
There is a basis $\{m_1,\ldots,m_n\}$ of $M_L$ over $L$ such that all
the $m_i$'s belong to $(M_L)^{D_L}$.
\item
There is no smaller $D_K$-subfield over $K$ in $L$ containing
the coordinates of the $m_i$'s in a basis of $M$ over $K$.
\end{enumerate}
\end{definition}
In particular, the canonical morphism $$L\otimes_k M_L^{D_L}\to
M_L$$ is an isomorphism, where $k:=K^{D_K}=L^{D_L}$.

\begin{example}
Consider the differential field $(K,\mathfrak g)$ with zero $\theta_K$, where $\mathfrak g$ is a finite-dimensional Lie algebra over $K$.
Let $V$ be a finite-dimensional $\mathfrak g$-module over $K$, that is, $V$ is a finite-dimensional representation of $\mathfrak g$ over $K$. Let $G$ be the smallest algebraic subgroup in $\GL(V)$ such that its Lie algebra contains the image of the representation map
$\rho_V:{\mathfrak g}\to\nolinebreak {\mathfrak{gl}}(V)$. The field $L$ of
rational functions on $G$ is a $\mathfrak g$-field over $K$:
$\mathfrak g$ acts on $L$ by translation invariant vector fields on $G$ through $\rho_V$. The $\mathfrak g$-field $L$ is a Picard--Vessiot extension for $V$.
\end{example}

Let $(K,D_K)$ be a parameterized differential field over a
differential field $(k,D_k)$ and let $(L,D_L)$ be a
$D_K$-field over $K$. Then we obtain a morphism of
differential fields $$(k,D_k)\to (L,D_L)$$ as the composition of the
morphisms $(k,D_k)\to (K,D_K)$ and $(K,D_K)\to (L,D_L)$. The
isomorphism $D_L\cong L\otimes_K D_K$ induces an isomorphism
$$D_{L/k}\cong L\otimes_K D_{K/k},$$ where, as in Definition~\ref{defin-paramdifffield}, $$D_{L/k}:= \Ker (D_L\to L\otimes_k D_k).$$ Thus,
$(L,D_{L/k})$ is a $D_{K/k}$-field over $K$.

The following definition of a parameterized Picard--Vessiot
extension essentially repeats the corresponding definition
from~\cite{PhyllisMichael}.

\begin{definition}\label{defin-PPV}
Let $(K,D_K)$ be a parameterized differential field over a
differential field $(k,D_k)$ and~$M$ be a finite-dimensional
$D_{K/k}$-module over $K$.
\begin{itemize}
\item
A {\it parameterized Picard--Vessiot extension for $M$}, or,
shortly, {\it a PPV extension for $M$}, is a $D_K$-field~$(L,D_L)$
over $K$ such that the following conditions are satisfied:
\begin{enumerate}
\item
We have
$
K^{D_{K/k}}=L^{D_{L/k}}$.
\item
There is a basis $\{m_1,\ldots,m_n\}$ of $M_L$ over $L$ such that all
the $m_i$'s belong to $(M_L)^{D_{L/k}}$, where $M_L$ is a $D_{L/k}$-module over the $D_{L/k}$-field $L$ (see the discussion preceding the definition).
\item
There is no smaller $D_K$-subfield over $K$ in $L$ containing
the coordinates of the $m_i$'s in a basis of $M$ over $K$.
\end{enumerate}
\item
A {\it morphism between PPV extensions} is an isomorphism between
the corresponding $D_K$-fields over~$K$. Let $\PPV(M)$ denote the
category of all PPV extensions for $M$.
\end{itemize}
\end{definition}

Note that, in the notation of Definition~\ref{defin-PPV}, we have $L^{D_{L/k}}=k$, that is, $(L,D_L)$ is a
parameterized differential field over~$(k,D_k)$. If $\Char k= 0$ and
$(k,D_k)$  is differentially closed, then all PPV extensions for a
given $D_{K/k}$-module are isomorphic,~\cite[Theorem~3.5]{PhyllisMichael} (see examples of PPV
extensions therein).

In the case of Example~\ref{exmp-commparamdf}\eqref{ex:836},
Definition~\ref{defin-PPV} becomes equivalent to the definition of a
PPV extension as given in \cite{PhyllisMichael}. It makes sense to
consider PPV extensions, because they lead to a reasonable Galois
theory for integrable systems of differential equations with
parameters. Namely, as shown in~\cite{PhyllisMichael}, a PPV
extension defines a parameterized differential Galois group, which
is a differential algebraic group over~$(k,D_k)$.

 In addition, there
is a Galois correspondence between differential algebraic subgroups
and PPV subextensions, see Section~\ref{section-Galois} for the case
when $(k,D_k)$ is not necessarily differentially closed. To investigate the parameterized differential Galois theory, one also needs the notion of a PPV ring.
\begin{definition}\label{defin-PPVring}
Let $(K,D_K)$ be a parameterized differential field over a
differential field $(k,D_k)$, $M$ be a finite-dimensional
$D_{K/k}$-module over $K$, and $L$ be a PPV extension for $M$. Let $m_i\in M_L$ be as in  Definition~\ref{defin-PPV}. A {\it parameterized Picard--Vessiot ring associated with $L$} is a $D_K$-subalgebra in $L$ generated by the coordinates $a_{ij}$ of the $m_i$'s in a basis of $M$ over $K$ and the inverse of the determinant $1/\det(a_{ij})$.
\end{definition}

\subsection{Jet rings}\label{subsection-defAtiyah}

The proof of the main result, Theorem~\ref{theor-main}, requires an
appropriate notion of a differential Tannakian category over a
differential field that goes along with the notion of a differential
module. Since it seems not possible to give a direct analogue of
Definition~\ref{defin-diffmod} in a more general setting, one needs
another approach to differential modules. Actually, $D_R$-modules
over a differential ring $(R,D_R)$ turn out to be comodules over an
object similar to a Hopf algebroid, namely, the~$2$-jet ring $\Qr_R$
(Definition~\ref{defin-secondjet}). This approach has a natural
version with $R$-modules replaced by other objects over $R$, e.g.,
Hopf algebras over~$R$ or abelian $R$-linear tensor categories. The
latter leads to the notion of a differential object
(Definition~\ref{defin-diffobj}).

Let $(R,D_R)$ be a differential ring.

\begin{definition}\label{defin-firstjet}
A {\it $1$-jet ring} is the abelian group
$\P^{1}_R:=R\oplus\Omega_R$ with the following commutative ring
structure: $$a\cdot b=ab,\quad a\cdot
\omega=a\omega,\quad\text{and}\quad \omega\cdot\eta=0,\quad a\in R,\
\omega,\eta\in\Omega_R$$ (recall that $\Omega_R=D_R^\vee$\,).
\end{definition}

Consider two ring homomorphisms $l,r:R\to\P^{1}_R$ given by $$l(a):=a\quad \text{and}\quad r(a):=a+\dd a,\quad a\in\nolinebreak R.$$ Thus, $\P_R^{1}$ is an algebra over $R\otimes R$. Explicitly, for all
$a,b\in R$ and $\omega\in \Omega_R$, we have
$$
l(a)\cdot (b+\omega):=ab+a\omega\quad\text{and}\quad (b+\omega)\cdot
r(a):=ab+a\omega+b\,\dd a.
$$
It follows that $\Omega_R$ is an $(R\otimes R)$-ideal in $\P_R^1$.
The homomorphism $r:R\to \P^1_R$ provides a canonical right $R$-linear splitting $$\P^1_R\cong R\oplus\Omega_R,$$ which differs from the left $R$-linear splitting. It follows that $\P^{1}_R$ is a finitely generated projective
$R$-module with respect to both $R$-module structures.

\begin{definition}\label{defin-secondjet}
A {\it $2$-jet ring} $\Qr_R$ is the subset in
\mbox{$\P^{1}_R\otimes_R\P^{1}_R$} that consists of all elements
$$a\otimes 1+1\otimes\omega+\omega\otimes 1-\eta,$$ where $a\in R$,
$\omega\in\Omega_R$, and $\eta\in \Omega_R\otimes_R\Omega_R$ are
such that $\dd \omega$ equals the image of $\eta$ under the natural
map $$\Omega_R\otimes_R\Omega_R\to\wedge^2_R\Omega_R.$$ Put~$I_R$ to
be the set of elements in $\Qr_R$ with $a=0$.
\end{definition}

\begin{remark}
Note that, according to our notation, the tensor product
$\P_R^{1}\otimes_R\P_R^{1}$ involves both left and right
$R$-module structures on $\P_R^{1}$.
\end{remark}

\begin{example}
Let $(R,D_R)$ be as in
Example~\ref{examp-diffring}~\eqref{en:650}. Then
$$\P_R^1=(R\otimes_k R)/J^{2},$$ where $J$ is the kernel
of the multiplication homomorphism $R\otimes_k R\to R$. If $2$ is invertible in $R$, then $\P_R^2=(R\otimes_k R)/J^{3}$,~\cite{BerthelotOgus}.
\end{example}

Let us list some important properties of the $2$-jet ring. One can
show that $\Qr_R$ is an $(R\otimes R)$-subalgebra in
$\P^{1}_R\otimes_R\P^{1}_R$ with respect to the ``external'' $R$-modules structures. This defines two ring homomorphisms
from $R$ to~$\Qr_R$, which we denote also by~$l$ and~$r$.
Explicitly, we have $$l(a)=a\otimes 1\quad\text{and}\quad
r(a)=a\otimes 1+1\otimes \dd a+\dd a\otimes 1.$$ Denote  the natural
embedding by
\begin{equation}\label{eq:DeltaPR2}
\Delta:\Qr_R\to \P^{1}_R\otimes_R\P^{1}_R,
\end{equation}
and put also
$$
e:\P_R^{1}\to R,\quad a+\omega\mapsto a.
$$
Note that $\Delta$ and $e$ are morphisms of algebras over $R\otimes
R$. Both compositions
$$
\begin{xy}(0,15)*+{\Qr_R}="ab";(20,15)*+{\P_R^{1}\otimes_R \P_R^{1}}="a"; (45,15)*+{\P_R^{1}}="b";
{\ar@<0ex>"ab";"a"^<<<<<<{\Delta}};
{\ar@<0.7ex>"a";"b"^>>>>>>>>{e\cdot\id}};
{\ar@<-0.7ex>"a";"b"_>>>>>>>>{\id\cdot e}};
\end{xy}
$$
coincide with the surjective morphism of $(R\otimes R)$-algebras
\begin{equation}\label{eq:PR2PR1}
\Qr_R\to \P_R^{1},\quad (a\otimes 1+1\otimes\omega+\omega\otimes
1-\eta)\mapsto a+\omega.
\end{equation}
The kernel of this homomorphism equals the kernel of the natural map
$$
\Omega_R\otimes_R\Omega_R\to{\wedge}^2_R\Omega_R,
$$
which is, by definition, the second symmetric power $\Sym^2_R\Omega_R$ of
$\Omega_R$. Since $\Sym^2_R\Omega_R$ is a finitely generated
projective $R$-module,~$\Qr_R$ is a finitely generated projective
$R$-module with respect to both $R$-module structures, being an
extension of~$\P_R^{1}$ by $\Sym^2_R\Omega_R$. We also denote the map $\Qr_R\to R$ defined as the composition
$$\Qr_R\to\P^{1}_R\stackrel{e}\to R$$ by
$e$. Explicitly, we have
$$
e(a\otimes 1+1\otimes\omega+\omega\otimes 1-\eta)=a.
$$
Thus, we have $I_R=\Ker(e)$.

A morphism between differential rings
$\varphi:(R,D_R)\to (S,D_S)$ defines a homomorphism of
$(R\otimes R)$-algebras
$$
\left(\P^{1}_\varphi:=\varphi\oplus\varphi_*\right):\P^{1}_R\to
\P^{1}_S.
$$
The integrability condition for $\varphi$ is equivalent
to the fact that the ring homomorphism
$$
\P^{1}_\varphi\otimes\P^{1}_\varphi:\P^{1}_R\otimes_R\P^{1}_R\to
\P^{1}_S\otimes_S\P^{1}_S
$$
sends $\Qr_R$ to $\Qr_S$. Indeed, for any element $$1\otimes
\omega+\omega\otimes 1-\eta\in\Qr_R,$$ the element
$$
{\left(\P^{1}_\varphi\otimes\P^{1}_\varphi\right)}(1\otimes
\omega+\omega\otimes 1-\eta)=
1\otimes\varphi_*(\omega)+\varphi_*(\omega)\otimes 1-\varphi_*(\eta)
$$
belongs to $\Qr_S$ if and only if $\dd(\varphi_*(\omega))$ equals
the image of $\varphi_*(\eta)$ under the natural map
$$\Omega_S\otimes_S\Omega_S\to\wedge^2_S\Omega_S,$$ while the
latter coincides with $\varphi_*(\dd \omega)$. Thus, we obtain a
morphism of $(R\otimes R)$-algebras
$$
\Qr_{\varphi}:\Qr_R\to\Qr_S.
$$
One can show that $\Qr_{\varphi}$ commutes with the morphisms
$l,r,\Delta$, and $e$.

\begin{remark}
Assume that $2$ is invertible in $R$ (in~particular, $\Char
R\ne 2$). Then there is a section
\begin{equation}\label{eq:wedgetotensor}
{\wedge}^2_R\Omega_R\hookrightarrow \Omega_R\otimes_R\Omega_R,\quad
\omega_1\wedge\omega_2\mapsto\frac{1}{2}(\omega_1\otimes\omega_2-\omega_2\otimes\omega_1)
\end{equation}
of the natural quotient map $$\Omega_R\otimes_R\Omega_R\to \wedge^2_R\Omega_R,$$
and $\Qr_R$ is generated as a subring and a left (or right)
submodule in $\P_R^{1}\otimes_R\P_R^{1}$ by all elements of type
$$\langle\omega\rangle:=1\otimes\omega+\omega\otimes
1-\dd\omega,\quad \omega\in\Omega_R.$$ In addition, $I_R$ is
generated by $\langle\omega\rangle$ for all $\omega\in\Omega_R$ and
$\Sym^2_R\Omega_R=I_R\cdot I_R$.
\end{remark}

\subsection{Differential rings vs. Hopf algebroids}\label{subsection-illusie}

Let us cite  some relations between differential rings and Hopf algebroids from~\cite{Illusie}. The content of this section is not needed for the rest of the text, but we have decided to include it for the convenience of the reader.

Given a differential ring $(R,D_R)$, we have defined  a 2-jet ring $\Qr_R$ in Section~\ref{subsection-defAtiyah}. Actually, the construction depends only on the
{\it 2-truncated de Rham complex}
$$
\begin{CD}
R@>\dd>>\Omega_R@>\dd>>\wedge^2_R\Omega_R
\end{CD}
$$
associated with $(R,D_R)$ (Definition~\ref{defin-omega}).

Conversely, let ${\big(R,A^2\big)}$ satisfy the similar properties as ${\big(R,\Qr_R\big)}$ does. Namely, call a {\it $2$-truncated Hopf algebroid with divided powers} a pair of rings
$\big(R,A^{2}\big)$ ($A^2$ is just a notation for a ring) together with the following data: two ring homomorphisms
$l,r:R\to A^{2}$, a morphism of $(R\otimes R)$-algebras $e:A^{2}\to R$,
a map of sets $\gamma:I\to A^2$, where $I:=\Ker(e)$, and a morphism of $(R\otimes R)$-algebras
$$
\Delta:A^{2}\to A^{1}\otimes_R A^{1},
$$
where $$A^{1}:=A^{2}/I^{[2]},\quad I^{[2]}:=I\cdot I+\gamma(I).$$ We require that, for all $a\in R$, $x,y\in I$, we have
$$
\gamma(ax)=a^2\gamma(x),\quad \gamma(x+y)=\gamma(x)+xy+\gamma(y),\quad I\cdot I^{[2]}=0,$$ and the compositions
$$
\begin{xy}(0,15)*+{A^{2}}="ab";(20,15)*+{A^{1}\otimes_R A^{1}}="a"; (45,15)*+{A^{1}}="b";
{\ar@<0ex>"ab";"a"^<<<<<{\Delta}};
{\ar@<0.7ex>"a";"b"^>>>>>>>>{e\otimes\id}};
{\ar@<-0.7ex>"a";"b"_>>>>>>>>{\id\otimes e}};
\end{xy}
$$
are equal to the canonical surjection $A^{2}\to A^{1}$, where
$e$ also denotes the canonical morphism \mbox{$A^{1}\to R$}.
In particular, for~$\Qr_R$, we put
$$
\gamma(1\otimes\omega+\omega\otimes 1-\eta):=\omega\otimes\omega.
$$
It follows that there is an antipode map $A^2\to
{\big(A^2\big)}^s$ that satisfies the usual properties. An
analogous construction to the one from
Example~\ref{examp-diffring}\eqref{ex:455} provides a 2-truncated de
Rham complex associated with~${\left(R,A^2\right)}$. This implies
that the category of 2-truncated Hopf algebroids with divided powers
is equivalent to the category of 2-truncated de Rham complexes.

Further, as shown in Remark~\ref{rmk-Ill1}, there is a way to
construct a differential ring based on a 2-truncated de Rham complex
with finitely generated projective $\Omega_R$. The Jacobi identity
for the Lie bracket is equivalent to the vanishing of the
composition $$\dd\circ\dd:\Omega_R\to\wedge^3_R\Omega_R.$$ This gives
an auxiliary condition on the corresponding 2-truncated Hopf
algebroids with divided powers, which can be explicitly written in
terms of a certain ring $A^3$ (which is a 3-jet ring in the case of
$\Qr_R$),~\cite[1.3.5]{Illusie}. This condition is similar to
the associativity condition for a Hopf algebroid
(Section~\ref{subsection-prelHopfalg}). 

It follows
from~\cite[Proposition 1.2.8]{Illusie} that the category of
2-truncated Hopf algebroids with divided powers, with $I/I^{[2]}$
being a flat $R$-module, and with the associativity condition is
equivalent to the category of 2-truncated de Rham complexes with
$\Omega_R$ being a flat $R$-module and with vanishing
$$\dd\circ\dd:\Omega_R\to\wedge^3_R\Omega_R.$$ Also, the category of
2-truncated de Rham complexes with $\Omega_R$ being a finitely
generated projective $R$-module and with vanishing
$\dd\circ\dd:\Omega_R\to\wedge^3_R\Omega_R$ is equivalent to the
category of differential rings.

Recall that a {\it formal Hopf algebroid} is defined similarly to a
Hopf algebroid with $A$ being a pro-object in the category of
$(R\otimes R)$-algebras. A~formal Hopf algebroid $\big(R,\widehat A\big)$ with divided powers on $$I=\Ker\big(e:\widehat{A}\to R\big)$$ is
{\it complete} if the natural map $$\widehat A\to
``{\varprojlim}\mbox{''}A/I^{[i]}$$ is an isomorphism. It follows from
\cite[Th\'eor\`eme~1.3.6]{Illusie} that the category of
$2$-truncated Hopf algebroids with divided powers, with $I/I^{[2]}$ being a flat $R$-module, and with the associativity condition is equivalent to the category of
complete formal Hopf algebroids with divided powers and with $I/I^{[2]}$ being a flat $R$-module.

In particular, the category of differential rings over $\Q$ is
equivalent to the category of complete formal Hopf algebroids~$\big(R,\widehat A\big)$ with $R$ being a $\Q$-algebra and
$I/I^2$ being a finitely generated projective $R$-module. For
example, if $D_R=R\cdot\partial$ and~$R$ is a $\Q$-algebra, then,
for the corresponding complete formal Hopf algebroid
$\big(R,\widehat{A}\big)$, the ring~$\widehat{A}$ equals the ring
of formal Taylor series $R[[t]]$, and we have
$$
l(a)=a,\quad r(a)=\sum_{i=0}^{\infty}\partial^i(a)/i!,\quad
\text{and}\quad \Delta(t)=1\otimes t+t\otimes 1.
$$
In other words, the formal Hopf algebroid
$\big(R,\widehat{A}\big)$ is given by the action of the formal
additive group~$\widehat{\mathbb G}_a$ on~$\Spec(R)$.

It seems that, in general, formal Hopf algebroids that are complete
with respect to the usual powers $I^i$ correspond to iterative
Hasse--Schmidt derivations on the differential side.

Assume that all $D_R$-modules over $R$ that are finitely generated
over $R$ are projective $R$-mo\-dules (for example, this holds if
$R$ is a field). Then the category $\DModf(R,D_R)$ is a Tannakian
category with the forgetful fiber functor $$\DModf(R,D_R)\to\Mod(R).$$
It seems to be a non-trivial problem to give an explicit description
of the corresponding Hopf algebroid $(R,A)$ in terms of $D_R$, whose formal completion is the complete formal Hopf algebroid
associated with~$(R,D_R)$.

\subsection{Differential objects}\label{subsection-diffobj}

The pair $\big(R,\Qr_R\big)$
resembles a Hopf algebroid (Section~\ref{subsection-prelHopfalg}).
The main difference with a Hopf algebroid is that~$\Delta$ does not
send $\Qr_R$ to the tensor square of itself. However, one can define
a comodule over~$\big(R,\Qr_R\big)$ in the same way as one
defines a comodule over a Hopf algebroid. In the present paper, we
use a generalization of this notion.

Let $\Mc$ be a category {\it cofibred} over commutative rings, that
is, for each commutative ring $R$, there is a category~$\Mc(R)$ and,
given a ring homomorphism $R\to S$, there is a functor
$$S\otimes_R -:\Mc(R)\to\Mc(S),$$ called an {\it extension of
scalars}, compatible with taking composition of ring homomorphisms
(for more details, see~\cite{Grothendieck1959}).

\begin{example}
$\Mc(R)$ can be the category of $R$-modules, $R$-algebras, Hopf
algebras over $R$, Hopf algebroids over $R$, etc.
\end{example}

We will now define differential objects, generalizing
stratifications on sheaves from~\cite{BerthelotOgus}.

\begin{definition}\label{defin-diffobj}
\hspace{0cm}
\begin{itemize}
\item
A {\it $D_R$-object in $\Mc$} over $(R,D_R)$ (or simply over $R$) is
a pair $\big(X,\epsilon^{2}_X\big)$, where $X$ is an object
in~$\Mc(R)$ and
$$
\epsilon^{2}_X:X\otimes_R\Qr_R\to \Qr_R\otimes_R X
$$
is a morphism in the category $\Mc\big(\Qr_R\big)$ such that the following two conditions are satisfied. First, we have
$$
R\otimes_{\Qr_R}\epsilon^{2}_X=\id_X,
$$
where the $\Qr_R$-module structure on $R$ is defined by the ring homomorphism $e:\Qr_R\to R$.
Put
$$
{\left(\epsilon_X^{1}:=\P_R^{1}\otimes_{\P_R^{2}}\epsilon_X^{2}\right)}\colon X\otimes_R\P_R^1\to\P_R^1\otimes_R X,
$$
where the $\P_R^{2}$-mo\-dule structure on $\P_R^{1}$ is given by the
canonical surjection $\P_R^{2}\to \P_R^{1}$. The second condition says that
the composition of morphisms in $\Mc{\big(\P_R^1\otimes_R\P_R^1\big)}$
$$
\begin{CD}
X\otimes_R P_R^{1}\otimes_R P_R^{1}@>\epsilon_X^{1}\otimes_{P^1_R}{\left(P_R^1\otimes_R P_R^1\right)}>> P_R^{1}\otimes_R
X\otimes_R P_R^{1}@>{\left(\P_R^1\otimes_R\P_R^1\right)}\otimes_{P^1_R}\epsilon_X^{1}>>
P_R^{1}\otimes_R P_R^{1}\otimes_R X
\end{CD}
$$
is equal to the extension of scalars
$$
\left(P_R^{1}\otimes_R
P_R^{1}\right)\otimes_{\Qr_R}\epsilon^{2}_X,
$$
where the $\P_R^{2}$-mo\-dule structure on $\P_R^{1}\otimes_R \P_R^{1}$
is given by the ring homomorphism $\Delta$.
\item
The morphism
$\epsilon_X^{2}$ is called a {\it $D_R$-structure} on~$X$.
\item A {\it
morphism between $D_R$-objects} in~$\Mc$ over $(R,D_R)$ is a
morphism between objects in $\Mc(R)$ that commutes with~$\epsilon^{2}$.
\end{itemize}
\end{definition}

\begin{remark}
Perhaps, a more conceptually proper way to define a differential object would also involve the 3-jet ring to encode the associativity condition (Section~\ref{subsection-illusie}), but the present definition will be enough for our purposes. However, all examples that arise in the paper satisfy the associativity condition.
\end{remark}

\begin{definition}\label{defin-extrestr}
We say that a cofibred category $\Mc$ over rings has {\it
restrictions of scalars} if, for any ring homomorphism $R\to S$,
there is a functor $\Mc(S)\to\Mc(R)$, called a {\it restriction of
scalars}, that is right adjoint to the extension of scalars. We
usually denote the value of the restriction of scalars functor in
the same way as its argument.
\end{definition}

Thus, for all objects $X$ in $\Mc(R)$
and $Y$ in $\Mc(S)$, there is a functorial isomorphism
$$
\Hom_{\Mc(R)}(X,Y)\cong\Hom_{\Mc(S)}(S\otimes_R X,Y).
$$
Also, the restriction of scalars defines an object $S\otimes_R X$
in $\Mc(R)$, which is functorial in $S$ and $X$: given a
homomorphism of $R$-algebras $\varphi:S\to T$ and a morphism $f:X\to
Y$ in $\Mc(R)$, we have the morphism in~$\Mc(R)$
$$
\varphi\otimes
f:S\otimes_R X\to T\otimes_R Y.
$$
In particular, we have a canonical
morphism $X\to S\otimes_R X$ in $\Mc(R)$ given by the ring
homomorphism $R\to S$.

\begin{example}
The cofibred
categories of modules and algebras have restrictions of scalars,
while the cofibred categories of Hopf algebras and Hopf algebroids
do not have restrictions of scalars.
\end{example}

Given an object $X$ in $\Mc(R)$, by $_R\big( \P_R^{2}\otimes_R
X\big)$, denote  the object in $\Mc(R)$ defined as follows: first
one takes the extension of scalars $\P_R^2\otimes_R X$ with respect
to the right morphism $r:R\to \P_R^{2}$ and then applies the
restriction of scalars with respect to the left morphism $l:R\to
\P_R^{2}$. The proof of the following proposition is a direct
application of the adjointness between the extension and restriction
of scalars.

\begin{proposition}\label{prop-equivcomod2tr}
Suppose that a cofibred category $\Mc$ over rings has restrictions
of scalars. Then a $D_R$-object in $\Mc$ over~$(R,D_R)$ is the same
as a pair $\big(X,\phi_X^{2}\big)$, where $X$ is an object in $\Mc(R)$ and
$$\phi^{2}_X:X\to{}_R{\left(\P_R^{2}\otimes_R X\right)}$$ is a morphism
in $\Mc(R)$ such that
$$
(e\otimes\id_X)\circ\phi^{2}_X=\id_X
$$
and the following diagram
commutes in $\Mc(R)$:
$$
\begin{CD}
X@>\phi^{2}_X>>_R{\left(\P_R^{2}\otimes_R X\right)}\\
@V\phi^{1}_X VV@V\Delta\otimes\,\id_X VV\\
_R{\left(\P_R^{1}\otimes_R X\right)}@>\id_{\P}\otimes\phi^{1}_X>>_R{\left(\P_R^{1}\otimes_R \P_R^{1}\otimes_R X\right)},
\end{CD}
$$
where $\phi^{1}_X$ is the composition of $\phi_X^{2}$ with the morphism $$_R{\left(\P_R^{2}\otimes_R X\right)}\to
{}_R{\left(\P_R^{1}\otimes_R X\right)}.$$
\end{proposition}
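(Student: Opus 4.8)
\section*{Proof proposal}

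The plan is to read the equivalence off directly from the adjunction between extension and restriction of scalars, applied to the two $R$-algebra structures $l,r\colon R\to\P_R^{2}$. The source $X\otimes_R\P_R^{2}$ of $\epsilon_X^{2}$ is the extension of scalars of $X$ along the left map $l$, while the target $\P_R^{2}\otimes_R X$ is its extension along the right map $r$; both are objects of $\Mc\big(\P_R^{2}\big)$, and ${}_R\big(\P_R^{2}\otimes_R X\big)$ is by definition the restriction along $l$ of the latter. Since $\Mc$ has restrictions of scalars, restriction $l^{*}$ is right adjoint to extension $l_{*}$, and the functorial isomorphism displayed after Definition~\ref{defin-extrestr} gives
\begin{equation*}
\Hom_{\Mc(R)}\big(X,\,{}_R(\P_R^{2}\otimes_R X)\big)\;\cong\;\Hom_{\Mc(\P_R^{2})}\big(X\otimes_R\P_R^{2},\,\P_R^{2}\otimes_R X\big).
\end{equation*}
Under this bijection the datum $\epsilon_X^{2}$ corresponds to a morphism $\phi_X^{2}$ in $\Mc(R)$, obtained concretely by restricting $\epsilon_X^{2}$ along $l$ and precomposing with the unit $X\to{}_R\big(\P_R^{2}\otimes_R X\big)$ of the adjunction; conversely $\epsilon_X^{2}$ is recovered from $\phi_X^{2}$ through the counit. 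This matches the raw data on both sides.

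It then remains to match the two axioms. First I would treat the normalization condition. Restriction of scalars along $e\colon\P_R^{2}\to R$ carries $\epsilon_X^{2}$ to $R\otimes_{\P_R^{2}}\epsilon_X^{2}$, and since $e\circ l=e\circ r=\id_R$, both extensions $X\otimes_R\P_R^{2}$ and $\P_R^{2}\otimes_R X$ restrict along $e$ to $X$ itself. Hence the condition $R\otimes_{\P_R^{2}}\epsilon_X^{2}=\id_X$ is exactly the $e$-reduction of $\phi_X^{2}$, namely $(e\otimes\id_X)\circ\phi_X^{2}=\id_X$; this step uses naturality of the adjunction isomorphism with respect to $e$ together with the triangle identities.

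Second, I would transport the associativity condition. Its two composites live in $\Mc\big(\P_R^{1}\otimes_R\P_R^{1}\big)$ and are compared with the extension of scalars of $\epsilon_X^{2}$ along $\Delta\colon\P_R^{2}\to\P_R^{1}\otimes_R\P_R^{1}$. Applying the relevant restrictions of scalars and invoking naturality of the adjunction with respect to $\Delta$ and to the reduction map $\P_R^{2}\to\P_R^{1}$, the extension along $\Delta$ becomes the arrow $\Delta\otimes\id_X$ of the displayed square, while the reduction $\epsilon_X^{1}=\P_R^{1}\otimes_{\P_R^{2}}\epsilon_X^{2}$ corresponds to $\phi_X^{1}$, the composite of $\phi_X^{2}$ with ${}_R\big(\P_R^{2}\otimes_R X\big)\to{}_R\big(\P_R^{1}\otimes_R X\big)$. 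Commutativity of the square is then equivalent to the equality of the two composites, so the two formulations of the axiom agree.

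The formal core is elementary, and the main obstacle is purely the bookkeeping: one must track rigorously which of the structures $l$ and $r$ governs each tensor factor, and verify that the adjunction isomorphisms are natural with respect to all the structural homomorphisms $l$, $r$, $e$, $\Delta$, and $\P_R^{2}\to\P_R^{1}$ at once. Once this naturality is in place, which is guaranteed by the compatibility of extension of scalars with composition of ring homomorphisms built into the cofibred structure of $\Mc$, the correspondence and the translation of both conditions follow formally.
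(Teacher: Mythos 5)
Your proposal is correct and follows exactly the route the paper takes: the paper's entire proof is the remark that the statement is a direct application of the adjointness between extension and restriction of scalars, which is precisely the adjunction $\Hom_{\Mc(\P_R^{2})}\big(X\otimes_R\P_R^{2},\,\P_R^{2}\otimes_R X\big)\cong\Hom_{\Mc(R)}\big(X,\,{}_R(\P_R^{2}\otimes_R X)\big)$ you set up, together with the naturality bookkeeping for $e$, $\Delta$, and $\P_R^{2}\to\P_R^{1}$ that you carry out. Your write-up simply makes explicit the details the paper leaves to the reader.
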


In Section~\ref{subsection-explicitdefin}, we use the following
statement.

\begin{proposition}\label{prop-isomcomod2tr}
Suppose that a cofibred category $\Mc$ over rings has restrictions
of scalars. Then, for any $D_R$-object $X$ in~$\Mc$ over $R$, the
morphism $\epsilon^{1}_X$ in $\Mc{\big(\P_R^{1}\big)}$ is an
isomorphism.
\end{proposition}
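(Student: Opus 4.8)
The plan is to produce an explicit two-sided inverse of $\epsilon^1_X$ out of the antipode of the $1$-jet ring, and to check that it is inverse by specializing the cocycle condition from Definition~\ref{defin-diffobj} along two auxiliary ring homomorphisms. Throughout, for a ring homomorphism $f$ I write $f_{*}$ for the associated extension of scalars, so that the counit condition reads $e_{*}\epsilon^2_X=\id_X$ and the cocycle condition reads $(\iota_2)_{*}\epsilon^1_X\circ(\iota_1)_{*}\epsilon^1_X=\Delta_{*}\epsilon^2_X$ in $\Mc\big(\P^1_R\otimes_R\P^1_R\big)$, where $\iota_1(x)=x\otimes 1$ and $\iota_2(y)=1\otimes y$.

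First I would record the antipode of $\P^1_R$: the additive map $\sigma\colon\P^1_R\to\P^1_R$ given by $\sigma(a+\omega)=a+\dd a-\omega$ for $a\in R$ and $\omega\in\Omega_R$. Using $\Omega_R\cdot\Omega_R=0$ and the Leibniz rule for $\dd$, one checks that $\sigma$ is a ring homomorphism with $\sigma^2=\id$, $\sigma\circ l=r$, and $\sigma\circ r=l$. Since $\sigma$ interchanges $l$ and $r$, extension of scalars along $\sigma$ interchanges the two objects $X\otimes_R\P^1_R$ and $\P^1_R\otimes_R X$; applying $\sigma_{*}$ to $\epsilon^1_X\colon X\otimes_R\P^1_R\to\P^1_R\otimes_R X$ therefore yields a morphism $\sigma_{*}\epsilon^1_X\colon\P^1_R\otimes_R X\to X\otimes_R\P^1_R$ in the opposite direction, which is my candidate inverse.

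Next I would introduce the ring homomorphisms $c,c'\colon\P^1_R\otimes_R\P^1_R\to\P^1_R$ defined by $c(x\otimes y)=x\cdot\sigma(y)$ and $c'(x\otimes y)=\sigma(x)\cdot y$; both are well defined over $R$ and multiplicative precisely because $\sigma\circ l=r$. They satisfy $c\circ\iota_1=\id$, $c\circ\iota_2=\sigma$, $c'\circ\iota_1=\sigma$, and $c'\circ\iota_2=\id$, and a one-line computation on a general element $a\otimes 1+1\otimes\omega+\omega\otimes 1-\eta$ of $\Qr_R$ (using $\sigma(\omega)=-\omega$ for $\omega\in\Omega_R$ and $\Omega_R\cdot\Omega_R=0$) gives the two crucial identities
$$
c\circ\Delta=l\circ e,\qquad c'\circ\Delta=r\circ e .
$$

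Finally I would specialize. Applying the functor $c_{*}$ to the cocycle equation and using $c\circ\iota_1=\id$ and $c\circ\iota_2=\sigma$ turns the left-hand side into $\sigma_{*}\epsilon^1_X\circ\epsilon^1_X$, while $c\circ\Delta=l\circ e$ together with the counit condition $e_{*}\epsilon^2_X=\id_X$ turns the right-hand side into $l_{*}\id_X=\id_{X\otimes_R\P^1_R}$; hence $\sigma_{*}\epsilon^1_X\circ\epsilon^1_X=\id$. Applying $c'_{*}$ instead, and using $c'\circ\Delta=r\circ e$, gives symmetrically $\epsilon^1_X\circ\sigma_{*}\epsilon^1_X=\id_{\P^1_R\otimes_R X}$. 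Thus $\epsilon^1_X$ is an isomorphism with inverse $\sigma_{*}\epsilon^1_X$. I expect the only real difficulty to be bookkeeping rather than conceptual: one must keep careful track of which of $l,r$ is used in each extension of scalars so that the source--target matching in the two composites is correct. The substantive inputs are exactly the identities $c\circ\Delta=l\circ e$ and $c'\circ\Delta=r\circ e$, together with the counit condition; everything else is functoriality of extension of scalars.
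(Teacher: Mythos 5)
Your proof is correct, and it is a genuinely cleaner execution of the same underlying idea as the paper's. Both proofs produce the inverse as an antipode twist of $\epsilon^1_X$ and verify it by specializing the cocycle condition along ring homomorphisms that collapse $\P^1_R\otimes_R\P^1_R$ back to $\P^1_R$: your $c'(x\otimes y)=\sigma(x)y$ is exactly the paper's map $\imath\cdot\id_{\P}$ (the paper even records the same key identity, written there as $(\imath\cdot\id_{\P})\circ\Delta=r\circ e$, which in your coordinates is $c'\circ\Delta=r\circ e$; note that your $\sigma(a+\omega)=a+\dd a-\omega$, with $\sigma\circ l=r$ and $\sigma\circ r=l$, is the form of the antipode for which this identity actually computes out, whereas the paper's $a+\omega\mapsto a-\omega$ only makes sense after the bookkeeping of passing to $(\P^1_R)^s$). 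The differences are worth noting. First, you stay entirely on the $\epsilon$-side and never invoke $\phi^1_X$ or $\phi^2_X$; since those are defined via adjunction, the paper's argument genuinely uses the hypothesis that $\Mc$ has restrictions of scalars, while yours does not — so your argument proves the statement for an arbitrary cofibred category, with the restriction-of-scalars hypothesis redundant. Second, you verify both composites, $\sigma_*\epsilon^1_X\circ\epsilon^1_X=\id$ via $c$ (using $c\circ\Delta=l\circ e$) and $\epsilon^1_X\circ\sigma_*\epsilon^1_X=\id$ via $c'$, whereas the paper explicitly computes only the second and asserts the conclusion; your symmetric treatment closes that small gap. Third, the paper routes the computation through the quotient $\P^1_R\otimes_R^{1}\P^1_R=(\P^1_R\otimes_R\P^1_R)/(\Omega_R\otimes_R\Omega_R)$; in your version this is unnecessary because $c$ and $c'$ automatically kill $\Omega_R\otimes_R\Omega_R$ (as $\sigma(\omega)\omega'=-\omega\omega'=0$ in $\P^1_R$), which is precisely why the $\eta$-term of an element of $\Qr_R$ drops out of $c\circ\Delta$ and $c'\circ\Delta$. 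The one place to be pedantic is the source–target matching you flag yourself: with the paper's conventions $X\otimes_R\P^1_R$ is the extension of $X$ along $l$ and $\P^1_R\otimes_R X$ the extension along $r$, and one must check that $\iota_1\circ l=\Delta\circ l$ and $\iota_2\circ r=\Delta\circ r$ as maps $R\to\P^1_R\otimes_R\P^1_R$ so that both sides of the cocycle identity have literally the same source and target before applying $c_*$ and $c'_*$; this does hold, and with it your argument is complete.
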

\begin{proof}
The proof is similar to that for a Hopf algebra or a Hopf algebroid. The idea is that $\big(R,\P_R^1\big)$ corepresents a groupoid in the category of $R$-algebras with a two-step filtration, where the filtration on~$\P_R^1$ is given by $\P_R^1\supset\Omega_R$. More precisely, put
$$
\P_R^1\otimes_R^{1}\P_R^1:={\left(\P_R^1\otimes_R\P^1_R\right)}\big/
{\left(\Omega_R\otimes_R\Omega_R\right)},\quad
\imath:\P^1_R\to{\left(\P^1_R\right)}^s,\quad a+\omega\mapsto a-\omega,
$$
where, as in Section~\ref{subsection-prelHopfalg}, the superscript
$s$ denotes the interchange of the left and right $R$-module
structures. The homomorphism $$\Delta:\Qr_R\to\P_R^1\otimes_R\P_R^1$$
induces a homomorphism $$\P_R^1\to \P_R^1\otimes_R^{1}\P_R^1,$$  also  denoted by $\Delta$.

Let us construct an inverse to $\epsilon^1_X$ explicitly. Denote the composition in $\Mc(R)$
$$
\begin{CD}
X@>\phi_X^1>>_R{\left(\P_R^1\otimes_R X\right)}@>\imath\otimes\id_X>>{\left(X\otimes_R\P_R^1\right)}_R
\end{CD}
$$
by
$\psi$.
We will prove that $\epsilon^1_X\circ\psi$ equals the morphism $$X\to {\left(\P_R^1\otimes_R X\right)}_R$$ given by the ring homomorphism $r:R\to \P^1_R$. This would imply that $\epsilon^1_X$ is inverse to the morphism in $\Mc{\big(\P_R^1\big)}$ from $\P_R^1\otimes_R X$ to $X\otimes_R\P_R^1$ that corresponds by adjunction to $\psi$, thus, $\epsilon_X^1$ is an isomorphism.

By the adjunction relation between $\epsilon$ and $\phi$, the composition
$$
\begin{CD}
_R{\left(\P_R^1\otimes_R X\right)}@>\imath\otimes\id_X>>{\left(X\otimes_R\P_R^1\right)}_R
@>\epsilon_X^1>>{\left(\P_R^1\otimes_R X\right)}_R
\end{CD}
$$
is equal to the composition
$$
\begin{CD}
_R{\left(\P_R^1\otimes_R X\right)}@>{\id_{\P}\otimes\phi^1_X}>>
_R{\left(\P_R^1\otimes_R^{1}\P_R^1\otimes_R X\right)}@>\imath\cdot\id_{\P}\otimes\id_X>>{\left(\P_R^1\otimes_R X\right)}_R.
\end{CD}
$$
Since $X$ is a $D_R$-object, we have
$$
(\Delta\otimes\id_X)\circ \phi^2_X={\left(\id_P\otimes\phi^1_X\right)}\circ\phi_X^1:X\to{}_R{\left(\P_R^1\otimes_R\P_R^1\otimes_R X\right)}.
$$
Applying the ring homomorphism $$\P_R^1\otimes_R\P_R^1\to \P_R^1\otimes^1_R\P_R^1,$$ we obtain that both compositions
$$
\begin{xy}(0,15)*+{X}="ab";(20,15)*+{_R{\left(\P_R^1\otimes_R X\right)}}="a"; (60,15)*+{_R{\left(\P_R^1\otimes^1_R\P_R^1\otimes_R X\right)}}="b";
{\ar@<0ex>"ab";"a"^<<<<<{\phi^1_X}};
{\ar@<0.7ex>"a";"b"^>>>>>>>>>{\id_{\P}\otimes\phi_X^1}};
{\ar@<-0.7ex>"a";"b"_>>>>>>>>>{\Delta\otimes \id_X}};
\end{xy}
$$
are the same. Further, as for Hopf algebroids, we have
$$
(\imath\cdot\id_{\P})\circ\Delta=r\circ e:\P_R^1\to \P_R^1,
$$
where we consider $$\Delta:\P_R^1\to\P_R^1\otimes^1_R\P^1_R.$$
Finally, the composition $e\circ \phi_X^1:X\to X$ is the identity.
All together, this implies the needed statement about
$\epsilon_X^1\circ\psi$.
\end{proof}

\begin{remark}
It is not clear whether the morphism $\phi^2_X$ must be an isomorphism in the general case.
\end{remark}

\subsection{Examples of differential objects}\label{subsection-diffobjexamps}

Definition~\ref{defin-diffobj} is motivated by the following
statement.

\begin{proposition}\label{prop-diffmodjets}
Given an $R$-module $M$, a $D_R$-module structure on $M$ over $R$ is
the same as a $D_R$-structure on $M$ as an object in the cofibred
category of modules.
\end{proposition}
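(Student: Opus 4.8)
The plan is to establish a bijection between the two notions of differential structure on $M$ by writing out each side explicitly and matching the defining conditions term by term. Recall that by Definition~\ref{defin-diffmod} a $D_R$-module structure is an additive map $\nabla_M:M\to\Omega_R\otimes_R M$ satisfying the Leibniz rule and the integrability condition $\nabla_M\circ\nabla_M=0$, whereas by Definition~\ref{defin-diffobj} (applied to the cofibred category of modules) a $D_R$-structure is an isomorphism-type datum $\epsilon^2_M:M\otimes_R\Qr_R\to\Qr_R\otimes_R M$ of $\Qr_R$-modules, satisfying the counit normalization $R\otimes_{\Qr_R}\epsilon^2_M=\id_M$ and the cocycle/associativity condition at the level of $\P_R^1\otimes_R\P_R^1$. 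Since the category of modules has restrictions of scalars, by Proposition~\ref{prop-equivcomod2tr} I may equivalently work with the adjoint datum $\phi^2_M:M\to{}_R(\Qr_R\otimes_R M)$, which is often more transparent for explicit computation.

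First I would treat the $1$-jet level, which encodes the connection itself. The object $_R(\P_R^1\otimes_R M)$ uses the right structure $r$ for the extension of scalars and the left structure $l$ for the restriction; since $\P_R^1\cong R\oplus\Omega_R$ with $r(a)=a+\dd a$, giving a map $\phi^1_M:M\mapsto{}_R(\P_R^1\otimes_R M)$ with $(e\otimes\id)\circ\phi^1_M=\id_M$ amounts to writing $\phi^1_M(m)=1\otimes m+\nabla_M(m)$, where the $\Omega_R\otimes_R M$-component defines an additive map $\nabla_M$. The $R$-linearity of $\phi^1_M$ as a map into the object $_R(\P_R^1\otimes_R M)$ — where the outer $R$-action is through $l$ while the element $1\otimes m$ carries an implicit $r$-twist — translates precisely into the Leibniz rule $\nabla_M(am)=\dd a\otimes m+a\,\nabla_M(m)$. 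This is the key dictionary entry, and it comes directly from the discrepancy between $l$ and $r$ on $\P_R^1$, i.e.\ from the definition $r(a)=a+\dd a$.

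Next I would pass to the $2$-jet level to recover the integrability condition. The commutativity of the square in Proposition~\ref{prop-equivcomod2tr}, involving $\Delta:\Qr_R\to\P_R^1\otimes_R\P_R^1$ and the two ways of iterating $\phi^1_M$, should reduce — after expanding $\phi^2_M(m)=1\otimes m+\nabla_M(m)+(\text{second-order term})$ and using the definition of $\Qr_R$ as cut out by the relation $\dd\omega=$ image of $\eta$ in $\wedge^2_R\Omega_R$ — to the statement that the composite $\nabla_M\circ\nabla_M:M\to\wedge^2_R\Omega_R\otimes_R M$ vanishes. Concretely, the $\Sym^2_R\Omega_R\otimes_R M$-component of $\phi^2_M$ is forced by the cocycle condition, and the obstruction to extending $\phi^1_M$ to a genuine $\Qr_R$-comodule map lives exactly in $\wedge^2_R\Omega_R\otimes_R M$; this obstruction is $\nabla_M\circ\nabla_M$. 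Conversely, given any integrable $\nabla_M$, the section $\Sym^2_R\Omega_R\hookrightarrow\Omega_R\otimes_R\Omega_R$ lets me build the required $\phi^2_M$ and check the axioms, giving the inverse construction.

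The main obstacle I anticipate is bookkeeping the left-versus-right $R$-module structures throughout: the $2$-jet ring $\Qr_R\subset\P_R^1\otimes_R\P_R^1$ involves both, and the restriction-of-scalars formalism for $_R(\P_R^2\otimes_R M)$ silently twists the outer action by $l$ while the extension used $r$, so one must be scrupulous about which tensor factor carries which action. The genuinely delicate verification is that the second-order datum is not extra information but is \emph{determined} by $\nabla_M$ via the relation defining $\Qr_R$ (namely that $\eta$ maps to $\dd\omega$), so that the correspondence is a bijection rather than merely a surjection with a kernel; matching the $\Sym^2$-component against the integrability condition is where the real content lies, and I would carry out that identification carefully using the explicit description of $\Qr_R$ and its kernel $\Sym^2_R\Omega_R$ given in Section~\ref{subsection-defAtiyah}.
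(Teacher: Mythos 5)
Your proposal follows essentially the same route as the paper's proof: pass to the adjoint datum $\phi^2_M$ via Proposition~\ref{prop-equivcomod2tr}, read the connection off the first-order part from the discrepancy between $l$ and $r$ on $\P_R^1$, and identify the cocycle condition at the $2$-jet level with integrability, using that the second-order component carries no extra information (the paper phrases this as injectivity of $\Delta\otimes\id_M$, which holds because the cokernel of $\Delta$ is a projective $R$-module).

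One concrete correction to your ``key dictionary entry.'' The extension of scalars in $\P_R^1\otimes_R M$ is taken along $r$, so $1\otimes am=r(a)\otimes m=a\otimes m+\dd a\otimes m$; hence left $R$-linearity of $m\mapsto 1\otimes m+\nabla_M(m)$ forces $\nabla_M(am)=a\,\nabla_M(m)-\dd a\otimes m$, which is the Leibniz rule with the wrong sign. The paper sets $\phi^1_M(m)=1\otimes m-\nabla_M(m)$; with your plus sign the correspondence still exists but pairs $\phi^1_M$ with $-\nabla_M$, so the identification as you state it is off by a sign, and you should fix the convention before matching the integrability conditions (which are quadratic in $\nabla_M$ and hence insensitive to the sign, so nothing downstream breaks). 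Also, in the converse direction you do not need any splitting of $\Omega_R\otimes_R\Omega_R\to\wedge^2_R\Omega_R$ (which would require $2$ invertible, and note that $\Sym^2_R\Omega_R$ is already by definition the kernel of that surjection, not a section of it): the paper simply defines $\phi^2_M:=(\id\otimes\phi^1_M)\circ\phi^1_M$ and verifies by a direct computation that its image lies in $\Qr_R\otimes_R M$ precisely when $\nabla_M\circ\nabla_M=0$, which is cleaner than invoking a choice of splitting.
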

\begin{proof}
The cofibred category of modules has restrictions of scalars. Hence,
by Proposition~\ref{prop-equivcomod2tr}, a $D_R$-structure on~$M$ as
an object in the cofibred category of modules is given by an $R$-linear morphism
$$\phi^{2}_M:M\to{}_R{\left(\Qr_R\otimes_R M\right)}$$ that satisfies the
conditions therein.
Assume that $\nabla_M$ is a $D_R$-module structure on $M$. Consider the map
\begin{equation}\label{eq:phi1M}
\phi^{1}_M:M\to \P^{1}_R\otimes_R M,\quad m\mapsto 1\otimes m-\nabla_M(m).
\end{equation}
The Leibniz rule for $\nabla_M$ is equivalent to the left
$R$-linearity of $\phi^{1}_M$. Also, we have $$(e\otimes\id_M)\circ\phi_M^{1}=\id_M.$$
Note that the cokernel of the injective map
$$\Delta:\Qr_R\to\P_R^{1}\otimes_R\P_R^{1}$$ is a projective
$R$-module, being an extension of $\Omega_R$ by~$\wedge^2_R\Omega_R$. Therefore, the map
$$
\Delta\otimes\id_M:\Qr_R\otimes_R M\to \P^{1}_R\otimes_R\P^{1}_R\otimes_R M
$$
is injective. The integrability condition for $\nabla_M$ is
equivalent to the fact that the image of the composition
$$
\begin{CD}
M@>\phi^{1}_M>>\P^{1}_R\otimes_R M@>\id\otimes\phi^{1}_M>>
\P^{1}_R\otimes_R\P^{1}_R\otimes_R M
\end{CD}
$$
is contained in $\Qr_R\otimes_R M$. To see this, take any $m\in M$
and set $$\nabla_M(m)=\sum _i \omega_i\otimes m_i,\quad
\omega_i\in\Omega_R,\ m_i\in M.$$ Then the element
\begin{align*}
&{\left(\id\otimes\phi^{1}_M\right)}{\left(\phi^{1}_M(m)\right)}=
{\left(\id\otimes\phi^{1}_M\right)}{\left(1\otimes m-\sum _i \omega_i\otimes m_i\right)}=\\
&
=1\otimes 1\otimes m-\sum _i 1\otimes\omega_i\otimes m_i-\sum _i
\omega_i\otimes1\otimes m_i+\sum _i\omega_i\otimes\nabla_M(m_i)
\end{align*}
belongs to $\Qr_R\otimes_R M$ if and only if
$$\sum _i\dd\omega_i\otimes m_i=\sum _i\omega_i\wedge\nabla_M(m_i)\in
\wedge^2_R\Omega_R\otimes_R M.$$ Finally, put
$$
\phi_M^{2}:={\left(\id\otimes\phi^{1}_M\right)}\circ\phi^{1}_M
$$
to be the obtained map from $M$ to $\Qr_R\otimes_R M$.

Conversely, assume that $\phi_M^2$ is a $D_R$-structure on $M$.
Then the formula
$$
\nabla_M(m):=1\otimes m-\phi^{1}_M(m)
$$
defines a $D_R$-module structure on $M$ over $R$.
\end{proof}

\begin{example}\label{example-diffobjects}
\hspace{0cm}
\begin{enumerate}
\item\label{en:1646}
A $D_R$-object over $R$ in the cofibred category of algebras is the same as a $D_R$-algebra over~$R$.
\item\label{examp-diffHopfalgebra}
A $D_R$-Hopf algebra over $R$ is a Hopf
algebra $A$ over $R$ such that $A$ is a $D_R$-algebra over $R$ and
the coproduct, the counit, and the antipode maps are
morphisms of $D_R$-algebras.
\item\label{en:6104}
Given a differential ring $(\kappa,D_\kappa)$, a $D_\kappa$-Hopf
algebroid over $\kappa$ is a Hopf algebroid $(R,A)$ over~$\kappa$ such that $R$ and $A$ are $D_\kappa$-algebras over $\kappa$
and  $(l,r,\Delta,e,\imath)$ are morphisms of $D_{\kappa}$-algebras over
$\kappa$.
\end{enumerate}
\end{example}

Here is an application of this approach to differential structures.

\begin{proposition}\label{prop-Hopf}
Let $A$ be a $D_R$-algebra over $R$ such that $A$ is also a Hopf algebra over $R$.
Suppose that the coproduct map is a morphism of $D_R$-algebras over $R$.
Then the counit and antipode maps are also morphisms of $D_R$-algebras over $R$, that is, $A$ is a $D_R$-Hopf algebra over $R$.
\end{proposition}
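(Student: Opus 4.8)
The plan is to reduce the claim to a statement about the underlying maps of $D_R$-modules, exploiting that the counit and (since $A$ is commutative) the antipode are automatically $R$-algebra homomorphisms, so that only the compatibility with $\nabla$ needs to be checked. For an $R$-linear map $f\colon M\to N$ of $D_R$-modules over $R$ I would introduce the \emph{defect}
$$
\delta(f):=\nabla_N\circ f-(\id_{\Omega_R}\otimes f)\circ\nabla_M\colon M\to\Omega_R\otimes_R N,
$$
so that $f$ is a morphism of $D_R$-modules precisely when $\delta(f)=0$. A one-line computation with the Leibniz rule for $\nabla$ shows that $\delta(f)$ is always $R$-linear (the $\dd a\otimes f(m)$ terms cancel), and that $\delta$ satisfies two Leibniz-type identities,
\begin{align*}
\delta(g\circ f)&=\delta(g)\circ f+(\id_{\Omega_R}\otimes g)\circ\delta(f),\\
\delta(f\otimes g)&=\delta(f)\otimes g+f\otimes\delta(g),
\end{align*}
the second computed with respect to the tensor-product connection of Definition~\ref{defin-tenspr}. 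These are routine and I would merely state them. Since $A$ is a $D_R$-algebra, the unit $u\colon R\to A$ and multiplication $m\colon A\otimes_R A\to A$ give $\delta(u)=0$ and $\delta(m)=0$ by Remark~\ref{remark-diffalg}; by hypothesis $\delta(\Delta)=0$, and clearly $\delta(\id_A)=0$.

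For the counit $\epsilon\colon A\to R$ I would apply $\delta$ to the axiom $(\epsilon\otimes\id_A)\circ\Delta=\id_A$ (identifying $R\otimes_R A\cong A$). The composition and tensor rules, together with $\delta(\Delta)=0$ and $\delta(\id_A)=0$, yield $(\delta(\epsilon)\otimes\id_A)\circ\Delta=0$ as a map $A\to\Omega_R\otimes_R A$. Postcomposing with $\id_{\Omega_R}\otimes\epsilon$ and using the $R$-linearity of $\delta(\epsilon)$ together with the other counit relation $(\id_A\otimes\epsilon)\circ\Delta=\id_A$, the right-hand side collapses to $\delta(\epsilon)$ while the left-hand side is $0$; hence $\delta(\epsilon)=0$. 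Being an $R$-algebra homomorphism as well, $\epsilon$ is then a morphism of $D_R$-algebras.

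For the antipode $S\colon A\to A$ I would first note that $\delta(u\circ\epsilon)=0$, since now both $\delta(u)=0$ and $\delta(\epsilon)=0$. Applying $\delta$ to the antipode axiom $m\circ(S\otimes\id_A)\circ\Delta=u\circ\epsilon$ and using $\delta(m)=\delta(\Delta)=0$ produces
$$
(\id_{\Omega_R}\otimes m)\circ(\delta(S)\otimes\id_A)\circ\Delta=0.
$$
I would read this as a convolution identity: equip $\Hom_R(A,\Omega_R\otimes_R A)$ with the right action of the convolution algebra $(\End_R(A),\ast)$ defined by $\phi\cdot g:=(\id_{\Omega_R}\otimes m)\circ(\phi\otimes g)\circ\Delta$, where $\Omega_R\otimes_R A$ is regarded as an $A$-module through its right tensor factor. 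Coassociativity and the counit axiom make this a unital module with unit $u\circ\epsilon$, and the displayed equation is exactly $\delta(S)\cdot\id_A=0$. Acting on the right by the antipode and using $\id_A\ast S=u\circ\epsilon$ gives $\delta(S)=\delta(S)\cdot(u\circ\epsilon)=\bigl(\delta(S)\cdot\id_A\bigr)\cdot S=0$. Thus $S$ is a morphism of $D_R$-modules, and as $A$ is commutative $S$ is an $R$-algebra homomorphism, hence a morphism of $D_R$-algebras; therefore $A$ is a $D_R$-Hopf algebra.

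The only real obstacle is bookkeeping: verifying the two Leibniz identities for $\delta$ against the placement conventions of Definition~\ref{defin-tenspr} (the far-left position of $\Omega_R$), and checking that the convolution action above is genuinely associative and unital, so that right-multiplication by $S$ is legitimate. Everything else is a formal manipulation of the Hopf axioms.
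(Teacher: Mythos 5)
Your proof is correct, but it takes a genuinely different route from the paper's. The paper's argument is a two-line reduction: since the coproduct is differential, the map $\epsilon^{2}_A:\Qr_R\otimes_R A\to A\otimes_R\Qr_R$ encoding the $D_R$-structure is a morphism of bialgebras between Hopf algebras over the jet ring, and one then quotes the classical fact (Waterhouse, Section 2.1) that such a morphism automatically commutes with counit and antipode. You instead work entirely at the level of connections, introducing the defect $\delta(f)=\nabla_N\circ f-(\id_{\Omega_R}\otimes f)\circ\nabla_M$, proving its Leibniz identities under composition and tensor product, and then running the convolution-inverse argument by hand to show $\delta(\epsilon)=0$ and $\delta(S)=0$. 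I checked the key steps — the $R$-linearity of $\delta(f)$, the two Leibniz rules, the collapse $(\delta(\epsilon)\otimes\epsilon)\circ\Delta=\delta(\epsilon)$ via the counit axiom, and the unitality and associativity of the right convolution action on $\Hom_R(A,\Omega_R\otimes_R A)$ — and they all go through; note also that your argument never uses integrability of $\nabla$, only the Leibniz rule. The two proofs rest on the same underlying mechanism (uniqueness of convolution inverses), but yours is self-contained and elementary, avoiding the jet-ring formalism and the external citation entirely, at the cost of the bookkeeping you acknowledge; the paper's is slicker but presupposes the dictionary between $D_R$-structures and the $\epsilon^2$-data of Section 3.7. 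Your observation that commutativity of $A$ is what makes $S$ an $R$-algebra homomorphism (so that only the $\nabla$-compatibility needs checking) is a point the paper leaves implicit and is worth stating.
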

\begin{proof}
Since the coproduct map is differential, the morphism
$$\epsilon^{2}_A:\Qr_R\otimes_R A\to A\otimes_R\Qr_R$$ commutes with
the coproduct maps in the corresponding Hopf algebras
$\Qr_R\otimes_R A$ and $A\otimes_R\Qr_R$ over $\P_R$. Therefore, it
commutes with the counit and the antipode maps (for example, see
\cite[Section~2.1]{Water}).
\end{proof}

In Section~\ref{subsection-defDiffTann} we use the following statement.

\begin{lemma}\label{lemma-diffHopfalg}
Let $(R,A)$ be a $D_\kappa$-Hopf algebroid over a differential ring $(\kappa,D_\kappa)$.
Then the composition of the isomorphisms of abelian groups
$$
A\otimes_R\Qr_R\stackrel{\sim}\longrightarrow \Qr_A\stackrel{\sim}\longrightarrow \Qr_R\otimes_R A
$$
is an isomorphism of $R$-bimodules.
\end{lemma}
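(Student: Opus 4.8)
The plan is to recognise both maps as base-change isomorphisms of the $2$-jet ring along strict morphisms of differential rings, applied to the two units of the Hopf algebroid, and then to verify that the two identifications endow $\Qr_A$ with one and the same $R$-bimodule structure.

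First I would check that both units $l_A,r_A\colon R\to A$ are strict. Since $(R,A)$ is a $D_\kappa$-Hopf algebroid, $l_A$ and $r_A$ are morphisms of $D_\kappa$-algebras; as $\kappa\to R$ and $\kappa\to A$ are strict, the structure maps $D_A\to A\otimes_R D_R$ attached to $l_A$ and to $r_A$ are both the canonical isomorphism $A\otimes_\kappa D_\kappa\cong A\otimes_R(R\otimes_\kappa D_\kappa)$, so $l_A$ and $r_A$ are strict in the sense of Definition~\ref{defin-morphdiffrings}. In particular $\Omega_A\cong A\otimes_R\Omega_R$ and $\Sym^2_A\Omega_A\cong A\otimes_R\Sym^2_R\Omega_R$ through either unit, and because $\Qr_R$ is the extension of $\P^1_R=R\oplus\Omega_R$ by $\Sym^2_R\Omega_R$, each unit yields a base-change isomorphism of the corresponding $2$-jet rings.

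Writing $l,r$ (with subscripts $\Qr_R$, $\Qr_A$) for the structure homomorphisms of the jet rings, I would identify the first map with extension of scalars along $r_A$ using the left structure of $\Qr_R$,
$$
\alpha\colon A\otimes_R\Qr_R\stackrel{\sim}{\longrightarrow}\Qr_A,\qquad b\otimes\xi\mapsto l_{\Qr_A}(b)\cdot\Qr_{r_A}(\xi),
$$
and the second map with the inverse of extension of scalars along $l_A$ using the right structure of $\Qr_R$,
$$
\beta\colon \Qr_R\otimes_R A\stackrel{\sim}{\longrightarrow}\Qr_A,\qquad \xi\otimes b\mapsto\Qr_{l_A}(\xi)\cdot r_{\Qr_A}(b).
$$
The tensor-product conventions of Section~\ref{subsection-notation} force exactly this pairing, since $A\otimes_R\Qr_R$ uses the right structure $r_A$ of $A$ while $\Qr_R\otimes_R A$ uses the left structure $l_A$; well-definedness over the balancing relations is precisely the commutation of $\Qr_{r_A}$ and $\Qr_{l_A}$ with $l$ and $r$ recorded after~\eqref{eq:PR2PR1}. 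The map of the lemma is then $\beta^{-1}\circ\alpha$. On $A\otimes_R\Qr_R$ the two remaining $R$-actions are $a\cdot(b\otimes\xi)=l_A(a)b\otimes\xi$ and $(b\otimes\xi)\cdot a=b\otimes\xi\,r_{\Qr_R}(a)$; transporting through $\alpha$ and using $\Qr_{r_A}\circ r_{\Qr_R}=r_{\Qr_A}\circ r_A$ and $\Qr_{r_A}\circ l_{\Qr_R}=l_{\Qr_A}\circ r_A$, these become multiplication on $\Qr_A$ by $l_{\Qr_A}(l_A(a))$ and by $r_{\Qr_A}(r_A(a))$. The symmetric computation through $\beta$, using $\Qr_{l_A}\circ l_{\Qr_R}=l_{\Qr_A}\circ l_A$ and $\Qr_{l_A}\circ r_{\Qr_R}=r_{\Qr_A}\circ l_A$, yields the same two multiplications. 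Hence $\alpha$ and $\beta$ realise identical $R$-bimodule structures on $\Qr_A$, so $\beta^{-1}\circ\alpha$ is an isomorphism of $R$-bimodules.

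The main obstacle is organizational rather than conceptual: one must keep the four $R$-module structures rigorously apart—the Hopf-algebroid units $l_A,r_A\colon R\to A$ and the jet-ring units $l,r$—and observe that the conventions pin the first isomorphism to the right unit $r_A$ and the second to the left unit $l_A$. Once this matching is fixed, every verification reduces formally to the commutation of $\Qr_{(-)}$ with $l$ and $r$, and no further computation is needed.
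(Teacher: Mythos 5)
Your proof is correct and follows essentially the same route as the paper's: both arguments transport the two $R$-bimodule structures onto $\Qr_A$ via the explicit base-change isomorphisms and observe that they agree because $\Qr_{(-)}$ commutes with $l$ and $r$ (equivalently, because $\Qr_{\varphi}$ is a morphism of $R$-bimodules for the units $\varphi=l_A,r_A$). Your write-up is merely more explicit about which unit governs which of the two isomorphisms and about the preliminary strictness of $l_A$ and $r_A$, which the paper leaves implicit.
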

\begin{proof}
Let $\varphi:R\to A$ denote the left homomorphism.
The left $R$-module structure on $A\otimes_R\Qr_R$ corresponds to the $R$-module structure on $\Qr_A$ given by the composition $$R\stackrel{\varphi}\longrightarrow A \stackrel{l}\longrightarrow\Qr_A.$$ The left $R$-module structure on $\Qr_R\otimes_R A$ corresponds to the $R$-module structure on $\Qr_A$ given by the composition $$\begin{CD}R@>l>>\Qr_R@>\Qr_{\varphi}>>\Qr_A.\end{CD}$$ Since $\varphi$ is a morphism of differential rings, $\Qr_{\varphi}$ is a morphism of $R$-bimodules. In particular, the compositions above coincide. Therefore, the left $R$-modules structures on $A\otimes_R\Qr_R$ and $\Qr_R\otimes_R A$ are the same. The proof for the right $R$-module structures in analogous.
\end{proof}

\subsection{Lie derivative}\label{subsection-Lieder}

In Section~\ref{sec:paramAtiyah}, we use the Lie derivatives defined on jet rings. Let $(R,D_R)$ be a differential ring.

\begin{definition}
A {\it weak $D_R$-module} is an $R$-module $M$ together with a
morphism of Lie rings $$\rho_M:D_R\to\End_{\Z}(M)$$ that satisfies the
Leibniz rule with respect to the multiplication by scalars from~$R$
(thus, a $D_R$-module is a weak $D_R$-module such that $\rho_M$ is
$R$-linear). Morphisms between weak $D_R$-modules are defined
similarly to morphisms between $D_R$-modules. As with differential
modules, we sometimes omit $\rho_M$ and use just $\partial(m)$ to
denote $\rho_M(\partial)(m)$.
\end{definition}

\begin{remark}
As in Definition~\ref{defin-tenspr}, given two weak $D_R$-modules,
one can show that the Leibniz rule defines a weak $D_R$-structure on
their tensor product.
\end{remark}

\begin{definition}\label{def:Liederivative}
Given $\partial\in D_R$ and $\omega\in\Omega_R$, define the {\it Lie derivative} as follows:
$$
L_{\partial}(\omega):=\dd(\omega(\partial))+(\dd\omega)(\partial\wedge-)\in\Omega_R,
$$
where $(\dd\omega)(\partial\wedge-)$ denotes the element in $\Omega_R=D_R^{\vee}$ that sends any $\delta\in D_R$ to $(\dd\omega)(\partial\wedge\delta)$.
\end{definition}

The notation $L_{\partial}(\omega)$ instead of $\partial(\omega)$ avoids confusing the Lie derivative with the result of the pairing between~$D_R$ and~$\Omega_R$. It follows from the definition of $\dd\omega$ that, for any $\xi\in D_R$, we have
\begin{equation}\label{eq:Lie}
L_{\partial}(\omega)(\xi)=\partial(\omega(\xi))-\omega([\partial,\xi]).
\end{equation}
Also, one can show that, for any $a\in R$, we have
\begin{equation}\label{eq:weakLie}
L_{a\partial}(\omega)=aL_{\partial}(\omega)+\omega(\partial)\dd a.
\end{equation}
The Lie derivative defines a weak $D_R$-structure on $\Omega_R$. By linearity, we obtain a weak $D_R$-structure on \mbox{$\P_R^1\cong R\oplus\Omega_R$}:
$$
\partial(a+\omega):=\partial(a)+L_{\partial}(\omega),\quad a\in R,\,\omega\in\Omega_R\,,\partial\in D_R.
$$
It follows that $r:R\to \P_R^1$ is a morphism of weak $D_R$-modules. Since $$\dd(\partial(a))=L_{\partial}(\dd a)\quad \text{for all } a\in R,\ \partial\in D_R,$$ we have that $l:R\to\P_R^1$ is a morphism of weak $D_R$-modules. The Leibniz rule for $L_{\partial}$ on $\Omega_R$ implies that the multiplication morphism $$\P_R^1\otimes_R\P_R^1\to\P_R^1$$ is also a morphism of weak $D_R$-modules.

\begin{remark}\label{rem:Lie}
By the Leibniz rule, the Lie derivative extends to a weak
$D_R$-structure on $\wedge^2\Omega_R$, which we also denote by
$L_{\partial}$. One can show that $L_{\partial}$ commutes with the
map $\dd:\Omega_R\to\wedge_R^2\Omega_R$. This implies that the
subring $$\Qr_R\subset\P_R^1\otimes_R\P_R^1$$ is preserved under the
action of $D_R$ via the weak $D_R$-module structure
on~$\P_R^1\otimes_R\P_R^1$.
\end{remark}

\section{Differential categories}\label{sec:differentialcategories}

\subsection{Extension of scalars for abelian tensor categories}\label{subsection-prelextscal}

Our aim is to apply Definition~\ref{defin-diffobj} of a differential
object with $X$ being an abelian $R$-linear tensor category. For
this, we need to use extension of scalars for such categories
associated with homomorphisms of rings. Let us briefly describe
this. One can find more details, for example, 
in \cite[p.~155]{Deligne}, \cite[p.~407]{MilneMotives}, and more recent
papers~\cite{Gaits} and~\cite{Stalder}.

We use the terminology from Section~\ref{subsection-prelTann}. We
fix a commutative ring $R$, a commutative $R$-algebra $S$, and an
abelian $R$-linear tensor category $\Cat$. According to our
definitions, this means that, in particular, the tensor product in
$\Cat$ is right-exact and $R$-linear in both arguments.

\begin{definition}\label{defin-extcat}
The {\it extension of scalars of $\Cat$ from $R$ to $S$} is an abelian $S$-linear tensor category $S\otimes_R\Cat$ together with a right-exact $R$-linear tensor functor
$$
S\otimes_R-:\Cat\to S\otimes_R\Cat
$$
that is universal from the left among all such data, that is, for any abelian $S$-linear tensor category $\Dop$, taking the composition with $S\otimes_R-$ defines an equivalence of categories:
$$
\Fun_S^{r,\otimes}(S\otimes_R\Cat,\Dop)\stackrel{\sim}\longrightarrow
\Fun_R^{r,\otimes}(\Cat,\Dop),\quad F\mapsto F\circ(S\otimes_R-),
$$
where $\Fun_S^{r,\otimes}$ denotes the category of right-exact $S$-linear tensor functors (similarly, for $\Fun_R^{r,\otimes}$).
\end{definition}

We usually denote the extension of scalars just by $S\otimes_R \Cat$
(keeping in mind that we also fix the functor \mbox{$S\otimes_R-$}).
Let us describe some general properties of the extension of
scalars for categories. First, consider a homomorphism of
\mbox{$R$-algebras} $S\to T$ and assume that the extensions of
scalars $S\otimes_R\Cat$ and~$T\otimes_S(S\otimes_R\Cat)$ exist.
Then $T\otimes_S(S\otimes_R\Cat)$ is equivalent to the extension of
scalars $T\otimes_R\Cat$.

Further, the category $S\otimes_R\Cat$ is functorial in $S$ and
$\Cat$ in the following way. Let $\varphi:S\to T$ be a
homomorphism of $R$-algebras, $\Dop$ be an abelian $R$-linear tensor category, and  $F:\Cat\to\Dop$ be a right-exact $R$-linear tensor functor. Assume that both $S\otimes_R\Cat$ and $T\otimes_R\Dop$ exist. Then we obtain a right-exact
$S$-linear tensor functor
$$
\varphi\otimes F:S\otimes_R\Cat\to T\otimes_R\Dop
$$
defined by the universal property of $S\otimes_R\Cat$ applied to the right-exact $R$-linear tensor functor
$$
\begin{CD}
\Cat\stackrel{F}\longrightarrow \Dop@>T\otimes_R->>T\otimes_R\Dop.
\end{CD}
$$
The assignment $F\mapsto\varphi\otimes F$ is functorial in $F$.
If $\psi:T\to U$ is a homomorphism of $R$-algebras, $\Ec$ is an abelian $R$-linear tensor category, $G:\Dop\to \Ec$ is a right-exact $R$-linear tensor functor, and $U\otimes_R\Ec$ exists, then there is a canonical
isomorphism between tensor functors:
$$
(\psi\otimes G)\circ(\varphi\otimes F)\cong (\psi\circ \varphi)\otimes(G\circ F).
$$

Sometimes, we also denote $\id_S\otimes F$ by $S\otimes_R F$. Also, we have that $\varphi\otimes\id_{\Cat}=S\otimes_R-$ for $\varphi:R\to S$. We hope that this coincidence will not make any confusion.

In Definition~\ref{defin-Dkfunctor}, we will need a slight generalization of the previous functor $\varphi\otimes F$. Namely, let
$$
\begin{CD}
R@>>>S\\
@VVV @VV\varphi V\\
U@>>>T\\
\end{CD}
$$
be a commutative diagram of rings, $\Dop$ be an abelian $U$-linear tensor category, and  $F:\Cat\to\Dop$ be a right-exact $R$-linear tensor functor. Assume that both $S\otimes_R\Cat$ and $T\otimes_U\Dop$ exist. Then, similarly to the above, we obtain a right-exact $S$-linear tensor functor
$$
\varphi\otimes F:S\otimes_R\Cat\to T\otimes_{U}\Dop.
$$
If $(S\otimes_R U)\otimes_U\Dop$ exists, then we have
\begin{equation}\label{eq:explextscal}
(\varphi\otimes F)(X)=T\otimes_{(S\otimes_R U)}F(X).
\end{equation}

The following important result is proved in~\cite[Theorem~1.4.1]{Stalder} (see also~\cite[p.~155]{Deligne} and \cite[p.~407]{MilneMotives}).

\begin{theorem}\label{theor-extscal}
Let $\Cat$ be a Tannakian category over a field $k$ and $k\subset K$
be a field extension. Then there exists the extension of scalars
$K\otimes_k\Cat$.
\end{theorem}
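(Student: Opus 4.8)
The plan is to use Tannaka duality to turn the construction of $K\otimes_k\Cat$ into a base change of comodule categories, and then to verify the universal property of Definition~\ref{defin-extcat} through a $\Hom$ base-change formula. First I would fix a fiber functor $\omega\colon\Cat\to\Vectf(k')$ for a suitable field extension $k'\supset k$, which exists because $\Cat$ is Tannakian. By Tannaka duality $\Cat$ is then equivalent to $\Comodf(A)$, the comodules over a Hopf algebroid $(k',A)$ that are finite-dimensional over $k'$. I would propose $\Comodf(A\otimes_k K)$, the finite comodules over the base-changed Hopf algebroid $\bigl(k'\otimes_k K,\,A\otimes_k K\bigr)$ over $K$, as the candidate for $K\otimes_k\Cat$, with the functor $K\otimes_k-$ sending a comodule $M$ to $(k'\otimes_k K)\otimes_{k'}M$ with its induced coaction.

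Next I would check that this candidate is an abelian $K$-linear tensor category and that $K\otimes_k-$ is a right-exact $k$-linear tensor functor. Because $k\to K$ is faithfully flat, base change is exact and monoidal on comodules, so kernels, cokernels, the tensor product, and rigidity are all inherited from $\Comodf(A)$, and $K\otimes_k-$ is exact and tensor. This is the point at which the Tannakian hypothesis is essential: it presents $\Cat$ as the finite (dualizable) objects inside the cocomplete abelian category $\Comod(A\otimes_k K)=\Ind\bigl(\Comodf(A\otimes_k K)\bigr)$, where the filtered colimits forced by a possibly infinite-dimensional $K/k$ are exact; one performs the base change at the level of all comodules and then cuts back down to the finite ones. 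A key technical ingredient here is the $\Hom$ base-change formula $\Hom_{K\otimes_k\Cat}(K\otimes_k X,\,K\otimes_k Y)\cong K\otimes_k\Hom_{\Cat}(X,Y)$, which holds because $K/k$ is flat and the spaces $\Hom_{\Cat}(X,Y)$ are finite-dimensional over $k$.

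The main obstacle is the universal property against an arbitrary abelian $K$-linear tensor target $\Dop$, which is \emph{not} assumed Tannakian, so one cannot simply dualize the target. Given a right-exact $k$-linear tensor functor $F\colon\Cat\to\Dop$, the factorization $\bar F\colon K\otimes_k\Cat\to\Dop$ is forced as follows. On an object in the essential image of $K\otimes_k-$ the object $F(X)$ of $\Dop$ acquires two commuting $K$-actions, the intrinsic $K$-linear one of $\Dop$ and the one obtained by applying $F$ to the $K$-action on $K\otimes_k X$; I would set $\bar F$ to coequalize them, which makes sense since $\Dop$ is abelian with right-exact tensor. Using rigidity, every object of $K\otimes_k\Cat$ has a finite presentation by objects in this image, so $\bar F$ is determined on objects, and the $\Hom$ base-change formula guarantees that $F$ is $K$-linear enough for $\bar F$ to be well defined on morphisms; uniqueness up to unique tensor isomorphism then reduces, along $\omega$, to the universality of the ordinary scalar extension $K\otimes_k\Vectf(k)\simeq\Vectf(K)$ of vector spaces. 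Assembling these verifications into a coherent right-exact $K$-linear tensor functor, and checking compatibility with the monoidal and associativity constraints, is the part I expect to require the most care.
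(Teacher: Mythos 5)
The paper offers no proof of this theorem: it cites Stalder (Theorem 1.4.1), Deligne, and Milne, whose construction realizes $K\otimes_k\Cat$ intrinsically as a category of $K$-modules in $\Ind(\Cat)$ subject to a finiteness condition --- the description the paper itself recalls later in the proof of Proposition~\ref{prop-extensioncat}. Your route is genuinely different: you trade $\Cat$ for $\Comodf(k',A)$ via a fiber functor over a field $k'$ and Theorem~\ref{theor-HopfalgDeligne}, and propose the base-changed comodule category $\Comodf\bigl(k'\otimes_kK,\,A\otimes_kK\bigr)$ as the scalar extension. The candidate is the right category, and your preliminary steps (fiber functor over a field, exactness and monoidality of $-\otimes_kK$ on comodules, the $\Hom$ base-change formula) are sound. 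But the whole content of the theorem is the universal property against an \emph{arbitrary} abelian $K$-linear tensor category $\Dop$, and that is where your argument has genuine gaps.

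Two concrete problems. First, defining $\bar F$ on $K\otimes_kX$ by ``coequalizing two commuting $K$-actions on $F(X)$'' does not typecheck: when $[K:k]=\infty$, the object $K\otimes_kX$ lies only in $\Ind(\Cat)$, so $F$ cannot be applied to its $K$-action unless $F$ is first extended to ind-objects, which requires filtered colimits that an arbitrary abelian target $\Dop$ need not have. On the image of $K\otimes_k-$ one must simply \emph{set} $\bar F(K\otimes_kX):=F(X)$; the actual work is extending $\bar F$ to morphisms via $\Hom_{K\otimes_k\Cat}(K\otimes_kX,K\otimes_kY)\cong K\otimes_k\Hom_{\Cat}(X,Y)\to\Hom_{\Dop}(F(X),F(Y))$ (using the $K$-linearity of $\Dop$), then to general objects via finite presentations by objects $K\otimes_kM$ --- a lemma that follows from local finiteness of comodules combined with rigidity, not from rigidity alone --- and then verifying independence of the presentation, functoriality, and the tensor constraints; none of this is carried out. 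Second, your uniqueness step ``reduces along $\omega$ to the universality of $K\otimes_k\Vectf(k)\simeq\Vectf(K)$'' is not a valid reduction: $\Dop$ is arbitrary and admits no comparison functor to vector spaces, so nothing can be transported along $\omega$. What is actually required is that restriction along $K\otimes_k-$ be \emph{fully faithful} on categories of right-exact $K$-linear tensor functors, and the proposal does not address morphisms of functors at all. The strategy is salvageable, but the steps that carry the theorem are the missing ones.
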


Further, recall that an {\it $S$-module in $\Cat$} is a pair
$(X,\alpha_X)$, where $X$ is an object in $\Cat$ and
$$
\alpha_X:S\to \End_{\Cat}(X)
$$
is a
homomorphism of $R$-algebras. Morphisms between $S$-modules in $\Cat$ are naturally defined. Given an $R$-module~$M$ and an object $X$ in $\Cat$, define
$$
M\otimes_R X
$$
to be an object in $\Cat$ such that there is a functorial isomorphism of $R$-modules
\begin{equation}\label{eq:homtensor}
\Hom_{\Cat}(M\otimes_R X,Y)\cong \Hom_R(M,\Hom_{\Cat}(X,Y)).
\end{equation}
The object $M\otimes_R X$ is well-defined up to a unique
isomorphism if it exists. If an $R$-module $M$ is of finite presentation, that is, there is
a right-exact sequence of $R$-modules
$$
\begin{CD}
R^{\oplus m}@>\varphi>> R^{\oplus n}@>>> M@>>> 0,
\end{CD}
$$
then, $M\otimes_R X$ exists for any $X$. By~\eqref{eq:homtensor}, for an $S$-module $(X,\alpha_X)$ in $\Cat$, the homomorphism $\alpha_X$ defines a morphism
$$
a_X:S\otimes_R X\to X.
$$
The following result is extensively used in what follows. Its
proof can be found 
in~\cite[5.11]{DeligneFS},
where an equivalent approach to the extension of scalars for
categories is used (see also~\cite{Stalder} and~\cite{Gaits}).

\begin{proposition}\label{prop-extscalfinite}
Let $\Cat$ be an abelian $R$-linear tensor category.
Suppose that $S$ is of finite presentation as an $R$-module. Then the abelian $S$-linear tensor category of $S$-modules in $\Cat$ is equivalent to the extension of scalars $S\otimes_R\Cat$ and the functor~$S\otimes_R-$ sends $X$ to $S\otimes_R X$.
\end{proposition}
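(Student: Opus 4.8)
The plan is to realize the extension of scalars concretely as the category of $S$-modules in $\Cat$, which I denote $\Mod_S(\Cat)$, and to verify the universal property of Definition~\ref{defin-extcat} for it. The starting observation is that, since $S$ is of finite presentation over $R$, the assignment $X\mapsto S\otimes_R X$ (the object defined by~\eqref{eq:homtensor}) is an endofunctor $T\colon\Cat\to\Cat$, and the $R$-algebra structure of $S$ endows $T$ with a monad structure: the multiplication $T\circ T\Rightarrow T$ comes from $S\otimes_R S\to S$ together with the canonical identification $S\otimes_R(S\otimes_R X)\cong(S\otimes_R S)\otimes_R X$, and the unit $\id\Rightarrow T$ from $R\to S$. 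Unwinding~\eqref{eq:homtensor}, a $T$-algebra $a_X\colon S\otimes_R X\to X$ is exactly the datum of an $R$-algebra homomorphism $\alpha_X\colon S\to\End_\Cat(X)$, so $\Mod_S(\Cat)$ is the Eilenberg--Moore category $\Cat^T$. First I would check that $\Cat^T$ is an abelian $S$-linear tensor category: it is abelian because $T$ is right exact (being the cokernel of a map between finite direct sums of the identity functor, via the presentation $R^{\oplus m}\to R^{\oplus n}\to S\to 0$); it is $S$-linear since $S$ acts on each $\Hom_{\Cat^T}(X,Y)$ through $\alpha_X$ (equivalently $\alpha_Y$, the two agreeing on $T$-algebra morphisms); and its tensor product $X\otimes_S Y$ is the cokernel of the map $S\otimes_R(X\otimes Y)\to X\otimes Y$ that forces the two actions $\alpha_X\otimes\id$ and $\id\otimes\alpha_Y$ to coincide, with unit $F(\uno)$. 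Right-exactness of $\otimes$ in $\Cat$ guarantees that $\otimes_S$ is well defined and right exact.

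Next I would study the free functor $F\colon X\mapsto(S\otimes_R X,\ \text{multiplication action})$, which by construction is left adjoint to the forgetful functor $U\colon\Cat^T\to\Cat$. It is right exact and $R$-linear because $T$ is, and it is a tensor functor via the canonical identification $S\otimes_R(X\otimes Y)\cong(S\otimes_R X)\otimes_S(S\otimes_R Y)$ together with $F(\uno)=\uno_{\Cat^T}$. This $F$ is the candidate for the functor $S\otimes_R-$, and identifying it as such already yields the final assertion of the proposition.

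The heart of the matter is the universal property: for every abelian $S$-linear tensor category $\Dop$, composition with $F$ must be an equivalence $\Fun_S^{r,\otimes}(\Cat^T,\Dop)\to\Fun_R^{r,\otimes}(\Cat,\Dop)$. Given a right-exact $R$-linear tensor functor $G\colon\Cat\to\Dop$, I would define $\tilde G(X,\alpha_X)$ to be the coequalizer in $\Dop$ of the two maps $S\otimes_R G(X)\rightrightarrows G(X)$, one being the canonical action $a_{G(X)}$ coming from the $S$-linearity of $\Dop$ and the other being induced by $G(\alpha_X)$; equivalently, $\tilde G(X,\alpha_X)$ is the object on which the intrinsic and the $G$-transported $S$-actions agree. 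Because $G$ is right exact and $R$-linear it commutes with $T$, so $G(S\otimes_R X)\cong S\otimes_R G(X)$ and this coequalizer collapses on free modules, producing a canonical isomorphism $\tilde G\circ F\cong G$. To see that $G\mapsto\tilde G$ is quasi-inverse to $H\mapsto H\circ F$, I would invoke the canonical presentation of every $T$-algebra, $F(S\otimes_R X)\rightrightarrows F(X)\to(X,\alpha_X)$, a reflexive coequalizer that becomes split after applying $U$ (the bar construction of the adjunction $F\dashv U$). Any right-exact $S$-linear functor preserves this coequalizer, hence is determined on all of $\Cat^T$ by its restriction along $F$; this gives both essential surjectivity (the $\tilde G$ above) and full faithfulness on natural transformations.

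The main obstacle I anticipate is not the construction of $\tilde G$ as an additive functor, but verifying that it is a tensor functor, so that $G\mapsto\tilde G$ genuinely lands in $\Fun_S^{r,\otimes}$. Concretely, one must show $\tilde G(X\otimes_S Y)\cong\tilde G(X)\otimes\tilde G(Y)$, which amounts to interchanging the coequalizer defining $\tilde G$ with the coequalizer defining $\otimes_S$; this is exactly where right-exactness of the tensor product in $\Dop$ is indispensable, since it lets the two cokernels commute. Once this compatibility is in place, the remaining verifications --- the associativity and unit constraints, $S$-linearity, and the naturality making the two assignments mutually quasi-inverse --- are routine diagram chases resting on the adjunction $F\dashv U$ and the finite presentation of $S$.
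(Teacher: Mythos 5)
Your argument is correct, and it is worth noting that the paper itself does not prove this proposition: it defers to \cite[5.11]{DeligneFS} (and to \cite{Stalder}, \cite{Gaits}), where an equivalent characterization of the extension of scalars is used. What you supply is a self-contained verification of the universal property of Definition~\ref{defin-extcat} in monadic language: $S$-modules in $\Cat$ are the Eilenberg--Moore category of the right-exact monad $T=S\otimes_R-$ (right-exact precisely because $S$ is of finite presentation, so $T$ is a cokernel of maps between finite sums of the identity), the free functor $F$ plays the role of $S\otimes_R-$, and functors out of $\Cat^T$ are recovered from their restriction along $F$ via the bar presentation $F(TX)\rightrightarrows F(X)\to (X,\alpha_X)$, which is a reflexive coequalizer and hence a cokernel preserved by any right-exact additive functor. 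This is in substance the same mechanism as in the cited sources, and your construction of $\otimes_S$ as the coequalizer of the two $S$-actions is exactly the general form of the paper's Example~\ref{examp-tensBaer}.

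One step deserves more care than your phrase ``this gives both essential surjectivity and full faithfulness'' suggests. Preservation of the bar coequalizer shows that a right-exact functor $H$ on $\Cat^T$ is \emph{determined} by $H\circ F$ (faithfulness, and uniqueness of descent), but \emph{fullness} requires that an arbitrary morphism of tensor functors $\theta\colon H_1\circ F\to H_2\circ F$ actually descend, i.e.\ that $\theta$ be compatible not only with the maps $F(a_X)$ (which is plain naturality) but also with the counits $\epsilon_{FX}$, whose underlying maps are the multiplications $\mu_X$ and which are not of the form $F(g)$. This compatibility is not automatic for a bare natural transformation; it does hold here, but the proof uses the extra structure you have available: writing $\epsilon_{FX}$ as $\epsilon_{F\uno}\otimes_S\id_{FX}$ via the tensor constraints, identifying $H_i(\epsilon_{F\uno})$ with the intrinsic action map $S\otimes_R\uno_{\Dop}\to\uno_{\Dop}$ by the $S$-linearity of $H_i$, and invoking the unit constraint $\theta_{\uno}=\id$ for a morphism of tensor functors. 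With that point made explicit, the argument is complete.
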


\begin{example}\label{examp-extscal}
If $S$ is of finite presentation as an $R$-module, then the extension of scalars category $S\otimes_R\Mod(R)$ is equivalent to the category $\Mod(S)$ and the functor $S\otimes_R-$ coincides with the usual tensor product functor.
\end{example}

\begin{example}\label{examp-tensBaer}
Let $M$ be an $R$-module of finite presentation. Put $S:=R\oplus M$,
where an $R$-algebra structure on~$S$ is uniquely defined by the
condition $M\cdot M=0$. An $S$-module in $\Cat$ is the same as an
exact sequence $$0\to X'\to X\to X''\to 0$$ together with a morphism
$$
f_X:M\otimes_R X''\to X'.
$$
Namely, with an $S$-module $(X,\alpha_X)$, we associate
$X':=M\cdot X$ and $X'':=X/(M\cdot X)$, where $M\cdot X$ is the image of the morphism
$$
a_X:M\otimes_R X\to X.
$$
If $S$-modules $(X,\alpha_X)$ and $(Y,\alpha_Y)$ correspond to the data
\begin{align*}
&0\to X'\to X\to X''\to 0,\quad f_X:M\otimes_R X''\to X';\\
&0\to Y'\to Y\to Y''\to 0,\quad f_Y:M\otimes_R Y''\to Y',
\end{align*}
then their tensor product $(X,\alpha_X)\otimes (Y,\alpha_Y)$ in
$S\otimes_R\Cat$ is defined as the cokernel of the morphism
$$
b_X\otimes \id_Y-\id_X\otimes b_Y:M\otimes_R(X\otimes Y)\to X\otimes
Y,
$$
where $b_X$ is defined as the composition
$$
\begin{CD}
M\otimes_R X@>>> M\otimes_R X''@>f_X>> X'@>>> X,
\end{CD}
$$
and similarly for $b_Y$. In particular, if the tensor product in
$\Cat$ is exact in both arguments and the morphisms~$f_X$, $f_Y$ are
isomorphisms, then (see also ~\cite[5.1.3]{Moshe}) the tensor product
$(X,\alpha_X)\otimes(Y,\alpha_Y)$ corresponds to the Baer sum of the exact
sequences
\begin{align*}
&0\to M\otimes_R (X''\otimes Y'')\to X\otimes Y''\to X''\otimes Y''
\to 0,\\
&0\to X''\otimes (M\otimes_R Y'')\to X''\otimes Y\to X''\otimes
Y''\to 0.
\end{align*}
\end{example}

\subsection{Differential abelian tensor categories}\label{subsection-defDiffTann}

Throughout this subsection, we fix a differential ring $(R,D_R)$. We use constructions from Sections~\ref{subsection-diffobj} and~\ref{subsection-prelextscal}.
Recall that the jet rings $\P_R^1$, $\Qr_R$, and $\P_R^1\otimes_R\P_R^1$
(Definition~\ref{defin-secondjet}) are finitely generated
projective $R$-modules with respect to both left and right $R$-module structures.
Hence, they are of finite presentation as $R$-modules and there is an extension of
scalars from $R$ to $\Qr_R$ for abelian tensor categories (Definition~\ref{defin-extcat} and Proposition~\ref{prop-extscalfinite}).

Consequently, Definition~\ref{defin-diffobj} gives the notion of a
$D_R$-object over $R$ in the cofibred $2$-category of abelian tensor
categories, or a {\it $D_R$-category} over $R$ for short. Here
``morphisms'' between tensor categories are tensor functors. The
main difference with the case of a usual cofibred category as in
Definition~\ref{defin-diffobj} is that, instead of considering
equalities between morphisms, one should fix isomorphisms between
tensor functors.

Further, there are also restrictions of scalars between $\Qr_R$ and
$R$ for abelian tensor categories (this follows from the definition
of the extension of scalars for categories,
Definition~\ref{defin-extcat}). Proposition~\ref{prop-equivcomod2tr}
remains valid in the case of a cofibred $2$-category instead of a
cofibred \mbox{($1$-)category}. Thus, one has an equivalent
definition of a $D_R$-category over $R$ in terms of $\phi$'s instead
of~$\epsilon$'s. We prefer to use the definition in terms of
$\phi$'s. Note that Definitions~\ref{defin-intdifftens} and~\ref{defin-Dkfunctor} below have a more explicit equivalent form,
see Section~\ref{subsection-explicitdefin} (also, compare with
\cite[Example 12]{Gaits}, where the case of the coaction of a Hopf
algebra on a category is considered).

Similarly to
Section~\ref{subsection-diffobj},
$$_R{\left(\Qr_R\otimes_R\Cat\right)}$$ denotes the abelian tensor category
$\Qr_R\otimes_R\Cat$ considered with the $R$-linear structure
obtained by the left ring homomorphism $l:R\to \P_R^{2}$.

\begin{definition}\label{defin-intdifftens}
A {\it $D_R$-category} over $(R,D_R)$ (or simply over $R$) is a collection
${\left(\Cat,\phi^{2}_{\Cat},\Phi_{\Cat},\Psi_{\Cat}\right)}$, where~$\Cat$
is an abelian $R$-linear tensor category,
$$
\phi^{2}_{\Cat}:\Cat\to{}_R{\left(\Qr_R\otimes_R\Cat\right)}
$$
is a right-exact $R$-linear tensor functor,
$$
\Phi_{\Cat}:(e\otimes\id_{\Cat})\circ\phi^{2}_{\Cat}\stackrel{\sim}\longrightarrow
\id_{\Cat}
$$
is an isomorphism of tensor functors from $\Cat$ to itself
(recall that $e:\Qr_R\to R$ is a ring homomorphism), and
$$
\Psi_{\Cat}:(\Delta\otimes\,\id_{\Cat})\circ\phi^{2}_{\Cat}\stackrel{\sim}\longrightarrow
{\big(\id_{P^1_R}\otimes\phi^{1}_{\Cat}\big)}\circ\phi_{\Cat}^1
$$
is an isomorphism between tensor functors from $\Cat$ to
$\P_R^1\otimes_R\P_R^1\otimes_R\Cat$, where~$\phi^{1}_{\Cat}$ is the
composition of~$\phi_{\Cat}^{2}$ with the functor
$$\P_R^{2}\otimes_R{\Cat}\to \P_R^{1}\otimes_R {\Cat}.$$
For short, we usually denote a
$D_R$-category
${\left(\Cat,\phi^{2}_{\Cat},\Phi_{\Cat},\Psi_{\Cat}\right)}$
just by $\Cat$. We call the collection~$\left(\phi^2_{\Cat},\Phi_{\Cat},\Psi_{\Cat}\right)$ a
{\it $D_R$-structure on $\Cat$}.
\end{definition}

In other words, $\Phi_{\Cat}$ is an isomorphism between the composition
$$
\begin{CD}
\Cat@>\phi_{\Cat}^{2}>>\Qr_R\otimes_R\Cat@>e\otimes\id_{\Cat}>>\Cat
\end{CD}
$$
and the identity functor from $\Cat$ to itself, while the isomorphism $\Psi_{\Cat}$ makes the following diagram of categories to commute:
$$
\begin{CD}
\Cat@>\phi^{2}_{\Cat}>>\P_R^{2}\otimes_R {\Cat}\\
@V\phi^{1}_{\Cat} VV@V\Delta\otimes\,\id_{\Cat} VV\\
\P_R^{1}\otimes_R
{\Cat}@>\id_{P}\otimes\,\phi^{1}_{\Cat}>>\P_R^{1}\otimes_R
\P_R^{1}\otimes_R {\Cat}.
\end{CD}
$$

\begin{example}\label{examp-moddiffcat}
The category $\Mod(R)$ of $R$-modules has a canonical
$D_R$-structure given by the composition of $R$-linear tensor
functors (see also Example~\ref{examp-extscal})
$$
\begin{CD}\Mod(R)@>-\otimes_R\Qr_R>>_R\Mod{\left(\Qr_R\right)}
\cong{}_R{\left(\Qr_R\otimes_R\Mod(R)\right)}.\end{CD}
$$
Explicitly, for an \mbox{$R$-module~$M$}, we put
$$
\phi^2_R(M):={\left({\left(M\otimes_R\Qr_R\right)}_R,\alpha\right)}
$$
in $\Qr_R\otimes_R\Mod(R)$, where
$$
\alpha:\Qr_R\to\End_R{\left(M\otimes_R\Qr_R\right)}
$$
is the natural homomorphism. In other words, $\phi^2_R(M)$ is the
{\it Atiyah extension} of $M$ (see also
Proposition~\ref{prop-phi1expl} and
Remark~\ref{rem-expldefin}~\eqref{en:2845}).
\end{example}

\begin{example}\label{examp-comoddiffcat}
The $\kappa$-linear category $\Comod(R,A)$ of comodules over a
$D_{\kappa}$-Hopf algebroid $(R,A)$ over a differential ring~$(\kappa,D_\kappa)$
(Example~\ref{example-diffobjects}\eqref{en:6104}) has a canonical
$D_\kappa$-structure given by the composition of $\kappa$-linear tensor functors
\begin{align*}
&\begin{CD}
\Comod(R,A)@>-\otimes_\kappa
\Qr_{\kappa}>>_\kappa\Comod{\left(R\otimes_\kappa
\Qr_{\kappa},A\otimes_\kappa\Qr_{\kappa}\right)}\cong
{_\kappa\Comod{\left(\Qr_\kappa\otimes_\kappa
R,\Qr_\kappa\otimes_\kappa A\right)}}
\end{CD}\\
&\qquad\qquad\qquad\qquad\qquad
\cong{}_\kappa{\left(\Qr_\kappa\otimes_\kappa\Comod(R,A)\right)}.
\end{align*}
Explicitly, given a comodule $M$ over $A$, we define an $A$-comodule
structure on $\phi^2_R(M)$ as the composition
$$
\phi^2_R(M)\to \phi_R^2(M\otimes_R A)=\left(M\otimes_R
A\otimes_R\Qr_R\right)_R\cong \left(M\otimes_R\Qr_R\otimes_R
A\right)_R=\left(\phi^2_R(M)\otimes_R A\right)_R,
$$
where the non-trivial isomorphism in the middle is defined as in
Lemma~\ref{lemma-diffHopfalg}. Thus, the functor $\phi^2_R$ extends
to a $D_{\kappa}$-structure on the category $\Comod(R,A)$. If one
does an explicit calculation in the case when~$(\kappa,D_\kappa)$ is
a differential field with one derivation, $R=\kappa$, and $A$ is a
$D_{\kappa}$-Hopf algebra over $\kappa$, then one recovers the
formula from \cite[Theorem 1]{OvchRecoverGroup}.
\end{example}
We will define differential functors now.
\begin{definition}\label{defin-Dkfunctor}
\hspace{0cm}
\begin{itemize}
\item
Let $\varphi:(R,D_R)\to (S,D_S)$ be a morphism of differential
rings, $\Cat$ be a $D_R$-category over $R$, and $\Dop$ be a
$D_S$-category over $S$. A {\it differential functor} from $\Cat$ to
$\Dop$ is a pair $(F,\Pi_F)$, where $F:\Cat\to\Dop $ is a
right-exact $R$-linear tensor functor and
$$
\Pi_F:{\left(\Qr_{\varphi}\otimes F\right)}\circ \phi^2_{\Cat}\stackrel{\sim}\longrightarrow
\phi_{\Dop}^2\circ F
$$
is a isomorphism between tensor functors from $\Cat$ to
$\P_S^2\otimes_S\Dop$ such that $\Phi_{\Cat}$ commutes with
$\Phi_{\Dop}$ via~$\Pi_F$ and $\Psi_{\Cat}$ commutes with
$\Psi_{\Dop}$ via $\Pi_F$. For short, we usually denote a
differential functor~$(F,\Pi_F)$ just by~$F$. We call~$\Pi_F$ a {\it
differential structure} on $F$.
\item
A {\it morphism between
differential functors} is a morphism between tensor functors
$\Phi:F\to G$ that commutes with the~$\Pi$'s.
\end{itemize}
Denote the category of differential functors from $\Cat$ to $\Dop$
by $\Fun_R^{D}(\Cat,\Dop)$.
\end{definition}

In other words, the isomorphism $\Pi_F$ makes the following diagram
of categories commutative:
$$
\begin{CD}
\Cat@>F>>\Dop\\
@V\phi_{\Cat}^2VV@V\phi_{\Dop}^2VV\\
\Qr_R\otimes_R\Cat@>\Qr_{\varphi}\otimes F>>\Qr_S\otimes_S\Dop\,.
\end{CD}
$$

\begin{example}\label{examp-difffunctor}
\hspace{0cm}
\begin{enumerate}
\item\label{en:2371}
Given a morphism of differential rings $(R,D_R)\to(S,D_S)$,
the extension of
scalars functor
$$
S\otimes_R -:\Mod(R)\to\Mod(S)
$$
is canonically a differential functor.
\item\label{en:2370}
Given a $D_\kappa$-Hopf algebroid $(R,A)$ over a differential ring $(\kappa,D_\kappa)$, the forgetful functor
$$
\Comod(R,A)\to \Mod(R)
$$
is canonically a differential functor, where we consider the
$D_R$-structure on $\Mod(R)$ with \mbox{$D_R:=R\otimes_{\kappa}
D_{\kappa}$}.
\item\label{en:2377}
Given a $D_\kappa$-Hopf algebroid $(R,A)$ over a differential ring $(\kappa,D_\kappa)$ and a morphism of $D_\kappa$-algebras $R\to S$, the extension of scalars functor
$$
S\otimes_R-:\Comod(R,A)\to\Comod(S,{}_SA_S)
$$
is canonically a differential functor.
\end{enumerate}
\end{example}

The following statement is needed in the proof of Theorem~\ref{theor-PPVdff}.

\begin{lemma}\label{corol-composdifffunc}
Let $\Cat$, $\Dop$, and $\Ec$ be $D_R$-categories, $F:\Cat\to \Dop$
be a functor, and $G:\Dop\to \Ec$ be a fully faithful differential
functor. Then there is a bijection between differential structures
on $F$ and $G\circ F$.
\end{lemma}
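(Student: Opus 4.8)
The plan is to build an explicit bijection: a differential structure $\Pi_F$ on $F$ is sent to the structure $\Pi_{G\circ F}$ obtained by composing $\Pi_F$ with the differential structure $\Pi_G$ of $G$, and this assignment is inverted using the full faithfulness of $G$. Since $\Cat$, $\Dop$, $\Ec$ are all $D_R$-categories over the same $(R,D_R)$, the morphism of differential rings underlying every functor here is $\id_{(R,D_R)}$, so $\Qr_{\id}=\id_{\Qr_R}$ and, by Definition~\ref{defin-Dkfunctor}, a differential structure on $F$ is just an isomorphism of right-exact $R$-linear tensor functors $\Pi_F\colon(\Qr_R\otimes F)\circ\phi^{2}_{\Cat}\stackrel{\sim}\longrightarrow\phi^{2}_{\Dop}\circ F$ compatible with the $\Phi$'s and $\Psi$'s. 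Note that $G\circ F$ is again a right-exact $R$-linear tensor functor, so the statement is meaningful. Below, $\ast$ denotes whiskering (horizontal composition with an identity $2$-morphism).

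First I would define the forward map. Using the canonical isomorphism $(\Qr_R\otimes G)\circ(\Qr_R\otimes F)\cong\Qr_R\otimes(G\circ F)$ coming from the functoriality of extension of scalars (the relation $(\psi\otimes G)\circ(\varphi\otimes F)\cong(\psi\circ\varphi)\otimes(G\circ F)$ recalled in Section~\ref{subsection-prelextscal}), set
$$
\Pi_{G\circ F}:=(\Pi_G\ast F)\circ\big((\Qr_R\otimes G)\ast\Pi_F\big),
$$
transported along that canonical isomorphism. As a composite of isomorphisms of tensor functors it is itself one. Its compatibility with $\Phi_{\Cat},\Phi_{\Ec}$ and with $\Psi_{\Cat},\Psi_{\Ec}$ follows by pasting the compatibility diagrams of $\Pi_F$ (relating $\Cat$ and $\Dop$) with those of $\Pi_G$ (relating $\Dop$ and $\Ec$), using the naturality of $\Pi_G$ and the unit and associativity constraints of extension of scalars. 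Thus $\Pi_{G\circ F}$ is a genuine differential structure on $G\circ F$.

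The \emph{key point}, and the only place full faithfulness of $G$ is used, is that the extension of scalars $\Qr_R\otimes G\colon\Qr_R\otimes_R\Dop\to\Qr_R\otimes_R\Ec$ is again fully faithful. Since $\Qr_R$ is of finite presentation as an $R$-module, Proposition~\ref{prop-extscalfinite} identifies $\Qr_R\otimes_R\Dop$ with the category of $\Qr_R$-modules in $\Dop$, namely pairs $(X,\alpha_X)$ with $\alpha_X\colon\Qr_R\to\End_{\Dop}(X)$, and identifies $\Qr_R\otimes G$ with $(X,\alpha_X)\mapsto(G(X),G\circ\alpha_X)$. A morphism $G(X)\to G(Y)$ in $\Ec$ commuting with the $\Qr_R$-actions is, by fullness of $G$, of the form $G(f)$ for a unique $f\colon X\to Y$, and faithfulness of $G$ forces $f$ to commute with the $\Qr_R$-actions; hence $\Qr_R\otimes G$ is fully faithful, and therefore reflects isomorphisms. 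The same finite-presentation argument (the text records that $\P_R^1\otimes_R\P_R^1$ is of finite presentation over $R$) shows that the extension of $G$ along $\P_R^1\otimes_R\P_R^1$ is faithful as well.

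Finally I would invert the forward map. Given a differential structure $\Pi_{G\circ F}$ on $G\circ F$, the composite $(\Pi_G\ast F)^{-1}\circ\Pi_{G\circ F}$, transported back along the canonical isomorphism, is an isomorphism
$$
(\Qr_R\otimes G)\circ(\Qr_R\otimes F)\circ\phi^{2}_{\Cat}\stackrel{\sim}\longrightarrow(\Qr_R\otimes G)\circ\phi^{2}_{\Dop}\circ F.
$$
Because $\Qr_R\otimes G$ is fully faithful, each component lifts uniquely through $\Qr_R\otimes G$, and faithfulness applied to the naturality squares assembles these lifts into a unique isomorphism of tensor functors $\Pi_F\colon(\Qr_R\otimes F)\circ\phi^{2}_{\Cat}\stackrel{\sim}\longrightarrow\phi^{2}_{\Dop}\circ F$ whose whiskering by $\Qr_R\otimes G$ recovers the displayed isomorphism; it is an isomorphism since $\Qr_R\otimes G$ reflects them. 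The compatibilities of $\Pi_F$ with $\Phi_{\Cat},\Phi_{\Dop}$ and $\Psi_{\Cat},\Psi_{\Dop}$ follow from those of $\Pi_{G\circ F}$ by the same reflection principle: whiskering the desired equation for $\Pi_F$ by the (faithful) extensions of $G$ yields exactly the equation already known for $\Pi_{G\circ F}$, and faithfulness cancels the whiskering. By construction these two assignments are mutually inverse, which gives the bijection. I expect the main obstacle to be precisely the full faithfulness of $\Qr_R\otimes G$; once that is in hand, everything else is formal $2$-categorical bookkeeping.
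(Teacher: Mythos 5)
Your proof is correct and follows essentially the same route as the paper's: the forward direction is composition of differential structures, and the inverse uses that $\Qr_R\otimes G$ is fully faithful to descend the isomorphism $(\Qr_R\otimes G)\circ(\Qr_R\otimes F)\circ\phi^{2}_{\Cat}\cong(\Qr_R\otimes G)\circ\phi^{2}_{\Dop}\circ F$ to a differential structure on $F$. The only difference is that you supply a justification (via Proposition~\ref{prop-extscalfinite} and the description of $\Qr_R\otimes_R\Dop$ as $\Qr_R$-modules in $\Dop$) for the full faithfulness of $\Qr_R\otimes G$ and spell out the transfer of the $\Phi$- and $\Psi$-compatibilities, both of which the paper asserts without detail.
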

\begin{proof}
If $F$ is a differential functor, then $G\circ F$ is also a
differential functor, being a composition of differential functors.
Conversely, suppose that $G\circ F$ is a differential functor. Consider the
diagram of categories:
$$
\begin{CD}
\Cat@>F>>\Dop@>G>>\Ec\\
@V\phi_{\Cat}^2VV@V\phi_{\Dop}^2VV@V\phi_{\Ec}^2VV\\
\Qr_R\otimes_R\Cat@>\id_P\otimes F>>\Qr_R\otimes_R\Dop@>\id_P\otimes G
>>\Qr_R\otimes_R \Ec.
\end{CD}
$$
Since $G\circ F$ is a differential functor, we obtain an isomorphism between tensor functors
$$
{\big(\id_{\P_R^2}\otimes G\big)}\circ{\big(\id_{\P_R^2}\otimes
F\big)}\circ\phi_{\Cat}^2\stackrel{\sim}\longrightarrow
\phi^2_{\Ec}\circ G\circ F.
$$
Further, since $G$ is a differential functor, we obtain an isomorphism between tensor functors
$$
\phi^2_{\Ec}\circ G\circ F\stackrel{\sim}\longrightarrow
{\big(\id_{\P_R^2}\otimes G\big)}\circ\phi_{\Dop}^2\circ F.
$$
Taking the composition, we obtain an isomorphism between tensor functors
$$
{\big(\id_{\P_R^2}\otimes G\big)}\circ{\big(\id_{\P_R^2}\otimes
F\big)}\circ\phi_{\Cat}^2\stackrel{\sim}\longrightarrow
{\big(\id_{\P_R^2}\otimes G\big)}\circ\phi_{\Dop}^2\circ F.
$$
Since $G$ is fully faithful, the functor $\id_{\P_R^2}\otimes G$ is
also fully faithful. Therefore, we obtain an isomorphism between
tensor functors
$$
\Pi_F:{\big(\id_{\P_R^2}\otimes
F\big)}\circ\phi_{\Cat}^2\stackrel{\sim}\longrightarrow
\phi_{\Dop}^2\circ F.
$$
It follows that this indeed defines a differential structure on $F$.
\end{proof}

We also use extensions of scalars for differential categories in the proof of Theorem~\ref{theor-PPVdff}.

\begin{proposition}\label{prop-diffextscal}
Let $\varphi:(R,D_R)\to (S,D_S)$ be a morphism between differential rings, $\Cat$ be a $D_R$-category over $R$, and suppose that the extension of scalars category $S\otimes_R\Cat$ exists (Definition~\ref{defin-extcat}).
\begin{enumerate}
\item\label{en:2368}
There is a canonical $D_S$-structure on $S\otimes_R \Cat$ such that the functor
$$
S\otimes_R-:\Cat\to S\otimes_R\Cat
$$
is canonically a differential functor.
\item\label{en:2375}
Let $\Dop$ be a $D_S$-category over $S$. Then taking the composition with $S\otimes_R-$ defines an equivalence of categories:
$$
\Fun_S^{D}(S\otimes_R\Cat,\Dop)\stackrel{\sim}\longrightarrow
\Fun_R^{D}(\Cat,\Dop),\quad F\mapsto F\circ(S\otimes_R-).
$$
\end{enumerate}
\end{proposition}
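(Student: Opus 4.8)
The plan is to deduce both parts from the $2$-functoriality of the extension of scalars for abelian tensor categories (the functoriality discussion preceding Theorem~\ref{theor-extscal} and the universal property in Definition~\ref{defin-extcat}), together with the transitivity isomorphism $\Qr_S\otimes_S(S\otimes_R\Cat)\cong\Qr_S\otimes_R\Cat$. Since $\varphi$ is a morphism of differential rings, $\Qr_{\varphi}\colon\Qr_R\to\Qr_S$ commutes with $l$, $r$, $\Delta$, and $e$. First I would form the right-exact $R$-linear tensor functor
$$
G\colon\Cat\xrightarrow{\phi^2_{\Cat}}{}_R\big(\Qr_R\otimes_R\Cat\big)\longrightarrow{}_S\big(\Qr_S\otimes_R\Cat\big)\cong{}_S\big(\Qr_S\otimes_S(S\otimes_R\Cat)\big),
$$
where the middle arrow is the extension of scalars along $\Qr_{\varphi}$ (which is $R$-linear via $l\colon R\to\Qr_R$) and the last isomorphism is transitivity of extension of scalars along $R\xrightarrow{\varphi}S\xrightarrow{l}\Qr_S$. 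By the universal property of $S\otimes_R-$, the functor $G$ factors, uniquely up to coherent isomorphism, through a right-exact $S$-linear tensor functor $\phi^2_{S\otimes_R\Cat}\colon S\otimes_R\Cat\to{}_S(\Qr_S\otimes_S(S\otimes_R\Cat))$, which is the candidate for the $D_S$-structure.

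Next I would build the coherence isomorphisms $\Phi_{S\otimes_R\Cat}$ and $\Psi_{S\otimes_R\Cat}$ by applying the extension-of-scalars $2$-functor to $\Phi_{\Cat}$ and $\Psi_{\Cat}$ and splicing in the relevant transitivity isomorphisms; here the identities $e\circ\Qr_{\varphi}=\varphi\circ e$ and $\Delta\circ\Qr_{\varphi}=(\Qr_{\varphi}\otimes\Qr_{\varphi})\circ\Delta$ guarantee that the extended data again satisfy the counit and coassociativity constraints of Definition~\ref{defin-intdifftens}. The differential structure $\Pi$ making $S\otimes_R-$ a differential functor is then tautological: by construction $\phi^2_{S\otimes_R\Cat}\circ(S\otimes_R-)\cong G$, while the left-hand side of the differential-functor condition of Definition~\ref{defin-Dkfunctor}, namely $\big(\Qr_{\varphi}\otimes(S\otimes_R-)\big)\circ\phi^2_{\Cat}$, is identified with $G$ via the same transitivity isomorphism; the compatibility of this $\Pi$ with $\Phi$ and $\Psi$ holds by the very way $\Phi_{S\otimes_R\Cat}$ and $\Psi_{S\otimes_R\Cat}$ were defined. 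This proves part~\ref{en:2368}.

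For part~\ref{en:2375} I would upgrade the non-differential equivalence. By Definition~\ref{defin-extcat}, composition with $S\otimes_R-$ is an equivalence on underlying right-exact $S$-linear tensor functors; in particular it is fully faithful, hence a bijection on natural isomorphisms between any two such functors into any fixed $S$-linear target, e.g. $\Qr_S\otimes_S\Dop$. Given $F\in\Fun_S^{D}(S\otimes_R\Cat,\Dop)$ with differential structure $\Pi_F$, I would restrict $\Pi_F$ along $S\otimes_R-$; using $\Pi_{S\otimes_R-}$ from part~\ref{en:2368} and the composition rule $(\id_{\Qr_S}\otimes F)\circ\big(\Qr_{\varphi}\otimes(S\otimes_R-)\big)\cong\Qr_{\varphi}\otimes\big(F\circ(S\otimes_R-)\big)$, the source and target of the restricted transformation become $\big(\Qr_{\varphi}\otimes\bar F\big)\circ\phi^2_{\Cat}$ and $\phi^2_{\Dop}\circ\bar F$, where $\bar F:=F\circ(S\otimes_R-)$. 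Thus the restriction of $\Pi_F$ is exactly a differential structure $\Pi_{\bar F}$ on $\bar F$, which is the composition-of-differential-functors construction of Lemma~\ref{corol-composdifffunc}. Since restriction along $S\otimes_R-$ is a bijection on natural isomorphisms, the assignment $\Pi_F\mapsto\Pi_{\bar F}$ is a bijection; and because restriction is injective on all natural transformations and carries the $\Phi$- and $\Psi$-compatibility axioms for $\Pi_F$ precisely to those for $\Pi_{\bar F}$, the axioms hold on one side if and only if they hold on the other. Combined with the equivalence on underlying functors, and the same faithfulness argument for morphisms of differential functors, this yields the desired equivalence $\Fun_S^{D}(S\otimes_R\Cat,\Dop)\stackrel{\sim}\longrightarrow\Fun_R^{D}(\Cat,\Dop)$.

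The main obstacle will be the coherence bookkeeping in part~\ref{en:2368}: constructing $\Phi_{S\otimes_R\Cat}$ and especially $\Psi_{S\otimes_R\Cat}$, and checking that the coassociativity diagram of Definition~\ref{defin-intdifftens} commutes. Because we work $2$-categorically, with isomorphisms between tensor functors rather than equalities, one must pin down the transitivity isomorphisms $\Qr_S\otimes_{\Qr_R}(\Qr_R\otimes_R\Cat)\cong\Qr_S\otimes_R\Cat$ together with their iterates over $\P_S^1\otimes_S\P_S^1\otimes_S(S\otimes_R\Cat)$ and verify that they are mutually compatible; this verification is formal but laborious. Everything in part~\ref{en:2375}, by contrast, is a direct consequence of the universal property once part~\ref{en:2368} is in place.
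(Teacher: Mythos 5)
Your proposal is correct and takes essentially the same route as the paper: the $D_S$-structure $\phi^2_{S\otimes_R\Cat}$ is obtained from the universal property of $S\otimes_R\Cat$ applied to $(\Qr_{\varphi}\otimes\id_{\Cat})\circ\phi^2_{\Cat}$ together with the transitivity identification $\Qr_S\otimes_S(S\otimes_R\Cat)\cong\Qr_S\otimes_R\Cat$, the differential structure on $S\otimes_R-$ is tautological from this construction, and part~\ref{en:2375} follows from the universal property directly. The paper's proof is a two-sentence version of exactly this argument; your coherence bookkeeping for $\Phi$, $\Psi$, and the bijection on differential structures fills in details the paper leaves implicit.
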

\begin{proof}
To prove statement~\ref{en:2368}, define the functor
$$
\phi^2_{S\otimes_R\Cat}:S\otimes_R\Cat\to \Qr_S\otimes_S(S\otimes_R\Cat)\cong \Qr_S\otimes_R\Cat
$$
by the universal property of $S\otimes_R\Cat$ applied to the right-exact $R$-linear tensor functor
$$
\begin{CD}\Cat\stackrel{\phi_{\Cat}^2}\longrightarrow\Qr_R\otimes_R\Cat
@>\Qr_{\varphi}\otimes\id_{\Cat}>>\Qr_S\otimes_R\Cat.
\end{CD}
$$
This also defines a differential structure on the functor
$S\otimes_R-$. To prove statement~\ref{en:2375}, one applies the
universal property of~$S\otimes_R\Cat$ directly.
\end{proof}

\begin{remark}\label{remark-extescalcatdiff}
\hspace{0cm}
\begin{enumerate}
\item
Applying Proposition~\ref{prop-diffextscal} to $\Cat=\Mod(R)$, we
obtain that the canonical functor $$S\otimes_R\Mod(R)\to \Mod(S)$$ is
a differential functor between $D_S$-categories
(Example~\ref{examp-moddiffcat}) provided that $S\otimes_R\Mod(R)$
exists.
\item\label{en:2422}
Given $D_R$-categories $\Cat$ and $\Dop$ over $R$ and a differential
functor $F:\Cat\to\Dop$, the functor $$S\otimes_R F:S\otimes_R\Cat\to
S\otimes_R\Dop$$ is canonically a differential functor between
$D_S$-categories over $S$ provided that~$S\otimes_R\Cat$ and
$S\otimes_R\Dop$ exist.
\item
Both Definition~\ref{defin-extscaldiffmod} and
the construction from Proposition~\ref{prop-diffextscal}\eqref{en:2368} are particular cases of extensions of scalars for differential objects.
\end{enumerate}
\end{remark}

\subsection{Definitions in the explicit form}\label{subsection-explicitdefin}

The following technical result provides an explicit information
about objects of type $\phi^2_{\Cat}(X)$, where $X$ is an object in
a $D_R$-category $\Cat$. Recall that we have a decreasing filtration
by ideals  in $\Qr_R$ (see Definition~\ref{defin-secondjet}
for~$I_R$)
$$
\Qr_R\supset I_R\supset \Sym_R^2\Omega_R\supset 0
$$
and canonical isomorphisms
$$
\Qr_R/I_R\cong R,\quad I_R/\Sym^2_R\Omega_R\cong \Omega_R.
$$

\begin{lemma}\label{lemma-isom}
Let $\Cat$ be a $D_R$-category over $R$. Then, for any object $X$ in
$\Cat$, there are functorial isomorphisms (see
Section~\ref{subsection-prelextscal} for $\Omega_R\otimes_R X$):
$$
\phi_{\Cat}^2(X)/I_R\cdot\phi_{\Cat}^2(X)\cong X,\ \ \
I_R\cdot\phi_{\Cat}^2(X)/\Sym^2_R\Omega_R\cdot\phi_{\Cat}^2(X)\cong
\Omega_R\otimes_R X,\ \ \
\Sym^2_R\Omega_R\cdot\phi_{\Cat}^2(X)\cong
\Sym^2_R\Omega_R\otimes_R X.
$$
\end{lemma}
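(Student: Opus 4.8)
The plan is to invoke Proposition~\ref{prop-extscalfinite} to regard $\phi_{\Cat}^2(X)$ as a $\Qr_R$-module $(Y,\alpha)$ in $\Cat$, where $\alpha\colon\Qr_R\to\End_{\Cat}(Y)$ is a homomorphism of $R$-algebras, and to interpret $J\cdot\phi_{\Cat}^2(X)$, for an ideal $J\subseteq\Qr_R$, as the image of the restriction $a_Y|_J\colon J\otimes_R Y\to Y$ of the action morphism $a_Y\colon\Qr_R\otimes_R Y\to Y$. Since $\Omega_R$ and $\Sym_R^2\Omega_R$ are finitely generated projective $R$-modules, the functors $\Omega_R\otimes_R-$ and $\Sym^2_R\Omega_R\otimes_R-$ are exact and the sequences $0\to I_R\to\Qr_R\xrightarrow{e}R\to 0$ and $0\to\Sym^2_R\Omega_R\to I_R\to\Omega_R\to 0$ split as sequences of $R$-modules; this is what makes the associated-graded computation below well behaved. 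Note also that $I_R\cdot\Sym^2_R\Omega_R=0$ in $\Qr_R$ (as $\Sym^2_R\Omega_R=I_R\cdot I_R$ and $I_R\cdot I^{[2]}=0$), and consequently $I_R$ annihilates both $\Sym^2_R\Omega_R$ and the quotient $I_R/\Sym^2_R\Omega_R\cong\Omega_R$, so these are genuine $R$-modules.

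First I would prove the leftmost isomorphism. Base change of $(Y,\alpha)$ along $e\colon\Qr_R\to R$ computes $R\otimes_{\Qr_R}Y=Y/I_R\cdot Y=\phi_{\Cat}^2(X)/I_R\cdot\phi_{\Cat}^2(X)$, and this is exactly the value $(e\otimes\id_{\Cat})(\phi_{\Cat}^2(X))$; the counit isomorphism $\Phi_{\Cat}$ of the $D_R$-category structure then identifies it functorially with $X$. Next, because $\alpha$ is an algebra homomorphism and $I_R\cdot\Sym^2_R\Omega_R=0$, the composite $\Sym^2_R\Omega_R\otimes_R Y\to Y$ annihilates $\Sym^2_R\Omega_R\otimes_R(I_R\cdot Y)$ and hence factors through a natural surjection $\Sym^2_R\Omega_R\otimes_R X\twoheadrightarrow\Sym^2_R\Omega_R\cdot\phi_{\Cat}^2(X)$; similarly, lifting $\Omega_R\cong I_R/\Sym^2_R\Omega_R$ to $I_R$ and reducing modulo $\Sym^2_R\Omega_R\cdot Y$, one obtains a natural surjection $\Omega_R\otimes_R X\twoheadrightarrow I_R\cdot\phi_{\Cat}^2(X)/\Sym^2_R\Omega_R\cdot\phi_{\Cat}^2(X)$ (well defined since $I_R\cdot I_R\cdot Y\subseteq\Sym^2_R\Omega_R\cdot Y$).

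The main obstacle is to show that these two surjections are isomorphisms, i.e.\ to establish injectivity on the graded pieces, and this is where the non-formal inputs enter. For the middle isomorphism I would pass to $\phi^1_{\Cat}(X)=\P_R^1\otimes_{\Qr_R}Y=Y/\Sym^2_R\Omega_R\cdot Y$, which is a $\P_R^1$-module in $\Cat$, and invoke Proposition~\ref{prop-isomcomod2tr}: the isomorphism $\epsilon^1_X$ identifies $\phi^1_{\Cat}(X)$ with an induced $\P_R^1$-module on $X$, exactly as in Example~\ref{examp-moddiffcat}, whose $\Omega_R$-filtration quotients are visibly $X$ and $\Omega_R\otimes_R X$ (using $\P_R^1\cong R\oplus\Omega_R$ and $\Omega_R\cdot\Omega_R=0$); the degree-one quotient is precisely the middle graded piece. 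For the top isomorphism I would use the coassociativity isomorphism $\Psi_{\Cat}$, which relates $(\Delta\otimes\id_{\Cat})\circ\phi^2_{\Cat}$ to the iterate $\bigl(\id_{\P}\otimes\phi^1_{\Cat}\bigr)\circ\phi^1_{\Cat}$. Since $\Delta$ restricts on $\Sym^2_R\Omega_R\subseteq\Qr_R$ to the inclusion $\Sym^2_R\Omega_R\hookrightarrow\Omega_R\otimes_R\Omega_R\subseteq\P_R^1\otimes_R\P_R^1$, the top filtration piece $\Sym^2_R\Omega_R\cdot Y$ is matched under $\Psi_{\Cat}$ with the subobject produced by applying the degree-one computation twice, namely $\Sym^2_R\Omega_R\otimes_R X$; this forces the surjection $\Sym^2_R\Omega_R\otimes_R X\twoheadrightarrow\Sym^2_R\Omega_R\cdot\phi^2_{\Cat}(X)$ to be an isomorphism. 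Functoriality in $X$ is automatic throughout, since every map is assembled from the functorial data $\phi^2_{\Cat}$, $\Phi_{\Cat}$, $\Psi_{\Cat}$, and $\epsilon^1_X$.
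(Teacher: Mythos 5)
Your proposal is correct and follows essentially the same route as the paper's proof: the counit $\Phi_{\Cat}$ handles the bottom graded piece, the surjections come from the $\Qr_R$-action together with $I_R\cdot I_R\subseteq\Sym^2_R\Omega_R$ and $I_R\cdot\Sym^2_R\Omega_R=0$, injectivity in degree one is obtained from the (2-categorical form of) Proposition~\ref{prop-isomcomod2tr} making $\epsilon^1_{\Cat}$ an exact equivalence, and injectivity in degree two is forced by $\Psi_{\Cat}$ comparing $\phi^2_{\Cat}$ with the iterate of $\phi^1_{\Cat}$ and the split inclusion $\Sym^2_R\Omega_R\hookrightarrow\Omega_R^{\otimes 2}$. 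Your degree-two step is stated more compactly than the paper's explicit commutative square, but the mechanism is identical.
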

\begin{proof}
The isomorphism $\Phi_{\Cat}$ yields the first isomorphism, because
$$
(e\otimes\id_{\Cat})\left(\phi^2_{\Cat}(X)\right)\cong
\phi_{\Cat}^2(X)\big/I_R\cdot\phi_{\Cat}^2(X).
$$
Hence, the $\Qr_R$-module structure on $\phi_{\Cat}^2(X)$ defines
surjective morphisms
$$
\Omega_R\otimes_R X\stackrel{\alpha}\longrightarrow
I_R\cdot\phi_{\Cat}^2(X)\big/\Sym^2_R\Omega_R\cdot\phi_{\Cat}^2(X),\quad
\Sym^2_R\Omega_R\otimes_R
X\stackrel{\beta}\longrightarrow\Sym^2_R\Omega_R\cdot\phi_{\Cat}^2(X),
$$
where we use that
$$
I_R\cdot I_R\subset \Sym^2_R\Omega_R\quad\text{and}\quad
I_R\cdot\Sym^2_R\Omega_R=0.
$$
Let us prove that $\alpha$ is injective and, thus, is an
isomorphism. By the definition of $\phi^1_{\Cat}$
(Definition~\ref{defin-intdifftens}), we have
$$
\phi^1_{\Cat}(X)\cong
\phi^2_{\Cat}(X)\big/\Sym^2_R\Omega_R\cdot\phi^2_{\Cat}(X).
$$
Thus, we need to show that the corresponding morphism
$$
\Omega_R\otimes_R X\stackrel{\gamma}\longrightarrow\phi_{\Cat}^1(X)
$$
is injective (see also Example~\ref{examp-tensBaer}).
By Proposition~\ref{prop-extscalfinite}, the right-exact $R$-linear
tensor functor
$$\phi_{\Cat}^1:\Cat\to{}_R{\left(\P_R^1\otimes_R\Cat\right)}$$
defines a right-exact $\P_R^1$-linear tensor functor
$$
\epsilon^1_{\Cat}:\Cat\otimes_R\P_R^1\to \P_R^1\otimes_R\Cat
$$
such that, for any object $X$ in $\Cat$, we have a functorial
isomorphism
$$
\phi^1_{\Cat}(X)\cong
\epsilon_{\Cat}^1{\left(X\otimes_R\P_R^1\right)}.
$$
Further, the proof of Proposition~\ref{prop-isomcomod2tr} remains
valid in the case of a cofibred 2-category instead of a cofibred
\mbox{(1-)category}. Thus, $\epsilon^1_{\Cat}$ is an equivalence of
categories and, in particular, is exact. On the other hand,
since~$\Phi_{\Cat}$ is an isomorphism, we have an isomorphism of
tensor functors $$R\otimes_{\P_R^1}\epsilon_{\Cat}^1\cong \id_{\Cat}$$
from~$\Cat$ to itself, where we consider the ring homomorphism
$e:\P_R^1\to R$. Explicitly, this means that, for a
$\P_R^1$-module~$Y$ in $\Cat$ such that $\Omega_R$ acts trivially on
$Y$, there is a functorial isomorphism $\epsilon_{\Cat}^1(Y)\cong Y$
(Proposition~\ref{prop-extscalfinite}). Therefore, applying
$\epsilon_{\Cat}^1$ to the injective morphism in
$\Cat\otimes_R\P_R^1$
$$
X\otimes_R\Omega_R\longrightarrow X\otimes_R\P_R^1
$$
given by the split embedding $\Omega_R\subset\P_R^1$ (and using that
$X\otimes_R\Omega_R=\Omega_R\otimes_R X$), we show the injectivity
of $\gamma$.
Now let us prove that $\beta$ is injective and, thus, it is an
isomorphism. Consider the object
$$
Z:={\big(\id_{\P_R^1}\otimes\phi_{\Cat}^1\big)}{\left(\phi^1_{\Cat}(X)\right)}
$$
in $\P_R^1\otimes_R\P_R^1\otimes_R\Cat$. We have a commutative
diagram in $\Cat$
$$
\begin{CD}
\Sym_R^2\Omega_R\otimes_R X@>\beta>>\Sym_R^2\Omega_R\cdot
\phi_{\Cat}^2(X)\\
@V f VV@V g VV\\
\Omega^{\otimes 2}_R\otimes_R X@>h>>\Omega_R^{\otimes 2}\cdot Z,
\end{CD}
$$
where $h$ is given by the action of $\P^1_R\otimes_R\P^1_R$ on $Z$
(we use that $X\cong Z/(I_R\cdot Z)$ and $\Omega^{\otimes 2}_R\cdot I_R=0$), the morphism~$f$ is defined
by the embedding $$\Sym_R^2\Omega_R\to \Omega_R^{\otimes 2},$$ and the
morphism $g$ is induced by the isomorphism~$\Psi_{\Cat}$.
Using the injectivity of $\gamma$ for $X$ and for
$\phi^1_{\Cat}(X)$, we obtain that $h$ is injective. Since
$$\Omega_R^{\otimes 2}\big/\Sym_R^2\Omega_R\cong\wedge_R^2\Omega_R$$ is
a projective $R$-module, $f$ is also injective, which implies the
injectivity of $\beta$.
\end{proof}

Now let us give a more explicit (though, a longer) definition of a
$D_R$-category. First, consider only the functor~$\phi^1_{\Cat}$. In
this case, the situation is similar to the previously known
differential Tannakian category over a field with one derivation \cite[Definition~3]{difftann}, and~\cite[Definition~5.2.1]{Moshe}.
For simplicity, we assume that the tensor product is {\it exact} in~$\Cat$.

\begin{proposition}\label{prop-phi1expl}
Let $\Cat$ be an abelian $R$-linear tensor category such that the
tensor product is exact. Then to define a right-exact $R$-linear
tensor functor
$$
\phi^{1}_{\Cat}:\Cat\to{}_R{\left(\P^1_R\otimes_R\Cat\right)}
$$
together with an isomorphism between tensor functors
$$
\Phi_{\Cat}:(e\otimes\id_{\Cat})\circ\phi^{1}_{\Cat}\stackrel{\sim}\longrightarrow
\id_{\Cat}
$$
is the same as to define the following data:
\begin{enumerate}
\item\label{en:2704}
A functor $\At^1_{\Cat}:\Cat\to\Cat$ together with a functorial
exact sequence
\begin{equation}\label{eq:Atiyah}
\begin{CD}
0@>>>\Omega_R\otimes_R X@>>> \At^1_{\Cat}(X)@>>>X@>>> 0
\end{CD}
\end{equation}
for any object $X$ in $\Cat$;
\item\label{en:2662}
An isomorphism
$$
\At^1_{\Cat}(\uno)\stackrel{\sim}\longrightarrow\P_R^1\otimes_R\uno,
$$
where we consider the right $R$-module structure on $P^1_R$, such
that exact sequence~\eqref{eq:Atiyah} coincides with the natural
exact sequence for $X=\uno$:
$$
\begin{CD}
0@>>>\Omega_R\otimes_R\uno@>>>\P_R^1\otimes_R\uno@>>>\uno@>>> 0,
\end{CD}
$$
and, for any $a\in R\to\End_{\Cat}(\uno)$, we have
$$\At^1_{\Cat}(a)=l(a),$$ where we denote elements of $R$
(respectively, in~$\P_R^1$) and their images under the morphisms to
$\End(\uno_{\Cat})$ (respectively, to $\End(\P_R^1\otimes_R\uno)$)
in the same way;
\item\label{en:2679}
A functorial isomorphism with the Baer sum
\begin{equation}\label{eq:AtiyahBaer}
\At^1_{\Cat}(X\otimes Y)\stackrel{\sim}\longrightarrow
{\left(\At^1_{\Cat}(X)\otimes Y)+_\Bm
(X\otimes\At^1_{\Cat}(Y)\right)}
\end{equation}
for all objects $X$ and $Y$ in $\Cat$ that respects commutativity
and associativity constraints in $\Cat$ and the splitting of
$$\At^1_{\Cat}(\uno)\cong\P_R^1\otimes_R\uno$$ given by the canonical
right $R$-linear splitting $\P^1_R\cong R\oplus\Omega_R$.
\end{enumerate}
\end{proposition}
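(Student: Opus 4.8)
The plan is to read the data (a)--(c) off the $\P_R^1$-module interpretation of $\phi^1_{\Cat}(X)$, and conversely. Since $\P_R^1=R\oplus\Omega_R$ is of finite presentation as an $R$-module, Proposition~\ref{prop-extscalfinite} identifies $\P_R^1\otimes_R\Cat$ with the category of $\P_R^1$-modules in $\Cat$, and the functor $\P_R^1\otimes_R-$ with $X\mapsto\P_R^1\otimes_R X$. Moreover $\P_R^1$ is precisely an algebra of the form $R\oplus M$ with $M=\Omega_R$ and $M\cdot M=0$, so Example~\ref{examp-tensBaer} presents each $\P_R^1$-module in $\Cat$ as an exact sequence $0\to X'\to Z\to X''\to 0$ together with a morphism $f\colon\Omega_R\otimes_R X''\to X'$, where $X''=Z/(\Omega_R\cdot Z)$ and $X'=\Omega_R\cdot Z$.

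First I would treat the forward direction. Given $(\phi^1_{\Cat},\Phi_{\Cat})$, set $\At^1_{\Cat}(X)$ to be the underlying object of $\phi^1_{\Cat}(X)$ (the restriction of scalars along $l$). The functor $e\otimes\id_{\Cat}$ is extension along $e\colon\P_R^1\to R$, which sends a $\P_R^1$-module $Z$ to $Z/(\Omega_R\cdot Z)=X''$; hence $\Phi_{\Cat}$ furnishes a functorial isomorphism $X''\cong X$, and therefore the exact sequence~\eqref{eq:Atiyah}, once one knows that $f_X$ is an isomorphism so that $X'\cong\Omega_R\otimes_R X$. Tensor-functoriality of $\phi^1_{\Cat}$ combined with the last part of Example~\ref{examp-tensBaer} (valid here since the tensor product is exact and the relevant $f$'s are isomorphisms) yields the Baer-sum isomorphism~\eqref{eq:AtiyahBaer}. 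Finally $\phi^1_{\Cat}(\uno)$ is the unit object $\P_R^1\otimes_R\uno$ of $\P_R^1\otimes_R\Cat$, which gives condition~\ref{en:2662} together with the right $R$-linear splitting, and $\At^1_{\Cat}(a)=l(a)$ follows from the $R$-linearity of $\phi^1_{\Cat}$ through $l$.

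For the converse, given the data (a)--(c) I would reconstruct the $\P_R^1$-module structure on $\At^1_{\Cat}(X)$: let $R$ act through $l$, and let $\omega\in\Omega_R$ act by the composite $\At^1_{\Cat}(X)\twoheadrightarrow X\xrightarrow{\omega\otimes-}\Omega_R\otimes_R X\hookrightarrow\At^1_{\Cat}(X)$ coming from~\eqref{eq:Atiyah}; since $\Omega_R\cdot\Omega_R=0$ this is a well-defined $\P_R^1$-module, i.e.\ an object $\phi^1_{\Cat}(X)$ of $\P_R^1\otimes_R\Cat$. Right-exactness of $\At^1_{\Cat}$ (built into the functorial sequence) makes $\phi^1_{\Cat}$ a right-exact $R$-linear functor, the Baer-sum isomorphism~\eqref{eq:AtiyahBaer} supplies its tensor structure, condition~\ref{en:2662} identifies the unit, and the quotient map in~\eqref{eq:Atiyah} is the required $\Phi_{\Cat}$. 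One then checks that these two assignments are mutually inverse.

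The main obstacle is the point flagged above: in the forward direction, identifying the sub-object $\Omega_R\cdot\phi^1_{\Cat}(X)$ canonically with $\Omega_R\otimes_R X$, equivalently proving that the natural surjection $f_X$ is an isomorphism. This is exactly the assertion that the adjoint $\P_R^1$-linear functor $\epsilon^1_{\Cat}\colon\Cat\otimes_R\P_R^1\to\P_R^1\otimes_R\Cat$ is an equivalence, and hence exact; I would obtain it as in Proposition~\ref{prop-isomcomod2tr} and its use in Lemma~\ref{lemma-isom}, applying $\epsilon^1_{\Cat}$ to the split monomorphism $X\otimes_R\Omega_R\hookrightarrow X\otimes_R\P_R^1$ to deduce the injectivity of $f_X$. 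The remaining effort is the routine but lengthy verification that the Baer-sum isomorphism~\eqref{eq:AtiyahBaer} is compatible with the commutativity and associativity constraints and with the canonical splitting of $\At^1_{\Cat}(\uno)$.
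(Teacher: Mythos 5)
Your proposal is correct and follows essentially the same route as the paper's own (very terse) proof: define $\At^1_{\Cat}$ as $\phi^1_{\Cat}$ followed by the forgetful functor via Proposition~\ref{prop-extscalfinite}, use Example~\ref{examp-tensBaer} for the exact-sequence and Baer-sum descriptions, and reduce the only nontrivial point to the injectivity of $f_X$, which is exactly the part of Lemma~\ref{lemma-isom} concerning the first two adjoint quotients (obtained there, as you say, from the equivalence $\epsilon^1_{\Cat}$ of Proposition~\ref{prop-isomcomod2tr}). You have in fact correctly isolated and unpacked the one step the paper leaves implicit.
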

\begin{proof}
Given $\phi_{\Cat}^1$, let $\At^1_{\Cat}$ be the composition of
$\phi^1_{\Cat}$ with the forgetful functor
$\P^1_R\otimes_R\Cat\to\Cat$ (Proposition~\ref{prop-extscalfinite}).
Then Example~\ref{examp-tensBaer} and Lemma~\ref{lemma-isom}
(namely, its part that concerns the first two adjoint quotients)
imply the needed statement.
\end{proof}

\begin{remark}\label{rem-expldefin}
\hspace{0cm}
\begin{enumerate}
\item\label{en:2845}
The notation $\At$ is explained by an analogy with the case
$\Cat=\Mod(R)$ (Example~\ref{examp-moddiffcat}), when the
corresponding functor coincides with the Atiyah extension:
$$\At_{\Cat}^1(M)={\left(M\otimes_R\P_R^1\right)}_R$$ for an
$R$-module $M$. In particular, for a $D_k$-Hopf algebra $A$ or a
$D_k$-Hopf algebroid $(R,A)$ over $k$, we have that
$$\At^1_{\Cat}(M)={\left(M\otimes_R\P_R^1\right)}_R,$$ where
$\Cat=\Comod(A)$ or $\Cat=\Comod(R,A)$
(Example~\ref{examp-comoddiffcat}) and~$M$ is an $A$-comodule.
\item
To give the functor $\At_{\Cat}$ is the same as to give an object of
type $(\Omega_R[1],\alpha)$ in the category of K\"ahler
differentials for the derived category of $\Cat$ as defined
in~\cite{Markarian}.
\item\label{it:2806}
To be strict, we distinguish between a $\P_R^1$-module $(Y,\alpha_Y)$
in $\Cat$ and the corresponding object $Y$ in $\Cat$, which makes
the difference between $\phi^1_{\Cat}(X)$ and $\At^1_{\Cat}(X)$.
\end{enumerate}
\end{remark}

To define a $D_R$-category in these terms, let us first
discuss several properties of the functor
$$\At^1_{\Cat}:\Cat\to\Cat.$$ It is not tensor and is not $R$-linear.
For any object $X$ in $\Cat$, $\At^1_{\Cat}(X)$ is canonically a
$P^1_R$-module in $\Cat$ with respect to the right $R$-module
structure on $\P_R^1$ (Example~\ref{examp-tensBaer}). For any $a\in
R$, we have
$$
\At^1_{\Cat}(a)=a-\dd a,
$$ where~$a$ acts on objects in $\Cat$, being a scalar from $R$, and
$$
\dd a\in\Omega_R\subset\P_R^1
$$
acts on $\At^1_{\Cat}(X)$ as the composition
$$
\begin{CD}
\At^1_{\Cat}(X)\to X@>{\dd a\otimes \id_X}>>\Omega_R\otimes_R
X\to\At^1_{\Cat}(X).
\end{CD}
$$
Further, for any $X$ in~$\Cat$, the object
$\At^1_{\Cat}{\big(\At^1_{\Cat}(X)\big)}$ is a
${\big(\P_R^1\otimes_R\P_R^1\big)}$-module in $\Cat$. Consider
the filtration by ideals:
\begin{equation}\label{eq:filtration}
\P^1_R\otimes_R\P^1_R\supset
{\left(\Omega_R\otimes_R\P^1_R+\P^1_R\otimes_R\Omega_R\right)}\supset
\Omega_R\otimes_R\Omega_R\supset 0.
\end{equation}
This defines a decreasing filtration on
$\At^1_{\Cat}{\big(\At^1_{\Cat}(X)\big)}$. By exact
sequence~\eqref{eq:Atiyah}, the corresponding adjoint quotients are
as follows:
$$
X,\quad \left(\Omega_R\otimes_R
X\right)\oplus\left(\Omega_R\otimes_R X\right),\quad
\Omega_R\otimes_R\Omega_R\otimes_R X.
$$
In addition, the Baer sum isomorphism~\eqref{eq:AtiyahBaer} (or,
equivalently, the tensor property of $\phi^1_{\Cat}$) implies that
there is a product map
$$
m:\At^1_{\Cat}{\left(\At^1_{\Cat}(X)\right)}\otimes
\At^1_{\Cat}{\left(\At^1_{\Cat}(Y)\right)}\to
\At^1_{\Cat}{\left(\At^1_{\Cat}(X\otimes Y)\right)}.
$$

We will use the following technical result. By a filtered ring, we
mean a ring $A$ together with a decreasing filtration $$A=F^0A\supset
F^1A\supset\ldots\quad \text{such that}\quad F^iA\cdot F^jA\subset F^{i+j}A.$$

\begin{lemma}\label{lemma-filtr}
Let $A$ be a finitely filtered ring, $f:M\to N$ be a morphism
between $A$-modules (possibly, between $A$-modules in an appropriate
abelian tensor category). Suppose that
$$
\gr^0f:\gr^0M\to \gr^0 N
$$
is an isomorphism and, for any $i$, the canonical morphism
$$
\gr^iA\otimes_{\gr^0A}\gr^0N\to\gr^iN
$$
is an isomorphism. Then $f$ is an isomorphism.
\end{lemma}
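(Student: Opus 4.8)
The plan is to pass to the associated graded level, prove the statement there, and then lift it back using finiteness of the filtration. First I would equip $M$ and $N$ with the filtrations induced from $A$, namely $F^iM:=F^iA\cdot M$ and $F^iN:=F^iA\cdot N$, where $F^iA\cdot M$ denotes the image of the action map $F^iA\otimes_R M\to M$ (exactly as $I_R\cdot\phi^2_{\Cat}(X)$ is used in Lemma~\ref{lemma-isom}). Since $f$ is a morphism of $A$-modules, $f(F^iM)=F^iA\cdot f(M)\subset F^iN$, so $f$ is filtered and induces $\gr^if\colon\gr^iM\to\gr^iN$ for every $i$. Because $A$ is finitely filtered, say $F^{n+1}A=0$, the module filtrations satisfy $F^{n+1}M=F^{n+1}N=0$ and are finite as well. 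Thus the standard fact that a filtered morphism between finitely filtered objects inducing isomorphisms on all associated graded pieces is itself an isomorphism (proved by downward induction on the filtration length, applying the five lemma to the short exact sequences $0\to F^{i+1}\to F^i\to\gr^i\to 0$) reduces the problem to showing that each $\gr^if$ is an isomorphism.

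For fixed $i$ I would then contemplate the commutative square
$$
\begin{CD}
\gr^iA\otimes_{\gr^0A}\gr^0M @>{u_M}>> \gr^iM\\
@V{\id\otimes\gr^0f}VV @VV{\gr^if}V\\
\gr^iA\otimes_{\gr^0A}\gr^0N @>{u_N}>> \gr^iN,
\end{CD}
$$
where $u_M,u_N$ are the maps induced by multiplication. By the second hypothesis $u_N$ is an isomorphism, and by the first hypothesis $\gr^0f$ is an isomorphism, so the left vertical map is an isomorphism (extension of scalars over $\gr^0A$ preserves isomorphisms). Hence the composite $\gr^if\circ u_M=u_N\circ(\id\otimes\gr^0f)$ is an isomorphism, which already forces $\gr^if$ to be an epimorphism and $u_M$ to be a monomorphism. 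Independently, $u_M$ is an epimorphism straight from the construction of the induced filtration: the action map $F^iA\otimes_R M\to M$ is epi onto its image $F^iM$, so $F^iA\otimes_R M\to\gr^iM$ is epi, and this map factors through $u_M$ because both $F^{i+1}A\otimes_R M$ and $F^iA\otimes_R F^1M$ land in $F^{i+1}M$. Combining the two observations, $u_M$ is an isomorphism, and therefore $\gr^if=\big(u_N\circ(\id\otimes\gr^0f)\big)\circ u_M^{-1}$ is an isomorphism.

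The main obstacle is precisely the injectivity of $\gr^if$, equivalently the isomorphy of $u_M$: the hypotheses only assert that the comparison map is an isomorphism for the target $N$, not for the source $M$, so one cannot simply declare that $M$ is "generated in degree zero" in the same strong sense. The resolution is the asymmetry just exploited—surjectivity of $u_M$ is automatic for the filtration induced by the ring, and the diagram then supplies the missing injectivity of $u_M$ for free. In the abelian tensor category formulation the only points requiring care are that the objects $\gr^iA\otimes_{\gr^0A}\gr^0(-)$ and the maps $u_M,u_N$ are understood via the extension-of-scalars formalism of Section~\ref{subsection-prelextscal}, and that the epi/mono arguments are carried out in the ambient abelian category rather than elementwise; granting this, the diagram chase above goes through unchanged.
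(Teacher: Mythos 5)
Your proof is correct and follows essentially the same route as the paper's: the paper also observes that the comparison map $\gr^iA\otimes_{\gr^0A}\gr^0M\to\gr^iM$ is surjective for the induced filtration, that its composition with $\gr^if$ is the isomorphism $u_N\circ(\id\otimes\gr^0f)$, deduces that $\gr^if$ is an isomorphism, and concludes by finiteness of the filtration. Your write-up merely makes explicit the points the paper leaves implicit (the epi--mono bookkeeping forcing $u_M$ to be an isomorphism, and the five-lemma induction at the end).
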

\begin{proof}
We have surjective morphisms $$\gr^i A\otimes_{\gr^0 A}\gr^0M\to
\gr^iM.$$ By the hypotheses of the lemma, their compositions with~$\gr^if$ is an isomorphism. Thus, $\gr^if$ is an isomorphism. Since
$A$ is finitely filtered, we conclude that $f$ is an isomorphism.
\end{proof}

\begin{proposition}\label{prop-expl}
Let $\big(\Cat,\At^1_{\Cat}\big)$ be as in
Proposition~\ref{prop-phi1expl}. Then to define a $D_R$-structure on
$\Cat$ with $\phi^1_{\Cat}$ being given by $\At_{\Cat}^1$ is the
same as to define a functorial $\Qr_R$-submodule
$$
\At^2_{\Cat}(X)\subset\At^1_{\Cat}{\left(\At^1_{\Cat}(X)\right)}
$$
such that, for all $X$ and $Y$ in $\Cat$, the following is
satisfied:
\begin{enumerate}
\item\label{en:2800}
The map $m$ sends $\At_{\Cat}^2(X)\otimes \At_{\Cat}^2(Y)$ to
$\At_{\Cat}^2(X\otimes Y)$;
\item\label{en:2802}
The adjoint quotients of the intersection of $\At_{\Cat}^2(X)$ with
the above filtration on $\At^1_{\Cat}{\big(\At^1_{\Cat}(X)\big)}$
are contained in
$$
X,\quad \Omega_R\otimes_R X,\quad \Sym^2_R\Omega_R\otimes_R X,
$$
where we consider the diagonal embedding
$$
\Omega_R\otimes_R X\hookrightarrow \left(\Omega_R\otimes_R
X\right)\oplus\left(\Omega_R\otimes_R X\right)
$$
and the natural embedding
$$
\Sym^2_R\Omega_R\otimes_R X\hookrightarrow
\Omega_R\otimes_R\Omega_R\otimes_R X;
$$
\item\label{en:2808}
The induced map from $\At_{\Cat}^2(X)$ to
$X=\gr^0\At^1_{\Cat}\big(\At^1_{\Cat}(X)\big)$ is surjective.
\end{enumerate}
\end{proposition}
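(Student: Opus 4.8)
The plan is to realize the functor $\phi^2_{\Cat}$ concretely as an Atiyah-type functor and to match the abstract data $\left(\phi^2_{\Cat},\Phi_{\Cat},\Psi_{\Cat}\right)$ with the submodule $\At^2_{\Cat}$ and its three properties. Since $\Qr_R$ is of finite presentation as an $R$-module, Proposition~\ref{prop-extscalfinite} identifies $\Qr_R\otimes_R\Cat$ with the category of $\Qr_R$-modules in $\Cat$, so a right-exact $R$-linear tensor functor $\phi^2_{\Cat}$ is the same datum as a functor sending $X$ to a $\Qr_R$-module in $\Cat$, whose underlying object (via $l\colon R\to\Qr_R$) I call $\At^2_{\Cat}(X)$. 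Under the description of Proposition~\ref{prop-phi1expl}, the target of $\Psi_{\Cat}$, namely $\left(\id_{P^1_R}\otimes\phi^1_{\Cat}\right)\circ\phi^1_{\Cat}$, corresponds to the iterated object $\At^1_{\Cat}\left(\At^1_{\Cat}(X)\right)$ as a module over $\P_R^1\otimes_R\P_R^1$, while the source $\left(\Delta\otimes\id\right)\circ\phi^2_{\Cat}$ is the extension of scalars of $\At^2_{\Cat}(X)$ along $\Delta\colon\Qr_R\to\P_R^1\otimes_R\P_R^1$. As the cokernel of $\Delta$ is a projective $R$-module, $\Delta$ is a split injection, so the adjunction unit produces a canonical embedding $\At^2_{\Cat}(X)\hookrightarrow\At^1_{\Cat}\left(\At^1_{\Cat}(X)\right)$ of $\Qr_R$-modules, the target being a $\Qr_R$-module by restriction along $\Delta$.

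\textbf{Forward direction.} Starting from a $D_R$-structure, I define $\At^2_{\Cat}$ as above and transport the filtration $\Qr_R\supset I_R\supset\Sym^2_R\Omega_R\supset 0$. The key observation is that this filtration is the preimage under $\Delta$ of filtration~\eqref{eq:filtration} on $\P_R^1\otimes_R\P_R^1$: one checks $\Delta^{-1}(F^1)=I_R$ and $\Delta^{-1}(F^2)=\Sym^2_R\Omega_R$, and that on associated graded $\Delta$ induces the diagonal $\Omega_R\hookrightarrow\Omega_R\oplus\Omega_R$ in degree one and the natural inclusion $\Sym^2_R\Omega_R\hookrightarrow\Omega_R\otimes_R\Omega_R$ in degree two. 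This is exactly the bookkeeping that turns condition~(\ref{en:2802}) into the content of Lemma~\ref{lemma-isom}, whose three isomorphisms give the adjoint quotients $X$, $\Omega_R\otimes_R X$, and $\Sym^2_R\Omega_R\otimes_R X$ with the embeddings specified there. Condition~(\ref{en:2808}) is the surjectivity onto $\gr^0=X$ supplied by $\Phi_{\Cat}$, and condition~(\ref{en:2800}) is the product map $m$ coming from $\phi^2_{\Cat}$ being a tensor functor.

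\textbf{Converse direction.} Given $\At^2_{\Cat}$ with the three properties, I set $\phi^2_{\Cat}(X):=\At^2_{\Cat}(X)$, viewed as a $\Qr_R$-module in $\Cat$ via restriction of the $\P_R^1\otimes_R\P_R^1$-action along $\Delta$; functoriality and right-exactness follow from those of $\At^1_{\Cat}$ together with the filtration, and the tensor structure is condition~(\ref{en:2800}). The isomorphism $\Phi_{\Cat}$ is $(e\otimes\id)\phi^2_{\Cat}(X)\cong\gr^0\At^2_{\Cat}(X)\cong X$, using conditions~(\ref{en:2802}) and~(\ref{en:2808}). The heart of this direction is the construction of $\Psi_{\Cat}$: the inclusion $\At^2_{\Cat}(X)\subset\At^1_{\Cat}\left(\At^1_{\Cat}(X)\right)$ induces by adjunction a morphism of $\P_R^1\otimes_R\P_R^1$-modules $f\colon\left(\Delta\otimes\id\right)\At^2_{\Cat}(X)\to\At^1_{\Cat}\left(\At^1_{\Cat}(X)\right)$, which I must show is an isomorphism. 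I would apply Lemma~\ref{lemma-filtr} to the filtered ring $A=\P_R^1\otimes_R\P_R^1$ of filtration~\eqref{eq:filtration}, with $\gr^0A=R$, $\gr^1A=\Omega_R\oplus\Omega_R$, $\gr^2A=\Omega_R\otimes_R\Omega_R$; the target $N=\At^1_{\Cat}\left(\At^1_{\Cat}(X)\right)$ satisfies $\gr^iA\otimes_{\gr^0A}\gr^0N\xrightarrow{\sim}\gr^iN$ by the explicit computation of its adjoint quotients recorded before Lemma~\ref{lemma-filtr}, and conditions~(\ref{en:2802}) and~(\ref{en:2808}) force $\gr^0f\colon X\to X$ to be an isomorphism. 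Lemma~\ref{lemma-filtr} then gives that $f$ is an isomorphism, yielding $\Psi_{\Cat}$.

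I expect the main obstacle to be precisely this last step. The forward passage and the construction of $\phi^2_{\Cat}$, $\Phi_{\Cat}$ are essentially a transcription of Lemma~\ref{lemma-isom} and the filtration comparison, but the identity $\left(\Delta\otimes\id\right)\At^2_{\Cat}(X)\cong\At^1_{\Cat}\left(\At^1_{\Cat}(X)\right)$ is where one must genuinely recover the whole of the iterated Atiyah object from graded data, and Lemma~\ref{lemma-filtr} is tailored for exactly this. The two constructions are mutually inverse by design, since both factor through the same embedding $\At^2_{\Cat}(X)\subset\At^1_{\Cat}\left(\At^1_{\Cat}(X)\right)$ and the same identification of its $\Delta$-extension with the iterated Atiyah object.
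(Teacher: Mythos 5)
Your proposal is correct and follows essentially the same route as the paper: the forward direction is a transcription of Lemma~\ref{lemma-isom} together with the filtration comparison, and the converse hinges on showing that $\mu:\left(\P_R^1\otimes_R\P_R^1\right)\otimes_{\Qr_R}\At^2_{\Cat}(X)\to\At^1_{\Cat}\left(\At^1_{\Cat}(X)\right)$ is an isomorphism via Lemma~\ref{lemma-filtr} applied to the finitely filtered ring $\P_R^1\otimes_R\P_R^1$, exactly as in the paper. The one step you gloss over is that the morphism induced by $m$ on tensor products must itself be checked to be an isomorphism (not merely that $m$ preserves $\At^2_{\Cat}$), which the paper settles by one more application of Lemma~\ref{lemma-filtr}, this time to the filtered ring $\Qr_R$.
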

\begin{proof}
Given a $D_R$-structure $\phi^2_{\Cat}$, let $\At_{\Cat}^2$ be the
composition of $\phi^2_{\Cat}$ with the forgetful functor
$$\Qr_R\otimes_R\Cat\to\Cat.$$ Since~$\phi^2_{\Cat}$ is a tensor
functor and we have an isomorphism of tensor functors $\Psi_{\Cat}$,
$\At_{\Cat}^2$ satisfies statement~\ref{en:2800}. Also, by
Lemma~\ref{lemma-isom}, we have statements~\ref{en:2802} and~\ref{en:2808}.

Conversely, let $\At_{\Cat}^2$
satisfy statements~\ref{en:2800},~\ref{en:2802}, and~\ref{en:2808}. To
construct the isomorphism $\Psi_{\Cat}$, we need to show that the
natural morphism
$$
\mu:{\left(\P_R^1\otimes_R\P_R^1\right)}\otimes_{\Qr_R}\At^2_{\Cat}(X)\to
\At^1_{\Cat}\left(\At^1_{\Cat}(X)\right)
$$
is an isomorphism. Note that $\mu$ is a morphism between
$\big(\P_R^1\otimes_R\P_R^1\big)$-modules in $\Cat$. Consider the
filtration on the source and target of $\mu$ given by
filtration~\eqref{eq:filtration}. By statements~\ref{en:2802}
and~\ref{en:2808}, the natural morphism
$$
\At^2_{\Cat}(X)\big/I_R\cdot\At^2_{\Cat}(X)\to X
$$
is an isomorphism. Therefore, the first adjoint quotient of the
source of $\mu$ is isomorphic to $X$ and $\gr^0\mu$ is an
isomorphism (being the identity from $X$ to itself). By
Lemma~\ref{lemma-filtr} applied to the finitely filtered
ring~$\big(\P_R^1\otimes_R\P_R^1\big)$,~$\mu$ is an isomorphism.

The tensor structure on the functor $\phi^2_{\Cat}$ is given by the
product map $m$. The fact that we obtain an isomorphism follows from
Lemma~\ref{lemma-filtr} applied to the finitely filtered ring
$\P_R^2$. Finally, $$\phi^2_{\Cat}:\Cat\to \Qr_R\otimes_R\Cat$$ is
$R$-linear with respect to the left homomorphism $l:R\to \Qr_R$,
because so is the functor $\phi^1_{\Cat}$, and, hence,
$\big(\id_{\P_R^1}\otimes\phi^1_{\Cat}\big)\circ\phi^1_{\Cat}$.
\end{proof}

\begin{definition}\label{defin-diffgenercat}
Given an object $X$ in a rigid $D_k$-category $\Cat$, let $\langle
X\rangle_{\otimes,D}$ denote the minimal full rigid
$D_k$-subcategory in $\Cat$ that contains $X$ and is an closed under
taking subquotients. We say that the category~$\langle
X\rangle_{\otimes,D}$ is {\it $D_k$-tensor generated} by the
object~$X$.
\end{definition}

\begin{remark}\label{remark-difftensgen}
In the notation of Definition~\ref{defin-diffgenercat}, $\Cat$ is
$D_k$-tensor generated by $X$ if and only if there is no smaller
full subcategory in $\Cat$ containing $X$ and closed under taking
direct sums, tensor products, duals, subquotients, and applying the
functor $\At_{\Cat}^1$ (Section~\ref{subsection-explicitdefin}),
because $\At^{2}_{\Cat}$ is a subobject in
$\At_{\Cat}^{1}{\big(\At_{\Cat}^{1}(X)\big)}$. In addition, the
category $\langle X\rangle_{\otimes,D}$ is the union of
all~$\Cat_i$'s, where $\Cat_i$ is the subcategory in $\Cat$ tensor
generated by~${\big(\At^{1}_{\Cat}\big)^{\circ\,i}}(X)$.
\end{remark}

\begin{remark}
\hspace{0cm}
\begin{enumerate}
\item\label{en:28022}
Definition~\ref{defin-intdifftens} is analogous to the definition of a group action on a category (for example, see \cite{Deligne1997}) so that the isomorphisms $\Phi$ and $\Psi$ correspond to the unit and associativity constraints, respectively. We do not require  the pentagon condition for $\Psi$ in Definition~\ref{defin-intdifftens} as we are not considering $\P_R^3$ (Section~\ref{subsection-illusie}). 

On the contrary, the compatibility condition between $\Phi$ and $\Psi$ makes sense in our set-up and means that, for any object $X$ in $\Cat$, the following compositions coincide:
$$
\begin{CD}
\At^2_{\Cat}(X)\to \At^1_{\Cat}\left(\At^1_{\Cat}(X)\right)@>\At^1(\pi_X)>> \At^1_{\Cat}(X),\quad
\end{CD}
\begin{CD}
\At^2_{\Cat}(X)\to \At^1_{\Cat}\left(\At^1_{\Cat}(X)\right)@>\pi_{\At^1(X)}>>\At^1_{\Cat}(X),
\end{CD}
$$
where $\pi_X:\At^1_{\Cat}(X)\to X$ is the morphism given by exact sequence~\eqref{eq:Atiyah}. We do not require  this condition in Definition~\ref{defin-intdifftens} as well. However, it holds for Examples~\ref{examp-moddiffcat},~\ref{examp-comoddiffcat} and for the differential category constructed in Theorem~\ref{teor-paramAtiayh}.
\item\label{en:3210}
Suppose that $D_R$ is of rank one over $R$ and the compatibility condition from part~\ref{en:28022} holds for a $D_R$-category~$\Cat$. Then we have $$\Sym^2_R \Omega_R=\Omega_R\otimes_R\Omega_R$$ and, by a dimension argument, the embedding
$$
\At^2_{\Cat}(X)\to \At^1_{\Cat}\left(\At^1_{\Cat}(X)\right)
$$
identifies $\At^2_{\Cat}(X)$ with the kernel of the morphism
$$
\At^1(\pi_X)-\pi_{\At^1(X)}:\At^1_{\Cat}\left(\At^1_{\Cat}(X)\right)\to \At^1_{\Cat}(X).
$$
Therefore, $\At^2_{\Cat}$ is uniquely defined by $\At^1_{\Cat}$, or, equivalently, $\phi_{\Cat}^1$ is uniquely defined up to a canonical isomorphism by $\phi_{\Cat}^2$.
\item
Suppose that $F:\Cat\to \Dop$ is a faithful differential functor between $D_R$-categories and the compatibility condition from part~\ref{en:28022} holds for $\Dop$. Then this condition also holds for $\Cat$. In particular, if $\Cat$ is a $D_k$-Tannakian category (Definition~\ref{defin-diffTancat}) over a differential field $(k,D_k)$, then the compatibility condition holds for $\Cat$ by the end of part~\ref{en:28022}. If, in addition, $\dim_k(D_k)=1$, then, by part~\ref{en:3210}, we see that Definition~\ref{defin-diffTancat} is equivalent to the definitions of a differential Tannakian category over a field with one derivation from \cite[Definition~3]{difftann} and~\cite[Definition~5.2.1]{Moshe}.
\end{enumerate}
\end{remark}

Let us discuss the relation between
Definition~\ref{defin-intdifftens} and the definition of a neutral
differential Tannakian category with several commuting derivations
given in~\cite[Definition~3.1]{diffreductive}. Suppose that
$D_R$ is a free $R$-module generated by commuting derivations
$\partial_1,\ldots,\partial_d$. Let $\omega_1,\ldots,\omega_d$ be
the dual basis in $\Omega_R=D_R^{\vee}$. There is an involution
$\sigma$ of the $(R\otimes R)$-algebra $\P^1_R\otimes_R\P_R^1$
uniquely defined by the condition $\sigma(\omega_i\otimes
1)=1\otimes\omega_i$ for all $i$. For example, for any
$$\omega=\sum _i a_i\omega_i\in\Omega_R,\ \ \ a_i\in R,$$ we have
$$\sigma(1\otimes\omega)=\omega\otimes 1+\sum _i\omega_i\otimes\dd
a_i.$$ The subring of invariants under the involution $\sigma$
coincides with $\Qr_R$, because $\dd\omega_i=0$ for all $i$.
Further, for any $i$, the morphism of differential rings $(R,D_R)\to
(R,R\cdot\partial_i)$ induces the ring homomorphism
$\P_R^1\to\P_i^1$, where $\P_i^1$ denotes the 1-jet ring associated
with the differential ring $(R,R\cdot\partial_i)$. It follows that
$\sigma$ induces a collection of ring isomorphisms
$$\P_i^1\otimes_R\P_j^1\cong \P_j^1\otimes_R\P_i^1$$ that commute with
$\sigma$ via the homomorphisms
$$
\P_R^1\otimes_R\P_R^1\to \P_i^1\otimes_R\P_j^1.
$$
Next, let $\Cat$ be a $D_R$-category over $R$. Then, for any object
$X$ in $\Cat$, the isomorphism
$$
\mu:{\left(\P_R^1\otimes_R\P_R^1\right)}\otimes_{\Qr_R}\At^2_{\Cat}(X)
\stackrel{\sim}\longrightarrow
\At^1_{\Cat}\left(\At^1_{\Cat}(X)\right)
$$
induces an involution $\sigma_X$ on
$\At^1_{\Cat}\big(\At^1_{\Cat}(X)\big)$ such that the invariants
of $\sigma_X$ coincide with $\At_{\Cat}^2(X)$. For any~$i$, the ring
homomorphism $\P_R^1\to\P_i^1$ induces a morphism
$$
\At^1_{\Cat}(X)\to\At^1_i(X),
$$
where we have a functorial exact sequence
$$
\begin{CD}
0@>>>X@>>> \At^1_i(X)@>>>X@>>> 0.
\end{CD}
$$
Since the ring homomorphism
$$
\P_R^1\otimes_R\P_R^1\to\bigoplus _{i,j}\left(\P^1_i\otimes_R\P^1_j\right)
$$
is injective, the natural morphism
$$
\At^1_{\Cat}\left(\At^1_{\Cat}(X)\right)\to\bigoplus _{i,j}\At^1_i\left(\At^1_j(X)\right)
$$
is also injective. It follows that to define $\sigma_X$ it is enough
to specify a collection of isomorphisms
$$
S_{i,j}:\At^1_i\left(\At^1_j(X)\right)\stackrel{\sim}\longrightarrow
\At^1_j\left(\At^1_i(X)\right)
$$
that should satisfy certain compatibility conditions. If, in
addition, $R=k$ is a field, $\Cat$ is a neutral Tannakian category,
and the fiber functor commutes with $\At^1$ and sends the
isomorphisms $S_{i,j}$ to the corresponding isomorphisms in
$\Vect(k)$, then~$S_{i,j}$ satisfy the compatibility conditions and
define correctly $\At^2_{\Cat}$ as the equalizer in
$\At^1_{\Cat}\big(\At^1_{\Cat}(X)\big)$ of all the isomorphisms~$S_{i,j}$. Also, one needs to require the Baer sum isomorphisms for
$\At^1_i$ to obtain the Baer sum isomorphism for $\At^1_{\Cat}$,
which would preserve~$\At^2_{\Cat}$. The latter coincides with
the definition of a neutral differential Tannakian category as given
in \cite[Definition~3.1]{diffreductive}.

Finally, let us perform a calculation that we use in Section~\ref{subsection-isomfibfunct}. Let $(R,D_R)$ be a differential ring with free~$D_R$. Choose a basis $\partial_1,\ldots,\partial_d$ in $D_R$ over $R$ and let $\omega_1,\ldots,\omega_d$ be the dual basis in $\Omega_R$. Consider free $R$-modules $$M=R\cdot e_1\oplus\ldots \oplus R\cdot e_m\quad \text{and}\quad N=R\cdot f_1\oplus\ldots \oplus R\cdot f_n$$ and a morphism of $R$-modules $\phi:M\to N$ given by a matrix $T$. Then the morphism $$\At^1_R(\phi):\At^1_R(M)\to\At^1_R(N)$$ is given by the matrix
$$
\begin{pmatrix}
T&0&\ldots&0&0\\
-\partial_1(T)&T&\ldots&0&0\\
\vdots&&\ddots&&\vdots\\
-\partial_{d-1}(T)&0&\ldots&T&0\\
-\partial_d(T)&0&\ldots&0&T
\end{pmatrix},
$$
where we consider the basis
$$
{\left\{e_1\otimes 1,\ldots,e_m\otimes 1,e_i\otimes\omega_j\right\}},\quad 1\leqslant i\leqslant m,\ 1\leqslant j\leqslant d,\quad\text{in}\ \At^1_R(M)={\left(M\otimes_R\P_R^1\right)}_R$$ with respect to the right $R$-module structure (Remark~\ref{rem-expldefin}\eqref{en:2845}) and the analogous basis in $\At^1_R(N)$.

\subsection{Differential Tannakian categories}\label{subsection-isomfibfunct}

Throughout this subsection, we fix a differential field $(k,D_k)$ and use the notions and notation from
Section~\ref{subsection-prelTann}. Let us define differential
Tannakian categories.

\begin{definition}\label{defin-diffTancat}
\hspace{0cm}
\begin{itemize}
\item
A {\it $D_k$-Tannakian category} over $(k,D_k)$ (or simply over $k$)
is a $D_k$-category $\Cat$ over $k$
(Definition~\ref{defin-intdifftens}) such that $\Cat$ is rigid, the
homomorphism $k\to\End_{\Cat}(\uno)$ is an isomorphism, and there
exists a $D_k$-algebra~$R$ over $k$ together with a differential
functor $\omega:\Cat\to \Mod(R)$ (Definition~\ref{defin-Dkfunctor}).
\item
Given two differential functors $\omega,\eta:\Cat\to\Mod(R)$, denote
the set of isomorphisms between~$\omega$ and~$\eta$ as differential
functors by $\fIsom^{\otimes,D}(\omega,\eta)$.
\item A {\it neutral $D_k$-Tannakian
category} over $k$ is a $D_k$-Tannakian category over~$k$ with a
fixed differential functor to~$\Vect(k)$.
\end{itemize}
\end{definition}

\begin{remark}
We use notation from Definition~\ref{defin-diffTancat}. Since the
category $\Cat$ is rigid and any differential functor is right-exact
(Definition~\ref{defin-Dkfunctor}), we see that the functor $\omega$
is exact \cite[2.10(i)]{DeligneFS}. In particular, $\omega$ is a fiber functor from~$\Cat$ to $\Mod(R)$.
\end{remark}

\begin{example}
Let $(R,A)$ be a $D_k$-Hopf algebroid over $k$ (Example~\ref{example-diffobjects}\eqref{en:6104}) such that $A$ is faithfully flat over~\mbox{$R\otimes_k R$}.
Since the forgetful functor $$\Comodf(R,A)\to\nolinebreak\Mod(R)$$ is a fiber functor (Section~\ref{subsection-prelHopfalg}) and differential  (Example~\ref{examp-difffunctor}\eqref{en:2370}), the category $\Comodf(R,A)$ is a $D_k$-Tannakian category over~$k$.
In particular, if $R=k$ and $A$ is a $D_k$-Hopf algebra over $k$ (Example~\ref{example-diffobjects}\eqref{examp-diffHopfalgebra}), then the category $\Comodf(A)$ is a neutral $D_k$-Tannakian category over~$k$.
\end{example}

Given a $D_k$-Tannakian category $\Cat$ over $k$, a differential
functor $\omega:\Cat\to\Mod(R)$, and a morphism of $D_k$-algebras~$R\to S$, we put
$$
\omega_S:\Cat\to\Mod(S),\quad X\mapsto S\otimes_R\omega(X).
$$

\begin{proposition}\label{prop-reprdiffalg}
Let $R$ be a $D_k$-algebra over $k$, $\Cat$ be a $D_k$-Tannakian category over $k$,
$$
\omega,\eta:\Cat\to\Mod(R)
$$
be differential functors, and let $A$ be the $R$-algebra that
corepresents the functor  (Section~\ref{subsection-prelTann}):
$$
\Isom^{\otimes}(\omega,\eta):\Alg(R)\to\Sets,\quad S\mapsto \fIsom^{\otimes}(\omega_S,\eta_S).
$$
Then $A$ has a canonical structure of a $D_R$-algebra over $R$
such that $A$ corepresents the functor
$$
\Isom^{\otimes,D}(\omega,\eta):\DAlg(R,D_R)\to \Sets,\quad
S\mapsto \fIsom^{\otimes,D}(\omega_S,\eta_S).
$$
\end{proposition}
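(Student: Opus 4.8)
The plan is to reduce the construction of the $D_R$-structure on $A$ to the universal property that defines $A$, and then to read off corepresentability on $\DAlg(R,D_R)$ from that same universal property. Recall that, by Example~\ref{example-diffobjects}\eqref{en:1646} together with Proposition~\ref{prop-equivcomod2tr}, giving a $D_R$-algebra structure on $A$ is the same as giving a homomorphism of $R$-algebras $\phi^2_A\colon A\to {}_R(\Qr_R\otimes_R A)$ satisfying the counit identity $(e\otimes\id_A)\circ\phi^2_A=\id_A$ and the coassociativity diagram of Proposition~\ref{prop-equivcomod2tr}. Write $B:={}_R(\Qr_R\otimes_R A)$, regarded as an $R$-algebra through $l\colon R\to\Qr_R$. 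By the corepresentability of $\Isom^{\otimes}(\omega,\eta)$ on $\Alg(R)$ (Section~\ref{subsection-prelTann}), giving the $R$-algebra homomorphism $\phi^2_A\colon A\to B$ is the same as giving an isomorphism of tensor functors $\theta\colon\omega_B\xrightarrow{\sim}\eta_B$ over $B$. So the whole construction amounts to producing one canonical such $\theta$ and then checking the two identities above.

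To build $\theta$, I would conjugate the universal isomorphism by the differential structures. Let $u\colon\omega_A\xrightarrow{\sim}\eta_A$ be the universal isomorphism corresponding to $\id_A$. Base-changing categories along $\Qr_\varphi$ (using the functoriality of extension of scalars from Section~\ref{subsection-prelextscal}) turns $u$ into an isomorphism $\bar u\colon(\Qr_\varphi\otimes\omega_A)\xrightarrow{\sim}(\Qr_\varphi\otimes\eta_A)$. Composing $\bar u$ with the differential structures $\Pi_\omega$ and $\Pi_\eta^{-1}$ of Definition~\ref{defin-Dkfunctor}, and using the canonical Atiyah identification of $\phi^2_R(\omega X)=(\omega(X)\otimes_R\Qr_R)_R$ (Example~\ref{examp-moddiffcat}) with $\omega_B(X)$ afforded by the structure isomorphism $\epsilon^2$ — invertible by Proposition~\ref{prop-isomcomod2tr} and its two-jet analogue — produces the desired tensor isomorphism $\theta\colon\omega_B\xrightarrow{\sim}\eta_B$. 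Defining $\phi^2_A$ as the classifying homomorphism of $\theta$, the counit identity follows by base-changing $\theta$ along $e\colon\Qr_R\to R$: the compatibility of $\Pi_\omega,\Pi_\eta$ with the unit constraints $\Phi_{\Cat}$ and $\Phi_{\Mod(R)}$ forces the $e$-restriction of $\theta$ to be $u$ itself, i.e. $(e\otimes\id_A)\circ\phi^2_A=\id_A$. Likewise, the coassociativity diagram of Proposition~\ref{prop-equivcomod2tr} follows from the compatibility of $\Pi_\omega,\Pi_\eta$ with $\Psi_{\Cat}$ (Definition~\ref{defin-intdifftens}): both composites classify the same tensor isomorphism over $\P^1_R\otimes_R\P^1_R\otimes_R A$, so they coincide by Yoneda. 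Finally, $\phi^2_A$ lands in $\Qr_R\otimes_R A$ with $\Qr_R$ coming from $R$, so the associated morphism of differential rings is automatically strict and $A$ is genuinely a $D_R$-algebra.

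For corepresentability on $\DAlg(R,D_R)$, fix a $D_R$-algebra $S$. For a strict morphism $R\to S$ the extension of scalars $S\otimes_R-$ is a differential functor (Example~\ref{examp-difffunctor}\eqref{en:2371}), so $\omega_S$ and $\eta_S$ are differential functors $\Cat\to\Mod(S)$ and $\fIsom^{\otimes,D}(\omega_S,\eta_S)\subseteq\fIsom^{\otimes}(\omega_S,\eta_S)$ is the subset of tensor isomorphisms compatible with the induced differential structures. Under the bijection $\Hom_{\Alg(R)}(A,S)\cong\fIsom^{\otimes}(\omega_S,\eta_S)$, $f\mapsto v:=f_\ast(u)$, I would show that $f$ is horizontal, i.e. a morphism in $\DAlg(R,D_R)$ commuting with $\phi^2_A$ and $\phi^2_S$, if and only if $v$ is an isomorphism of differential functors. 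This is a naturality chase: since $\phi^2_A$ was defined as the classifying map of $\theta$, and $\theta$ is precisely the conjugate of $u$ by $\Pi_\omega,\Pi_\eta$, the equation $\phi^2_S\circ f=(\Qr_R\otimes_R f)\circ\phi^2_A$ transports, under corepresentability, to the statement that $v$ intertwines $\Pi_{\omega_S}$ and $\Pi_{\eta_S}$, which is exactly the condition $v\in\fIsom^{\otimes,D}(\omega_S,\eta_S)$. Hence $\Hom_{\DAlg(R,D_R)}(A,S)\cong\fIsom^{\otimes,D}(\omega_S,\eta_S)$, naturally in $S$.

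The main obstacle is the bookkeeping hidden in the second paragraph: one must track the left and right $R$-module structures on $\Qr_R$ throughout the composite defining $\theta$, and verify that the Atiyah identifications $\epsilon^2$ genuinely match $\phi^2_R(\omega X)$ with $\omega_B(X)$ as $B$-modules and tensor-functorially in $X$, so that $\theta$ is an isomorphism of tensor functors and not merely an objectwise isomorphism. Checking the coassociativity identity — equivalently, that the two ways of comparing $\omega$ and $\eta$ over $\P^1_R\otimes_R\P^1_R\otimes_R A$ agree — is where the full force of the $\Psi$-compatibility built into Definition~\ref{defin-Dkfunctor} is required, and is the most delicate point.
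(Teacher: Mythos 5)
Your proposal is correct and follows essentially the same route as the paper's proof: both conjugate the universal tensor isomorphism by $\Pi_\omega$ and $\Pi_\eta^{-1}$ over the $2$-jet ring and transport the result through corepresentability and Yoneda, the only real difference being that you package the differential structure as the coaction $\phi^2_A\colon A\to{}_R\!\left(\Qr_R\otimes_R A\right)$ while the paper works with the adjoint datum $\epsilon^2_A\colon A\otimes_R\Qr_R\to\Qr_R\otimes_R A$, obtained from a morphism of functors $\Lambda$ on $\Alg\!\left(\Qr_R\right)$; the verification of the axioms via the $\Phi$- and $\Psi$-compatibilities and the characterization of differential morphisms $A\to S$ are identical in substance. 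One small repair: the identification of $\omega_{\Qr_R}$ with the Atiyah-twisted functor should be justified by Proposition~\ref{prop-extscalfinite} (factoring through $\Cat\otimes_k\Qr_k$ and $\epsilon^2_{\Cat}$, as in the paper's displayed isomorphisms), not by a ``two-jet analogue of Proposition~\ref{prop-isomcomod2tr}'' --- no such analogue is proved, and the paper explicitly leaves open whether $\epsilon^2$ is invertible in general.
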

\begin{proof}
First let us construct a $D_R$-structure on $A$. The idea is as
follows. The collection $(\Cat,R,\omega,\eta)$ is a $D_k$-object in
the (2-)category of collections that consist of a Tannakian category
over $k$, a $k$-algebra, and two fiber functors to modules over this
algebra. On the other hand, the (pseudo-) functor that assigns $A$ to
such a collection commutes with extensions and restrictions of
scalars between $k$ and $\Qr_k$. This defines a $D_k$-structure on
$A$. Let us give more details. By the definition of $A$, the
$\Qr_R$-algebra $A\otimes_R\Qr_R$ corepresents the functor
$$
\Isom^{\otimes}{\left(\omega_{\Qr_R},\eta_{\Qr_R}\right)}:\Alg{\left(\Qr_R\right)}\to\Sets,
$$
where, as above,
$$
\omega_{\Qr_R}:\Cat\to \Mod{\left(\Qr_R\right)},\quad X\mapsto\omega(X)\otimes_R\Qr_R,\quad
\mbox{and}\quad
\eta_{\Qr_R}:\Cat\to \Mod{\left(\Qr_R\right)},\quad X\mapsto\eta(X)\otimes_R\Qr_R.
$$
The functors $\omega_{\Qr_R}$ and $\eta_{\Qr_R}$ are exact
$k$-linear tensor functors. Moreover, $\omega_{\P_R^2}$ is the
composition of the functor
$$
-\otimes_k\Qr_k:\Cat\to\Cat\otimes_k\Qr_k
$$
and the functor
$$
\omega\otimes_k\Qr_k:\Cat\otimes_k\Qr_k\to\Mod(R)\otimes_k\Qr_k\cong\Mod{\left(\Qr_R\right)}.
$$
The analogous relations hold for $\eta_{\Qr_R}$ and
$\eta\otimes_k\Qr_k$. Hence, by
Proposition~\ref{prop-extscalfinite}, there is a canonical
isomorphism of functors from $\Alg{\big(\Qr_R\big)}$ to $\Sets$:
\begin{equation}\label{eq:3269}
\Isom^{\otimes}{\left(\omega_{\Qr_R},\eta_{\Qr_R}\right)}\cong
\Isom^{\otimes}{\left(\omega\otimes_k\Qr_k,\eta\otimes_k\Qr_k\right)}.
\end{equation}
Similarly, the $\Qr_R$-algebra $\Qr_R\otimes_R A$
corepresents the functor
$$
\Isom^{\otimes}{\left(_{\Qr_R}\omega,_{\Qr_R}\eta\right)}:\Alg{\left(\Qr_R\right)}\to\Sets
$$
and we have an isomorphism of functors
\begin{equation}\label{eq:3285}
\Isom^{\otimes}{\left(_{\Qr_R}\omega,_{\Qr_R}\eta\right)}\cong \Isom^{\otimes}{\left(\Qr_k\otimes_k\omega,\Qr_k\otimes_k\eta\right)}.
\end{equation}
Again, by Proposition~\ref{prop-extscalfinite}, the right-exact $k$-linear tensor functor $$\phi_{\Cat}^2:\Cat\to\Qr_k\otimes_k\Cat$$ defines a right-exact $\Qr_k$-linear tensor functor
$$
\epsilon_{\Cat}^2:\Cat\otimes_k\Qr_k\to\Qr_k\otimes_k\Cat.
$$
In addition, the isomorphism $\Pi_{\omega}$ defines an
isomorphism of tensor functors
$$
\omega\otimes_k\Qr_k\stackrel{\sim}\longrightarrow  {\left(\Qr_k\otimes_k\omega\right)}\circ\epsilon_{\Cat}^2
$$
from $\Cat\otimes_k\Qr_k$ to $\Mod{\left(\Qr_R\right)}$. Analogously, $\Pi_{\eta}$ defines an isomorphism of tensor functors
$$
\eta\otimes_k\Qr_k\stackrel{\sim}\longrightarrow  {\left(\Qr_k\otimes_k\eta\right)}\circ\epsilon_{\Cat}^2.
$$
This leads to a morphism of functors
$$
\Isom^{\otimes}{\left(\Qr_k\otimes_k\omega,\Qr_k\otimes_k\eta\right)}\to \Isom^{\otimes}{\left(\omega\otimes_k\Qr_k,\eta\otimes_k\Qr_k\right)}.
$$
Hence, by isomorphisms~\eqref{eq:3269} and~\eqref{eq:3285}, we obtain a morphism of functors
$$
\Lambda:\Isom^{\otimes}{\left(_{\Qr_R}\omega,_{\Qr_R}\eta\right)}\to
\Isom^{\otimes}{\left(\omega_{\Qr_R},\eta_{\Qr_R}\right)}.
$$
By the corepresentability properties of $A\otimes_R\Qr_R$ and $\Qr_R\otimes_R A$, the morphism of functors $\Lambda$ corresponds to a morphism of~$\Qr_R$-algebras
$$
\epsilon_A^2:A\otimes_R\Qr_R\to\Qr_R\otimes_R A.
$$
Since the isomorphisms $\Psi_{\Cat}$ and $\Phi_{\Cat}$ commute with $\Psi_R$ and $\Phi_R$ via $\omega$ and $\eta$, the morphism $\epsilon^2_A$ satisfies the required properties (Example~\ref{example-diffobjects}\eqref{en:1646}) to define a $D_R$-structure on $A$.

Now let us prove the corepresentability property of $A$ in the
category of $D_R$-algebras. Let $S$ be a $D_R$-algebra,
\mbox{$\alpha:\omega_S\to \eta_S$} be an isomorphism of tensor functors,
and let $f:A\to S$ be the corresponding morphism of $R$-algebras. We
need to show that~$\alpha$ is differential if only if $f$ is
differential. Note that $\alpha$ is differential if and only if the
map
$$
\Lambda_S:\fIsom^{\otimes}{\left(_{\Qr_S}\omega,_{\Qr_S}\eta\right)}\to
\fIsom^{\otimes}{\left(\omega_{\Qr_S},\eta_{\Qr_S}\right)}
$$
sends $_{\Qr_S}\alpha$ to $\alpha_{\Qr_S}$. This is equivalent to the equality between the morphism
$$
f\otimes\id_{\Qr_R}:A\otimes_R\Qr_R\to S\otimes_R\Qr_R
$$
and the composition
$$
\begin{CD}
A\otimes_R\Qr_R@>\epsilon_A^2>>\Qr_R\otimes_R A@>\id_{\Qr_R}\otimes f>> \Qr_R\otimes_R S@>{\left(\epsilon^2_S\right)}^{-1}>> S\otimes_R\Qr_R.
\end{CD}
$$
The latter is equivalent to $f$ being differential.
\end{proof}

\begin{example}\label{examp-diffstrdmod}
Let $D_k=k\cdot\partial$, where $\partial$ is a formal symbol that
denotes the trivial derivation from~$k$ to itself,~$K$ be a
differential field over $(k,D_k)$ such that $k=K^{\partial}$, let
$\Cat=\DMod(K,D_K)$ with $D_K=K\cdot\partial$, $\omega_0:\Cat\to\Vect(k)$ be a fiber functor, and let
\mbox{$\omega:\Cat\to\Vect(K)$} be the forgetful functor. Since the left and the right $k$-module structures on $\Qr_k$ coincide,
$\Cat$ has the trivial $D_k$-structure with
$$
\phi^{2}_{\Cat}(M):=\Qr_k\otimes_k M\cong M\oplus M
$$
for a $\partial$-module $M$ over $K$. Since
$$
\omega_0{\left(\Qr_k\otimes_k M\right)}\cong \Qr_k\otimes_k \omega_0(M)\cong \omega_0(M)\otimes_k\Qr_k,
$$
we see that $\omega_0$ is a differential functor. By
Proposition~\ref{prop-diffmodjets}, for any $\partial$-module $M$
over $K$, there is a canonical isomorphism of $(K\otimes K)$-modules
$$
M\otimes_K\Qr_K\cong \Qr_K\otimes_K M.
$$
Since $$\left(\Qr_K\otimes_K M\right)_K\cong \left(\Qr_k\otimes_k M\right)_K,$$ we obtain
that $\omega$ is a differential functor. Let $A$ be the $K$-algebra
that corepresents the functor
$$
\Isom^{\otimes}\big((K\otimes_k-)\circ\omega_0,\omega\big).
$$
Proposition~\ref{prop-reprdiffalg} provides a $\partial$-structure
on $A$. This $\partial$-structure coincides with the one defined
in~\cite[9.2]{DeligneFS} (note that the definition of a
$\partial$-structure from \cite[9.2]{DeligneFS} works well
for the whole category $\DMod(K,D_K)$, not just a subcategory tensor
generated by one object).
\end{example}

\begin{theorem}\label{theor-diffTanncorr}
Let $\Cat$ be a $D_k$-Tannakian category over a differential field
$(k,D_k)$, $R$ be a $D_k$-algebra over $k$, and let
$\omega:\Cat\to\Mod(R)$ be a differential functor. Then there exists
a $D_k$-Hopf algebroid $(R,A)$ over $(k,D_k)$ such that $A$ is
faithfully flat over $R\otimes_k R$ and $\omega$ lifts up to an
equivalence of $D_k$-categories over $k$
$$
\Cat\stackrel{\sim}\longrightarrow\Comodf(R,A).
$$
\end{theorem}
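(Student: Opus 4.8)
The plan is to combine the classical (non-differential) Tannakian reconstruction with the corepresentability statement of Proposition~\ref{prop-reprdiffalg}, which supplies the differential structure on the reconstructed Hopf algebroid.

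First I would forget the differential structures and apply the usual Tannakian correspondence of Section~\ref{subsection-prelTann}. Since $\Cat$ is rigid, $k\to\End_{\Cat}(\uno)$ is an isomorphism, and $\omega:\Cat\to\Mod(R)$ is a fiber functor (it is exact, hence faithful, by the remark following Definition~\ref{defin-diffTancat}), there is a Hopf algebroid $(R,A)$ over $k$ with $A$ faithfully flat over $R\otimes_k R$, together with an equivalence of abelian $k$-linear tensor categories $E:\Cat\stackrel{\sim}\longrightarrow\Comodf(R,A)$ under which $\omega$ becomes the forgetful functor $u:\Comodf(R,A)\to\Mod(R)$. Here $A$ corepresents $\Isom^{\otimes}(\omega_l,\omega_r)$ on $(R\otimes_k R)$-algebras, where $\omega_l,\omega_r:\Cat\to\Mod(R\otimes_k R)$ are the extensions of scalars of $\omega$ along the two morphisms $l,r:R\to R\otimes_k R$.

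Next I would install the differential structure on $A$. Because $R$ is a $D_k$-algebra, $R\otimes_k R$ is a $D_k$-algebra (Remark~\ref{remark-diffalg}) and $l,r$ are morphisms of $D_k$-algebras, so $\omega_l$ and $\omega_r$ are differential functors (Remark~\ref{remark-extescalcatdiff}). Applying Proposition~\ref{prop-reprdiffalg} over the base ring $R\otimes_k R$ to the pair $(\omega_l,\omega_r)$ endows $A$ with a canonical structure of a differential algebra over $R\otimes_k R$, hence over $k$, so that $A$ corepresents $\Isom^{\otimes,D}(\omega_l,\omega_r)$ on $\DAlg(R\otimes_k R)$; in particular $R\otimes_k R\to A$, and thus $l,r:R\to A$, are differential. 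To conclude that $(R,A)$ is a $D_k$-Hopf algebroid (Example~\ref{example-diffobjects}) it then suffices, by the Hopf-algebroid version of Proposition~\ref{prop-Hopf}, to check that the coproduct $\Delta$ is differential; but $\Delta$ corresponds under corepresentability to composition of isomorphisms of differential fiber functors, and a composition of isomorphisms of differential functors is again one, so $\Delta$ respects the differential $\Isom$ functors. The counit and antipode are then automatically differential.

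Finally, Example~\ref{examp-comoddiffcat} equips $\Comodf(R,A)$ with a canonical $D_k$-structure for which $u$ is a differential functor (Example~\ref{examp-difffunctor}), so it remains to promote $E$ to an equivalence of $D_k$-categories, i.e.\ to produce an isomorphism $\Pi_E$ intertwining $\phi^2_{\Cat}$ and $\phi^2_{\Comodf(R,A)}$. The isomorphism $\Pi_{\omega}$ expressing that $\omega$ is differential gives, via $\omega\cong u\circ E$, an isomorphism between the two candidates after applying the forgetful functor to $\Mod(\Qr_R)$; the remaining task is to verify that this underlying-module isomorphism is a morphism of $A$-comodules, which is exactly the compatibility of $\Pi_\omega$ with the coaction that was built into the $D$-structure on $A$ in the previous step. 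I expect this last verification to be the main obstacle: the functor $u$ is faithful but not full, so Lemma~\ref{corol-composdifffunc} does not apply directly, and one must instead argue that a $D_k$-structure on $\Comodf(R,A)$ lifting the fixed differential structure on underlying modules through the faithful functor $u$ is unique --- the coaction of $\Qr_R$ on a comodule being determined by $\phi^2_R$ on the underlying module together with comodule-compatibility, precisely as in the construction of Example~\ref{examp-comoddiffcat}. This uniqueness forces the transported and the canonical $D_k$-structures to coincide and makes $E$ a differential equivalence.
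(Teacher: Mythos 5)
Your proposal is correct and follows essentially the same route as the paper: both proofs hinge on applying Proposition~\ref{prop-reprdiffalg} to the two scalar extensions of $\omega$ along $l,r:R\to R\otimes_k R$ to get the differential structure on $A$, deduce the $D_k$-Hopf algebroid axioms from the corepresentability of the differential $\Isom$ functor, and invoke Deligne's \cite[1.12]{DeligneFS} for faithful flatness and the equivalence. The only difference is one of ordering (you run the classical reconstruction first and then install the $D_k$-structure, whereas the paper does the differential corepresentability first), and you spell out the final compatibility of $\Pi_\omega$ with the coaction — which the paper disposes of with a bare reference to Example~\ref{examp-comoddiffcat} — via the correct uniqueness-of-lift argument through the faithful forgetful functor.
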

\begin{proof}
Apply Proposition~\ref{prop-reprdiffalg} to the differential
functors $_{R\otimes R}\,\omega$ and $\omega_{R\otimes R}$
from~$\Cat$ to $\Mod({R\otimes_k R})$, where, as above, for~$X$
in~$\Cat$, we put
$$
(_{R\otimes R}\,\omega)(X):=(R\otimes_k R)\otimes_R\omega(X)\cong
R\otimes_k\omega(X),\quad(\omega_{R\otimes
R})(X):=\omega(X)\otimes_R(R\otimes_kR)\cong \omega(X)\otimes_k R.
$$
This gives a differential algebra $A$ over $R\otimes_k R$, where the
differential structure on $R\otimes_k R$ is defined as on the tensor
product of $D_k$-algebras (Remark~\ref{remark-diffalg}).
From the properties of the functor from $\DAlg(R\otimes_k R)$ to~$\Sets$
corepresented by $A$, it follows that $(R,A)$ is a $D_k$-Hopf algebroid
over $k$ and $\omega$ lifts to a differential functor between
$D_k$-categories $$\Cat\to\Comodf(R,A)$$
(Example~\ref{examp-comoddiffcat}). Finally,
by~\cite[1.12]{DeligneFS} (Theorem~\ref{theor-HopfalgDeligne}),
the latter functor is an equivalence of categories and $A$ is
faithfully flat over $R\otimes_k R$.
\end{proof}

In particular, when $R=k$, Theorem~\ref{theor-diffTanncorr}
recovers \cite[Theorem~2]{OvchTannakian}.

Now let us discuss finiteness properties of the algebra $A$ from
Proposition~\ref{prop-reprdiffalg}.

\begin{proposition}\label{prop-difffingencat}
In the notation of Proposition~\ref{prop-reprdiffalg}, suppose that
$\Cat$ is $D_k$-tensor generated by an object $X$
(Definition~\ref{defin-diffgenercat}). Then $A$ is $D_k$-generated
over $R$ by the matrix entries of the canonical isomorphism
$$
\omega(X)_A\stackrel{\sim}\longrightarrow \eta(X)_A
$$
and the matrix entries of its inverse with respect to any choice of
systems of generators of $\omega(X)_A$ and~$\eta(X)_A$ over~$A$.
\end{proposition}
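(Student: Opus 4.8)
The plan is to reduce the differential statement to the classical Tannakian fact that $A$ is generated as an ordinary $R$-algebra by the matrix entries of the universal isomorphism on any family of tensor-generators, and then to recognise that replacing an object by its Atiyah extension amounts, on matrix entries, to differentiating. Since $A$ corepresents $\Isom^{\otimes}(\omega,\eta)$, the element $\id_A$ yields a universal isomorphism of tensor functors $u\colon\omega_A\stackrel{\sim}{\to}\eta_A$. For an object $Y$, after choosing finite systems of $A$-module generators of the projective modules $\omega(Y)_A$ and $\eta(Y)_A$, let $c(Y)$ denote the (finitely many) matrix entries of $u_Y$ together with those of $u_Y^{-1}$. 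The standard argument recalled in Section~\ref{subsection-prelTann} gives $u_{Y\otimes Z}=u_Y\otimes u_Z$, $u_{Y^\vee}=(u_Y^{-1})^\vee$, block-diagonality on direct sums and compatibility with subquotients; hence the ordinary $R$-subalgebra generated by $c(Y)$ contains $c(Z)$ for every $Z\in\langle Y\rangle_\otimes$, and $\bigcup_Y c(Y)$ generates $A$ over $R$.

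Next I would apply the explicit description of $D_k$-tensor generation in Remark~\ref{remark-difftensgen}: since $\Cat$ is $D_k$-tensor generated by $X$, we have $\Cat=\bigcup_{i\geq 0}\Cat_i$ with $\Cat_i=\langle Y_i\rangle_\otimes$ tensor-generated by $Y_i:=(\At^1_{\Cat})^{\circ i}(X)$. By the previous paragraph, $A$ is generated as an ordinary $R$-algebra by $\bigcup_i c(Y_i)$. It therefore suffices to show, by induction on $i$, that each $c(Y_i)$ lies in the smallest $D_R$-subalgebra $B\subseteq A$ containing $c(X)$; this is exactly the assertion that $A$ is $D_k$-generated over $R$ by $c(X)$.

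The inductive step is where the differential structure enters. Because $\omega$ and $\eta$ are differential functors, the isomorphisms $\Pi_\omega$, $\Pi_\eta$ identify $\omega(\At^1_{\Cat}(Y))\cong\At^1_R(\omega(Y))$ and $\eta(\At^1_{\Cat}(Y))\cong\At^1_R(\eta(Y))$, compatibly after the strict base change from $(R,D_R)$ to the $D_R$-algebra $(A,D_A)$ (Remark~\ref{remark-extescalcatdiff}). Moreover $u$ is a differential isomorphism --- this is precisely what the canonical $D_R$-structure on $A$ from Proposition~\ref{prop-reprdiffalg} records, $\id_A$ being a point of $\Isom^{\otimes,D}(\omega,\eta)$ --- so $u$ intertwines the two Atiyah functors, and under the identifications above $u_{\At^1_{\Cat}(Y)}$ becomes $\At^1_A(u_Y)$. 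Now the explicit matrix computation at the end of Section~\ref{subsection-explicitdefin} shows that if $u_Y$ has matrix $T$, then $\At^1_A(u_Y)$ has the block form with $T$ along the diagonal and the derivatives $-\partial_j(T)$ filling the first block-column, where $\partial_1,\dots,\partial_d$ is a basis of $D_R$ acting on the entries of $T$ through the $D_R$-structure of $A$. Hence $c(\At^1_{\Cat}(Y))$ consists of the entries of $T$ and of the $\partial_j(T)$, and treating $u_Y^{-1}$ in the same way (using $\partial_j(T^{-1})=-T^{-1}\partial_j(T)T^{-1}$) keeps everything inside $B$. This proves $c(Y_{i+1})\subseteq B$ whenever $c(Y_i)\subseteq B$, closing the induction and giving $A=B$.

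I expect the main obstacle to be the clean justification of the identification $u_{\At^1_{\Cat}(Y)}\cong\At^1_A(u_Y)$: one must unwind the definition of a differential functor and of the induced $D_R$-structure on $A$ to confirm that ``$u$ is differential'' really forces the matrix of the isomorphism on the Atiyah extension to be the Atiyah extension of the matrix, so that the block formula of Section~\ref{subsection-explicitdefin} applies verbatim over $A$. The remaining ingredients --- the classical generation statement and the union description of $D_k$-tensor generation --- are then routine to assemble.
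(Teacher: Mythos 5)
Your proposal is correct and follows exactly the paper's own route: the paper's proof is a one-line citation of Proposition~\ref{prop-Tanngen}, Remark~\ref{remark-difftensgen}, and the matrix computation of $\At^1_R(\phi)$ at the end of Section~\ref{subsection-explicitdefin}, which are precisely the three ingredients you assemble. Your added care about identifying $u_{\At^1_{\Cat}(Y)}$ with $\At^1_A(u_Y)$ via the differential structure on $A$ is a useful explicit justification of a step the paper leaves implicit, but it is not a different argument.
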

\begin{proof}
This follows from Proposition~\ref{prop-Tanngen}, Remark~\ref{remark-difftensgen}, and the calculation of $\At^1_R(\phi)$ at the end of Section~\ref{subsection-explicitdefin}.
\end{proof}

\begin{corollary}
Suppose that $(k,D_k)$ is differentially closed, $\Char k=0$, and
the category $\Cat$ is $D_k$-tensor generated by one object. Then
all differential functors from $\Cat$ to $\Vect(k)$ are isomorphic.
\end{corollary}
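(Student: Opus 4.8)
The plan is to reduce the statement to finding a single $D_k$-point of an associated differential algebra. Any differential functor $\Cat\to\Vect(k)$ is automatically a fiber functor, since it is right-exact and $\Cat$ is rigid. Fix two such functors $\omega,\eta$ and apply Proposition~\ref{prop-reprdiffalg} with $R=k$: there is a $D_k$-algebra $A$ over $k$ corepresenting $\Isom^{\otimes,D}(\omega,\eta)$ on $\DAlg(k,D_k)$. Evaluating at $S=k$ yields
\[
\fIsom^{\otimes,D}(\omega,\eta)\cong\Hom_{\DAlg(k,D_k)}(A,k),
\]
so it is enough to produce one morphism of $D_k$-algebras $A\to k$ over $(k,D_k)$, as such a morphism is precisely an isomorphism $\omega\stackrel{\sim}\longrightarrow\eta$ of differential functors.

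I would then establish two facts about $A$. First, because $\Cat$ is $D_k$-tensor generated by a single object, Proposition~\ref{prop-difffingencat} shows that $A$ is $D_k$-finitely generated over $k$, hence a quotient of some ring of differential polynomials $k\{T_1,\dots,T_n\}$ by a differential ideal. Second, $A\neq 0$. For the latter I would forget the differential structure: the underlying $k$-algebra of $A$ corepresents the classical functor $\Isom^{\otimes}(\omega,\eta)$, which by the usual Tannakian formalism over a field \cite{DeligneFS} is a torsor under the group scheme $\Aut^{\otimes}(\eta)$---smooth, since $\Char k=0$---and is faithfully flat over $k$. Equivalently, after base change to an algebraic closure $\overline{k}$ the fiber functors $\overline{k}\otimes_k\omega$ and $\overline{k}\otimes_k\eta$ become isomorphic, so $A\otimes_k\overline{k}\neq 0$; in either formulation $A\neq 0$.

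Finally I would use that $(k,D_k)$ is differentially closed. Since $A$ is a nonzero, differentially finitely generated $D_k$-algebra over a differentially closed field of characteristic zero, the system of differential equations defining $A$ is consistent: a prime differential ideal of $A$ produces a differential domain quotient, hence a differential field extension $L\supseteq k$ together with a $D_k$-homomorphism $A\to L$. By the defining property of a differentially closed field---existential closedness in the differential sense---this solution can be realized already in $k$, giving a $D_k$-homomorphism $A\to k$ \cite{KolDAG}. Combined with the bijection of the first paragraph, this produces an isomorphism $\omega\cong\eta$, and since $\omega,\eta$ were arbitrary the claim follows.

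The main obstacle is the nonvanishing $A\neq 0$, which is what forces the detour through the non-differential Tannakian theory: it relies on the fact that two fiber functors on a Tannakian category over an algebraically closed field are isomorphic, together with the smoothness afforded by $\Char k=0$. Once $A\neq 0$ and its differential finite generation are in place, extracting a $k$-point is the expected, more routine application of differential closedness.
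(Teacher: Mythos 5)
Your proposal is correct and follows essentially the same route as the paper's own proof: Proposition~\ref{prop-reprdiffalg} to reduce to finding a $D_k$-algebra morphism $A\to k$, Proposition~\ref{prop-difffingencat} for $D_k$-finite generation, the non-differential Tannakian theory (\cite[1.12]{DeligneFS}) for $A\neq 0$ via faithful flatness over $k$, and differential closedness together with $\Char k=0$ to extract the $k$-point. The only cosmetic difference is that you spell out the torsor/base-change justification of $A\neq 0$ and the maximal-differential-ideal argument for the $k$-point, where the paper simply cites the relevant references.
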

\begin{proof}
Let $\omega,\eta:\Cat\to\Vect(k)$ be differential functors. By
Proposition~\ref{prop-reprdiffalg}, isomorphisms between~$\omega$
and~$\eta$ as differential functors are in bijection with morphisms
of $D_k$-algebras $A\to k$. By
Proposition~\ref{prop-difffingencat},~$A$ is $D_k$-finitely
generated over $(k,D_k)$. By~\cite[1.12]{DeligneFS}, $A$ is
non-zero, being faithfully flat over $k$. Since $\Char k= 0$, there
is a morphism from $A$ to $k$ (for example, see
\cite[Definition~4]{TrushinSplitting} and the references given
there), which finishes the proof.
\end{proof}

Finally, let us describe the differential structure on the ring $A$
from Proposition~\ref{prop-reprdiffalg} explicitly. We use its notation. First,
recall an explicit construction of $A$. Consider the $R$-module
$$
F:=\bigoplus_{X \in \Ob(\Cat)}\Hom_R(\omega(X),\eta(X))
$$
and the $R$-submodule $T$ of $F$ generated by all elements of type
$$
(\psi\circ\omega(\phi))\oplus(-\eta(\phi)\circ\psi)\in\Hom_R(\omega(X),\eta(X))
\oplus\Hom_R(\omega(Y),\eta(Y)),
$$
where $\phi\in\Hom_{\Cat}(X,Y)$, $\psi\in\Hom_R(\omega(Y),\eta(X))$,
and $X$, $Y$ are objects in $\Cat$. Then we have $A= F/T$
(\cite{Deligne}). For each object~$X$ in~$\Cat$, choose an
$R$-linear section
$$
s_X:\eta(X)\to\At^1_R{\left(\eta(X)\right)}
$$
of the morphism $\At^1_R\left(\eta(X)\right)\to \eta(X)$. By
Remark~\ref{rem-expldefin}\eqref{en:2845} and
Proposition~\ref{prop-diffmodjets}, $s_X$ corresponds to a,
possibly, non-integrable $D_R$-structure on $\eta(X)$. This uniquely defines
an $R$-linear morphism
$$
t_X:\At^1_R\left(\eta(X)\right)\to \Omega_R\otimes_R\eta(X)
$$
such that the canonical morphism
$$\Omega_R\otimes_R\eta(X)\to\At^1_R\left(\eta(X)\right)$$ is a
section of $t_X$ and $t_X\circ s_X=0$. Next, for any $\partial\in
D_R$, consider the additive map
$$
\partial:\Hom_R(\omega(X),\eta(X))\to \Hom_R\left(\omega\left(\At^1_{\Cat}(X)\right),\eta\left(\At^1_{\Cat}(X)\right)\right),\ \ 
\partial(\psi):=s_X\circ{\left(\partial\otimes\id_{\eta(X)}\right)}\circ t_X\circ \At^1_R(\psi),
$$
where $\psi\in\Hom_R(\omega(X),\eta(X))$ and we use the functorial
isomorphism
$$
\omega\left(\At^1_{\Cat}(X)\right)\stackrel{\sim}\longrightarrow\At^1_R\left(\omega(X)\right).
$$
Taking the direct sum over all objects $X$ in $\Cat$, we get the
additive map $\partial:F\to F$. One can show that~$\partial$
preserves the submodule~$T$ and defines a derivation on the
$R$-algebra $A$. All together, this defines a $D_R$-structure on
$A$.

\section{Parameterized Atiyah extensions}\label{sec:paramAtiyah}

\subsection{Construction}

Throughout this section, we fix a differential field $(k,D_k)$ and a
parameterized differential algebra~$(R,D_R)$ over $(k,D_k)$
(Definition~\ref{defin-paramdifffield}). Recall that we have a
differential ring $\left(R,D_{R/k}\right)$, where $D_{R/k}$ is the
kernel of the structure map \mbox{$D_R\to R\otimes_k D_k$}
associated with the morphism of differential rings $(k,D_k)\to
(R,D_R)$. Put $\Omega_{R/k}:=D_{R/k}^\vee$.

\begin{theorem}\label{teor-paramAtiayh}
There is a canonical $D_k$-structure on the category $\DMod{\left(R,D_{R/k}\right)}$ such that the forgetful functor from the $D_k$-category $\DMod{\left(R,D_{R/k}\right)}$ over $(k,D_k)$ to the $D_R$-category $\Mod(R)$ over $(R,D_R)$ is a differential functor.
\end{theorem}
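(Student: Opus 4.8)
The plan is to produce the $D_k$-structure on $\Cat:=\DMod\!\left(R,D_{R/k}\right)$ as a \emph{parameterized Atiyah extension}, obtained by cutting the full $D_R$-Atiyah extension on $\Mod(R)$ down to the parameter directions by means of the relative flat connection. Since the tensor product on $\Cat$ is exact, I may invoke Propositions~\ref{prop-phi1expl} and~\ref{prop-expl} and so reduce the construction of $\left(\phi^2_{\Cat},\Phi_{\Cat},\Psi_{\Cat}\right)$ to building functors $\At^1_{\Cat}$ and $\At^2_{\Cat}\subset\At^1_{\Cat}\!\left(\At^1_{\Cat}(-)\right)$ meeting the conditions listed there. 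The geometric input is the canonical exact sequence of finitely generated projective $R$-modules obtained by dualizing the structure sequence $0\to D_{R/k}\to D_R\to R\otimes_k D_k\to 0$, namely
$$
\begin{CD}
0@>>> R\otimes_k\Omega_k@>>>\Omega_R@>>>\Omega_{R/k}@>>> 0,
\end{CD}
$$
where $\Omega_k=D_k^{\vee}$ and $R\otimes_k\Omega_k$ is the module of parameter $1$-forms; after tensoring with an object it isolates the part of $\Omega_R\otimes_R M$ on which the $D_k$-structure will live.

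For $(M,\nabla_M)$ in $\Cat$ I would set $\At^1_{\Cat}(M)$ to be the pullback
$$
\At^1_{\Cat}(M):=M\times_{\At^1_{R/k}(M)}\At^1_R(M),
$$
where $\At^1_R(M)=\left(M\otimes_R\P^1_R\right)_R$ and $\At^1_{R/k}(M)=\left(M\otimes_R\P^1_{R/k}\right)_R$ are the Atiyah extensions of the $D_R$- and $D_{R/k}$-structures on $\Mod(R)$ (Example~\ref{examp-moddiffcat}, Remark~\ref{rem-expldefin}), the map $\At^1_R(M)\to\At^1_{R/k}(M)$ is induced by the surjection $\P^1_R\to\P^1_{R/k}$, and $M\to\At^1_{R/k}(M)$ is the canonical flat section determined by $\nabla_M$ (equivalently by the comodule structure $\phi^1_M$ of Proposition~\ref{prop-diffmodjets}). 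Computing the kernel of $\P^1_R\to\P^1_{R/k}$ via the displayed sequence yields the required functorial exact sequence $0\to\Omega_k\otimes_k M\to\At^1_{\Cat}(M)\to M\to 0$. The role of the pullback along the \emph{flat} section is to equip $\At^1_{\Cat}(M)$ with a genuine $D_{R/k}$-module structure: the Lie derivative of Section~\ref{subsection-Lieder} makes $\P^1_R$ a weak $D_R$-module, whence $\At^1_R(M)$ becomes a weak $D_{R/k}$-module through $\nabla_M$ on the factor $M$, and I would check that this weak action preserves the pullback and restricts there to an $R$-linear, integrable connection. In coordinates (e.g. a commuting basis as in Proposition~\ref{prop-commbasis}), writing $\rho_M(\partial_{x,i})=A_i$ and choosing parameter lifts $\tilde\partial_{t,j}$, the off-diagonal block of the resulting connection is $\tilde\partial_{t,j}(A_i)$, the variation of the connection matrices along the parameters. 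The unit-object normalization and the Baer-sum isomorphism of Proposition~\ref{prop-phi1expl} then follow from the corresponding properties of $\P^1_R$, the splitting $\P^1_R\cong R\oplus\Omega_R$, and the Leibniz rule \eqref{eq:weakLie} for $L_{\partial}$, producing $\phi^1_{\Cat}$ together with $\Phi_{\Cat}$.

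Iterating, I would define $\At^2_{\Cat}(M)\subset\At^1_{\Cat}\!\left(\At^1_{\Cat}(M)\right)$ as the analogous pullback of the second full Atiyah object $\At^2_R(M)$ (the forgetful image of $\phi^2_R(M)$) along the flat data, and verify the three conditions of Proposition~\ref{prop-expl}: the prescribed adjoint quotients $M$, $\Omega_k\otimes_k M$, $\Sym^2_k\Omega_k\otimes_k M$ record the symmetry of the second parameter-derivatives, and the product map is inherited from that on $\At^2_R$. This supplies $\phi^2_{\Cat}$ and the associativity isomorphism $\Psi_{\Cat}$, completing the $D_k$-structure. Finally, since by construction $\At^i_{\Cat}$ is the ``parameter part'' of $\At^i_R$, the forgetful functor $\forget\colon\Cat\to\Mod(R)$ acquires a canonical differential structure: the inclusions $\At^i_{\Cat}(M)\hookrightarrow\At^i_R(M)$ induce, after extension of scalars along $\Qr_{\varphi}$, the isomorphism $\left(\Qr_{\varphi}\otimes\forget\right)\circ\phi^2_{\Cat}\cong\phi^2_R\circ\forget$ of Definition~\ref{defin-Dkfunctor}, and compatibility with $\Phi$ and $\Psi$ holds because both sides are assembled from the same jet rings (compare Example~\ref{examp-difffunctor}).

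I expect the main obstacle to be the verification that the pullback $\At^1_{\Cat}(M)$ is a \emph{genuine} (that is, $R$-linear and integrable) $D_{R/k}$-module rather than merely a weak one, and that it is independent of the chosen lift of $D_k$ into $D_R$. The weak $D_R$-structure on $\P^1_R$ is not $R$-linear --- the correction term $\omega(\partial)\,\dd a$ in \eqref{eq:weakLie} lands in $\Omega_R$ --- so one must show that, after restricting to the pullback cut out by $\nabla_M$, these corrections cancel; likewise the ambiguity in lifting $\partial\in D_k$ to $\tilde\partial\in D_R$ is an element of $D_{R/k}$ whose Lie derivative contributes an ``inner'' term that is absorbed by the relative connection. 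Concretely, the integrability $\nabla_{\At^1_{\Cat}(M)}\circ\nabla_{\At^1_{\Cat}(M)}=0$ reduces, via \eqref{eq:Lie}, to differentiating the vanishing curvature of $\nabla_M$ along the parameter directions together with the bracket relations $[\tilde\partial_{t,j},\partial_{x,i}]\in D_{R/k}$; this is precisely the point at which the weak-versus-honest and lift-independence subtleties become delicate, and where the relative $D_{R/k}$-module structure on $M$ is essential.
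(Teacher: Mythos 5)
Your construction is essentially the paper's own proof: the fiber product $M\times_{\At^1_{R/k}(M)}\At^1_R(M)$ is exactly the kernel of the map $\lambda$ of~\eqref{eq:lambda} by which the paper defines $\At^1(M)$ inside $M\otimes_R\P_R^1$, the weak Lie-derivative action followed by the $R$-linearity check is Step~1 there, the definition $\At^2(M)=\At^1{\left(\At^1(M)\right)}\cap{\left(M\otimes_R\Qr_R\right)}$ is Step~3, and Lemma~\ref{lemma-filtr} handles the forgetful functor as in Step~5. The only place your plan is thinner than the paper is condition~\ref{en:2808} of Proposition~\ref{prop-expl} (surjectivity of $\At^2(M)\to M$), which in Step~4 requires lifting $\sum_i m_i\otimes\dd\omega_i$ and $-\sum_i\nabla(m_i)\otimes\omega_i$ to a common element of $M\otimes_R\Omega_R\otimes_R\Omega_R$ using the integrability of $\nabla_M$ and projectivity of the relevant modules --- this is precisely the ``delicate point'' you flag, just located in the passage from $\At^1$ to $\At^2$ rather than in the integrability of the connection on $\At^1(M)$.
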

\begin{proof}
We follow the explicit approach from Section~\ref{subsection-explicitdefin}. First, we need to construct a right-exact $k$-linear tensor functor
$$
\phi^1:\DMod{\left(R,D_{R/k}\right)}\to\,_k{\left(\P_k^1\otimes_k\DMod{\left(R,D_{R/k}\right)}\right)}
$$
together with certain isomorphisms between tensor functors. Then we need to functorially construct  a $\Qr_k$-submodule $\At^2(M)$ in $\At^1{\big(\At^1(M)\big)}$ satisfying several properties. 

Recall that we distinguish between a $\P_k^1$-module in $\DMod(R,D_{R/k})$ and the corresponding object in $\DMod(R,D_{R/k})$, which makes the difference between $\phi^1$ and $\At^1$ (Remark~\ref{rem-expldefin}\eqref{it:2806}). In particular, $\phi^1\big(\phi^1(M)\big)$ is not well-defined, while $\At^1\big(\At^1(M)\big)$ is well-defined.
We call $\At^2(M)$ a {\it parameterized Atiyah extension}. The proof is divided into several steps.

\subsection*{Step 1. Construction of $\phi^1(M)$}

Let $M$ be a $D_{R/k}$-module. Put
\begin{equation}\label{eq:paramAt}
\At^1(M):={\left\{m\otimes
1+\sum _im_i\otimes\omega_i\:\big|\:\forall \xi\in D_{R/k},\:
\xi(m)=\sum _i\omega_i(\xi)m_i\right\}}\subset M\otimes_R\P_R^1,
\end{equation}
where $m,m_i\in M$, $\omega_i\in\Omega_R$. Here we use that
$D_{R/k}$ is an $R$-submodule in $D_R$, whence, $\omega_i(\xi)$ is
well-defined. Equivalently,~$\At^1(M)$ is the kernel of the map
\begin{equation}\label{eq:lambda}
\lambda:M\otimes_R\P_R^1\to \Omega_{R/k}\otimes_R M,\quad
m\otimes a+\sum _i m_i\otimes\omega_i\mapsto a\nabla_M(m)+\dd a\otimes m-\sum _i[{\omega}_i]\otimes m_i,
\end{equation}
where the brackets mean the application of the natural quotient map
$\Omega_R\to\Omega_{R/k}$. The Leibniz rule for~$\nabla_M$ implies
that $\lambda$ is well-defined. Also, $\lambda$ is $R$-linear with
respect to the right $R$-module structure on~$M\otimes_R\P_R^1$
defined by the homomorphism $r:R\to\P_R^1$. Hence, $\At^1(M)$ is an
$R$-submodule in $M\otimes_R\P_R^1$ with respect to $r$. Explicitly,
we have
\begin{equation}\label{eq:3273}
a\cdot{\left(m\otimes 1+\sum _i m_i\otimes\omega_i\right)}=am\otimes
1+m\otimes \dd a+\sum _im_i\otimes a\omega_i.
\end{equation}
Let us define a weak $D_{R/k}$-module structure on $\At^1(M)$
(Section~\ref{subsection-Lieder}). Recall that we have a weak
$D_{R/k}$-module structure on $\P_R^1$. Hence, we obtain a weak
$D_{R/k}$-module structure on the tensor product~$M\otimes_R\P_R^1$.
We claim that the corresponding action of an arbitrary element
$\partial\in D_{R/k}$ on $M\otimes_R\P_R^1$ preserves $\At^1(M)$.
Indeed, for any $$m\otimes 1+\sum _i m_i\otimes\omega_i\in
\At^1(M),$$ we have
$$
\partial{\left(m\otimes 1+\sum _i m_i\otimes\omega_i\right)}=\partial(m)\otimes 1+
\sum _i{\left(\partial(m_i)\otimes \omega_i+m_i\otimes
L_{\partial}(\omega_i)\right)}
$$
(see Definition~\ref{def:Liederivative} for $L_{\partial}$). Hence,
we need to show that, for any $\xi\in D_{R/k}$, we have
$$
\xi(\partial(m))=\sum _i\big(\omega_i(\xi)\cdot\partial(m_i)+
L_{\partial}(\omega_i)(\xi)\cdot m_i\big).
$$
By~\eqref{eq:Lie}, the right-hand side is equal to
$$
\sum _i\big(\omega_i(\xi)\cdot\partial(m_i)+
\partial(\omega_i(\xi))\cdot m_i-\omega_i([\partial,\xi])\cdot m_i\big).
$$
Further, by~\eqref{eq:paramAt}, the latter equals
$$
\partial(\xi(m))-[\partial,\xi](m).
$$
Thus, we conclude by the integrability condition for the $D_{R/k}$-module structure on $M$.
Let us check that the above weak $D_{R/k}$-module structure actually defines a $D_{R/k}$-module structure. For all $$a\in R,\quad \partial\in D_{R/k},\quad m\otimes 1+\sum _i m_i\otimes\omega_i\in \At^1(M),$$ we have
\begin{align*}
a\cdot\partial&{\left(m\otimes 1+\sum _i m_i\otimes\omega_i\right)}=a\cdot{\left(\partial(m)\otimes 1+\sum _i {\left(
\partial(m_i)\otimes \omega_i+m_i\otimes L_{\partial}(\omega_i)\right)}\right)}=\\
&=a\partial(m)\otimes 1+\partial(m)\otimes \dd a+\sum _i {\left(
\partial(m_i)\otimes a\omega_i+m_i\otimes aL_{\partial}(\omega_i)\right)}=\\
&=a\partial(m)\otimes 1+\sum _i {\left(\omega_i(\partial)m_i\otimes \dd a+
\partial(m_i)\otimes a\omega_i+m_i\otimes aL_{\partial}(\omega_i)\right)}=\\
&=a\partial(m)\otimes 1+\sum _i {\left(a\partial(m_i)\otimes\omega_i+m_i\otimes L_{a\partial}(\omega_i)\right)}=(a\partial){\left(m\otimes 1+\sum _i m_i\otimes\omega_i\right)},
\end{align*}
where we have used~\eqref{eq:weakLie} and~\eqref{eq:3273}. Thus, we
have shown that $\At^1(M)$ is an object in
$\DMod{\left(R,D_{R/k}\right)}$.

Now let us extend $\At^1(M)$ to an object $\phi^1(M)$ in
$\P_k^1\otimes_k \DMod{\left(R,D_{R/k}\right)}$, that is, let us
define a $\P_k^1$-module structure on~$\At^1(M)$ with respect to the
right homomorphism $r:k\to\P_k^1$. For this, note
that~$M\otimes_R\P_R^1$ is a $\P_R^1$-module.
In addition, the multiplication by $\P_k^1\subset \P_R^1$ preserves $\At^1(M)$: for $k\subset\P^1_k$ this follows from the existence of the $R$-linear structure on $\At^1(M)$, while, for any $$\eta\in \Omega_k\quad
\text{and}\quad m\otimes 1+\sum _i m_i\otimes\omega_i\in \At^1(M),$$ we have
$$
{\left(m\otimes 1+\sum _i m_i\otimes\omega_i\right)}\cdot \eta=
m\otimes\eta
$$
and $\eta(\xi)=0$ for any $\xi\in D_{R/k}$. Moreover, the
multiplication by $\P_k^1$ commutes with the $D_{R/k}$-structure on~$\At^1(M)$,
because the product on $\P_R^1$ respects the weak
$D_{R/k}$-structure via the Leibniz rule
(Section~\ref{subsection-Lieder}) and $$\xi(a+\eta)=0$$ in the above
notation. All together, this defines an object $\phi^1(M)$ in
$\P_k^1\otimes_k\DMod{\left(R,D_{R/k}\right)}$.

\subsection*{Step 2. The functor $M\mapsto \phi^1(M)$}

It follows that $\phi^1(M)$ depends functorially on $M$. Moreover, the explicit description of $\phi^1(M)$ from~\eqref{eq:paramAt} implies a functorial exact sequence in $\DMod{\left(R,D_{R/k}\right)}$:
$$
0\to \Omega_k\otimes_k M\to \phi^1(M)\stackrel{\pi}\longrightarrow M\to 0,\quad \pi{\left(m\otimes 1+\sum _im_i\otimes\omega_i\right)}=m.$$ It follows that the functor $\phi^1$ is exact. By construction, it is also $k$-linear with respect to the left homomorphism $l:k\to\P_k^1$, because the left $R$-linear structure on $\P_R^1$ is involved in the tensor product $M\otimes_R\P_R^1$.

Let us show that the functor $\phi^1$ is tensor. Let $M$ and $N$ be $D_{R/k}$-modules. We have a natural isomorphism
$$
{\left(M\otimes\P_R^1\right)}\otimes_{\P_R^1} {\left(N\otimes\P_R^1\right)}\stackrel{\sim}\longrightarrow (M\otimes_R N)\otimes_R\P^1_R\,.
$$
This induces a map
$$
\phi^1(M)\otimes_{\P^1_k}\phi^1(N)\to (M\otimes_R N)\otimes_R\P_R^1.
$$
The Leibniz rule for the action of $D_{R/k}$ on $M\otimes_R N$ implies that the image of this map lies in the subset
$$
\phi^1(M\otimes_R N)\subset (M\otimes_R N)\otimes_R\P_R^1,
$$
which defines a morphism of $\P_k^1$-modules
$$
m:\phi^1(M)\otimes_{\P^1_k}\phi^1(N)\to \phi^1(M\otimes_R N).
$$
Our aim is to show that $m$ is an isomorphism.
Note that the morphism $\pi$ from above coincides with taking modulo the ideal $\Omega_k\subset \P^1_k$. Also denote taking modulo the ideal in any $\P_k^1$-module by $\pi$. Then the morphism $m$ commutes with the identity map from $M\otimes_R N$ to itself via the corresponding morphisms~$\pi$. By Example~\ref{examp-tensBaer}, the kernel of $\pi$ on $$\phi^1(M)\otimes_{\P^1_k}\phi^1(N)$$
is equal to $$\Omega_k\otimes_k(M\otimes_R N).$$ It follows that the morphism $m$ induces the identity map from $\Omega_k\otimes_k(M\otimes_R N)$ to itself on the kernels of $\pi$. Therefore, $m$ is an isomorphism, which fixes a tensor structure for the functor $\phi^1$. Also, we obtain an isomorphism of tensor functors
$$
(e\otimes\id)\circ\phi^1\cong \id,
$$
where, as above, $e:\P_k^1\to k$ is taking modulo $\Omega_k$.

\subsection*{Step 3. Construction of $\At^2(M)$}

Put
$$
\At^2(M):=\At^1{\left(\At^1(M)\right)}\cap M\otimes_R\Qr_R\subset M\otimes_R\P_R^1\otimes_R\P_R^1.
$$
By Remark~\ref{rem:Lie}, the subring
$$\Qr_R\subset\P_R^1\otimes_R\P_R^1$$ is preserved under the action
of $D_{R/k}$, whence $\At^2(M)$ is a weak $D_{R/k}$-module. Besides,
as shown above, $\At^1{\big(\At^1(M)\big)}$ is a
$D_{R/k}$-module, whence $\At^2(M)$ is also a $D_{R/k}$-module.
Since $\At^1{\big(\At^1(M)\big)}$ is preserved under the right
multiplication by~$\P_k^1\otimes_k\P_k^1$, we obtain that $\At^2(M)$
is preserved under the right multiplication by
$$
\Qr_k\subset {\left(\P_k^1\otimes_k\P_k^1\right)}\cap \Qr_R.
$$
Since multiplication by $\P_k^1\otimes_k\P_k^1$ on $\At^1{\big(\At^1(M)\big)}$ commutes with the $D_{R/k}$-structure, multiplication by $\Qr_k$ commutes with the $D_{R/k}$-structure on $\At^2(M)$. Thus, we see that $\At^2(M)$ is a $\Qr_k$-submodule in $\At^1{\big(\At^1(M)\big)}$ in the category $\DMod{\left(R,D_{R/k}\right)}$.

It follows that $\At^2(M)$ depends functorially on $M$. By Step 2, the tensor structure on $\At^1\circ\At^1$ is induced by the isomorphism
$$
{\left(M\otimes_R\P_R^1\otimes_R\P_R^1\right)}\otimes_{\left(\P_R^1\otimes_R\P_R^1\right)}
{\left(N\otimes_R\P_R^1\otimes_R\P_R^1\right)}\stackrel{\sim}\longrightarrow
(M\otimes_R N)\otimes_R{\left(\P_R^1\otimes_R\P_R^1\right)}.
$$
Since $\P^2_R$ is a subring in $\P^1_R\otimes_R\P^1_R$, we see that the product map
$$
\At^1\left(\At^1(M)\right)\otimes\At^1\left(\At^1(N)\right)\to\At^1\left(\At^1(M\otimes_R N)\right)
$$
preserves $\At^2$.

Consider the filtration by ideals:
$$
\P^1_k\otimes_k\P^1_k\supset {\left(\Omega_k\otimes_k\P^1_k+\P^1_k\otimes_k\Omega_k\right)}\supset \Omega_k\otimes_k\Omega_k\supset 0.
$$
This defines a decreasing filtration on $\At^1{\big(\At^1(M)\big)}$ with the following adjoint quotients (see Section~\ref{subsection-explicitdefin} for more computational details):
$$
M,\quad \left(\Omega_k\otimes_kM\right)\oplus\left(\Omega_k\otimes_k M\right),\quad \Omega_k\otimes_k\Omega_k\otimes_k M.
$$
Consider the intersection of this filtration with $\At^2(M)$.
Since $\At^2(M)$ is contained in $M\otimes_R\Qr_R$, the corresponding adjoint quotients are contained in
$$
M,\quad \Omega_k\otimes_k M,\quad \Sym^2_k\Omega_k\otimes_k M.
$$
Hence, by Proposition~\ref{prop-expl}, $\DMod(R,D_{R/k})$ with
the functor $\At^2$ is a $D_k$-category, provided that the induced
map $$\At^2(M)\to M=\gr^0\At^1\left(\At^1(M)\right)$$ is surjective.

\subsection*{Step 4. Surjectivity of $\At^2(M)\to M$}

Take any
$$
m\in M\quad \text{and}\quad m\otimes 1+\sum _i m_i\otimes\omega_i\in\At^1(M)
.$$
First, let us prove that there exists $x\in M\otimes_R\Omega_R\otimes_R\Omega_R$ such that the image of $x$ under the map
$$
M\otimes_R\Omega_R\otimes_R\Omega_R\to M\otimes_R\wedge^2_R\Omega_R
$$
is equal to $$y:=\sum _i m_i\otimes\dd\omega_i$$ and the image of $x$ under the map
$$
M\otimes_R\Omega_R\otimes_R\Omega_R\to M\otimes_R\Omega_{R/k}\otimes_R\Omega_R
$$
is equal to $$z:=-\sum _i \nabla(m_i)\otimes \omega_i,$$ where we
apply the isomorphism $$\Omega_{R/k}\otimes_R M\cong
M\otimes_R\Omega_{R/k}.$$ For short, 
$$ A:=\Omega_R\otimes_R\Omega_R,\quad
B:=\Ker{\left(\Omega_R\otimes_R\Omega_R\to\wedge^2_R\Omega_R\right)},\quad
C:=\Ker{\left(\Omega_R\otimes_R\Omega_R\to \Omega_{R/k}\otimes_R\Omega_R\right)}.
$$
We have the following exact sequence
$$
A\to (A/B)\oplus(A/C)\to A\big/(B+C)\to 0,
$$
where the first map is given by the diagonal embedding and the
second arrow is induced by taking the difference. Since the
$R$-modules $$A=\Omega_R\otimes_R\Omega_R,\quad
A/B\cong\wedge^2_R\Omega_R,\quad A/C\cong
\Omega_{R/k}\otimes_R\Omega_R,\quad \text{and}\quad A/(B+C)\cong
\wedge^2_R\Omega_{R/k}$$ are projective and, henceforth, flat, we
obtain the exact sequence
$$
M\otimes_R \Omega_R\otimes\Omega_R\to
{\left(M\otimes_R\wedge^2_R\Omega_R\right)}\oplus
{\left(M\otimes_R\Omega_{R/k}\otimes_R\Omega_R\right)}\to
M\otimes_R\wedge^2_R\Omega_{R/k}\to 0.
$$
The integrability condition on $M$ implies that $y\oplus z$ is in
the kernel of the rightmost non-zero map (note that we have switched the tensor
factors $M$ and $\Omega_{R/k}$ unlike in
Definition~\ref{defin-diffmod}, whence there is a sign change).
Hence, by the exactness in the middle, there exists $x$ with the
required properties.

Now let us show that the element
$$
n:=m\otimes 1\otimes 1+\sum _i m_i\otimes\omega_i\otimes 1+\sum _i m_i\otimes 1\otimes\omega_i-x\in M\otimes_R\P_R^1\otimes_R\P_R^1
$$
belongs to $\At^2(M)$. Since $x$ is sent to $y$, we see that $n$
belongs to $M\otimes_R\Qr_R$. By the hypotheses,
$$
m\otimes 1\otimes 1+\sum _i m_i\otimes\omega_i\otimes 1 \in \At^1(M)\otimes 1\subset \At^1(M)\otimes_R\P_R^1.
$$
Since $x$ is sent to $z$, we see that the map
$$
\lambda\otimes\id_{\P^1_R}:M\otimes_R\P_R^1\otimes_R P_R^1\to \Omega_{R/k}\otimes_R M\otimes_R\P_R^1
$$
sends $\sum_i m_i\otimes 1\otimes\omega_i-x$ to zero (recall that
$\lambda$ is defined in~\eqref{eq:lambda}). Since $\P_R^1$ is a
projective and, therefore, flat $R$-module, we conclude
that
$$
\sum _i m_i\otimes 1\otimes\omega_i-x \in \At^1(M)\otimes_R\Omega_R.
$$
Therefore, $$n \in \At^1(M)\otimes_R\P_R^1.$$
It remains to check that
$$
n \in \At^1{\left(\At^1(M)\right)}.$$
For this, we need to show that, for any $\xi\in D_{R/k}$, we have
$$
\xi{\left(m\otimes 1+\sum _i
m_i\otimes\omega_i\right)}=\sum _i\omega_i(\xi)\cdot(m_i\otimes
1)-x(-\otimes\xi)\in \At^1(M),
$$
where $$x(-\otimes\xi)\in M\otimes_R\Omega_R\cong\Hom_R(D_R,M)$$
sends any $\partial\in D_R$ to $x(\partial\otimes\xi)\in M$.
By the explicit formula for the $D_{R/k}$-module structure on
$\At^1(M)$ given in Step 1, the left-hand side is equal to
$$
\xi(m)\otimes 1+\sum _i\xi(m_i)\otimes\omega_i+\sum _im_i\otimes L_{\xi}(\omega_i).
$$
By the explicit formula~\eqref{eq:3273} for the $R$-module structure on
$\At^1(M)$ also given in Step 1, the right-hand side is equal to
$$
\sum _i\omega_i(\xi)m_i\otimes 1+\sum _im_i\otimes\dd(\omega_i(\xi))-x(-\otimes\xi).
$$
Since $$m\otimes 1+\sum _i m_i\otimes\omega_i\in\At^1(M),$$ we have that
$$
\xi(m)\otimes 1=\sum _i\omega_i(\xi)m_i\otimes 1.$$
Further, by the definition of the Lie derivative, we have
$$
\sum _i m_i\otimes L_{\xi}(\omega_i)=\sum _i m_i\otimes\dd(\omega_i(\xi))+\sum _i m_i\otimes(\dd\omega_i)(\xi\wedge-).
$$
Since $x$ is sent to $y$, we have that
$$
\sum _i m_i\otimes(\dd\omega_i)(\xi\wedge-)=x(\xi\otimes-)-x(-\otimes\xi).
$$
Finally, since $x$ is sent to $z$, we have that
$$
x(\xi\otimes-)=-\sum _i\xi(m_i)\otimes\omega_i,$$
which shows the required equality.

\subsection*{Step 5. The forgetful functor $\DMod{\left(R,D_{R/k}\right)}\to\Mod(R)$}

It remains to show that the forgetful functor $\DMod{\left(R,D_{R/k}\right)}\to\Mod(R)$ is differential. By Definition~\ref{defin-Dkfunctor} and~\eqref{eq:explextscal} from Section~\ref{subsection-prelextscal}, it is enough to show that the canonical morphism of $\P_R^2$-modules
$$
\At^2(M)\otimes_{\left(\P^2_k\otimes_k R\right)}\P^2_R\to M\otimes_R\P_R^2
$$
is an isomorphism. This follows directly from
Lemma~\ref{lemma-filtr} applied to the filtered ring $\Qr_R$.
\end{proof}

\begin{remark}\label{rem-parmaconst}
\hspace{0cm}
\begin{enumerate}
\item\label{it:3717}
If $(R,D_R)=(k,D_k)$, then we have
$\DMod{\left(R,D_{R/k}\right)}=\Vect(k)$. It follows from the
construction in Step 1 in the proof of
Theorem~\ref{teor-paramAtiayh} that the $D_k$-structure on
$\DMod{\left(R,D_{R/k}\right)}$ given by
Theorem~\ref{teor-paramAtiayh} coincides with the usual
$D_k$-structure on~$\Vect(k)$.
\item
There is a motivating example for the construction of a
$D_{R/k}$-structure on $\DMod(R,D_{R/k})$. Let $M$ be a $D_R$-module
over~$R$ and put $N:=M^{D_{R/k}}$ (Definition~\ref{defin-diffmod}).
Note that $N$ is a $k$-vector subspace in $M$. Moreover, there is a
$D_k$-module structure on $N$ over $k$ defined as follows. For
$\partial\in D_k$, consider any lift $\tilde\partial\in D_R$ of
$1\otimes\partial$ with respect to the structure map $D_R\to
R\otimes_kD_k$. 
Then, for any $n\in N$, put
$$\partial(n):=\tilde{\partial}(n).$$ In
Theorem~\ref{teor-paramAtiayh}, $M$ is replaced by the category
$\Mod(R)$ and, correspondingly, $N$ is replaced by
$\DMod(R,D_{R/k})$. It seems that both constructions can be
generalized for a wider class of $D_R$-objects or categories instead
of $M$ or $\Mod(R)$.
\item
In~\cite[1.6.3]{Besser}, one finds an alternative definition
of the $D_{R/k}$-module structure on $\At^1(M)$ in terms of lifts of
the $D_{R/k}$-structure on $M$ to, possibly, non-integrable
$D_R$-structures on $M$. The construction from op.cit. is given for
families of varieties but it applies in the setting of
parameterized differential algebras as well. However, the approach to
$\At^1(M)$ from Step~1 of the proof of
Theorem~\ref{teor-paramAtiayh} seems to be more convenient to show
that one, thus, obtains  a $D_k$-structure on $\DMod(R,D_{R/k})$.
\end{enumerate}
\end{remark}

In Section~\ref{subsection-PPVff}, we use the following result.

\begin{lemma}\label{lemma-changeofparam}
Given a morphism $(R,D_R)\to (S,D_S)$ of parameterized differential
algebras over $(k,D_k)$, the extension of scalars functor
(Definition~\ref{defin-extscaldiffmod})
$$
S\otimes_R-:\DMod{\left(R,D_{R/k}\right)}\to
\DMod{\left(S,D_{S/k}\right)}
$$
is canonically a differential functor between $D_k$-categories over
$(k,D_k)$.
\end{lemma}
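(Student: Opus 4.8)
The plan is to exhibit the differential structure on $F:=S\otimes_R-$ explicitly, built from the jet-ring homomorphisms attached to the given morphism, and to prove it is an isomorphism by reducing to associated gradeds via Lemma~\ref{lemma-filtr}.

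First I would make sense of $F$ as a functor $\DMod(R,D_{R/k})\to\DMod(S,D_{S/k})$. The morphism of parameterized differential algebras gives a structure map $D_\varphi\colon D_S\to S\otimes_R D_R$ compatible with the projections to $S\otimes_k D_k$. Since $D_k$ is finitely generated projective over $k$, the defining sequence $0\to D_{R/k}\to D_R\to R\otimes_k D_k\to 0$ splits, hence stays exact after $S\otimes_R-$, so that $\Ker(S\otimes_R D_R\to S\otimes_k D_k)=S\otimes_R D_{R/k}$ and $D_\varphi$ restricts to $D_{S/k}\to S\otimes_R D_{R/k}$. Together with $\varphi\colon R\to S$ this is a morphism of differential rings $(R,D_{R/k})\to(S,D_{S/k})$ (the two conditions of Definition~\ref{defin-morphdiffrings} are inherited from those for $D_\varphi$), so $F$ and the $D_{S/k}$-module $M_S$ are defined (Definition~\ref{defin-extscaldiffmod}). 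The full morphism also yields the jet homomorphisms $\P^1_\varphi\colon\P^1_R\to\P^1_S$ and $\Qr_\varphi\colon\Qr_R\to\Qr_S$, which commute with $l,r,\Delta,e$ and are compatible with the structural maps from $\P^1_k$ and $\Qr_k$, since $\varphi$ lies over $(k,D_k)$ and in particular $\varphi_*$ is the identity on $\Omega_k$.

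Next I would construct the comparison maps. Using $\At^1_R(M)\subset M\otimes_R\P^1_R$ (see~\eqref{eq:paramAt}) and $\At^1_S(M_S)\subset M_S\otimes_S\P^1_S$, define $c^1_M\colon S\otimes_R\At^1_R(M)\to\At^1_S(M_S)$ on generators by $s\otimes(m\otimes 1+\sum_i m_i\otimes\omega_i)\mapsto s\cdot\big((1\otimes m)\otimes 1+\sum_i(1\otimes m_i)\otimes\varphi_*(\omega_i)\big)$, i.e.\ by $\id_{M_S}\otimes\P^1_\varphi$ restricted to the subobject. The only nontrivial point, and the main obstacle of the proof, is that the image is horizontal for $D_{S/k}$, hence lies in $\At^1_S(M_S)$: writing $D_\varphi(\zeta)=\sum_j b_j\otimes\xi_j$ for $\zeta\in D_{S/k}$ and $\xi_j\in D_{R/k}$, this reduces, via the formula for $\nabla_{M_S}$ and the membership relation defining $\At^1_R(M)$, to the identity $\varphi_*(\omega_i)(\zeta)=\sum_j b_j\,\varphi(\omega_i(\xi_j))$, which is exactly the duality between $\varphi_*$ and $D_\varphi$ encoding condition~\eqref{eq-diffmorph}. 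Applying $\P^1_\varphi$ twice together with $\Qr_\varphi$ produces $c^2_M\colon S\otimes_R\At^2_R(M)\to\At^2_S(M_S)$, well defined because $\Qr_\varphi(\Qr_R)\subset\Qr_S$; both $c^1$ and $c^2$ are $\P^1_k$- resp.\ $\Qr_k$-linear and natural in $M$ by construction.

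Finally I would prove $c^1,c^2$ are isomorphisms and assemble $\Pi_F$. Since $\varphi_*$ is the identity on $\Omega_k$, these maps respect the jet filtrations and induce the identity on associated gradeds, which by Step~3 of Theorem~\ref{teor-paramAtiayh} are $M_S$, $\Omega_k\otimes_k M_S$, and $\Sym^2_k\Omega_k\otimes_k M_S$; applying Lemma~\ref{lemma-filtr} to the filtered rings $\P^1_k$ and $\Qr_k$ then shows $c^1,c^2$ are isomorphisms. Under the identifications $\phi^2_{\DMod(R,D_{R/k})}(M)=\At^2_R(M)$ and $\phi^2_{\DMod(S,D_{S/k})}(M_S)=\At^2_S(M_S)$, the collection $\Pi_F:=(c^2_M)_M$ is precisely an isomorphism $(\Qr_\varphi\otimes F)\circ\phi^2_{\Cat}\xrightarrow{\sim}\phi^2_{\Dop}\circ F$. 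It is tensor because $\P^1_\varphi$ and $\Qr_\varphi$ are ring homomorphisms, so $c^1,c^2$ intertwine the product maps $m$; and it is compatible with $\Phi$ and $\Psi$ because $\Qr_\varphi$ commutes with $e$ and $\Delta$. Hence $(F,\Pi_F)$ is a differential functor (Definition~\ref{defin-Dkfunctor}), and canonically so, completing the proof.
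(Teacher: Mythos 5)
Your proof is correct and follows the same route as the paper's: push $\At^1(M)$ and $\At^2(M)$ forward along the jet-ring homomorphisms $\P^1_\varphi$ and $\Qr_\varphi$, and show the resulting comparison map is an isomorphism by passing to associated gradeds via Lemma~\ref{lemma-filtr}. The differences are cosmetic: you spell out the horizontality check that the image of $\At^1(M)$ lands in $\At^1(M_S)$ (which the paper dispatches with ``it follows''), and you apply Lemma~\ref{lemma-filtr} to the filtered ring $\Qr_k$ where the paper invokes $\Qr_S$ --- equivalent bookkeeping, since the comparison map is $\Qr_k$-linear and the relevant adjoint quotients are the same.
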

\begin{proof}
For a $D_{R/k}$-module $M$, consider the morphism
$$
M\otimes_R\P_R^1\to M_S\otimes_S\P_S^1=M\otimes_R\P_S^1.
$$
It follows that this morphism sends $\At^1(M)$ to $\At^1(M_S)$.
Hence, the morphism
$$
M\otimes_R\P_R^2\to M_S\otimes_S\P_S^2=M\otimes_R\P_S^2
$$
sends $\At^2(M)$ to $\At^2(M_S)$. Thus, we obtain a morphism of
$\P_S^2$-modules
$$
\At^2(M)\otimes_{\left(\P_R^2\otimes_R S\right)}\P_S^2\to
\At^2(M_S).
$$
By Lemma~\ref{lemma-filtr} applied to the filtered ring $\Qr_S$,
this is an isomorphism.
\end{proof}

\subsection{Matrix description}
Let us describe the differential structure on $\At^1(M)$ in the case
of a parameterized field explicitly. In the particular case
when~$D_k$ is one-dimensional, this will coincide with the
prolongation functor from~\cite[Section~5]{OvchTannakian}. Let
$(K,D_K)$ be a parameterized differential field over $(k,D_k)$. Let
$\partial_{t,1},\ldots,\partial_{t,q}$ be a basis of~$D_k$ over $k$,
and let
$$
\partial_{x,1},\ldots,\partial_{x,p},\tilde{\partial}_{t,1},\ldots,\tilde{\partial}_{t,q}
$$
be a basis of $D_K$ over $K$ such that $\tilde\partial_{t,i}$ are
sent to $1\otimes\partial_{t,i}$ under the structure map $D_K\to
K\otimes_k D_k$. Let $\omega_{t,1},\ldots,\omega_{t,q}$ be the dual basis in
$\Omega_k$ to $\partial_{t,1},\ldots,\partial_{t,q}$, and let
$$
\widetilde \omega_{x,1},\ldots,\widetilde \omega_{x,p},\omega_{t,1},\ldots,\omega_{t,q}
$$
be the dual basis in $\Omega_K$ to
$\partial_{x,1},\ldots,\partial_{x,p},\tilde
\partial_{t,1},\ldots,\tilde \partial_{t,q}$. Thus, we have $\widetilde
\omega_{x,i}{\big(\tilde\partial_{t,j}\big)}=0$.

Let $M$ be a finite-dimensional $D_{K/k}$-module over $K$ and 
$\{e_1,\ldots,e_m\}$ be a basis of $M$ over $K$. For $\partial\in
D_{K/k}$, let $A_\partial\in \Mat_{m\times m}(K)$ be the connection
matrix on $M$ \cite[Section~1.2]{Michael}, that is, we have
$$
\partial{\left(\overline e\right)}=-\overline e\cdot A_{\partial},
$$
where $\overline e:=(e_1,\ldots,e_m)$. Put
$A_i:=A_{\partial_{x,i}}$, $1\leqslant i\leqslant p$. Then we obtain
the following basis for $\At^1(M)$:
$$
\big\{f_1,\ldots,f_m,e_i\otimes \omega_{t,j}\big\},\quad 1\leqslant i\leqslant m,
\,1\leqslant j\leqslant q,\quad
(f_1,\ldots,f_m)=\overline f,\quad
\overline f:=\overline e\otimes 1-\sum_{i=1}^p \overline e\cdot
A_i\otimes \widetilde \omega_{x,i}.
$$

\begin{proposition}
In the above basis for $\At^1(M)$, the connection matrix for $\partial\in D_{K/k}$ is equal to
$$
\begin{pmatrix}
A_{\partial}&0&\ldots&0&0\\
B_1&A_{\partial}&\ldots&0&0\\
\vdots&&\ddots&&\vdots\\
B_{q-1}&0&\ldots&A_{\partial}&0\\
B_q&0&\ldots&0&A_{\partial}
\end{pmatrix},\quad
B_i:=-\tilde\partial_{t,i}(A_{\partial})-A_{\left[\partial,\tilde\partial_{t,i}\right]},\quad
1\Le i\Le q.
$$
\end{proposition}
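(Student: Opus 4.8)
The plan is to compute the action of an arbitrary $\partial\in D_{K/k}$ on each vector of the given basis of $\At^1(M)$, using the explicit formula for the $D_{K/k}$-module structure on $\At^1(M)$ recorded in Step~1 of the proof of Theorem~\ref{teor-paramAtiayh}, namely
$$\partial{\left(m\otimes 1+\sum_i m_i\otimes\omega_i\right)}=\partial(m)\otimes 1+\sum_i{\left(\partial(m_i)\otimes\omega_i+m_i\otimes L_\partial(\omega_i)\right)},$$
and then re-express each result in the basis $\{f_1,\dots,f_m,e_i\otimes\omega_{t,j}\}$, reading off the connection matrix from the convention $\partial(\overline e)=-\overline e\cdot A_\partial$.

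First I would treat the ``constant'' vectors $e_i\otimes\omega_{t,j}$, where everything reduces to evaluating $L_\partial(\omega_{t,j})$. I claim $L_\partial(\omega_{t,j})=0$. Indeed, $D_{K/k}=\Span_K(\partial_{x,1},\dots,\partial_{x,p})$ and $\omega_{t,j}$ annihilates every $\partial_{x,l}$, so $\omega_{t,j}(\xi)=0$ for all $\xi\in D_{K/k}$; moreover $[\partial,\xi]$ again lies in $D_{K/k}$ for each basis vector $\xi$ of $D_K$ --- for $\xi=\partial_{x,l}$ because $D_{K/k}$ is a Lie subring (Remark~\ref{rmk-morphdiffring}\eqref{i:472}), and for $\xi=\tilde\partial_{t,l}$ because~\eqref{eq-explicintegrmorph} gives $D_\varphi([\partial,\tilde\partial_{t,l}])=\partial(1)\otimes\partial_{t,l}=0$. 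Formula~\eqref{eq:Lie} then makes $L_\partial(\omega_{t,j})$ vanish on the whole basis of $D_K$. Hence $\partial(e_i\otimes\omega_{t,j})=\partial(e_i)\otimes\omega_{t,j}=-(\overline e A_\partial)_i\otimes\omega_{t,j}$, which shows that each block column indexed by $\omega_{t,j}$ has $A_\partial$ in its diagonal block and zero in all other blocks; this accounts for the entire matrix outside the first block column.

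The substance lies in the first block column, the action on $\overline f=\overline e\otimes 1-\sum_i\overline e A_i\otimes\tilde\omega_{x,i}$. Applying the formula above together with the Leibniz rule, $\partial(f_s)$ splits into a $1$-component $\partial(e_s)$, some $\tilde\omega_{x,i}$-components, and $\omega_{t,j}$-components arising solely from $L_\partial(\tilde\omega_{x,i})$. By~\eqref{eq:Lie} and $\tilde\omega_{x,i}(\tilde\partial_{t,j})=0$, the $\omega_{t,j}$-component of $L_\partial(\tilde\omega_{x,i})$ equals $-\tilde\omega_{x,i}([\partial,\tilde\partial_{t,j}])$; since $[\partial,\tilde\partial_{t,j}]\in D_{K/k}$ expands as $\sum_i\tilde\omega_{x,i}([\partial,\tilde\partial_{t,j}])\,\partial_{x,i}$ and $\partial\mapsto A_\partial$ is $K$-linear, the resulting $\omega_{t,j}$-component of $\partial(\overline f)$ works out to $\overline e A_{[\partial,\tilde\partial_{t,j}]}$.

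I expect the one delicate point --- and the reason the formula for $B_j$ carries two summands --- to be the \emph{twisted} right $R$-module structure~\eqref{eq:3273} on $\At^1(M)$, under which scalar multiplication by $c\in K$ on a vector with nontrivial $1$-component adds a term $\otimes\,\dd c$. Thus when I rewrite $\partial(f_s)$ in the basis, the component $\sum_r c_{rs}f_r$ with $c_{rs}=-(A_\partial)_{rs}$ (forced by matching $1$-components, giving the top-left block $A_\partial$) contributes additional $\omega_{t,j}$-terms $\tilde\partial_{t,j}(c_{rs})\,e_r$, i.e.\ a correction $-\tilde\partial_{t,j}(A_\partial)$. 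Balancing the genuine $\omega_{t,j}$-component $\overline e A_{[\partial,\tilde\partial_{t,j}]}$ against this twist correction yields $B_j=-\tilde\partial_{t,j}(A_\partial)-A_{[\partial,\tilde\partial_{t,j}]}$, as claimed. The $\tilde\omega_{x,i}$-components need no separate verification, since any element of $\At^1(M)$ is determined by its $1$- and $\omega_{t,j}$-components through the defining constraint~\eqref{eq:paramAt}.
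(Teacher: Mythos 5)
Your proposal is correct and follows essentially the same route as the paper: both arguments apply the explicit $D_{K/k}$-action and the twisted scalar multiplication~\eqref{eq:3273} from Step~1 of the proof of Theorem~\ref{teor-paramAtiayh} to $\overline f$, and both reduce the bookkeeping to the $1$- and $\omega_{t,j}$-components (the paper phrases this as $\partial(\overline f)+\overline f\cdot A_\partial$ lying in $M\otimes_k\Omega_k$ and hence being determined by its values at the $\tilde\partial_{t,j}$). Your explicit verification that $L_\partial(\omega_{t,j})=0$ and that $[\partial,\tilde\partial_{t,j}]\in D_{K/k}$ just spells out what the paper leaves implicit.
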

\begin{proof}
We use the construction of the differential structure on $\At^1(M)$
as given in Step 1 of the proof of Theorem~\ref{teor-paramAtiayh}.
By definition, we have
$$\partial{\big(e_i\otimes \omega_{t,j}\big)}=\partial(e_i)\otimes \omega_{t,j}$$ and
$$\partial{\left(\overline f\right)}=-\overline e\cdot A_\partial \otimes 1-\sum_{i=1}^p \overline e\cdot\partial(A_i)
\otimes\widetilde \omega_{x,i}+ \sum_{i=1}^p\overline e\cdot  A_iA_\partial
\otimes\widetilde \omega_{x,i}-\sum_{i=1}^p\overline e\cdot A_i\otimes
L_{\partial}{\left(\widetilde \omega_{x,i}\right)}.
$$
On the other hand, by the definition of $K$-linear structure~\eqref{eq:3273} on $\At^1(M)$, we have
$$
\overline f\cdot A_\partial=\overline e\cdot A_\partial\otimes
1+\overline e\otimes\dd A_\partial-\sum_{i=1}^p\overline e\cdot
A_i\otimes A_\partial \widetilde \omega_{x,i}.
$$
Since the action of $\partial$ is well-defined on $\At^1(M)$, the sum
$$
\partial{\left(\overline f\right)}+\overline f\cdot A_\partial=-\sum_{i=1}^p \overline e\cdot\partial(A_i)
\otimes\widetilde \omega_{x,i}-\sum_{i=1}^p\overline e\cdot A_i\otimes
L_{\partial}{\left(\widetilde \omega_{x,i}\right)}+\overline e\otimes\dd
A_\partial
$$
belongs to $M\otimes_k\Omega_k$ and, hence, it is uniquely determined by its values at all $\tilde\partial_{t,j}$. Evaluating this explicitly and using that $\widetilde \omega_{x,i}{\big(\tilde\partial_{t,j}\big)}=0$, we obtain the needed result.
\end{proof}

\subsection{PPV extensions and differential functors}\label{subsection-PPVff}

The following statement is a parameterized version
of~\cite[9.6]{DeligneFS} (see also
Proposition~\ref{prop-Deligne2}).

\begin{theorem}\label{theor-PPVdff}
Let $(K,D_K)$ be a parameterized differential field over a
differential field $(k,D_k)$, \mbox{$\Char k=0$}, $M$ be a
finite-dimensional $D_{K/k}$-module over $K$. Then there is an
equivalence of categories
$$
\Phi:\PPV(M)\stackrel{\sim}\longrightarrow
\Fun^{D}_k(\Cat,\Vect(k)),
$$
where $\Cat:=\langle M\rangle_{\otimes,D}$ is the full subcategory
in $\DMod(K,D_{K/k})$ $D_k$-tensor generated by~$M$
(Definition~\ref{defin-diffgenercat}), where  the
$D_k$-structure on $\DMod(K,D_{K/k})$ is as in
Theorem~\ref{teor-paramAtiayh}.
\end{theorem}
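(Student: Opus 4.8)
The plan is to construct the comparison functor $\Phi$ together with an explicit quasi-inverse $\Psi$, following the non-parameterized argument of Deligne recalled in Proposition~\ref{prop-Deligne2}, but carrying along the extra $D_k$-structure furnished by the parameterized Atiyah extension of Theorem~\ref{teor-paramAtiayh}.

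\emph{Construction of $\Phi$.} Given a PPV extension $(L,D_L)$ for $M$, I would set, for an object $N$ in $\Cat$,
$$\Phi(L)(N):=(N_L)^{D_{L/k}}=(L\otimes_K N)^{D_{L/k}},$$
the space of $D_{L/k}$-horizontal sections, which is a $k$-vector space because $L^{D_{L/k}}=k$ by Definition~\ref{defin-PPV}. First I would check that $\Phi(L)$ is a fiber functor $\Cat\to\Vect(k)$: the PPV axioms supply a full horizontal basis of $M_L$, and since $\Cat$ is $D_k$-tensor generated by $M$ (closure under subquotients, tensor products, duals, and $\At^1$), this fullness propagates so that the canonical map $L\otimes_k\Phi(L)(N)\to N_L$ is an isomorphism for every $N$; hence $\Phi(L)$ is exact, faithful, tensor, and finite-dimensional. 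The essential point is that $\Phi(L)$ is a \emph{differential} functor in the sense of Definition~\ref{defin-Dkfunctor}: I would produce the required isomorphism $\Pi$ by showing that taking $D_{L/k}$-horizontal sections sends the parameterized Atiyah extension $\At^1(N)$ to the $k$-Atiyah extension of $\Phi(L)(N)$, i.e. that applying $(-)_L^{D_{L/k}}$ to $0\to\Omega_k\otimes_k N\to\At^1(N)\to N\to 0$ yields the Atiyah sequence computing the canonical $D_k$-action on $\Phi(L)(N)$ (this is precisely the mechanism of Remark~\ref{rem-parmaconst}). A morphism of PPV extensions is an isomorphism of $D_K$-fields and induces an isomorphism of the associated differential fiber functors, so $\Phi$ is a functor.

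\emph{Construction of $\Psi$.} Given a differential fiber functor $\omega_0\colon\Cat\to\Vect(k)$, I would consider two differential functors to $\Mod(K)$, relative to the (non-strict) morphism $(k,D_k)\to(K,D_K)$: the forgetful functor $\omega$, which is differential by Theorem~\ref{teor-paramAtiayh}, and $\omega_K:=(K\otimes_k-)\circ\omega_0$, which is differential by Proposition~\ref{prop-diffextscal} and Example~\ref{examp-difffunctor}. The argument of Proposition~\ref{prop-reprdiffalg}, carried out in this relative setting, then endows the $K$-algebra $B$ corepresenting $\Isom^\otimes(\omega_K,\omega)$ with a canonical $D_K$-structure corepresenting $\Isom^{\otimes,D}(\omega_K,\omega)$. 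Since $\Char k=0$ the Tannakian group is smooth and $B$ is nonzero and faithfully flat over $K$; choosing a maximal $D_K$-ideal $\mathfrak m\subset B$ (which is prime in characteristic zero) and putting $L:=\Frac(B/\mathfrak m)$ produces a $D_K$-field over $K$. By Proposition~\ref{prop-difffingencat} the algebra $B$ is $D_K$-generated over $K$ by the entries of the universal comparison isomorphism on $M$ together with the entries of its inverse, which I would use to verify PPV axioms~(2) and~(3) of Definition~\ref{defin-PPV}; the remaining axiom $L^{D_{L/k}}=k$ follows from the $D_{K/k}$-simplicity of $B/\mathfrak m$ by the standard no-new-constants argument of Picard--Vessiot theory, now performed in the $D_{K/k}$-direction.

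\emph{Quasi-inverse and main obstacle.} Finally I would exhibit natural isomorphisms $\Phi\circ\Psi\cong\id$ and $\Psi\circ\Phi\cong\id$: the universal horizontal trivialization over $B$ descends to $L$ and identifies $(N_L)^{D_{L/k}}$ with $\omega_0(N)$ compatibly with the differential structures, while the $\Isom$-torsor built from $\Phi(L)$ is canonically the parameterized Picard--Vessiot ring of $L$, whose fraction field recovers $L$ (using Lemma~\ref{lemma-changeofparam} to compare the $D_k$-structures after base change). I expect the main obstacle to be twofold. First, one must adapt Proposition~\ref{prop-reprdiffalg} to the relative situation in which $(K,D_K)$ is \emph{not} a $D_k$-algebra over $k$ (the map $D_{K/k}\to\Der_k(K,K)$ being nontrivial), so that $B$ acquires the full $D_K$-structure rather than merely a $D_k$-structure; the $D_{K/k}$-part encodes the defining equations of the PPV extension and the $D_k$-part encodes the parameter action. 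Second, one must match these two differential structures through every construction, verifying that the isomorphism $\Pi$ attached to $\Phi(L)$ and the $D_K$-structure on $B$ are genuinely compatible with the parameterized Atiyah extension, and that $L^{D_{L/k}}=k$ holds \emph{without} assuming $(k,D_k)$ differentially closed.
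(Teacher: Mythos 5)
Your overall architecture (solution-space functor in one direction, the corepresenting algebra of $\Isom^{\otimes,D}$ in the other) matches the paper's, and your construction of $\Phi$ is workable --- though the paper obtains the differential structure on $\omega_0$ more cheaply by composing with the fully faithful differential functor $L\otimes_k-$ and invoking Lemma~\ref{corol-composdifffunc}, rather than computing horizontal sections of $\At^1(N)$ directly. The construction of $\Psi$, however, has two genuine problems. First, you flag but do not resolve the fact that Proposition~\ref{prop-reprdiffalg} does not apply as stated: the morphism $(k,D_k)\to(K,D_K)$ is not strict, so $K$ is not a $D_k$-algebra over $k$. The paper's resolution is to pass to the extension-of-scalars category $K\otimes_k\Cat$ (which exists by Theorem~\ref{theor-extscal} and carries a canonical $D_K$-structure by Proposition~\ref{prop-diffextscal}), so that both the functor $\eta$ corresponding to the forgetful functor and $K\otimes_k\omega_0$ become differential functors between $D_K$-categories over the same base $(K,D_K)$, and Proposition~\ref{prop-reprdiffalg} applies verbatim. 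Without that step your ``relative'' version of the proposition is established nowhere.

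Second, and more seriously, your passage to $L:=\Frac(B/\mathfrak{m})$ for a maximal $D_K$-ideal $\mathfrak{m}$, followed by ``the standard no-new-constants argument,'' breaks down in exactly the situation this theorem is designed for. That argument only yields that the constants of the fraction field of a differentially simple finitely generated algebra are algebraic over $k$, with equality requiring $k$ algebraically (or differentially) closed; moreover $\mathfrak{m}$ is maximal among $D_K$-ideals, which does not make $B/\mathfrak{m}$ simple in the $D_{K/k}$-direction, the direction relevant for the constants. The point of starting from a differential fiber functor to $\Vect(k)$ is that no quotient is needed: the paper writes $\Cat=\bigcup_i\Cat_i$ with $\Cat_i$ tensor generated by $\big(\At^{1}\big)^{\circ i}(M)$, identifies $A=\varinjlim_i A_i$ with each $A_i$ a ring of the type studied in \cite[9.2]{DeligneFS}, and deduces from \cite[9.3, 9.6]{DeligneFS} that $A$ is already a domain whose fraction field has no new $D_{L/k}$-constants and whose horizontal vectors are exactly $1\otimes\omega_0(M)$ --- with no closedness hypothesis on $k$. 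Introducing a choice of $\mathfrak{m}$ also jeopardizes the quasi-inverse property, since the identification $\Phi(\Psi(\omega_0))\cong\omega_0$ relies on the universal trivialization over $A$ itself.
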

\begin{proof}
First, let us construct the functor $\Phi$. Let $L$ be a PPV
extension for $M$. By construction, the solution space functor
$$
\omega_0:\Cat\to\Vect(k),\quad X\mapsto X_L^{D_{L/k}}
$$
 is $k$-linear. By definition of a PPV extension, there is a canonical isomorphism
\begin{equation}\label{eq-isom}
L\otimes_k\omega_0(X)\stackrel{\sim}\longrightarrow X_L
\end{equation}
in $\DMod(L,D_{L/k})$. Therefore, the functor $\omega_0$ is exact
and tensor. Let us show that $\omega_0$ is a differential functor
between $D_k$-categories over $k$. By
Remark~\ref{rem-parmaconst}\eqref{it:3717} and
Lemma~\ref{lemma-changeofparam} applied to the morphism $(k,D_k)\to
(L,D_L)$, 
$$
L\otimes_k-:\Vect(k)\to \DMod{\left(L,D_{L/k}\right)}
$$
is a differential functor. Since the functor $L\otimes_k-$ is also fully
faithful, by Lemma~\ref{corol-composdifffunc}, it is enough to prove
that the composition
$$
(L\otimes_k-)\circ\omega_0:\Cat\to\DMod{\left(L,D_{L/k}\right)}
$$
is a differential functor. By isomorphism~\eqref{eq-isom}, this
composition is isomorphic to the extension of scalars functor
$$
L\otimes_K-:\Cat\to\DMod{\left(L,D_{L/k}\right)}.
$$
By Lemma~\ref{lemma-changeofparam}, the $L\otimes_K-$ is a
differential functor, which implies that $\omega_0$ is a
differential functor. We put $\Phi(L):=\omega_0$. One checks that
$\Phi$ extends to a functor.

Now let us construct a quasi-inverse functor $\Psi$ to $\Phi$. Let
$\omega_0:\Cat\to\Vect(k)$ be a differential functor. Consider
the forgetful functor $\omega:\Cat\to\Vect(K)$. By
Theorem~\ref{teor-paramAtiayh}, $\omega$ is a differential functor.
By Theorem~\ref{theor-extscal}, there exists the extension of
scalars $K\otimes_k\Cat$. By
Proposition~\ref{prop-diffextscal}\eqref{en:2368}, $K\otimes_k\Cat$
has a canonical $D_K$-structure and, by
Proposition~\ref{prop-diffextscal}\eqref{en:2375}, the functor
$\omega$ corresponds to a differential functor
$$
\eta:K\otimes_k\Cat\to\Vect(K)
$$
between $D_K$-categories over $K$. By
Remark~\ref{remark-extescalcatdiff}\eqref{en:2422}, we also have a
differential functor
$$
K\otimes_k\omega_0:K\otimes_k\Cat\to K\otimes_k\Vect(k)=\Vect(K)
$$
between $D_K$-categories over $K$. By Proposition~\ref{prop-reprdiffalg}, the functor
$$
\Isom^{\otimes,D}(K\otimes_k\omega_0,\eta):\DAlg(K,D_K)\to \Sets
$$
is corepresented by a $D_K$-algebra $A$ over $K$. We will show that
$A$ is a domain and $L:=\Frac(A)$ is a PPV extension for $M$. For
this, we use analogous results from \cite[9]{DeligneFS}. By
Proposition~\ref{prop-reprdiffalg}, $A$ as a $K$-algebra
corepresents the functor
$$
\Isom^{\otimes}(K\otimes_k\omega_0,\eta):\Alg(K)\to\Sets.
$$
By Definition~\ref{defin-extcat}, there is an equivalence of categories
$$
\Fun^{r,\otimes}_K(K\otimes_k\Cat,\Vect(K))
\stackrel{\sim}\longrightarrow\Fun^{r,\otimes}_k(\Cat,\Vect(K)),
$$
which sends $K\otimes_k\omega_0$ to $(K\otimes_k-)\circ\omega_0$ and
sends $\eta$ to $\omega$ by the construction of $\eta$. Therefore,
$A$ corepresents the functor
\begin{equation}\label{eq:isomKk}
\Isom^{\otimes}((K\otimes_k-)\circ\omega_0,\omega):\Alg(K)\to\Sets.
\end{equation}
Let $\Cat_i$ be the full subcategory in $\Cat$ tensor generated by
$\big(\At^{1}\big)^{\circ\,i}(M)$ and let $A_i$ be the
$K$-algebra that corepresents the functor
$$
\Isom^{\otimes}\big((K\otimes_k-)\circ\omega_0|_{\Cat_i},\omega|_{\Cat_i}\big).
$$
By Remark~\ref{remark-difftensgen}, the category $\Cat$ is a union
of all $\Cat_i$'s, whence we have that $$A=\varinjlim_i A_i.$$ Each
ring~$A_i$ is a particular example of a ring considered
in~\cite[9.2]{DeligneFS}, where it is denoted by $\Gamma(P,{\mathcal
O})$. A $D_{K/k}$-structure on~$A_i$ is defined
in~\cite[9.2]{DeligneFS}. Moreover, all morphisms $$A_i\to A_j,\quad
i\Le j,$$ are morphisms of $D_{K/k}$-algebras over $k$, which
defines a $D_{K/k}$-structure on $A$. By
Example~\ref{examp-diffstrdmod}, this $D_{K/k}$-structure coincides
with the one obtained from the $D_K$-structure on $A$. Thus, it
follows from \cite[9.3]{DeligneFS} that $A$ is a domain and the
field $L:=\Frac(A)$, being a $D_K$-field over~$K$, has no new
$D_{K/k}$-constants. Since $A$ corepresents
functor~\eqref{eq:isomKk}, the embedding $A\hookrightarrow L$
induces an isomorphism
$$
L\otimes_k\omega_0(M)\cong M_L.
$$
It follows from~\cite[9.6]{DeligneFS} that this
isomorphism identifies $1\otimes\omega_0(M)$ with $M_L^{D_{L/k}}$.
Thus, we have an isomorphism
$$
L\otimes_k M_L^{D_{L/k}}\to M_L.$$
Hence, by Proposition~\ref{prop-difffingencat}, $L$ is $D_K$-generated by the
coordinates of horizontal vectors in a basis of~$M$ over $K$, whence
$L$ is a PPV extension. We put $\Psi(\omega_0):=L$. One checks that
$\Psi$ extends to a functor. The proof of the fact that $\Phi$ and
$\Psi$ are quasi-inverses of each other is the same as the proof
of~\cite[Proposition~9.5]{DeligneFS}.
\end{proof}

\begin{remark}\label{rem-PPVring}
It follows from the proof of Theorem~\ref{theor-PPVdff} and Proposition~\ref{prop-difffingencat} that $A$ as above is equal to the PPV ring associated with $L$ (Definition~\ref{defin-PPVring}). Moreover, by the construction of $A$, for any $D_k$-algebra $R$, there is a canonical isomorphism
$$
\Aut^{D_K}(R\otimes_k A/R\otimes_k K)\cong \fIsom^{\otimes,D}(\omega_R,\omega_R). 
$$
\end{remark}

\section{Definability of differential Hopf algebroids}\label{sec:diffHopfalgebroids}

\subsection{Reduction to faithful flatness}

The goal of this section is to prove
Theorem~\ref{theor-defindiffHopfalg}. This technical result is
needed for the proof of Theorem~\ref{theor-main}.

\begin{theorem}\label{theor-defindiffHopfalg}
Let $(K,H)$ be a $D_k$-Hopf algebroid (Example~\ref{example-diffobjects}\eqref{en:6104}) over a differential field $(k,D_k)$ with $K$ being a field and $\Char k=0$. Suppose that $H$ is a $D_k$-finitely generated (Definition~\ref{defin-strictalg}) faithfully flat algebra over $K\otimes_k K$.
Then there exist a $D_k$-finitely generated subalgebra $R$ in $K$ over $k$ and a $D_k$-Hopf algebroid $(R,A)$ over $k$ such that $A$ is a $D_k$-finitely generated faithfully flat algebra over $R\otimes_k R$ and there is an isomorphism of $D_k$-Hopf algebroids over $k$
$$
{\left(K,{}_KA_K\right)}\cong (K,H).
$$
\end{theorem}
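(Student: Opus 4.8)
The plan is to descend the $D_k$-Hopf algebroid $(K,H)$ from the field $K$ to a $D_k$-finitely generated subalgebra $R\subset K$, keeping track of the differential structure throughout. The key principle is that everything in sight is \emph{finitely presented} in the differential sense, so all the structure maps are defined over finitely many elements of $K$; localizing further preserves faithful flatness. First I would choose a finite set of $D_k$-algebra generators of $H$ over $K\otimes_k K$, together with a $D_k$-finite presentation (Definition~\ref{defin-difffinpres}), which is available because by Lemma~\ref{lemma-finiteHopf} a $D_k$-finitely generated differential Hopf algebroid is a quotient of a ring of $D_k$-polynomials by a $D_k$-finitely generated ideal (one does not need a radical). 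Only finitely many elements of $K$ occur as coefficients in this presentation and in the defining formulas for $l,r,e,\Delta,\imath$; adjoin all of these, together with the finitely many elements of $K$ needed to express the $D_k$-module structure maps, to obtain an initial $D_k$-finitely generated subalgebra $R_0\subset K$ over $k$ and a candidate $D_k$-Hopf algebroid $(R_0,A_0)$ over $k$ with ${}_KA_{0,K}\cong H$.

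The main obstacle, as the introduction explicitly flags, is \textbf{flatness}: there is no reason for the candidate $A_0$ to be faithfully flat over $R_0\otimes_k R_0$, and in general obtaining differential flatness after localizing a differential ring by a single element seems to be unknown. The plan to overcome this is to localize $R_0$ by a suitable nonzero element $f$ and replace $R_0$ by $R:=(R_0)_f$, exploiting the Hopf algebroid structure. The idea is standard in the non-differential Tannakian setting (this is the content of \cite[9.2,9.6]{DeligneFS} and Proposition~\ref{prop-Deligne1}): generic flatness over the integral domain $R_0\otimes_k R_0$ produces an $f$ so that $(A_0)_f$ becomes flat over $(R_0\otimes_k R_0)_f$, and the groupoid structure—the existence of the antipode $\imath$ interchanging $l$ and $r$, together with the identity section $e$—upgrades flatness to faithful flatness. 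The point specific to this paper is that $f$ must be chosen so that the localization is compatible with the \emph{differential} structure; this is exactly where having a Hopf algebroid, rather than an arbitrary differential algebra, is essential, since one can arrange $f$ to be a $D_k$-constant or otherwise control $D_k(f)$ using the comultiplication, so that $(R_0)_f$ remains a $D_k$-algebra and $(A_0)_f$ remains a $D_k$-algebra over it.

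Concretely, I would carry out the steps in the following order. First, establish the $D_k$-finite presentation of $H$ over $K\otimes_k K$ via Lemma~\ref{lemma-finiteHopf} and record the finitely many coefficients involved. Second, descend all structure data $(l,r,e,\Delta,\imath)$ and the $D_k$-algebra structure to a subalgebra $R_0$, checking that the $D_k$-Hopf algebroid axioms (Example~\ref{example-diffobjects}\eqref{en:6104}) hold over $R_0$ after adjoining finitely many more elements witnessing each identity. Third, apply generic flatness to find $f\neq 0$ making the localization flat, and verify using the antipode and counit that it is faithfully flat. Fourth, verify that $f$ can be chosen so that $R:=(R_0)_f$ is still a $D_k$-algebra over $k$ (differentially finitely generated, since localization by one element is $D_k$-finite) and $A:=(A_0)_f$ is a $D_k$-Hopf algebroid over $R$; this is the crux. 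Finally, the base-change isomorphism ${}_KA_K\cong H$ is immediate since inverting $f\in R_0\subset K$ does not change the fiber over $K$, and it is automatically an isomorphism of $D_k$-Hopf algebroids because all structure maps were descended compatibly. The faithful flatness and $D_k$-finite generation of $A$ over $R\otimes_k R$ are then inherited from the construction.
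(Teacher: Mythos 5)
Your outline reproduces the outer shell of the paper's argument -- descend a $D_k$-finite presentation of $H$ (via Lemma~\ref{lemma-finiteHopf} and Lemma~\ref{lemma-finitepres}) to a $D_k$-finitely generated $R_0\subset K$, then localize to recover faithful flatness -- but it passes over the one point where the proof is genuinely hard, and it misdiagnoses where the differential structure causes trouble. Localizing a $D_k$-algebra at \emph{any} nonzero element automatically yields a $D_k$-algebra (derivations extend uniquely to localizations), so there is no need to arrange $f$ to be a $D_k$-constant or to control $D_k(f)$ via the comultiplication; that step of your plan addresses a non-problem. The real obstruction is that $A_0$ is only \emph{differentially} finitely generated over $R_0\otimes_k R_0$, hence is an increasing union of ordinary finitely generated subalgebras $A_n$ (generated by derivatives of the generators up to order $n$). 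Generic flatness applies to each $A_n$ separately, but the localizing element it produces depends on $n$, and a union of modules each flat after a different localization need not be flat after any single one. This is exactly the open issue recorded in Remark~\ref{remark-algfiniteflat}: the statement ``$\Sigma^{-1}A$ faithfully flat over $\Sigma^{-1}S$ implies $A_g$ faithfully flat over $S_g$ for some $g\in\Sigma$'' is standard for finitely generated algebras but unknown for $D_k$-finitely generated ones. Your appeal to ``generic flatness over the integral domain $R_0\otimes_k R_0$'' therefore does not close the argument.

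The paper's resolution, which your proposal does not contain, is Proposition~\ref{prop-localizealgebroid}, whose proof has two ingredients you would need to supply. First, Lemma~\ref{lemma-Trushin}: using a characteristic set of the prime differential ideal defining $A$ (with respect to an orderly ranking), one finds a single element $g$, built from the initials and separants, and an $N$ such that $(A_{n+1})_g$ is a \emph{polynomial ring} over $(A_n)_g$ for all $n\ge N$; this makes one localization work uniformly in $n$, so generic flatness applied to $A_N$ propagates to all $A_n$ and hence to $A=\bigcup_n A_n$. Second, the generic flatness step only gives (faithful) flatness over a dense open $U\subset\Spec(R\otimes_k R)$ and on a dense open $V\subset\Spec(A_n)$, and upgrading this to faithful flatness over all of $R\otimes_k R$ requires the pseudo-torsor and groupoid arguments of Lemmas~\ref{lemma-torsoropen}, \ref{lemma-twotorsors}, and~\ref{lemma-ffgroupoid} (together with the hypothesis $A_F\neq 0$ inherited from faithful flatness of $H$), not merely the existence of the antipode and counit. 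Without these two ingredients the proof does not go through.
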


The following statement is not used in the paper, but we include it for its own interest.

\begin{corollary}
Let $\Cat$ be a $D_k$-Tannakian category over a differentially closed field $(k,D_k)$ with $\Char k=0$. Suppose that $\Cat$ is $D_k$-tensor generated by one object. Then there exists a differential (fiber) functor $\Cat\to \Vect(k)$.
\end{corollary}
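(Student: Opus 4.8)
The plan is to manufacture a differential fiber functor valued in a $D_k$-field, reconstruct the corresponding differential Hopf algebroid via Theorem~\ref{theor-diffTanncorr}, spread it out to a $D_k$-finitely generated base using Theorem~\ref{theor-defindiffHopfalg}, and finally evaluate at a $k$-rational differential point produced by differential closedness. First I would reduce to a field. By Definition~\ref{defin-diffTancat} there are a $D_k$-algebra $R_0$ over $k$ and a differential functor $\omega\colon\Cat\to\Mod(R_0)$, and by Theorem~\ref{theor-diffTanncorr} we may assume $\Cat\cong\Comodf(R_0,A_0)$ with $A_0$ faithfully flat over $R_0\otimes_k R_0$, so that $R_0\neq 0$. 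Since $\Char k=0$, every minimal prime $\mathfrak p$ of $R_0$ is a $D_k$-ideal, and $K:=\Frac(R_0/\mathfrak p)$ is a $D_k$-field with $R_0\to K$ a morphism of $D_k$-algebras; composing $\omega$ with the extension-of-scalars differential functor $K\otimes_{R_0}-$ (Example~\ref{examp-difffunctor}\eqref{en:2371}) produces a differential functor $\omega_K\colon\Cat\to\Vect(K)$. Because $A_0$ is faithfully flat over $R_0\otimes_k R_0$ the associated groupoid is transitive, so every object of $\Comodf(R_0,A_0)$ has finitely generated projective underlying module of locally constant rank; hence base change to $K$ kills no nonzero object and $\omega_K$ is again a differential fiber functor (exact and faithful).

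Next I would reconstruct and spread out. Applying Theorem~\ref{theor-diffTanncorr} to $\omega_K$ yields a $D_k$-Hopf algebroid $(K,H)$ with $H$ faithfully flat over $K\otimes_k K$ and $\Cat\cong\Comodf(K,H)$; since $\Cat$ is $D_k$-tensor generated by one object, Proposition~\ref{prop-difffingencat} shows $H$ is $D_k$-finitely generated over $K\otimes_k K$. Now Theorem~\ref{theor-defindiffHopfalg} provides a $D_k$-finitely generated subalgebra $R\subset K$ over $k$ and a $D_k$-Hopf algebroid $(R,A)$, with $A$ a $D_k$-finitely generated faithfully flat algebra over $R\otimes_k R$, such that $(K,{}_KA_K)\cong(K,H)$. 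The category $\Comodf(R,A)$ is then a $D_k$-Tannakian category over $k$, and its forgetful differential functor to $\Mod(R)$ followed by $K\otimes_R-$ is a differential fiber functor to $\Vect(K)$: here $R\subset K$ is a domain and comodules are projective of constant rank, so faithfulness persists. By Theorem~\ref{theor-diffTanncorr} this fiber functor reconstructs $\Comodf(R,A)$ as $\Comodf(K,{}_KA_K)\cong\Comodf(K,H)$, so the base-change functor (Example~\ref{examp-difffunctor}\eqref{en:2377}) is an equivalence of $D_k$-categories $\Comodf(R,A)\stackrel{\sim}{\longrightarrow}\Cat$.

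Finally I would evaluate at a differential point. Because $R$ is a nonzero $D_k$-finitely generated $k$-algebra, $(k,D_k)$ is differentially closed, and $\Char k=0$, the differential Nullstellensatz furnishes a morphism of $D_k$-algebras $\epsilon\colon R\to k$ (exactly as in the passage $A\to k$ used in the uniqueness corollary of Section~\ref{subsection-isomfibfunct}; see also \cite{TrushinSplitting}). Then
$$
\Cat\ \stackrel{\sim}{\longleftarrow}\ \Comodf(R,A)\xrightarrow{\ \mathrm{forget}\ }\Mod(R)\xrightarrow{\ k\otimes_{R,\epsilon}-\ }\Vect(k)
$$
is a composition of differential functors (Example~\ref{examp-difffunctor}\eqref{en:2370},\eqref{en:2371}) whose values are finite-dimensional $k$-vector spaces, so precomposing with a quasi-inverse of the equivalence gives the desired differential functor $\Cat\to\Vect(k)$.

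The genuinely hard input is Theorem~\ref{theor-defindiffHopfalg}: it is the only step that makes the base $D_k$-finitely generated and so allows differential closedness to produce the point $\epsilon$. Within the present argument the delicate points are the reduction to a field and the identification $\Comodf(R,A)\cong\Cat$, both of which require that the extension-of-scalars functors $K\otimes_{R_0}-$ and $K\otimes_R-$ stay exact and faithful so that Theorem~\ref{theor-diffTanncorr} applies and the base change becomes an equivalence. This rests entirely on the local freeness of constant rank of comodules over the differential Hopf algebroids in play, which follows from the faithful flatness of $A_0$ and $A$ over the respective $R\otimes_k R$; once this is recorded, the remaining verifications are formal.
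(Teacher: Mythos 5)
Your proposal is correct and follows essentially the same route as the paper's proof: pass from the $D_k$-algebra $R_0$ to a $D_k$-field $K$ to get a differential fiber functor $\Cat\to\Vect(K)$, reconstruct $(K,H)$ via Theorem~\ref{theor-diffTanncorr} with Proposition~\ref{prop-difffingencat} giving $D_k$-finite generation, descend to $(R,A)$ by Theorem~\ref{theor-defindiffHopfalg} so that faithful flatness makes the base change an equivalence, and finally use differential closedness to produce a $D_k$-point $R\to k$. The only cosmetic difference is that you manufacture $K$ via a minimal prime of $R_0$ and re-derive the equivalence $\Comodf(R,A)\simeq\Comodf(K,{}_KA_K)$ through reconstruction, where the paper simply cites the faithfully-flat base-change equivalence from Section~\ref{subsection-prelHopfalg}.
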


\begin{proof}
 There is a $D_k$-morphism from any $D_k$-algebra over $k$ to a $D_k$-field over $k$. Thus, it follows from Definition~\ref{defin-diffTancat} that there is a differential functor $\Cat\to \Vect(K)$ for a $D_k$-field $K$ over $k$. Combining Theorem~\ref{theor-diffTanncorr}, Proposition~\ref{prop-difffingencat},  Theorem~\ref{theor-defindiffHopfalg}, Section~\ref{subsection-prelHopfalg}, and Example~\ref{examp-difffunctor}\eqref{en:2377}, we obtain a differential functor $\Cat\to \Mod(R)$, where $R$ is a $D_k$-finitely generated $D_k$-algebra over $k$. Since $\Char k= 0$, there
is a morphism from $R$ to $k$ (for example, see
\cite[Definition~4]{TrushinSplitting} and the references given
there), which finishes the proof.
\end{proof}

The proof of Theorem~\ref{theor-defindiffHopfalg} uses the following statements.

\begin{lemma}\label{lemma-finiteHopf}
Let $B$ be a $D_k$-finitely generated $D_k$-Hopf algebra over a differential field $(k,D_k)$ with $\Char k=0$. Then $B$ is of $D_k$-finite presentation over $k$ (Definition~\ref{defin-difffinpres}).
\end{lemma}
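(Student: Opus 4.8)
The plan is to present $B$ as a differential quotient of a differential polynomial ring and then to bound the order of the defining relations by means of the group structure. First I would use the $D_k$-finite generation of $B$ to fix differential generators $b_1,\ldots,b_n$ and obtain a surjection of $D_k$-algebras $\pi\colon S\to B$ with $S=k\{T_1,\ldots,T_n\}$ and $I:=\ker\pi$ a differential ideal. Writing $\Theta_{\le m}$ for the derivative operators of order $\le m$ built from a commuting basis of $D_k$, set $S_{\le m}:=k[\theta T_j\,:\,\theta\in\Theta_{\le m}]$, a Noetherian polynomial ring, together with $I_{\le m}:=I\cap S_{\le m}$ and $B_{\le m}:=S_{\le m}/I_{\le m}$. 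Since $S_{\le m}$ is Noetherian, each $I_{\le m}$ is finitely generated as an ordinary ideal; hence $I$ is $D_k$-finitely generated as soon as one produces an $m_0$ with $I=[I_{\le m_0}]$, where $[\,\cdot\,]$ denotes the differential ideal generated. Thus the whole problem reduces to showing that the relations occur in bounded order.

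Next I would exploit the Hopf structure. After replacing the generators by the matrix coordinates of a faithful representation of the associated differential algebraic group, the coproduct of each generator lies in $B_{\le 0}\otimes_k B_{\le 0}$, so the order filtration is by $k$-finitely generated Hopf subalgebras $B_{\le m}$ with $B=\varinjlim_m B_{\le m}$. As $\Char k=0$, each $G_m:=\Spec B_{\le m}$ is a smooth affine algebraic group over $k$ by Cartier's theorem, so $B_{\le m}$ is reduced and $I_{\le m}$ is a radical ideal of $S_{\le m}$; passing to the union, $B$ is reduced and $I$ is a radical differential ideal. Since $\Char k=0$, the Ritt--Raudenbush basis theorem then yields finitely many $f_1,\ldots,f_r\in I$ with $I=\sqrt{[f_1,\ldots,f_r]}$.

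The hard part is to remove the radical, that is, to upgrade $I=\sqrt{[f_1,\ldots,f_r]}$ to $I=[f_1,\ldots,f_r]$, equivalently to produce the stabilization order $m_0$ above. Here I would compare $G_{m+1}$ with the first prolongation of $G_m$: smoothness of $G_m$ guarantees that this prolongation is again smooth and is cut out scheme-theoretically, with no nilpotents, by the relations of $G_m$ together with their derivatives. Using that $m\mapsto\dim G_m$ agrees with the Kolchin differential dimension polynomial of $B$ over $k$ for large $m$, the dimensions of the $G_m$ stabilize, forcing the prolongation maps $G_{m+1}\to G_m$ to identify $G_{m+1}$ with the full prolongation of $G_m$ for all $m\ge m_0$. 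For such $m$ one gets $I_{\le m+1}=\bigl(I_{\le m},\,\partial_i g:\,g\in I_{\le m},\,i\bigr)$ as honest ideals of $S_{\le m+1}$, whence $I=[I_{\le m_0}]$ with no radical needed. I expect this final step to be the main obstacle: it is exactly the point where Ritt--Raudenbush alone (which only controls radical differential ideals) is insufficient, and one must convert the smoothness of the finite prolongations into exact, non-radical scheme-theoretic generation while controlling the multi-derivation jet combinatorics so that the dimension stabilization genuinely gives $I=[I_{\le m_0}]$.
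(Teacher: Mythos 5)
Your first half---passing to the $\GL_n$ matrix coordinates via Cassidy's embedding theorem so that $S=k\{T_{ij}\}[1/\det]$ is itself a differential Hopf algebra, filtering by order so that the truncations $B_{\le m}$ are finitely generated Hopf algebras, invoking Cartier's theorem for reducedness, and then Ritt--Raudenbush---is exactly how the paper's proof begins. The divergence, and the gap, is in your final step. The equality you need, namely that for $m\ge m_0$ one has the \emph{ideal-theoretic} identity $I_{\le m+1}=\bigl(I_{\le m},\,\partial_i g:\,g\in I_{\le m}\bigr)$ in $S_{\le m+1}$, does not follow from what you have written. Stabilization of the dimension data at best identifies $G_{m+1}$ with the prolongation of $G_m$ as \emph{reduced} schemes; to get the ideal equality you would still have to show that the ideal generated by $I_{\le m}$ together with its total derivatives is radical. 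For one derivation this is believable (the prolongation of a smooth scheme is an affine bundle over it), but with several commuting derivations the order-$(m+1)$ jet variables carry symmetry relations among mixed partials, and you have not shown that the naive ideal of derivatives cuts out the prolongation without nilpotents. In addition, identifying $\dim G_m$ with the Kolchin dimension polynomial of $I$ evaluated at $m$ presupposes a compatibility between your order filtration and a characteristic set of $I$ that itself needs proof. So the step you yourself flag as the main obstacle is a genuine one, and as written the argument does not close.

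The paper sidesteps all of this with a single observation. Let $[I_{\le m}]$ denote the differential ideal of $S$ generated by $I_{\le m}$. Because the comultiplication $\Delta$ is a morphism of $D_k$-algebras, the Leibniz rule applied to $\Delta(\theta x)=\theta(\Delta(x))$ for $x\in I_{\le m}$ and $\theta$ a derivative operator shows that $\Delta\bigl([I_{\le m}]\bigr)\subseteq [I_{\le m}]\otimes_k S+S\otimes_k[I_{\le m}]$; that is, $[I_{\le m}]$ is a Hopf ideal of the full, infinitely generated Hopf algebra $S$. Cartier's theorem \cite[Theorem~11.4]{Water} carries no finiteness hypothesis, so $S/[I_{\le m}]$ is reduced and each $[I_{\le m}]$ is \emph{already radical}: the radical you are struggling to remove never appears. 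Now $[I_{\le 1}]\subseteq[I_{\le 2}]\subseteq\cdots$ is an ascending chain of radical differential ideals with union $I$ (every element of $I$ lies in some $S_{\le m}$, and each $[I_{\le m}]\subseteq I$), so the Ritt--Raudenbush ascending chain condition gives $I=[I_{\le m_0}]$ for some $m_0$; since $I_{\le m_0}$ is a finitely generated ideal of the Noetherian ring $S_{\le m_0}$, this is exactly the conclusion your reduction asked for. Your prolongation argument could perhaps be completed, but it would be substantially more work; I recommend replacing your last paragraph with this Hopf-ideal observation.
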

\begin{proof}
By~\cite[Proposition 12]{Cassidy}, $B$ is a quotient of the Hopf algebra of the differential algebraic group $\GL_n$, that is, we have a surjective morphism of $D_k$-Hopf algebras
$$
C:=k\{T_{ij}\}[1/\det]\stackrel{\varphi}\longrightarrow B.
$$
Since $C$ is of $D_k$-finite presentation, it is enough to prove that the kernel $I$ of $\varphi$ is $D_k$-finitely generated. Let $C_n\subset C$ be the subring generated over $k$ by all  derivatives of $T_{ij}$ of order at most $n$ with respect to $D_k$. Put $J_n:=I\cap C_n$. Then $J_n$ is a finitely generated Hopf ideal \cite[Section~2.1]{Water} in the finitely generated Hopf algebra $C_n$ over $k$, because the comultiplication $\Delta : C\to C\otimes_k C$  is a $D_k$-morphism. 

Let $I_n$ be the $D_k$-ideal in $C$ generated by $J_n$. Again, since $\Delta$ is a $D_k$-morphism, $I_n$ is a $D_k$-finitely generated Hopf ideal in the Hopf algebra $C$ over $k$. Therefore, $I_n$ is radical~\cite[Theorem~11.4]{Water}. Since $I=\bigcup_n I_n$, by~\cite[Theorem~7.1]{Kap}, $I=I_n$ for some $n$, whence $I$ is $D_k$-finitely generated.
\end{proof}

\begin{lemma}\label{lemma-finitepres}
Let $(K,H)$ be a $D_k$-Hopf algebroid over a differential field
$(k,D_k)$ with $K$ being a field and $\Char k=0$. Suppose that $H$
is a $D_k$-finitely generated faithfully flat algebra over
$K\otimes_k K$. Then $H$ is of $D_k$-finite presentation over
$K\otimes_k K$.
\end{lemma}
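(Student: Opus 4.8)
The plan is to reduce the statement to the already-settled Hopf algebra case, Lemma~\ref{lemma-finiteHopf}, by passing to the \emph{vertex (isotropy) Hopf algebra} of the affine groupoid $\Spec H\rightrightarrows\Spec K$, and then to transport the resulting finiteness back to $H$ itself by faithfully flat descent along the structure homomorphism $K\otimes_k K\to H$. Throughout, I write $D_K\cong K\otimes_k D_k$ for the differential structure on $K$ and note that $H$ is a strict $D_{K\otimes_k K}$-algebra over $K\otimes_k K$, so that $D_k$-finite generation and $D_k$-finite presentation may be read relative to $K\otimes_k K$.

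First I would form the $K$-algebra
$$
B:=H\otimes_{K\otimes_k K}K,
$$
where $K\otimes_k K\to K$ is the multiplication map $m$; concretely $B=H/(l(x)-r(x))H$. Since $m$ is a surjective morphism of $D_k$-algebras and the structure maps $l,r,\Delta,e,\imath$ are $D_k$-morphisms (Example~\ref{example-diffobjects}), the quotient $B$ carries a canonical structure of $D_K$-Hopf algebra over the differential field $(K,D_K)$: it is the coordinate ring of the vertex group scheme obtained by restricting the groupoid to the diagonal. Being a quotient of the $D_k$-finitely generated algebra $H$, the Hopf algebra $B$ is $D_K$-finitely generated over $K$, and $\Char K=0$. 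Hence Lemma~\ref{lemma-finiteHopf}, applied to the differential field $(K,D_K)$ in place of $(k,D_k)$, shows that $B$ is of $D_K$-finite presentation over $K$ (Definition~\ref{defin-difffinpres}).

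Next I would relate $H$ to $B$ by the shear isomorphism of the Hopf algebroid. The antipode axioms furnish a canonical isomorphism of $D_H$-algebras over $H$
$$
H\otimes_{K\otimes_k K}H\stackrel{\sim}\longrightarrow B_H,\qquad B_H:=H\otimes_K B\ \text{(extension of scalars along }r),
$$
which geometrically is the map $(g_1,g_2)\mapsto(g_1,g_2g_1^{-1})$ on the scheme of pairs of arrows with equal source and equal target; its inverse uses the product and is again assembled from differential maps, so the isomorphism is differential. The left-hand side is precisely the base change of the $(K\otimes_k K)$-algebra $H$ along the faithfully flat structure homomorphism $K\otimes_k K\to H$, $x\otimes y\mapsto l(x)r(y)$, while the right-hand side is the base change of $B$ along the strict $D_k$-morphism $r\colon K\to H$. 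Since $B$ is of $D_K$-finite presentation over $K$, its base change $B_H$ is of $D_H$-finite presentation over $H$; therefore $H\otimes_{K\otimes_k K}H$ is of $D_H$-finite presentation over $H$.

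Finally I would descend finite presentation along the faithfully flat morphism $K\otimes_k K\to H$. Write $H=P/I$ with $P:=(K\otimes_k K)\{T_1,\ldots,T_n\}$, which is possible since $H$ is $D_k$-finitely generated, and let $[S]$ denote the differential ideal generated by a set $S$. Flatness of $H$ over $K\otimes_k K$ gives $H\otimes_{K\otimes_k K}H=P'/I'$ with $P'=H\{T_1,\ldots,T_n\}$ and $I'=I\cdot P'$, and the previous step says $I'$ is $D_H$-finitely generated. Choosing finitely many differential generators of $I'$ inside the image of $I$ (possible because $I'$ is generated as a differential ideal by that image and is $D_H$-finitely generated) and lifting them to $f_1,\ldots,f_r\in I$, I would tensor the exact sequence $0\to[f_1,\ldots,f_r]\to I\to I/[f_1,\ldots,f_r]\to0$ with the flat algebra $H$ to obtain $\big(I/[f_1,\ldots,f_r]\big)\otimes_{K\otimes_k K}H=I'/[g_1,\ldots,g_r]=0$, whence $I/[f_1,\ldots,f_r]=0$ by faithful flatness. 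Thus $I$ is $D_{K\otimes_k K}$-finitely generated and $H$ is of $D_k$-finite presentation over $K\otimes_k K$. I expect the main obstacle to be the shear isomorphism together with the verification that it respects the differential structures, since this is the one place where the Hopf algebroid axioms are genuinely used and where the passage to the vertex Hopf algebra $B$ is licensed; the concluding descent is then formal and, crucially, requires no differential Noetherianity of the non-field ring $K\otimes_k K$.
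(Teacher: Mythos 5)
Your proof is correct and follows essentially the same route as the paper's: pass to the vertex Hopf algebra $B=K\otimes_{K\otimes K}H$, apply Lemma~\ref{lemma-finiteHopf} to it, transport finite presentation to $H\otimes_{K\otimes K}H$ via the pseudo-torsor (shear) isomorphism $B\otimes_K H\cong H\otimes_{K\otimes K}H$, and descend along the faithfully flat map $K\otimes_k K\to H$. The only difference is that you write out the final faithfully flat descent of $D_k$-finite presentation explicitly, whereas the paper delegates it to the non-differential argument of \cite[Proposition 2.7.1(vi)]{EGAIV2}; your version of that step is sound.
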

\begin{proof}
Since $H$ is $D_k$-finitely generated over $K\otimes_k K$, we have that $$B:=K\otimes_{K\otimes K}H$$ is a $D_k$-finitely generated $D_k$-Hopf algebra over $K$. Therefore, $B$ is of $D_k$-finite presentation over $K$ by Lemma~\ref{lemma-finiteHopf}. Since $\Spec(H)$ is a $D_k$-pseudo-torsor under the group scheme $\Spec(B\otimes_k K)$ over $K\otimes_k K$ (Section~\ref{subsection-prelTann}), we have an isomorphism of $D_k$-algebras over $H$:
$$
B\otimes_K H \cong H\otimes_{K\otimes K}H.
$$
Hence, $H\otimes_{K\otimes K}H$ is of $D_k$-finite presentation over
$H$. By the hypotheses of the lemma, $H$ is faithfully flat over
$K\otimes_k K$. The same argument as in the non-differential case
(for example, see~\cite[Proposition 2.7.1(vi)]{EGAIV2}) implies that
$H$ is of $D_k$-finite presentation over $K\otimes_k K$.
\end{proof}

\begin{proposition}\label{prop-localizealgebroid}
Let $(R,A)$ be a $D_k$-Hopf algebroid over a differential field
$(k,D_k)$ with $R$ being a domain and $\Char k=0$. Suppose that $R$
and $A$ are $D_k$-finitely generated over $k$ and $A_F\ne 0$, where
$F$ is the total fraction ring of $R\otimes_k R$. Then there exists
a non-zero element $f\in R$ such that the localization~$_fA_f$ is
faithfully flat over the localization $R_f\otimes_k R_f$.
\end{proposition}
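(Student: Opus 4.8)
The plan is to exploit the homogeneity built into the Hopf-algebroid structure in order to convert the usual generic-flatness statement, whose localization depends on a chosen base point, into flatness over a \emph{product} open set $D(f\otimes f)=\Spec(R_f\otimes_k R_f)$ for a single $f\in R$. Throughout, let $\pi\colon\Spec A\to\Spec(R\otimes_k R)$ denote the morphism induced by the structure map $a\otimes b\mapsto l(a)r(b)$, and let $K:=\Frac(R)$. The goal is to produce $f$ making $\pi$ faithfully flat after localizing, i.e. flat and surjective.

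First I would dispose of flatness over the generic fibre for free: the total fraction ring $F$ of $R\otimes_k R$ is a finite product of fields, so $A_F$ is automatically flat over $F$, and the non-flat locus $N\subseteq\Spec A$ of $\pi$ therefore misses the generic fibre $\pi^{-1}(\Spec F)$. The difficulty is that $A$ is only $D_k$-finitely generated over $R\otimes_k R$, so $N$ need not be closed and ordinary generic flatness does not apply directly. To restore constructibility I would bring in the isotropy Hopf algebra $B:=R\otimes_{R\otimes_k R}A$ (base change of $A$ along multiplication $R\otimes_k R\to R$, i.e. the fibre of $\Spec A$ over the diagonal via the counit $e$): its generic fibre $B_K:=K\otimes_R B$ is a $D_k$-finitely generated Hopf algebra over the field $K$ with $\Char K=0$, so by Lemma~\ref{lemma-finiteHopf} it is of $D_k$-finite presentation over $K$. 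Clearing the finitely many denominators in such a presentation produces a first nonzero $f_0\in R$ over which the presentation, and with it the relevant constructibility of the flatness locus, spreads out to $R_{f_0}$.

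The heart of the argument is the homogeneity step. The groupoid $\Spec A\rightrightarrows\Spec R$ carries left and right translation isomorphisms (Section~\ref{subsection-prelHopfalg}): composition with an invertible arrow gives an isomorphism of the fibre of $\pi$ over $(x,y)$ with the fibre over $(x',y)$ whenever $x$ and $x'$ are connected by an arrow, compatibly with the target coordinate. Since flatness is stable under these isomorphisms, the locus $N$ is invariant under changing the source coordinate along arrows, so that $\pi(N)$ is pulled back from $\Spec R$ along the target projection, say $\pi(N)\subseteq\Spec R\times_k N_0$ with $N_0\subseteq\Spec R$ closed. Because $N$ avoids the generic fibre, $N_0$ is a proper closed subset of $\Spec R$, hence $N_0\subseteq V(f_1)$ for some nonzero $f_1\in R$; replacing $f_0$ by $f:=f_0f_1$ we get $\pi(N)\cap D(f\otimes f)=\varnothing$, i.e. $\,{}_fA_f$ is flat over $R_f\otimes_k R_f$. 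Finally, surjectivity on $D(f\otimes f)$ follows from the identity section: the counit $e$ gives $\Spec(e)\colon\Spec R\to\Spec A$ with $\pi\circ\Spec(e)=\Delta$, so the diagonal lies in the image, while the hypothesis $A_F\neq0$ places the generic points in the image, and the same translation invariance spreads the (now open, since $\pi$ is flat of finite presentation on $D(f\otimes f)$) image over all of $D(f\otimes f)$. Flat plus surjective yields faithful flatness.

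The main obstacle is precisely the flatness step of the preceding paragraph, and this is exactly the point the introduction flags as being unknown in general and special to Hopf algebroids: for a merely $D_k$-finitely generated algebra one cannot in general achieve flatness by inverting a single element of the base, and the whole leverage comes from the translation isomorphisms forcing the non-flat locus to be a pullback from $\Spec R$. Making the invariance of $N$ rigorous for an algebra that is not of finite type over $R\otimes_k R$ — i.e. controlling $N$ using only the coproduct and the finite-presentation input of Lemma~\ref{lemma-finiteHopf} — is where the real work lies; once flatness over $D(f\otimes f)$ is in hand, the passage to faithful flatness via the identity section and $A_F\neq0$ is routine.
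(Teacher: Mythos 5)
Your overall strategy---use generic flatness on a finite-type approximation and then propagate flatness over the whole base by the translation symmetry of the groupoid---is indeed the strategy of the paper, and your closing paragraph correctly identifies where the difficulty sits. But there is a genuine gap exactly there, and the device you offer to fill it does not suffice. The algebra $A$ is a filtered union of finite-type Hopf subalgebroids $A_n$ (generated over $R\otimes_k R$ by the derivatives of the generators of order at most $n$), and the problem is to find a \emph{single} nonzero $f$ that works for every $A_n$ simultaneously: a priori the non-flat locus of $\pi$ is an increasing union of closed sets, one for each $n$, and need not be contained in any proper closed subset of $\Spec(R\otimes_k R)$. Spreading out a $D_k$-finite presentation of the generic isotropy Hopf algebra $B_K$ (Lemma~\ref{lemma-finiteHopf}) only controls the fibre over the generic point of the diagonal; it gives, for each $n$, some localization over which $A_n$ behaves well, but the denominators may grow with $n$. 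The essential extra input in the paper is Lemma~\ref{lemma-Trushin}: using a characteristic set of the defining prime differential ideal, one produces a single $g$ and an $N$ such that $(A_{n+1})_g$ is a polynomial ring over $(A_n)_g$ for all $n\ge N$. This collapses the infinite tower onto the single finite-type stage $A_N$, to which ordinary generic flatness applies; your proposal has no substitute for this step.

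A secondary problem is the homogeneity step itself. You assert that flatness of $\pi$ is ``stable under'' the fibrewise translation isomorphisms, so that the non-flat locus is pulled back along the target projection. But flatness at a point of $\Spec A$ is not a property of the fibre through that point, and the translations only identify fibres over pairs $(x,y)$ and $(x',y)$ that are joined by an arrow---the groupoid need not act transitively. Making this rigorous is precisely the content of Lemmas~\ref{lemma-torsoropen} and~\ref{lemma-ffgroupoid}: one base-changes along a faithfully flat cover built from an open subset $V\subset\Gamma$ on which $\pi$ is \emph{already} faithfully flat and whose fibres are dense in those of $\pi$, and the density hypotheses (propagated up the tower by Lemma~\ref{lemma-twotorsors}) are what guarantee that the relevant intersections $V_x\cap g^{-1}V_x$ are non-empty. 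Thus both halves of your argument ultimately rest on the uniform finite-type control supplied by Lemma~\ref{lemma-Trushin}; without it, or an equivalent, the proof does not close.
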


\begin{proof}[Proof of Theorem~\ref{theor-defindiffHopfalg}]
By Lemma~\ref{lemma-finitepres}, $H$ is of $D_k$-finite presentation over $K\otimes_k K$.
A standard argument implies that there is a $D_k$-finitely generated subalgebra $R$ in $K$ over $k$ and a $D_k$-Hopf algebroid $(R,A)$ over $k$ such that $A$ is of $D_k$-finite presentation over $R\otimes_k R$ and there is an isomorphism of $D_k$-Hopf algebroids
$$
(K,{}_KA_K)\cong (K,H)
$$
over $k$.
Since $H$ is faithfully flat over $K\otimes_k K$, we have $A_F\ne 0$. Hence, by Proposition~\ref{prop-localizealgebroid},
localizing~$R$ by a non-zero element, we obtain that $A$ is faithfully
flat over~$R\otimes_k R$.
\end{proof}

\begin{remark}\label{remark-algfiniteflat}
Proposition~\ref{prop-localizealgebroid} is implied by the following
hypothetical statement: given a morphism $S\to A$ between
\mbox{$D_k$-fi\-nite\-ly} generated algebras over $k$, suppose that
there is a multiplicative set $\Sigma\subset S$ such that the
localization $\Sigma^{-1}A$ is faithfully flat over $\Sigma^{-1}S$;
then there is $g\in \Sigma$ such that $A_g$ is faithfully flat
over~$S_g$. The validity of this statement seems to be not clear,
while its non-differential version is well-known (for example, see
\cite[8.10.5(vi), 11.2.6.1(ii)]{EGAIV3}).
Proposition~\ref{prop-localizealgebroid} provides the partial
case of the above statement in the remark in which $A$ comes from the differential Hopf algebroid $(R,A)$ and $S=R\otimes_k R$.
\end{remark}

The rest of the section  is on
the proof of Proposition~\ref{prop-localizealgebroid}, which we
actually prove in Section~\ref{sec:proofof72}.

\subsection{Auxiliary results}\label{subsection-geomresults}

The following is a modification
of~\cite[Proposition~5]{TrushinInheriting}. The authors are grateful
to D.~Trushin for his suggestion to use this result.

\begin{lemma}\label{lemma-Trushin}
Let $A$ be a $D_S$-finitely generated algebra over a differential
ring $(S,D_S)$ and  let $a_1,\dots, a_p$ be its generators. Suppose that~$A$ is
a domain. Consider the following (non-differential) $S$-subalgebras
in~$A$:
$$
A_n:= S[
(\partial_1\cdot\ldots\cdot\partial_m)(a_i)\:|\:\partial_j\in D_S,\
m\le n,\ 1\Le i\Le p]\,,\,\,n\in\N\,.
$$
Then there exist a natural number $N$ and a non-zero element $g\in A_N$ such that, for any
$n\ge N$, there is an isomorphism
$${(A_{n+1})}_g \cong
{(A_n)}_g{\left[T_1,\ldots,T_{l_n}\right]}
$$
of algebras over the localization ${(A_n)}_g$, where the $T_l$'s are
formal variables.
\end{lemma}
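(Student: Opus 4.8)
The plan is to fix once and for all a finite $S$-module generating set $\partial^{(1)},\dots,\partial^{(d)}$ of $D_S$ (which exists since $D_S$ is finitely generated), viewed as $S$-linear derivations of $A$. A short induction with the Leibniz rule then shows that each $A_n$ is an ordinary finitely generated $S$-subalgebra of $A$, generated by the finitely many iterated derivatives $\partial^{(k_1)}\cdots\partial^{(k_m)}(a_i)$ with $m\le n$, that $A=\bigcup_n A_n$, and that $\partial^{(k)}(A_n)\subseteq A_{n+1}$ for every $k$. Since $A$ is a domain, so is each $A_n$; write $K_n:=\Frac(A_n)$ and let $F$ be the fraction field of the image of $S$ in $A$, so that $F\subseteq K_0\subseteq K_1\subseteq\cdots$ and $\bigcup_n K_n=\Frac(A)$. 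The target isomorphism amounts to saying that, after one localization, each inclusion $A_n\subseteq A_{n+1}$ becomes a polynomial extension.

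The first mechanism I would isolate is a \emph{derivative identity}. Fix $n$ and choose among the generators of $A_n$ a separating transcendence basis $c_1,\dots,c_\rho$ of $K_n$ over $F$ (a separating basis is available since the relevant extensions are separable, in particular in the characteristic zero setting in which the lemma is applied). Every remaining generator $b$ of $A_n$ satisfies a polynomial relation $P(b,c_1,\dots,c_\rho)=0$ with coefficients in $S$ and nonzero separant $s_b:=\partial P/\partial b$. Applying a derivation $\partial^{(k)}$ to this relation and solving for $\partial^{(k)}(b)$ expresses $\partial^{(k)}(b)$ as an $(A_n)_{s_b}$-linear combination of the $\partial^{(k)}(c_j)$, together with terms obtained by applying $\partial^{(k)}$ to the coefficients of $P$ (which lie in $S$). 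Inverting the product $g_n$ of all the separants $s_b$, we conclude that $(A_{n+1})_{g_n}$ is generated over $(A_n)_{g_n}$ by the finite set $\{\partial^{(k)}(c_j)\}_{1\le j\le\rho,\,1\le k\le d}$.

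Next I would make one step polynomial. Among the finitely many new generators $\partial^{(k)}(c_j)$, pick a maximal subset that is algebraically independent over $K_n$; these generate a polynomial subring of $(A_{n+1})_{g_n}$ over $(A_n)_{g_n}$. The remaining new generators are algebraic over $K_n$. The decisive differential point is that for $n\ge N$, with $N$ sufficiently large, the inclusion $K_n\subseteq K_{n+1}$ is purely transcendental, i.e.\ $K_n$ is relatively algebraically closed in $K_{n+1}$; this is the stabilization of the differential transcendence structure underlying \cite[Proposition~5]{TrushinInheriting}. Granting it, for $n\ge N$ every algebraic new generator already lies in $K_n$, hence in a localization of $A_n$, and after inverting the resulting denominators the step becomes $(A_{n+1})_g\cong(A_n)_g[T_1,\dots,T_{l_n}]$ with $l_n=\trdeg(K_{n+1}/K_n)$.

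The two things that make this genuinely delicate, and which I expect to be the main obstacles, are the stabilization just invoked and the requirement of a \emph{single} pair $(N,g)$ working for all $n\ge N$. For the latter I would run a self-propagation argument: the separating basis and the separants chosen at level $N$ reproduce themselves under the $\partial^{(k)}$, because differentiating a defining relation $P(b,\bar c)=0$ produces the relation needed one level higher with the \emph{same} separant $s_b$, and because $\partial^{(k)}$ already maps $(A_n)_g$ into $(A_{n+1})_g$ once $g$ is inverted (as $\partial^{(k)}(1/g)=-\partial^{(k)}(g)/g^2$). Hence no denominators beyond $g$ and its iterated derivatives, all of which are compatible with the localizations, are ever needed, and the finitely many separants that occur before stabilization can be absorbed into one element $g\in A_N$. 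Assembling the derivative identity, the polynomiality of each stabilized step, and this propagation yields the claim; establishing the stabilization in the present generality, with a possibly non-commuting $D_S$ over a base ring $S$ that need not be a field, is where the real work lies and is handled by adapting the cited result of Trushin.
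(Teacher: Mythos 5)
Your outline follows the same general strategy as the paper (separants, a stabilization order $N$, a single localizing element $g$), but the decisive step is asserted rather than proved, and it is precisely the step where the content of the lemma lies. You reduce everything to the claim that for $n\ge N$ the extension $K_n\subseteq K_{n+1}$ is ``purely transcendental'' and that the algebraic new generators descend to $(A_n)_g$ for a \emph{uniform} $g$, and then you write ``Granting it\dots'' and defer the justification to ``adapting the cited result of Trushin.'' That is the whole difficulty. The paper does not cite this away: it presents $A$ as $S\{y_1,\dots,y_p\}/\p$ for a prime differential ideal $\p$, chooses a characteristic set $\Sigma$ of $\p$ with respect to an orderly ranking in Hubert's framework (which is what accommodates a non-commuting $D_S$ with structure constants $c_{ij}^q\in S$ over a base that is only a ring), and uses the Rosenfeld-type reduction identity $\tilde g^{\,q}f=\sum_i h_i(\theta_i f_i)^{n_i}$ with the $h_i$ free of proper derivatives of leaders. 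That identity is what simultaneously (i) shows that the non-leader, non-derivative-of-leader variables of each order $>N$ are genuinely free polynomial variables over $(A_n)_g$ (equivalently, yields the presentation $A_g\cong S[W\cup X]_g/(\Sigma)$ with $\Sigma$ concentrated in order $\le N$), and (ii) bounds all denominators by the single element $\tilde g=\prod_{f\in\Sigma}I_fS_f$ of order $\le N$. Your ``self-propagation'' paragraph gestures at (ii) via the fact that $\theta f$ has the same separant as $f$, which is correct, but it does not establish (i): knowing that the transcendental new generators are algebraically independent over $K_n$ as field elements does not by itself give that $(A_{n+1})_g$ is a polynomial \emph{ring} over $(A_n)_g$, nor that the algebraic new generators lie in $(A_n)_g$ rather than merely in $\Frac(A_n)$ with denominators that grow with $n$.

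Two smaller points. First, you conflate ``purely transcendental'' with ``$K_n$ relatively algebraically closed in $K_{n+1}$''; these are not equivalent, and neither field-theoretic statement suffices for the ring-level conclusion you need. Second, your level-$n$ ``derivative identity'' (differentiating $P(b,\bar c)=0$ and solving for $\partial^{(k)}(b)$ after inverting the separant) is fine, but the separating basis and the separants it produces a priori depend on $n$; the uniformity in $n$ has to come from somewhere, and in the paper it comes from the finiteness of the characteristic set and the orderliness of the ranking, not from the generic level-by-level computation. As written, the proposal identifies the crux correctly but does not close it.
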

\begin{proof}
Replacing $S$ by its image under the homomorphism $S\to A$, we may
assume that this homomorphism is injective and~$S$ is a domain. Let
$\p$ be the kernel of the surjective morphism of
$D_S$-algebras over~$S$
$$
\varphi : B\to A,\quad y_i\mapsto a_i,
$$
where $B:=S\{y_1,\ldots,y_p\}$ (Definition~\ref{defin-strictalg}). Then $\p$ is a
prime $D_S$-ideal. For a natural $n$, put
$$
B_n:=
S[(\partial_1\cdot\ldots\cdot\partial_m)(y_i)\:|\:\partial_j\in
D_S,\ m\le n,\ 1\Le i\Le p]\subset B.
$$
Then we have $$A_n\cong B_n\big/{\left(\p\cap B_n\right)}.$$
Since $D_S$ is a finitely generated projective $S$-module,
localizing $S$ by a non-zero element, assume that $D_S$ is now a
finitely generated free $S$-module. Let
$$D_S=S\cdot\delta_1\oplus\ldots \oplus S\cdot\delta_d.$$ Then we have
$$
[\delta_i,\delta_j]=\sum_{q=1}^d c_{ij}^q \delta_q,\quad c_{ij}^q
\in S,\ 1\Le i,j\Le d,
$$
which is exactly the situation considered in~\cite{hubert03}.

For every $D_S$-polynomial $f\in B\setminus S$, we define its
leader, separant, and initial as in~\cite[Section~3.2]{hubert03}.
More precisely, put
$$
\Theta:=
\{\id\}\cup\left\{\delta_{i_1}\cdot\ldots\cdot\delta_{i_m}\:|\:1\Le
i_j\Le d,\ m\Ge 1\right\},\quad M:= \{\,\theta y_j\:|\:\theta \in \Theta,\
1\Le j\Le p\,\}\subset B,
$$ and let the order of
$$\delta_{i_1}\cdot\ldots\cdot \delta_{i_m} y_j\in M$$ be $m$. Thus,
$B$ is the ring of polynomials in elements of $M$. Consider an
orderly differential ranking on $M$ \cite[Definition~3.3]{hubert03},
for example, the ranking that first compares the orders of two
elements in $M$ and then compares lexicographically the $y_j$'s and
$\delta_i$'s. If $u_f \in M$ is the leader of $f$ with respect to
this ranking on $M$ and $$f=I_r u_f^r +\ldots+I_0,$$ 
where $I_{i}$, $0\Le i\Le r$, do not contain $u_f$,
then the
separant is $S_f:=\partial f/\partial u_f$ and the initial $I_f$ is
$I_r$. Let $\Sigma \subset \p$ be a characteristic set of
$\p$ with respect to our ranking, \cite[Section~6.3]{hubert03}, and put
$$
W:= \{u_f\:|\: f \in \Sigma\}, \ \ Z:=\left\{\theta u_f\:|\: f\in
\Sigma,\ \theta\in \Theta,\ \theta \ne \id\right\},\ \ X:=
M\setminus(Z\cup W),\ \ \text{and}\ \ \tilde g:= \prod_{f\in \Sigma}
I_f S_f.
$$
Note that $\tilde g\notin \p$, because the differential ideal $\p$ is prime.
By \cite[Section~6.1]{hubert03},
for every $f\in \p$, there exists  $q\Ge 0$ such that
\begin{equation}\label{eq-radical}
\tilde g^{\,q}\cdot f=\sum _i h_i\cdot{(\theta_i f_i)}^{n_i},
\end{equation}
for some $h_i\in B$, $\theta_i\in\Theta$, $f_i\in\Sigma$, and
$n_i>0$, where the polynomials $h_i$'s are free of the elements
of~$Z$. Let $N\in\N$ be such that $B_N\supset W$, that is, $N$ is
the maximal order of the elements of $W$. Since the ranking is
orderly, this implies that $\Sigma \subset B_N$. Further,~$I_f$ and
$S_f$ belong to $B_N$ for any $f\in\Sigma$, because they are
differential polynomials  of order not exceeding $N$. Hence, $\tilde
g\in B_N$. Put
$$
g:=\varphi(\tilde g).
$$
We have that $g\ne 0$, because $\tilde g \notin \p$ as shown
above. Since, again, the ranking is orderly, the localization~$A_g$
is generated by $\varphi(W)$, $\varphi(X)$, and $1/g$ over $S$.
Moreover, if $$f\in S[W\cup X]\subset B$$ is such that $\varphi(f)=0$
in~$A_{g}$, then~\eqref{eq-radical} implies that there exists
$q\Ge0$ such that $$\tilde g^q f\in (\Sigma),$$  the
(non-differential) ideal generated by~$\Sigma$. Therefore,
$$
A_{g} \cong S[W\cup X]_g/(\Sigma)
$$
as $S$-algebras. Thus, for every $n\Ge N$, we have that
${(A_{n+1})}_{g}$ is a polynomial ring over ${(A_n)}_{g}$. Precisely, we have $${(A_{n+1})}_{g}={(A_n)}_{g}[T],$$ where
$T:=(\varphi(X)\cap A_{n+1})\setminus A_n$.
\end{proof}

We use the following notation and conventions in our geometric
constructions. Given morphisms of schemes $\varphi:Y\to X$ and
$\pi:Z\to X$,  denote the fibred product $Y\times_X Z$
by $\varphi^*Z$ and
 the projection to $Y$ by $\varphi^*\pi:\varphi^*Z\to Y$. Thus, there is a Cartesian square of
schemes
$$
\begin{CD}
\varphi^*Z@>>>Z\\
@V\varphi^*\pi VV@V\pi VV\\
Y@>\varphi>>X.
\end{CD}
$$
The morphism $\varphi^*\pi$ is usually called a base change of $\pi$ by the morphism
$\varphi$. The notation $\varphi^*Z$ is correct, provided that $\pi$ is the only considered morphism from $Z$ to $X$.

Given a morphism of schemes $\varphi:Y\to X$ and an open or closed
subscheme $U\subset Y$,  denote the restriction of
the morphism~$\varphi$ to $U$ by $\varphi|_U$.
Given an open or closed
subscheme $W\subset X$,  denote the restriction of $Y$ to
$W$, that is, the preimage $\varphi^{-1}(W)$, by~$Y_W$, and denote the morphism $\varphi|_{Y_W}$ by
$$\varphi_W:Y_W\to W.$$ In
particular, if $x$ is a point in $X$, then $Y_x$ denotes the fiber
of $\varphi$ over $x$ considered as a scheme over the residue field $k(x)$ at $x$.

Given a scheme $X$, denote the projection to the $i$-th factor by
$$p_i:X\times X\to X,\quad i=1,\: 2.$$  Denote the projection to the
product of the $i$-th and $j$-th factors by
$$p_{ij}:X\times X\times X\to X\times X,\quad1\Le i<j\Le 3.$$ Given
a scheme $X$ and a field $F$, denote the set of $F$-points of $X$
by~$X(F)$. That is, an element in~$X(F)$ is a morphism of schemes
$\Spec(F)\to X$.

Recall that a
morphism is faithfully flat if and only if it is both flat and surjective.
A base change~$\varphi^*\pi$ of a
(faithfully) flat morphism $\pi$ by any morphism $\varphi$ is
(faithfully) flat. Further, if a composition of morphisms of schemes
$$
W\stackrel{\lambda}\longrightarrow Z\stackrel{\pi}\longrightarrow X,
$$
is (faithfully) flat with $\lambda$ being faithfully flat, then $\pi$ is (faithfully) flat. Also, we will use the following fact.

\begin{lemma}\label{lemma-critflat}
Consider a Cartesian square of schemes
$$
\begin{CD}
\varphi^*Z@>\pi^*\varphi>>Z\\
@V\varphi^*\pi VV@V\pi VV\\
Y@>\varphi>>X.
\end{CD}
$$
Suppose that $\varphi$ is faithfully flat and there is an open subset
$W\subset \varphi^*Z$ such that the morphism
\mbox{$(\varphi^*\pi)|_W:W\to Y$} is (faithfully) flat and the
morphism $(\pi^*\varphi)|_W:W\to Z$ is surjective. Then $\pi$ is
(faithfully) flat. In particular, if $\varphi^*\pi$ is (faithfully)
flat, then~$\pi$ is (faithfully) flat.
\end{lemma}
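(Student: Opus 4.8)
The plan is to check flatness of $\pi$ \emph{pointwise} on local rings, using the open subset $W$ together with the surjectivity hypothesis to produce, over each point of $Z$, a point of $W$ at which $\varphi^*\pi$ is flat, and then to descend flatness along a faithfully flat local homomorphism. First recall that $\pi^*\varphi\colon\varphi^*Z\to Z$ is the base change of $\varphi$, hence faithfully flat; in particular it is flat and surjective. The key point is that one cannot expect $\varphi^*\pi$ to be flat on all of $\varphi^*Z$, only on $W$, so the descent has to be run separately over each point $z\in Z$ rather than globally.

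I would fix $z\in Z$, set $x:=\pi(z)$, and use surjectivity of $(\pi^*\varphi)|_W$ to pick $w\in W$ with $\pi^*\varphi(w)=z$. Put $y:=\varphi^*\pi(w)$, so that $\varphi(y)=x$ by commutativity of the square. Since $W$ is open in $\varphi^*Z$, flatness of $(\varphi^*\pi)|_W$ means exactly that $\varphi^*\pi$ is flat at $w$, i.e. $\mathcal{O}_{Y,y}\to\mathcal{O}_{\varphi^*Z,w}$ is flat; composing with the flat map $\mathcal{O}_{X,x}\to\mathcal{O}_{Y,y}$ coming from flatness of $\varphi$, I get that $\mathcal{O}_{X,x}\to\mathcal{O}_{\varphi^*Z,w}$ is flat. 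Independently, flatness of $\pi^*\varphi$ at $w$ makes $\mathcal{O}_{Z,z}\to\mathcal{O}_{\varphi^*Z,w}$ a flat local homomorphism of local rings, which is automatically faithfully flat (as $\mathfrak{m}_{Z,z}\mathcal{O}_{\varphi^*Z,w}\subseteq\mathfrak{m}_{\varphi^*Z,w}\neq\mathcal{O}_{\varphi^*Z,w}$).

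Now the two factorizations of $\mathcal{O}_{X,x}\to\mathcal{O}_{\varphi^*Z,w}$, through $\mathcal{O}_{Y,y}$ and through $\mathcal{O}_{Z,z}$, coincide, so the flat composite factors as $\mathcal{O}_{X,x}\to\mathcal{O}_{Z,z}\to\mathcal{O}_{\varphi^*Z,w}$ with a faithfully flat second arrow. The elementary descent statement — if $A\to B\to C$ has $A\to C$ flat and $B\to C$ faithfully flat, then $A\to B$ is flat, verified by tensoring an injection of $A$-modules first with $B$ and then with the faithfully flat $C$ — yields flatness of $\mathcal{O}_{X,x}\to\mathcal{O}_{Z,z}$. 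Since $z$ is arbitrary, $\pi$ is flat, which settles the non-faithful version.

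For the faithfully flat case it remains to see that $\pi$ is surjective. If $(\varphi^*\pi)|_W$ is faithfully flat it is surjective, and $\varphi$ is surjective, so $\varphi\circ(\varphi^*\pi)|_W\colon W\to X$ is surjective; by commutativity of the square this equals $\pi\circ(\pi^*\varphi)|_W$, which thus factors a surjection through $\pi$, forcing $\pi$ onto $X$. The final assertion follows by taking $W=\varphi^*Z$, since then $(\pi^*\varphi)|_W=\pi^*\varphi$ is a base change of the faithfully flat $\varphi$, hence surjective, so the hypotheses of the main part hold. I expect no real difficulty beyond the bookkeeping; the one genuinely essential idea is the local, pointwise reduction forced by the fact that flatness of $\varphi^*\pi$ is available only on $W$.
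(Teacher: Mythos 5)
Your proof is correct and follows essentially the same route as the paper's: both arguments observe that $(\pi^*\varphi)|_W$ is faithfully flat (flat as a base change of $\varphi$ restricted to an open, surjective by hypothesis), that the composite $W\to Z\to X$ coincides with the (faithfully) flat composite $W\to Y\to X$, and then descend (faithful) flatness of $\pi$ along the faithfully flat leg $W\to Z$. The only difference is that you unfold the descent step pointwise on local rings (including the proof of ``$A\to C$ flat, $B\to C$ faithfully flat $\Rightarrow$ $A\to B$ flat''), whereas the paper invokes the corresponding global statement for morphisms of schemes as a known fact.
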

\begin{proof}
The morphism $\pi^*\varphi:\varphi^*Z\to Z$ is faithfully flat, being the base change of the faithfully flat morphism $\varphi$ by the morphism $\pi$. Therefore, the morphism
$$
(\pi^*\varphi)|_W:W\to Z
$$
is also faithfully flat, being both flat and surjective. On the other hand, the composition $\pi\circ(\pi^*\varphi)|_W$ is (faithfully) flat, because it is equal to the composition of (faithfully) flat morphisms $\varphi\circ(\varphi^*\pi)|_W$. Therefore, the morphism $\pi$ is (faithfully) flat.
\end{proof}

\begin{definition}\label{def:pseudotorsor}
Let $G\to X$ be a group scheme over a scheme $X$. Suppose that we
are given an action of $G$ on a scheme $T\to X$ over $X$, that is, a
morphism
$
a:G\times_X T\to T
$
that satisfies the group action condition. We say that $T$ is a {\it
pseudo-torsor under $G$} if the morphism
$$
{\left(a,{\rm pr}_T\right)}:G\times_X T\to T\times_X T
$$
is an isomorphism, where ${\rm pr}_T$ is the projection to $T$.
\end{definition}

\begin{lemma}\label{lemma-torsoropen}
Let $\rho:G\to X$ be a group scheme over a scheme $X$, and $\pi:T\to
X$ be a pseudo-torsor under $G$ over $X$. Suppose that there exists
an open subset $V\subset T$ such that the restriction $\pi|_V:V\to
X$ is faithfully flat and the fibers of the morphism $\pi|_V:V\to X$
are dense in the fibers of the morphism $\pi:T\to X$. Then the
morphisms $\rho$ and $\pi$ are faithfully flat.
\end{lemma}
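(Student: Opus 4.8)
The plan is to prove the two faithful--flatness assertions in order, first for $\rho$ and then, using the conclusion for $\rho$, for $\pi$. The only geometric input I will use beyond Lemma~\ref{lemma-critflat} is the pseudo-torsor isomorphism $\Theta:=(a,{\rm pr}_T)\colon G\times_X T\xrightarrow{\sim} T\times_X T$ from Definition~\ref{def:pseudotorsor}, which I will restrict over the open set $V$ to transport faithful flatness back and forth between $G$ and $T$, all along the given faithfully flat morphism $\varphi:=\pi|_V\colon V\to X$. Note that $\Theta(g,t)=(a(g,t),t)$, so the first projection of $T\times_X T$ pulls back to $a$ and the second pulls back to ${\rm pr}_T$.

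First I would show that $\rho$ is faithfully flat by applying Lemma~\ref{lemma-critflat} to the base change of $\rho$ along $\varphi$, whose Cartesian square has $\varphi^*\rho={\rm pr}_V\colon G\times_X V\to V$ and $\rho^*\varphi={\rm pr}_G\colon G\times_X V\to G$. As the open subscheme I take
$$
W:=a^{-1}(V)\cap (G\times_X V)\subset G\times_X V.
$$
Restricting $\Theta$ identifies $W$ with the locus $V\times_X V\subset T\times_X T$ where both factors lie in $V$, and under this identification ${\rm pr}_V|_W$ becomes a projection $V\times_X V\to V$, i.e.\ a base change of the faithfully flat morphism $\pi|_V$ along itself; hence ${\rm pr}_V|_W$ is faithfully flat. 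It then remains to check that ${\rm pr}_G|_W\colon W\to G$ is surjective. For a point $g\in G$ over $x\in X$, the fibre of ${\rm pr}_G$ is $T\times_X\Spec k(g)$, inside which $W$ cuts out $V_{k(g)}\cap a(g,-)^{-1}(V_{k(g)})$; here $a(g,-)$ is an automorphism of this fibre (with inverse $a(g^{-1},-)$ coming from the group action axioms), and $V_{k(g)}$ is a dense open of $T\times_X\Spec k(g)$ because the fibrewise density hypothesis is preserved under the field extension $k(x)\subset k(g)$. An intersection of two dense opens meets every irreducible component, so this set is nonempty, whence ${\rm pr}_G|_W$ is surjective. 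Lemma~\ref{lemma-critflat} now yields that $\rho$ is faithfully flat; alternatively surjectivity of $\rho$ is already guaranteed by the identity section.

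Granting that $\rho$ is faithfully flat, the flatness of $\pi$ follows quickly. The base change ${\rm pr}_V\colon G\times_X V\to V$ of $\rho$ along $\pi|_V$ is faithfully flat, and restricting $\Theta$ to ${\rm pr}_T^{-1}(V)=G\times_X V$ gives an isomorphism onto $p_2^{-1}(V)=T\times_X V$ over $V$ that carries ${\rm pr}_V$ (the base change of $\rho$) to the projection $T\times_X V\to V$ (the base change $\varphi^*\pi$ of $\pi$ along $\pi|_V$). Therefore $\varphi^*\pi$ is faithfully flat, and since $\varphi=\pi|_V$ is itself faithfully flat, the last assertion of Lemma~\ref{lemma-critflat} shows that $\pi$ is faithfully flat.

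The steps that need routine verification are that $a(g,-)$ is genuinely an automorphism of the fibre (a formal consequence of the action axioms $a(e,-)=\id$ and $a(gh,-)=a(g,a(h,-))$) and that topological density of a finitely presented open subscheme of a scheme of finite type over a field persists under field extension. The main obstacle, and the single point where all the hypotheses are simultaneously used, is the surjectivity of ${\rm pr}_G|_W$ in the first step: this is exactly where the pseudo-torsor structure supplies the translation automorphisms $a(g,-)$ that move an arbitrary group element into the flat locus carved out by the fibrewise-dense $V$. It is also the reason the naive choice $W=V\times_X V$ fails, since its image in $G$ is in general too small to be surjective, so the dependence of the argument on \emph{fibrewise} density (rather than mere global density of $V$ in $T$) is essential.
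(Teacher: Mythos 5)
Your proof is correct and follows essentially the same route as the paper's: you restrict the pseudo-torsor isomorphism $(a,\mathrm{pr}_T)$ over $V$, take the same open subset $W=a^{-1}(V)\cap(G\times_X V)$ (the paper writes it as $\psi^{-1}(V\times_X V)$), verify surjectivity onto $G$ fibrewise via the translation automorphism $a(g,-)$ and the density hypothesis, and then transport flatness to $\pi$ through the isomorphism $\varphi^*G\cong\varphi^*T$ using Lemma~\ref{lemma-critflat}. The only cosmetic difference is that you record faithful flatness of $\mathrm{pr}_V|_W$ directly where the paper records flatness and gets surjectivity of $\rho$ from the unit section; both variants are fine.
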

\begin{proof}
The morphism $\rho$ is surjective because of the existence of the
unit section and the morphism $\pi$ is surjective, because the
morphism $\pi|_{V}$ is faithfully flat and, in particular,
surjective. Hence, one needs to show the flatness of $\rho$ and $\pi$.

With this aim, we construct a faithfully flat morphism $\varphi:Y\to
X$ that satisfies the following two conditions. The first condition
is that there is an open subset $W\subset\varphi^*G$ such that the morphism \mbox{$(\varphi^*\rho)|_W:W\to Y$} is flat and the morphism $(\rho^*\varphi)|_W:W\to G$ is surjective. By Lemma~\ref{lemma-critflat}, this implies that $\rho$
is flat. In particular, $\varphi^*\rho$ is flat. The second
condition on $\varphi:Y\to X$ is that there is an isomorphism
$$\varphi^*G\cong\varphi^*T$$ of schemes over $Y$, thus,
$\varphi^*\pi$ is also flat. Again by Lemma~\ref{lemma-critflat},
this implies that $\pi$ is flat, which gives the needed result.

Now let us construct the required morphism $\varphi:Y\to X$. We claim that
$$
Y:=V,\quad\varphi:=\pi|_V,$$
satisfies all conditions above. Indeed, by the hypotheses of the lemma, $\varphi$ is faithfully flat. Further, since $T$ is a pseudo-torsor under $G$, there is an
isomorphism
$$
G\times_X T\stackrel{\sim}\longrightarrow T\times_X T
$$
that commutes with the right projection to $T$. After the
restriction to the open subset $V\subset T$, we obtain an
isomorphism
$$
\psi:G\times_X V\stackrel{\sim}\longrightarrow T\times_X V
$$
of schemes over $V$. In the other notation, $\psi$ is an isomorphism
$\varphi^*G\cong \varphi^*T$ of schemes over $Y$.
Further, consider the open subset $$V\times_X V\subset T\times_X V$$
and put
$$
W:=\psi^{-1}{\left(V\times_X V\right)}\subset G\times_X V=\varphi^*G.
$$
The (right) projection $V\times_X V\to V$ is flat, being the base change of
the flat morphism $\pi|_V:V\to X$ by itself. Since $\psi$ is an isomorphism, the
projection $W\to V$ is also flat, that is, we obtain the
flatness of the morphism $$(\varphi^*\rho)|_W:W\to Y.$$
It remains to prove that the morphism
$$(\rho^*\varphi)|_W:W\to G$$ is surjective. Take a point $g\in G$. We
need to show that the fiber~$W_g$ is non-empty. Let $F$ denote the
residue field at $g$ and put $x:=\rho(g)$ to be the corresponding
$F$-point of $X$. The point $g\in G_x(F)$ defines an automorphism of
the scheme $T_x$ over $F$, which we denote by the same letter~$g$.
By the construction of~$W$, we have the equality
$$
W_g=V_x\cap g^{-1}V_x\subset T_x.
$$
By the hypotheses of the lemma, $V_x$ is a dense open subset in
$T_x$, whence the latter intersection is non-empty.
\end{proof}

Recall that an affine groupoid $\Gamma$ acting on an affine scheme
$X$ over $\kappa$ is a pair $(X,\Gamma)$, where $\Gamma=\Spec(A)$, $X=\Spec(R)$,
and the pair $(R,A)$ is a Hopf algebroid. It follows from the
definition of a Hopf algebroid that one has a morphism $
\pi:\Gamma\to X\times X $ and a morphism of schemes over $X\times
X\times X$
$$
m:p_{12}^*\Gamma\times_{{\left(X^{\times 3}\right)}} p_{23}^*\Gamma\to
p_{13}^*\Gamma.
$$
Moreover, the morphism
$$
(m,{\rm pr}):p_{12}^*\Gamma\times_{{\left(X^{\times 3}\right)}} p_{23}^*\Gamma\to
p_{13}^*\Gamma\times_{{\left(X^{\times 3}\right)}} p_{23}^*\Gamma
$$
is an isomorphism, where
$${\rm pr}:p_{12}^*\Gamma\times_{{\left(X^{\times 3}\right)}} p_{23}^*\Gamma\to
p_{23}^*\Gamma$$
is the projection. Consider the restriction $\Gamma_\Delta$ of~$\Gamma$ to the diagonal $\Delta\subset X\times X$, that is, we have
$\Gamma_{\Delta}=\pi^{-1}(\Delta)$. The morphism
$$
\pi_{\Delta}:\Gamma_{\Delta}\to \Delta\cong X
$$
defines a group scheme over $X$. Take the base change of the
latter morphism by the projection $p_1:X\times X\to X$ and obtain
the group scheme over $X\times X$
$$
\rho:G\to X\times X,
\quad
\rho:=p_1^*(\pi_{\Delta}), \quad
G:=p_1^*(\Gamma_{\Delta}).
$$
It follows that $\pi:\Gamma\to X\times X$ is a
pseudo-torsor under $G$ over $X\times X$. We will need only affine groupoids acting on affine schemes, so, one may suppose this in the following.

\begin{lemma}\label{lemma-ffgroupoid}
Let $\pi:\Gamma\to X\times X$ be a groupoid acting on a scheme $X$.
Suppose that there are open subsets $U\subset X\times X$ and
$V\subset \Gamma$ such that for any $i=1,2$, the fibers of the
projection $p_i|_U:U\to X$ are dense in the fibers of the projection
$p_i:X\times X\to X$, the image $\pi(V)$ is contained in $U$, the
morphism $\pi|_V:V\to U$ is faithfully flat, and the fibers of the
morphism $\pi|_V:V\to U$ are dense in the fibers of the morphism
$\pi_U:\Gamma_U\to U$. Then the morphism $\pi:\Gamma\to X\times X$
is faithfully flat.
\end{lemma}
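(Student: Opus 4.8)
The plan is to deduce the statement from Lemma~\ref{lemma-torsoropen} applied over the \emph{open} base $U$, and then to propagate the resulting faithful flatness from $U$ to all of $X\times X$ by a ``translation'' argument built from the groupoid multiplication, concluding with the flatness criterion of Lemma~\ref{lemma-critflat}. Write $s:=p_1\circ\pi$ and $t:=p_2\circ\pi$ for the source and target morphisms $\Gamma\to X$. First I would restrict the pseudo-torsor structure $\pi:\Gamma\to X\times X$ under $G$ to the open subset $U\subset X\times X$, obtaining a pseudo-torsor $\pi_U:\Gamma_U\to U$ under the group scheme $G_U\to U$. The open subset $V\subset\Gamma_U$ then satisfies the hypotheses of Lemma~\ref{lemma-torsoropen} over the base $U$: by assumption $\pi|_V:V\to U$ is faithfully flat and its fibers are dense in those of $\pi_U$. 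Hence $\pi_U:\Gamma_U\to U$ is faithfully flat.

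Next I would manufacture a faithfully flat cover of the \emph{whole} base out of $V$. Note that $t|_V:V\to X$ is faithfully flat, being the composition of the faithfully flat $\pi|_V:V\to U$ with $p_2|_U:U\to X$ (the latter is flat since $k$ is a field and surjective since its fibers are dense, by hypothesis). Therefore the morphism $\mathrm{id}_X\times t|_V:X\times_k V\to X\times_k X$, $(x_1,\delta)\mapsto(x_1,t(\delta))$, is faithfully flat, being the base change of $t|_V$ along $p_2$. I then set
$$
E:=\{(x_1,\delta)\in X\times_k V:(x_1,s(\delta))\in U\},
$$
an open subscheme of $X\times_k V$, and let $\varphi:E\to X\times X$ be the restriction of $\mathrm{id}_X\times t|_V$, so that $\varphi(x_1,\delta)=(x_1,t(\delta))$. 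Being a restriction of a flat morphism to an open subscheme, $\varphi$ is flat; I claim it is also surjective, hence faithfully flat. Indeed, given a point $(x_1,x_3)$ of $X\times X$, the loci $\{x_2:(x_1,x_2)\in U\}$ and $\{x_2:(x_2,x_3)\in U\}$ are dense open subsets of the fiber $\cong X$ (by the density hypotheses on $p_1|_U$ and $p_2|_U$), so they meet, since the intersection of two dense opens is dense; choosing $x_2$ in the intersection and $\delta\in V$ over $(x_2,x_3)$ (possible because $\pi|_V$ is surjective) produces a preimage of $(x_1,x_3)$ in $E$.

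To finish I would compare $\varphi$ with the second projection $\alpha:E\to X\times X$, $(x_1,\delta)\mapsto(x_1,s(\delta))$, which by the very definition of $E$ factors through $U$. Composition of arrows sends, for each $(x_1,\delta)\in E$ with $s(\delta)=x_2$ and $t(\delta)=x_3$, an arrow $\gamma\in\Gamma_{(x_1,x_2)}$ to $m(\gamma,\delta)\in\Gamma_{(x_1,x_3)}$; globally this is precisely the isomorphism $(m,\mathrm{pr})$ of the groupoid, restricted over the base $E$, and it yields an isomorphism of $E$-schemes $\alpha^*\Gamma\cong\varphi^*\Gamma$ compatible with the projections, i.e.\ $\alpha^*\pi\cong\varphi^*\pi$. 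Since $\alpha$ factors through $U$ and $\pi_U$ is faithfully flat by the first step, $\alpha^*\pi$ is faithfully flat, and hence so is $\varphi^*\pi$. As $\varphi$ is itself faithfully flat, Lemma~\ref{lemma-critflat} gives that $\pi:\Gamma\to X\times X$ is faithfully flat, as required.

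The hard part will be the construction and analysis of the cover $\varphi$ in the second step: the data $V$ a priori lives only over $U$, so one must assemble from it a single faithfully flat morphism onto the entire base $X\times X$. Flatness of $\varphi$ is formal, but its surjectivity is exactly where the two density hypotheses are used, guaranteeing that for any $(x_1,x_3)$ one can find an intermediate point $x_2$ linking $x_1$ to $x_3$ through $U$; the groupoid multiplication then transports the faithful flatness already known over $U$ across this cover, which is what makes the descent via Lemma~\ref{lemma-critflat} go through.
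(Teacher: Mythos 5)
Your proof is correct and follows essentially the same route as the paper: apply Lemma~\ref{lemma-torsoropen} to get $\pi_U:\Gamma_U\to U$ faithfully flat, build a faithfully flat cover of $X\times X$ out of composable pairs threading through $U$ (with surjectivity coming from intersecting the two dense fibers of $p_1|_U$ and $p_2|_U$), transport faithful flatness across the cover via the groupoid multiplication isomorphism, and conclude with Lemma~\ref{lemma-critflat}. The only cosmetic difference is that your cover $E\subset X\times_k V$ is an open subscheme of the paper's cover $(p_{23}^*\Gamma)_W$ over $W=U\times_XU$, which lets you get flatness of the cover directly from $t|_V=p_2|_U\circ\pi|_V$ instead of from the already-established faithful flatness of $\pi_U$.
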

\begin{proof}
The idea of the proof is to construct a faithfully flat morphism
$\varphi:Y\to X\times X$ such that the base change
$$\varphi^*\pi:\varphi^*\Gamma\to Y$$ is faithfully flat and to
conclude by Lemma~\ref{lemma-critflat}. We are going to define the
morphism $\varphi$ as a composition of two faithfully flat
morphisms.
First, consider the open subset
$$
W:=U\times_X U=(U\times X)\cap (X\times U)\subset X\times X\times X.
$$
Since the open embedding $W\hookrightarrow X\times X\times X$ and
the projection $p_{13}:X^{\times 3}\to X^{\times 2}$ are both flat,
their composition $$p_{13}|_W:W\to X\times X$$ is flat as well. Let us
show that the morphism $p_{13}|_W$ is surjective. Take a point
$z$ on $X\times X$. We need to show that the fiber $W_z$ is
non-empty. Let $F$ denote the residue field at $z$ and put
$x_i:=p_i(z)$ to be the corresponding $F$-points in $X$. By the
construction of $W$, we have the equality
$$
W_z=_{x_1}\!\!U\,\cap U_{x_2}\subset X_k,
$$
where $$_{x_1}U:=p_1^{-1}(x)\cap U,\ \ U_{x_2}:=p_2^{-1}(x)\cap U,\ \
\text{and}\ \ X_F:=X\times\Spec(F).$$ By the hypotheses of the lemma, the open
subsets $_{x_1}U$ and $U_{x_2}$ are dense in~$X_F$, whence their
intersection is non-empty. We conclude that the morphism
$$
p_{13}|_W:W\to X\times X
$$
is surjective, whence it is faithfully flat.
Secondly, consider the morphism
$$
p_{23}^*\pi:p_{23}^*\Gamma\to X\times X\times X
$$
and put
$$
Y:={\left(p_{23}^*\Gamma\right)}_W.$$
Let us show that the morphism ${\left(p_{23}^*\pi\right)}_W:Y\to W$ is faithfully flat. There is a Cartesian square
$$
\begin{CD}
p_{23}^*\Gamma@>>>\Gamma\\
@V p_{23}^*\pi VV@V \pi VV\\
X\times X\times X@>p_{23}>>X\times X.
\end{CD}
$$
Changing $X\times X$ by the open subset $U$, we obtain a Cartesian square
$$
\begin{CD}
Y@>>>\Gamma_U\\
@V {\left(p_{23}^*\pi\right)}_WVV@V \pi_U VV\\
W@>p_{23}|_W>>U.
\end{CD}
$$
Hence, it is enough to show that the morphism $\pi_U:\Gamma_U\to U$ is faithfully
flat. With this aim, consider the group scheme
$$
\rho:G=p_1^*{\left(\Gamma_\Delta\right)}\to X\times X
$$
as in the discussion before the lemma. Take the restrictions
$\Gamma_U=\pi^{-1}(U)$ and $G_U=\rho^{-1}(U)$. Note that the group
scheme $\rho_U:G_U\to U$ over $U$, the pseudo-torsor
$\pi_U:\Gamma_U\to U$ under $G_U$, and the open subset
$V\subset\Gamma_U$ satisfy the hypotheses of
Lemma~\ref{lemma-torsoropen}. Therefore, the morphism $
\pi_U:\Gamma_U\to U$ is faithfully flat, whence the morphism $$
{\left(p_{23}^*\pi\right)}_W:Y\to W $$ is faithfully flat as explained above. Put
$$
\varphi:=p_{13}|_W\circ {\left(p_{23}^*\pi\right)}_W:Y\to X\times X.
$$
The morphism $\varphi$ is faithfully flat, being a composition of faithfully flat morphisms.

Now let us prove that the morphism $\varphi^*\pi:\varphi^*\Gamma\to
Y$ is faithfully flat. For this, we use another equivalent
constructions of the morphism~$\varphi^*\pi$. Consider the diagram
of Cartesian squares
$$
\begin{CD}
p_{23}^*\Gamma\times_{{\left(X^{\times 3}\right)}}p_{13}^*\Gamma@>>>p_{13}^*\Gamma@>>>\Gamma\\
@VVV@V p_{13}^*\pi VV@V\pi VV\\
p_{23}^*\Gamma@>p_{23}^*\pi>>X\times X\times X@>p_{13}>>X\times X.
\end{CD}
$$
This gives the diagram of Cartesian squares
$$
\begin{CD}
{\left(p_{23}^*\Gamma\right)}_W\times_W{\left(p_{13}^*\Gamma\right)}_W@>>>{\left(p_{13}^*\Gamma\right)}_W@>>>\Gamma\\
@VVV@V {\left(p_{13}^*\pi\right)}_W VV@V\pi VV\\
Y={\left(p_{23}^*\Gamma\right)}_W@>{\left(p_{23}^*\pi\right)}_W>>W@>p_{13}|_W>>X\times X.
\end{CD}
$$
Since
$$\varphi^*\Gamma={\left(p_{23}^*\pi\right)}_W^*{\left(p_{13}|_W\right)}^*\Gamma,$$
we obtain that
$$
\varphi^*\Gamma={\left(p_{13}^*\Gamma\right)}_W\times_W{\left(p_{23}^*\Gamma\right)}_W
$$
and the morphism in question $\varphi^*\pi:\varphi^*\Gamma\to Y$
coincides with the projection
$$
{\rm pr}:{\left(p_{13}^*\Gamma\right)}_W\times_W{\left(p_{23}^*\Gamma\right)}_W\to {\left(p_{23}^*\Gamma\right)}_W.
$$
So, we are reduced to show the faithful flatness of the morphism ${\rm pr}$.

Since $(X,\Gamma)$ is a groupoid, there is an isomorphism
$$
p_{12}^*\Gamma\times_{{\left(X^{\times 3}\right)}}
p_{23}^*\Gamma\stackrel{\sim}\longrightarrow
p_{13}^*\Gamma\times_{{\left(X^{\times 3}\right)}} p_{23}^*\Gamma
$$
of schemes over $p_{23}^*\Gamma$ (see the discussion before the lemma). Thus, there is an
isomorphism
$$
{\left(p_{12}^*\Gamma\right)}_W\times_W{\left(p_{23}^*\Gamma\right)}_W\stackrel{\sim}\longrightarrow
{\left(p_{13}^*\Gamma\right)}_W\times_W{\left(p_{23}^*\Gamma\right)}_W
$$
of schemes over ${\big(p_{23}^*\Gamma\big)}_W$. This shows that faithful flatness of the morphism $\rm pr$ is equivalent to the faithful flatness of the projection
$$
{\rm pr}':{\left(p_{12}^*\Gamma\right)}_W\times_W{\left(p_{23}^*\Gamma\right)}_W\to {\left(p_{23}^*\Gamma\right)}_W.$$
Finally, the morphism ${\rm pr}'$ is the base change of the faithfully flat morphism
$\pi_U:\Gamma_U\to U$ by the composition
$$
\begin{CD}
{\left(p_{23}^*\Gamma\right)}_W@>{\left(p^*_{23}\pi\right)}_W>>
W@>p_{12}|_W>> U.
\end{CD}
$$
Therefore, the morphism ${\rm pr}'$ is faithfully flat, which finishes the proof.
\end{proof}

\begin{lemma}\label{lemma-twotorsors}
Let $\psi:G'\to G$ be a morphism between group schemes of finite
type over a scheme $X$, let $\pi:T\to X$ be a pseudo-torsor under
$G$, $\pi':T'\to X$ be a pseudo-torsor under $G'$,  and let
$\varphi:T'\to T$ be a morphism compatible with $\psi$ in the
following sense: the diagram
$$
\begin{CD}
G'\times_X T'@>>>T'\\
@V \psi\times\varphi VV@V\varphi VV\\
G\times_X T@>>>T
\end{CD}
$$
commutes. Let $V\subset T$ be an open subset and put $V':=\varphi^{-1}(V)$. Suppose that the fibers of the morphism~$\pi|_V:V\to X$ are dense in the fibers of the morphism $\pi:T\to X$ and the morphism~$\varphi|_{V'}:V'\to V$ is surjective. Then the fibers of the morphism $$\pi'|_{V'}:V'\to X$$ are dense in the fibers of the morphism~$\pi':T'\to X$.
\end{lemma}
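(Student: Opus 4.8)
The statement is fibrewise over $X$, so the plan is to fix a point $x\in X$ and prove that the open subset $V'_x\subset T'_x$ is dense, using the conventions $T'_x=(\pi')^{-1}(x)$, $V'_x=V'\cap(\pi')^{-1}(x)$, etc., all regarded as schemes over the residue field $k(x)$. Since density of an open subscheme of a scheme of finite type over a field can be tested after the faithfully flat base change to the algebraic closure (that base change has zero-dimensional fibres, hence preserves dimensions of irreducible components, so an open set meets every component before extension exactly when its preimage does so afterwards), I would at once base change to $F:=\overline{k(x)}$. If $T'_x$ is empty there is nothing to prove, so assume it is nonempty; then, since $\varphi$ is a morphism over $X$, one has $V'_x=\varphi_x^{-1}(V_x)$ with $\varphi_x:=\varphi|_{T'_x}:T'_x\to T_x$, and the surjectivity of $\varphi|_{V'}$ forces $V_x$, hence $T_x$, to be nonempty as well.

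Restricting the pseudo-torsor isomorphism of Definition~\ref{def:pseudotorsor} to the fibre over $x$ shows that $T_x$ and $T'_x$ are pseudo-torsors under $G_x$ and $G'_x$ over $F$. Over the algebraically closed $F$ both nonempty pseudo-torsors have rational points and are therefore trivial: choosing $t'_0\in T'_x(F)$ and setting $t_0:=\varphi_x(t'_0)$, the orbit maps identify $T'_x\cong G'_x$ and $T_x\cong G_x$, and under these identifications the $\psi$-equivariance encoded in the commuting square turns $\varphi_x$ literally into $\psi_x:G'_x\to G_x$. Thus I would reduce to the following group-theoretic statement over $F$: given a homomorphism $\psi:G'\to G$ of algebraic groups and a dense open $\tilde V\subset G$ with $\tilde V\subset\psi(G')$ (the translation of the hypothesis on $\varphi|_{V'}$), the preimage $\psi^{-1}(\tilde V)$ is dense in $G'$. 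As everything here is topological, I would replace $G$ and $G'$ by their reductions, which over the perfect field $F$ are group varieties with the same underlying spaces.

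To prove this reduced statement, first note that $\tilde V\subset\psi(G')$ together with the density of $\tilde V$ gives $\overline{\psi(G')}=G$. Writing $G'=\bigsqcup_i g'_i(G')^{0}$ as the finite union of cosets of the identity component and using that $\psi$ is a homomorphism, one obtains $G=\overline{\psi(G')}=\bigcup_i \psi(g'_i)\,H$ with $H:=\overline{\psi((G')^{0})}$; since these are finitely many irreducible translates of $H$ covering $G$, a dimension count forces $H=(G)^{0}$, so $\psi$ is surjective on components and $\psi|_{(G')^{0}}$ is dominant. Then for an \emph{arbitrary} component $g'_0(G')^{0}$ of $G'$, the image $\psi(g'_0(G')^{0})=\psi(g'_0)\,\psi((G')^{0})$ is dense in the full component $\psi(g'_0)(G)^{0}$ of $G$; as $\tilde V$ meets this component in a nonempty open set, which a dense set must hit, we get $g'_0(G')^{0}\cap\psi^{-1}(\tilde V)\neq\varnothing$. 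Hence the open set $\psi^{-1}(\tilde V)$ meets every component of $G'$ and is dense, which unwinds to the density of $V'_x$, as required. The main obstacle is not any single computation but correctly packaging the reduction — trivializing the two pseudo-torsors so that $\varphi$ becomes $\psi$ on the nose, and justifying that density may be tested fibrewise and after passage to $\overline{k(x)}$; once this scaffolding is in place, the group-theoretic core is exactly the coset-and-dimension count above.
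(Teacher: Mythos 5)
Your proof is correct, and its scaffolding coincides with the paper's: both reduce to a fibre over a point $x$, extend scalars to the algebraic closure $F$, and use $F$-points to trivialize the two pseudo-torsors so that $\varphi$ becomes the homomorphism $\psi:G'\to G$, with a dense open $\tilde V\subset G$ satisfying $\tilde V\subset\psi(G')$. Where you genuinely diverge is the group-theoretic core. The paper notes that the image of $\psi$ is a closed subgroup of $G$ containing the dense set $\tilde V$, hence $\psi$ is surjective; it then uses equidimensionality of the fibres of a surjective homomorphism of algebraic groups (all components of dimension $d=\dim G'-\dim G$) to show that every irreducible component of $\psi^{-1}(G\setminus\tilde V)$ has dimension strictly less than $\dim G'$, so its complement $\psi^{-1}(\tilde V)$ is dense. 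You instead argue component by component: the coset decomposition plus a dimension count give $\overline{\psi((G')^{0})}=G^{0}$, so the image of each connected component of $G'$ is dense in a component of $G$ and therefore meets the dense open $\tilde V$; hence the open set $\psi^{-1}(\tilde V)$ meets every irreducible component of $G'$ (finitely many, as $G'$ is of finite type) and is dense. Both routes are valid and of comparable length; the paper's yields slightly more along the way (surjectivity of $\psi$ and a quantitative dimension bound on the complement), while yours avoids the equidimensionality-of-fibres input and rests only on the topological fact that a dense constructible image must meet any nonempty open set.
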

\begin{proof}
First, we reduce the lemma to a question about algebraic groups.
Since the needed result is fiber-wise and all data in the lemma are
stable under a base change, we may assume that $X=\Spec(F)$,
where~$F$ is a field. Further, it is enough to show the density
after the extension of scalars to the algebraic closure of~$F$.
Thus, we assume that $F$ is algebraically closed. Taking an
$F$-point $t'$ on $T'$ and the point $t:=\varphi(t')$ on~$T$, we
obtain isomorphisms $G'\stackrel{\sim}\longrightarrow T'$ and
$G\stackrel{\sim}\longrightarrow T$ that send $\psi$ to $\varphi$.

Therefore, we may assume that $T'=G'$ and $T=G$. Finally, we may assume that the schemes $G$ and~$G'$ are reduced. Summarizing, we have a morphism of algebraic groups $\psi:G'\to G$ and an open dense subset $V\subset G$ such that the morphism $\psi|_{V'}:V'\to V$ is surjective, where $V'=\psi^{-1}(V)$. We need to show that $V'$ is dense in $G'$.

The image of the morphism $\psi$ is a closed subgroup in $G$ (for
example, see \cite[Proposition~2.2.5]{Springer}). On the other hand,
this image contains the dense subset $V$, because the morphism
$$\psi|_{V'}:V'\to V$$ is surjective. Consequently, the morphism
$\psi$ is surjective. It follows that all irreducible components of
the fibers of $\psi$ have the same dimension $d:=\dim(G')-\dim(G)$.

Since $V\subset G$ is a dense open subset and all irreducible components of $G$ have the same dimension~$\dim(G)$, we see that all irreducible components of the closed subset $Z:=G\backslash V\subset G$ have dimension strictly less than $\dim(G)$. Therefore, all irreducible components of the closed subset $\psi^{-1}(Z)\subset G'$ have dimension strictly less than $d+\dim(G)=\dim(G')$. Since $$V'=G'\backslash \psi^{-1}(Z)$$ and all irreducible components of $G'$ have the same dimension $\dim(G')$, we conclude that $V'$ is dense in~$G'$, which finishes the proof.
\end{proof}

\subsection{Proof of Proposition~\ref{prop-localizealgebroid}}\label{sec:proofof72}
We are now ready to give a proof of Proposition~\ref{prop-localizealgebroid}. We use the geometric notation from Section~\ref{subsection-geomresults}.

\begin{proof}[Proof of Proposition~\ref{prop-localizealgebroid}]
We will localize the ring $R$ over a finite set of non-zero elements and then prove that the corresponding localization of $A$ is faithfully flat over the obtained localization of~$R\otimes_k R$.

Let $\{a_i\}$ be a finite set of $D_k$-generators of $A$ over
$R\otimes_k R$ and put $A_0$ to be the $(R\otimes_k R)$-subalgebra
in $A$ generated by the set~$\{a_i\}$. Since $L:=\Frac(R)$ is a
field, by~\cite[3.7, 3.8]{DeligneFS} (see also \cite[\S3.3]{Water}),
the images of $a_i$'s in ${_LA_L}$ are contained in a Hopf
subalgebroid of $\big(L,{_LA_L}\big)$ finitely generated over~$L\otimes_k
L$. Therefore, localizing~$R$ by a non-zero element and enlarging
the finite subset $\{a_i\}\subset A$, we obtain that~$(R,A_0)$ is a
Hopf subalgebroid in $(R,A)$.

 For each natural $n$, put $A_n$ to be
the $(R\otimes_k R)$-subalgebra in $A$ generated by all elements of
the form $$(\partial_1\cdot\ldots\cdot\partial_m)(a_i),\quad
\partial_j\in D_k,\ m\Le n.$$ Since $(R,A)$ is a differential
Hopf algebroid, it follows that $(R,A_n)$ is a Hopf subalgebroid
in~$(R,A)$ for all $n$. Put
$$
X:=\Spec(R),\quad \Gamma:=\Spec(A),\quad \Gamma_n:=\Spec(A_n).
$$
Denote the groupoid morphisms by $\pi_n:\Gamma_n\to X\times X$.

Since $A$ is $D_k$-finitely generated over $k$, we see that $\Gamma$ has finitely many irreducible components~\cite[Theorem~7.5]{Kap}. Applying Lemma~\ref{lemma-Trushin} to each irreducible component of $\Gamma$, we see that there exist a natural number $N$ and an affine dense open subset $W_N\subset \Gamma_N$ such that for any $n\ge N$, the morphisms $$\varphi_n|_{W_n}:W_n\to W_N$$ are faithfully flat, where $W_n:=\varphi_n^{-1}(W_N)$ and $\varphi_n:\Gamma_n\to \Gamma_N$ are the morphisms that arise in the projective system formed by $\Gamma_n$.

Since $\Char k=0$ and $R$ is a domain, the ring $R\otimes_k R$ is reduced. Since the morphism $$\pi_N|_{W_N}:W_N\to X\times_k X$$ is of finite type, by the generic flatness (for example, see~\cite[Proposition~7.91.7]{deJong}), there is a dense open subset $U\subset X\times_k X$ such that the morphism $\pi_N|_{V_N}:V_N\to U$ is flat and of finite presentation, where $$V_N:=W_N\cap\pi_N^{-1}(U).$$ As $A_F\ne 0$, we may also assume that $\pi_N|_{V_N}$ is faithfully flat. It follows that the morphisms
$$\pi_n|_{V_n}:V_n\to U$$ are faithfully flat, where $$V_n:=\varphi_n^{-1}(V_N), \quad n\Ge N.$$
By~\cite[9.5.3]{EGAIV3}, replacing $U$ with a dense open subset, we
obtain that the fibers of the morphism $$\pi_N|_{V_N}:V_N\to U$$ are
dense in the fibers of the morphism $$(\pi_N)_U:(\Gamma_N)_U\to U,$$
because $W_N$ is dense in~$\Gamma_n$. Since $R\otimes_k R$ has
finitely many irreducible components~\cite[Theorem~7.5]{Kap}, we may
assume that $U$ is an affine dense open subset in $X\times_k X$.
Localizing $R$ by a non-zero element, we obtain that, for any
$i=1,2$, the fibers of the projections $p_i|_{U}:U\to X$ are dense
in the fibers of the projection $$p_i:X\times_k X\to X$$ (by the
extension of scalars, this follows from the analogous statement
about irreducible varieties over fields).
For each $n$, put $$G_n:=p_1^*((\Gamma_n)_{\Delta}),$$ where
$\Delta\subset X\times_k X$ is the diagonal. Then $\Gamma_n$ is a
pseudo-torsor under the group scheme~$G_n$ over $X\times_k X$. The
morphism of group schemes $\psi_n:G_n\to G_N$ induced by $\varphi_n$
is compatible with the morphism of pseudo-torsors
$\varphi_n:\Gamma_n\to \Gamma_N$ in the sense of
Lemma~\ref{lemma-twotorsors}. Since the fibers of the morphism
$$\pi_N|_{V_N}:V_N\to U$$ are dense in the fibers of the morphism
$(\pi_N)_U:(\Gamma_N)_U\to U$, we see that, by
Lemma~\ref{lemma-twotorsors}, the fibers of the morphism
$\pi_n|_{V_N}:V_n\to U$ are dense in the fibers of the morphism
$$(\pi_n)_U:(\Gamma_n)_U\to U.$$
We obtain that, for every $n\ge N$, the groupoid $\Gamma_n\to X\times_k
X$ and the open subsets $V_n\subset \Gamma$, $U\subset X\times_k X$
satisfy all hypotheses of Lemma~\ref{lemma-ffgroupoid} (which is also true
for schemes over a field $k$ with the product of schemes taken over $k$). Therefore,
the morphism~$\pi_n$ is faithfully flat. In other terms, the
ring~$A_n$ is faithfully flat over $R\otimes_k R$. Since $A=\bigcup_n A_n$,
where $A_{n}\subset A_{n+1}$, we conclude that $A$ is faithfully
flat over $R\otimes_k R$, which finishes the proof.
\end{proof}

\section{Proofs of the main results}\label{sec:proofsofthemainresults}

\subsection{Proof of Theorem~\ref{theor-main}}\label{subsection-prooftheor-main}

We use the notation from Theorem~\ref{theor-main}. Let $M$ be a
finite-dimensional $D_{K/k}$-module over $K$. Consider a
$D_k$-structure on $\DMod(K,D_{K/k})$ as in
Theorem~\ref{teor-paramAtiayh}. Let $\Cat$ be the subcategory
$\langle M\rangle_{\otimes,D}$ in~$\DMod{\left(K,D_{K/k}\right)}$
$D_k$-tensor generated by $M$ (Definition~\ref{defin-diffgenercat}).
By Theorem~\ref{theor-PPVdff}, to prove the theorem, it is enough to
construct a differential functor from $\Cat$ to $\Vect(k)$, which is
our goal in what follows.

First, we would like to apply Theorem~\ref{theor-diffTanncorr} to
the forgetful functor $\Cat\to\Vect(K)$ and, thus, obtain a
$D_k$-Hopf algebroid. The problem here is that, a priori, there is no
$D_k$-structure on~$K$. To overcome this, the splitting
$\widetilde{D}_k$ is introduced in the hypotheses of the theorem (see also Remark~\ref{rem-allow}).
This allows to switch between the $D_k$-structure on $\Cat$ and the
$D_K$-structure on $K$ as follows.

The morphism of differential fields $(k,D_k)\to\big(k,\widetilde{D}_k\big)$ defines a
$\widetilde{D}_k$-structure on $\Cat$ by
Proposition~\ref{prop-diffextscal}~\eqref{en:2368}.
Denote  the category $\Cat$ with this
$\widetilde{D}_k$-structure by $\widetilde{\Cat}$. Thus, the identity functor
$
\Cat\to\widetilde{\Cat}
$
is a differential functor from a $D_k$-category $\Cat$ to a
$\widetilde{D}_k$-category $\widetilde{\Cat}$. By Theorem~\ref{teor-paramAtiayh}, the forgetful functor is a
differential functor from the $D_k$-category $\Cat$ to the
$D_K$-category $\Vect(K)$. Apply the extension of scalars along the vertical morphisms of the diagram from Remark~\ref{rem-split} to the forgetful functor $\Cat\to\Vect(K)$. 

By
Proposition~\ref{prop-diffextscal}\eqref{en:2375}, we obtain a differential functor
$\omega$ from the $\widetilde{D}_k$-category $\widetilde{\Cat}$ to the $\widetilde{D}_K$-category $\Vect(K)$ (the latter category is with the usual $\widetilde{D}_K$-structure as in Example~\ref{examp-moddiffcat}). Recall that $$\widetilde{D}_K=K\otimes_k\widetilde{D}_k$$ and $K$ is a $\widetilde{D}_k$-field over $k$. Thus, we have a differential functor $\omega:\widetilde{\Cat}\to\Vect(K)$ between $\widetilde{D}_k$-categories.

By Theorem~\ref{theor-diffTanncorr}, there exists a
$\widetilde{D}_k$-Hopf algebroid $(K,H)$ over $k$ such that $H$ is
faithfully flat over~$K\otimes_k K$ and~$\omega$ lifts up to an
equivalence of \mbox{$\widetilde{D}_k$-categories}
$$
\widetilde{\Cat}\stackrel{\sim}\longrightarrow\Comodf(K,H).
$$
Since~$\Cat$ is $D_k$-tensor generated by one object, the
$\widetilde{D}_k$-category $\Cat$ is also $\widetilde{D}_k$-tensor
generated by one object. Hence, Proposition~\ref{prop-difffingencat} and the proof of Theorem~\ref{theor-diffTanncorr} imply that $H$ is $\widetilde{D}_k$-finitely generated
over~$K\otimes_k K$. We apply Theorem~\ref{theor-defindiffHopfalg} to~$(K,H)$ and obtain
the corresponding Hopf algebroid $(R,A)$. The extension of scalars
$$
K\otimes_R-:\Comodf(R,A)\to\Comodf(K,H)
$$
is a differential functor between $\widetilde{D}_k$-categories
(Example~\ref{examp-difffunctor}\eqref{en:2377}). The forgetful
functor
$$
\Comodf(R,A)\to\Mod(R)
$$
is a differential functor, where we consider the $\widetilde{D}_R$-category structure on $\Mod(R)$ with $$\widetilde{D}_R:=R\otimes_k\widetilde{D}_k$$ (Example~\ref{examp-difffunctor}\eqref{en:2370}).
We have that $R$ is $\widetilde{D}_k$-finitely generated over $k$, the morphism $\big(R,\widetilde{D}_R\big)\to \big(K,\widetilde{D}_K\big)$ is strict, where $\widetilde{D}_K:=K\otimes_k\widetilde{D}_k$, and~$(k,D_k)$ is relatively differentially closed in $\big(K,\widetilde{D}_K\big)$ by the hypotheses of the theorem. Therefore, there is a morphism of differential rings $\big(R,\widetilde{D}_R\big)\to (k,D_k)$.
This defines a differential functor
$$
\Mod(R)\to\Vect(k),\quad N\mapsto k\otimes_R N
$$
from the $\widetilde{D}_R$-category $\Mod(R)$ to the $D_k$-category
$\Vect(k)$ (Example~\ref{examp-difffunctor}\eqref{en:2371}).
Summarizing, we obtain a collection of differential functors
$$
\begin{CD}
\Cat\to\widetilde{\Cat}\to\Comodf(K,H)@<K\otimes_R-<<\Comodf(R,A)\to
\Mod(R)\to\Vect(k).
\end{CD}
$$
Since $A$ is faithfully flat over $R\otimes_k R$, the extension of
scalars functor $K\otimes_R-$ is an equivalence of categories
(see~\cite[1.8,3.5]{DeligneFS} and also
Section~\ref{subsection-prelHopfalg}). All together, this defines a
differential functor from $\Cat$ to~$\Vect(k)$, which finishes the
proof.

\subsection{Proof of Theorem~\ref{thm-main}}\label{subsection-proofthm-main}

We need the following simple facts.

\begin{lemma}\label{lem-ratpointparam}
Let $Y$ be an irreducible variety over a field $k_0$ with $\Char k_0
= 0$, $k$ be a field extension of $k_0$, and let $K_0:=k_0(Y)$.
Suppose that $k_0$ is existentially closed in $K_0$. Then, for any
non-empty open subset $ U\subset X:=Y\times_{k_0} k, $ there exists
a $k_0$-point $y$ on $Y$ such that the $k$-point  $ x:=y\times_{k_0}
k $ of $X$ belongs to $U$.
\end{lemma}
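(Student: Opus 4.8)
The plan is to reduce the statement to an affine chart of $Y$ and then rephrase it as the problem of finding a $k_0$-rational point that avoids a single hypersurface, which is precisely what the existential closedness hypothesis (Definition~\ref{defin-arsimp}) is designed to handle.

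First I would pass to an affine neighborhood. Pick any point $x_0\in U$ and let $y_0$ be its image under the base-change morphism $\pi\colon X=Y\times_{k_0}k\to Y$. Choose an affine open $V=\Spec(B)\subset Y$ containing $y_0$, where $B$ is a finitely generated $k_0$-algebra. Since $Y$ is irreducible (hence integral), $B$ is a domain with $\Frac(B)=K_0$, and $V\times_{k_0}k=\Spec(B\otimes_{k_0}k)=\pi^{-1}(V)$ is an affine open of $X$ containing $x_0$. Therefore $U\cap(V\times_{k_0}k)$ is a nonempty open subset of $V\times_{k_0}k$, and because every $k_0$-point of $V$ is a $k_0$-point of $Y$ whose base change lands in $V\times_{k_0}k$, it suffices to prove the statement with $Y$ replaced by $V$ and $U$ by $U\cap(V\times_{k_0}k)$. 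So I may assume $Y=\Spec(B)$ is affine and $X=\Spec(B\otimes_{k_0}k)$.

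Next I would isolate a convenient principal open inside $U$. As the sets $D(g)$ form a basis of the topology of $X$, there is an $f\in B\otimes_{k_0}k$ with $\emptyset\ne D(f)\subset U$; in particular $f\ne 0$. I would write $f=\sum_i b_i\otimes c_i$ with $b_i\in B$ and with the $c_i\in k$ linearly independent over $k_0$, so that $f\ne 0$ forces some $b_{i_0}\ne 0$. A $k_0$-point $y$ of $Y$ is a $k_0$-algebra homomorphism $\phi\colon B\to k_0$, and its base change $x=y\times_{k_0}k$ corresponds to $\phi\otimes\id_k\colon B\otimes_{k_0}k\to k_0\otimes_{k_0}k=k$. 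Hence $x\in D(f)$ if and only if $\sum_i\phi(b_i)c_i\ne 0$, and since the $c_i$ are $k_0$-linearly independent and the $\phi(b_i)$ lie in $k_0$, this holds as soon as $\phi(b_{i_0})\ne 0$.

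Finally I would invoke existential closedness. The ring $B[1/b_{i_0}]$ is a finitely generated $k_0$-subalgebra of $K_0=\Frac(B)$, so by Definition~\ref{defin-arsimp} there is a $k_0$-algebra homomorphism $\psi\colon B[1/b_{i_0}]\to k_0$; from $\psi(b_{i_0})\psi(1/b_{i_0})=1$ one gets $\psi(b_{i_0})\ne 0$. Restricting $\psi$ to $B$ yields the desired $\phi$, and the corresponding $k_0$-point $y$ satisfies $x=y\times_{k_0}k\in D(f)\subset U$. The only step requiring genuine care is the affine reduction together with the linear-independence bookkeeping in $B\otimes_{k_0}k$: this is what allows a statement about the $k$-point $x$ to be detected by the single $k_0$-valued coordinate $\phi(b_{i_0})$, after which the hypothesis applies directly. (Note that the reducedness afforded by $\Char k_0=0$ is not even needed here, since $f\ne 0$ is automatic from $D(f)\ne\emptyset$.)
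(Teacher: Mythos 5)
Your proof is correct, and it takes a genuinely different route from the paper's. The paper argues by descent: it first enlarges $k$ so that $k_0$ is the fixed field of $\Gamma:=\Aut(k/k_0)$ (the only place $\Char k_0=0$ enters), intersects all $\Gamma$-translates of the complement $Z:=X\setminus U$ to obtain a $\Gamma$-invariant proper closed subset $Z'=W\times_{k_0}k$ with $W\neq Y$, and then applies existential closedness to produce a $k_0$-point of $Y\setminus W$. You instead work in coordinates on an affine chart: after locating a principal open $D(f)\subset U$ and expanding $f=\sum_i b_i\otimes c_i$ with the $c_i\in k$ linearly independent over $k_0$, your key observation is that for a base-changed $k_0$-point the value $(\phi\otimes\id_k)(f)=\sum_i\phi(b_i)c_i$ vanishes only if every $\phi(b_i)$ does, so it suffices to make a single coefficient $b_{i_0}$ survive, which Definition~\ref{defin-arsimp} applied to the finitely generated subalgebra $B[1/b_{i_0}]\subset K_0$ achieves. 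All your steps check out: the affine reduction via a chosen $x_0\in U$ is sound, the uniqueness of the decomposition of $f$ over a $k_0$-basis justifies both $b_{i_0}\neq 0$ and the detection of non-vanishing, and the invertibility trick forces $\psi(b_{i_0})\neq 0$. What your route buys is that it is more elementary and, as you note, dispenses with the hypothesis $\Char k_0=0$ entirely; the paper's route is shorter on the page but relies on the fixed-field fact and on descending a $\Gamma$-invariant closed subset to $Y$, both of which your argument sidesteps.
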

\begin{proof}
First note that, if the lemma is proven for an extension $k'$ of $k$,
then this implies the lemma for $k$. Thus, replacing~$k$ by its
extension, if needed, we may assume that $k^{\Gamma}=k_0$, where
$\Gamma$ is the group of all field automorphisms of $k$ over $k_0$,
because $\Char k_0 =0$. For a non-empty open subset $U\subset X$,
take its complement $Z:=X\backslash U$ and consider the intersection
$$
Z':=\bigcap _{\sigma\in \Gamma}\sigma(Z).
$$
The closed subvariety $Z'\subset X$ is invariant under $\Gamma$,
therefore there exists a closed subvariety $W\subset Y$ such that
$Z'=W\times_{k_0} k$. Moreover, $W\neq Y$, because $Z'\subset Z\neq
X$. Put $V:=Y\backslash W$, which is a non-empty open subset in $Y$.
Since $k_0$ is existentially closed in $K_0$, there exists $y\in
V(k_0)$. This defines a $k$-point $x:=y\times_{k_0} k$ in $X$. If $x
\in Z(k)$, then $x\in\sigma(Z(k))$ for any $\sigma\in \Gamma$. Thus,
$x \in Z'(k)$, which contradicts to the fact that $y$ is not in $W$.
Hence, we obtain that $x$ belongs to $U$.
\end{proof}

\begin{lemma}\label{lem-linadd}
Let $k\subset K$ be a field extension and let $\varphi_1,\ldots,\varphi_n:k\to k$
be maps that are linearly independent over $k$. Then $\varphi_1,\ldots,\varphi_n$ are linearly independent over $K$ considered as maps from $k$ to $K$.
\end{lemma}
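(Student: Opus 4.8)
The plan is to reduce $k$-linear independence of the $\varphi_i$ to the invertibility of a single matrix of values, and then to observe that such invertibility is preserved when the field of scalars is enlarged from $k$ to $K$. Throughout I regard $\varphi_1,\dots,\varphi_n$ as vectors in the $k$-vector space $\mathrm{Map}(k,k)$ of all maps from $k$ to itself, with scalar action $(c\cdot\varphi)(a)=c\,\varphi(a)$; post-composing with the inclusion $k\hookrightarrow K$ produces the maps whose $K$-linear independence is in question, now viewed inside the $K$-vector space $\mathrm{Map}(k,K)$.

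First I would establish the following criterion: the maps $\varphi_1,\dots,\varphi_n$ are linearly independent over $k$ if and only if there exist $a_1,\dots,a_n\in k$ such that the matrix $\big(\varphi_i(a_j)\big)_{1\le i,j\le n}$ is invertible over $k$. The implication from invertibility to independence is immediate: evaluating a relation $\sum_i c_i\varphi_i=0$ at each $a_j$ gives a homogeneous linear system whose coefficient matrix is invertible, forcing all $c_i=0$. For the converse I would consider the evaluation map $E\colon k\to k^n$, $a\mapsto\big(\varphi_1(a),\dots,\varphi_n(a)\big)$, and argue that its image must span $k^n$; otherwise a nonzero covector $(c_1,\dots,c_n)$ annihilating the image would satisfy $\sum_i c_i\varphi_i(a)=0$ for all $a\in k$, i.e.\ $\sum_i c_i\varphi_i=0$, contradicting independence. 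Once the image spans $k^n$, one selects $a_1,\dots,a_n\in k$ whose evaluation vectors $E(a_j)$ form a basis, which is exactly the asserted invertibility.

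With the criterion in hand, the ascent to $K$ is formal. Fixing $a_1,\dots,a_n\in k$ as above, the determinant of $\big(\varphi_i(a_j)\big)$ is a nonzero element of $k$, hence a nonzero element of $K$, so the same matrix is invertible over $K$. Now suppose $\sum_i c_i\varphi_i=0$ holds in $\mathrm{Map}(k,K)$ with $c_i\in K$. Evaluating at $a_1,\dots,a_n$ yields the system $\sum_i c_i\varphi_i(a_j)=0$ for $1\le j\le n$, whose coefficient matrix $\big(\varphi_i(a_j)\big)$ is invertible over $K$; therefore $c_1=\dots=c_n=0$, which is precisely the required $K$-linear independence.

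The only step that is not purely formal is the converse direction of the criterion: since $\mathrm{Map}(k,k)$ is infinite-dimensional, one must argue that linear independence of finitely many maps is already detected by their values at finitely many points of $k$. This is exactly what the spanning argument for the image of $E$ supplies, and I expect it to be the crux of the write-up; everything else is elementary linear algebra that transfers verbatim between $k$ and $K$.
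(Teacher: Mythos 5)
Your proof is correct and follows essentially the same route as the paper's: both hinge on the observation that $k$-linear independence forces the image of the evaluation map $a\mapsto(\varphi_1(a),\dots,\varphi_n(a))$ to span $k^{\oplus n}$, after which the passage to $K$ is immediate. Your matrix-and-determinant formulation is just a concrete rendering of the paper's one-line remark that a $k$-spanning subset of $k^{\oplus n}$ still spans $K^{\oplus n}$ over $K$.
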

\begin{proof}
Since $\varphi_1,\ldots,\varphi_n$ are linearly independent over $k$, the image of the map
$$
\Phi:k\to k^{\oplus n},\quad f\mapsto (\varphi_1(f),\ldots,\varphi_n(f)),
$$
spans all $k^{\oplus n}$ over $k$. Therefore, the image of the composition of $\Phi$ and the natural embedding $k^{\oplus n}\subset K^{\oplus n}$ spans all $K^{\oplus n}$ over~$K$, so, $\varphi_1,\ldots,\varphi_n$ are linearly independent over $K$.
\end{proof}

\begin{lemma}\label{lemma-finitegener}
Let $K$ be a $D_k$-field over a differential field $(k,D_k)$ with
$\Char k=0$ such that $K$ is of finite transcendence degree over
$k$. Then any finite subset $\Sigma\subset K$ is contained in a
$D_k$-subalgebra $R$ in $K$ over~$k$ that is finitely generated as
an algebra over $k$.
\end{lemma}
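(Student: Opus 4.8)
The plan is to split the proof into two steps: first I would produce a finitely generated \emph{field} extension $E$ with $k\subseteq E\subseteq K$ that contains $\Sigma$ and is stable under $D_k$, and then upgrade $E$ to a $D_k$-subalgebra that is finitely generated as a $k$-algebra. The second step is routine. Write $E=k(b_1,\ldots,b_s)$ with $\Sigma\subseteq\{b_1,\ldots,b_s\}$; since $E$ is $D_k$-stable, for every basis element $\partial$ of $D_k$ over $k$ one can write $\partial(b_j)=P_{\partial,j}/Q_{\partial,j}$ with $P_{\partial,j},Q_{\partial,j}\in k[b_1,\ldots,b_s]$. Setting $h:=\prod_{\partial,j}Q_{\partial,j}$ and $R:=k[b_1,\ldots,b_s,1/h]$, the Leibniz rule, the chain rule $\partial(h)=\sum_j(\partial h/\partial b_j)\,\partial(b_j)$, and the identity $\partial(1/h)=-\partial(h)/h^2$ show that every generator of $R$ is mapped into $R$ by each $\partial$ (using also $\partial(k)\subseteq k\subseteq R$). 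Hence $R$ is a $D_k$-subalgebra, finitely generated over $k$, with $\Sigma\subseteq R$, as required.

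For the first step, let $\Sigma=\{a_1,\ldots,a_p\}$ and fix a $k$-basis $\partial_1,\ldots,\partial_q$ of $D_k$; via the strict structure map of the $D_k$-field $K$ these act on $K$ as derivations. For $n\ge 0$ let $L_n\subseteq K$ be the subfield generated over $k$ by all $(\partial_{i_1}\cdots\partial_{i_m})(a_l)$ with $m\le n$ and $1\le l\le p$. Each $L_n$ is finitely generated over $k$; the $L_n$ form an increasing chain; and since applying any $\partial_i$ raises the order of such an operator by one, $\partial_i(L_n)\subseteq L_{n+1}$ for all $i,n$. Consequently $L:=\bigcup_n L_n$ is the $D_k$-subfield of $K$ generated by $\Sigma$. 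Now $\trdeg(L_n/k)$ is nondecreasing and bounded by $\trdeg(K/k)<\infty$, so it stabilizes: there is $N$ with $\trdeg(L_n/k)=\trdeg(L_N/k)$ for all $n\ge N$. In particular $\trdeg(L_{N+1}/L_N)=0$, so $L_{N+1}/L_N$ is algebraic, and being finitely generated it is finite.

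The decisive point is that $L_{N+1}$ is \emph{already} $D_k$-stable, so that $L=L_{N+1}$ and I may take $E:=L_{N+1}$. Fix an index $i$ and an arbitrary $\gamma\in L_{N+1}$. Since $L_{N+1}/L_N$ is algebraic and $\Char k=0$, $\gamma$ has a separable minimal polynomial $P=\sum_j c_jT^j$ over $L_N$, with $c_j\in L_N$. Applying $\partial_i$ to $P(\gamma)=0$ yields
$$
\sum_j\partial_i(c_j)\,\gamma^j+P'(\gamma)\,\partial_i(\gamma)=0 .
$$
Here $\partial_i(c_j)\in\partial_i(L_N)\subseteq L_{N+1}$ by the second paragraph, and $P'(\gamma)\in L_{N+1}$ is nonzero by separability; hence $\partial_i(\gamma)=-\big(\sum_j\partial_i(c_j)\gamma^j\big)\big/P'(\gamma)\in L_{N+1}$. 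As $\gamma$ and $i$ were arbitrary, $\partial_i(L_{N+1})\subseteq L_{N+1}$ for every $i$. In particular the order-$(N+1)$ generators of $L_{N+1}$ are sent into $L_{N+1}$, so $L_{N+2}=L_{N+1}$ and inductively $L=L_{N+1}$, which is finite over the finitely generated field $L_N$ and therefore finitely generated over $k$.

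The step I expect to be the main obstacle is precisely this last one: $L_N$ is \emph{not} itself a differential field, so the familiar fact that a derivation of an algebraic element stays in the field generated by that element cannot be applied over $L_N$ directly. What rescues the computation is the one-step inclusion $\partial_i(L_N)\subseteq L_{N+1}$, which guarantees that $\partial_i(c_j)$ — and hence $\partial_i(\gamma)$ — lands in $L_{N+1}$; this is exactly why one must pass to the one-step-larger field $L_{N+1}$ rather than hoping to close up at $L_N$.
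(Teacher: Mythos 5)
Your proof is correct and follows essentially the same route as the paper: reduce to showing that the $D_k$-subfield $L$ generated by $\Sigma$ in $K$ is finitely generated as a field over $k$, then obtain the finitely generated $D_k$-subalgebra by inverting the product of the denominators of the derivatives of the field generators. The only difference is that the paper outsources the first step to a citation (\cite[Theorem~5.6.3]{Pan}), whereas you prove it directly via the stabilization of $\trdeg(L_n/k)$ together with the observation that, for $\gamma$ algebraic over $L_N$, differentiating its (separable, since $\Char k=0$) minimal polynomial and using $\partial_i(L_N)\subseteq L_{N+1}$ forces $\partial_i(\gamma)\in L_{N+1}$ — a valid and standard argument for the cited result.
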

\begin{proof}
Let $L$ be the $D_k$-subfield generated by $\Sigma$ in $K$. It follows from \cite[Theorem~5.6.3]{Pan} that $L$ is a finitely generated field over $k$. Hence, there exists a finite set $S \subset L$ such that $\Sigma \subset S$ and $L = k(S)$. It now follows from differentiating fractions that $R := k[S\cup 1/T]\subset K$ satisfies the requirement of the lemma, where $T\subset K$ is the set of the denominators of $D_k(S)$.
\end{proof}

Now, we prove Theorem~\ref{thm-main} using its notation. Suppose that condition~\ref{it:552} of the theorem holds. Then the structure map identifies $\widetilde{D}_k$ and $1\otimes D_k$, where
$\widetilde{D}_k$ is given in condition~\ref{it:552}. Hence, $$(k,D_k)\to \big(k,\widetilde{D}_k\big)$$ is an isomorphism and $K$ is a $D_k$-field. Let $R$ be a $D_k$-finitely
generated subalgebra in~$K$ over $k$. We need to show that there is a morphism of $D_k$-algebras $R\to k$. By Remark~\ref{rem-mainthm}\eqref{rmk:333}, we have
$$K=\Frac{\left(K_0\otimes_{k_0} k\right)}.$$ Let $\{a_i/b_i\}$ be a finite set of $D_k$-generators of $R$ over $k$ with $a_i,b_i\in K_0\otimes_{k_0}k$.
 Let $R_0$ be the subalgebra in $K_0$ generated over $k$ by the $K_0$-components of summands in $a_i$'s and $b_i$'s and put $$f:=\prod _i b_i.$$ Since~$K_0$ is the field of $D_k$-constants, $R_0\otimes_{k_0}k$ is a $D_k$-differential subalgebra in $K$ over $k$. Hence, $R$ is contained in the localization ${\left(R_0\otimes_{k_0} k\right)}_f$. By Lemma~\ref{lem-ratpointparam} applied to
$$
Y:=\Spec(R_0),\quad U:=\Spec{\left({\left(R_0\otimes_{k_0} k\right)}_f\right)},
$$
there exists a $k_0$-point $y$ on $Y(k_0)$ such that the $k$-point $x:=y\times_{k_0}k$ of $X$ belongs
to $U$. The point $x$ defines a morphism of $k$-algebras $f:R\to k$.
The kernel of $f$ is generated by
$D_k$-constants in $R$, because $K_0=K^{D_k}$. Therefore, $f$ is a morphism of
$D_k$-algebras, and $(k,D_k)$ is relatively differentially closed in $$\left(K,K\otimes_k D_k\right)\cong\big(K,K\otimes_k \widetilde{D}_k\big).$$

Now suppose that condition~\ref{it:551} of the theorem holds. Our
first goal is to construct a splitting~$\widetilde{D}_k$
of~$(K,D_K)$ over $(k,D_k)$ such that the natural map $K\otimes_k
\widetilde{D}_k\to D_K$ is an isomorphism. With this aim, we
consider the ``effective'' quotients
$$
D_K^{\rm eff}:={\rm Im}(\theta_K:D_K\to\Der(K,K)),\quad D_k^{\rm eff}:={\rm
Im}(\theta_k:D_k\to\Der(k,k))
$$
of the differential structures $D_K$ and $D_k$, respectively. It follows from Lemma~\ref{lem-linadd} that the natural map $$K\otimes_kD^{\rm eff}_k\to\Der(k,K) $$ is injective. Therefore, the composition
$$
D_K\to K\otimes_k D_k\to K\otimes_k D^{\rm eff}_k
$$
factors through $D^{\rm eff}_K$, that is, we have a commutative diagram
$$
\begin{CD}
D_K @>>> K\otimes_kD_k\\
@VVV @VVV\\
D_K^{\rm eff}@>>> K\otimes_kD_k^{\rm eff}.
\end{CD}
$$
Hence, $\big(K,D^{\rm eff}_K\big)$ is a parameterized differential field over $\big(k,D_k^{\rm eff}\big)$. By condition~\ref{it:551} of the theorem, we have 
$$
D_{K/k}\stackrel{\sim}\longrightarrow \Der_k(K,K).
$$
Consequently, the natural $K$-linear morphism
$$
D_{K/k}=\Ker{\left(D_K\to K\otimes_k D_k\right)}\to\Ker{\Big(D_K^{\rm eff}\to K\otimes_k D^{\rm eff}_k\Big)\subseteq \Der_k(K,K)}
$$
is an isomorphism. It follows that there is an isomorphism
\begin{equation}\label{eq:5352}
D_K\cong D_K^{\rm eff}\times_{{\left(K\otimes D^{\rm eff}_k\right)}}(K\otimes_k D_k).
\end{equation}
Take commuting
bases in~$D^{\rm eff}_K$ and~$D^{\rm eff}_k$ from Proposition~\ref{prop-commbasis}. Put $\widetilde{D}^{\rm eff}_k$ to be the $k$-linear span of the basis in $D^{\rm eff}_K$. Then $\widetilde{D}^{\rm eff}_k$ is a splitting of $\big(K,D^{\rm eff}_K\big)$ over $\big(k,D^{\rm eff}_k\big)$ such that $$D_K^{\rm eff}\cong K\otimes_k\widetilde{D}^{\rm eff}_k.$$ Put
\begin{equation}\label{eq:5360}
\widetilde{D}_k:=\widetilde{D}^{\rm eff}_k\times_{D^{\rm eff}_k} D_k\subset D_K.
\end{equation}
Since taking effective quotients is a morphism of Lie rings and by formula~\eqref{eq-explicintegrmorph} from Section~\ref{subsection-diffrings}, we have that $\widetilde{D}_k$ is closed under the Lie bracket on $D_K$. Thus, $\widetilde{D}_k$ is a splitting of $(K,D_K)$ over $(k,D_k)$. Comparing~\eqref{eq:5352} and~\eqref{eq:5360}, we see that $$K\otimes_k\widetilde{D}_k\cong D_K.$$ Note that, in this case, $$\dim_k{\big(\widetilde{D}_k\big)}=\dim_K(D_K),$$ while, in the previous case (condition~\ref{it:552} of the theorem), $\dim_k{\big(\widetilde{D}_k\big)}$ could be less than $\dim_K(D_K)$. So, in this case, $\widetilde{D}_k$ could be ``much larger''. Put
$$
D:=\Ker\Big(\widetilde{D}_k\to D_k\Big).
$$
Then we have $K\otimes_k D\cong D_{K/k}$.

Let $(R,D_R)$ be a differential subalgebra in $(K,D_K)$ over $(k,D_k)$ such that the morphism $(R,D_R)\to (K,D_K)$ is strict
and $(R,D_R)$ is differentially finitely generated over $k$. Extending $R$ by a finite number
of elements from $K$, we obtain that $$D_R\cong R\otimes_k\widetilde{D}_k$$ and~$R$ is a $\widetilde{D}_k$-finitely generated $\widetilde{D}_k$-subalgebra in $K$ over $k$.
Since $$\dim_K(D_{K/k})=\dim_k(\Der_k(K,K))$$ is finite,~$K$ is of finite transcendence degree over $k$. Hence, by Lemma~\ref{lemma-finitegener}, we may assume that $R$ is finitely generated as an algebra over~$k$. By the hypotheses of the theorem, we have
$$D_{K/k}\cong \Der_k(K,K).$$ Since $\Char k=0$ and $R$ is finitely generated,
localizing~$R$ by a non-zero element, we may assume that $R$ is
smooth over $k$ and
$$
R\otimes_k
D\cong\Der_k(R,R).
$$
Since $k$ is existentially closed in $K$, there is a homomorphism
$f:R\to k$ of $k$-algebras. We claim that~$f$ extends to a morphism
of differential algebras
$\left(R,D_R\right)\to (k,D_k)$ over $(k,D_k)$.
By definition, to prove this, we have to construct a morphism of Lie rings $$s:D_k\to\widetilde{D}_k =k\otimes_R D_R$$ such that, for all
$\partial\in D_k$ and $a\in R$, we have
\begin{equation}\label{eq-defs}
\partial(f(a))=f(s(\partial)(a)).
\end{equation}
We claim that, for any
$\partial\in D_k$, there is a unique $s(\partial)\in
\widetilde{D}_k$ that satisfies~\eqref{eq-defs}. Indeed, consider the derivation $\delta$ from $R$ to itself defined as the composition
$$
\begin{CD}
R@>{f}>>k@>{\partial}>>k@>>> R
\end{CD}
$$
and consider any $\tilde\partial\in \widetilde{D}_k$ such that
$\tilde\partial$ is sent to $\partial$ by the surjective map $\widetilde{D}_k\to D_k$. The difference $\delta-\theta_K{\big(\tilde\partial\big)}$ is a $k$-linear derivation from $R$ to itself, that is, it belongs to $$R\otimes_k D\cong\Der_k(R,R).$$ Put
$$
s(\partial):=\tilde\partial+f\Big(\delta-\theta_K{\big(\tilde\partial\big)}\Big),
$$
where~$f$ denotes also the map
$$
\begin{CD}
R\otimes_k\widetilde{D}_k@>f\cdot\id>>
\widetilde{D}_k.
\end{CD}
$$
By construction, $s(\partial)$ satisfies~\eqref{eq-defs}. The
uniqueness of~$s(\partial)$ follows from the fact that if
$s(\partial)$ and $s(\partial)'$ in~$\widetilde{D}_k$
satisfy~\eqref{eq-defs}, then  $s(\partial)-s(\partial)'$ belongs to
$$\Ker\Big(R\otimes_k\widetilde{D}_k\stackrel{f\cdot\id}\longrightarrow
\widetilde{D}_k\Big),$$ whose intersection with
$1\otimes\widetilde{D}_k$ is trivial. By construction, the obtained
map $s:D_k\to \widetilde{D}_k$ is $k$-linear. The uniqueness
of~$s(\partial)$ implies that $s$ is a morphism of Lie rings: given
$\partial_1,\partial_2\in D_k$, the commutator
$\big[s(\partial_1),s(\partial_2)\big]$ satisfies~\eqref{eq-defs}
with $\partial=[\partial_1,\partial_2]$. This shows that $(k,D_k)$
is relatively differentially closed in
$$(K,D_K)=\big(K,K\otimes_k\widetilde{D}_k\big).$$

\section{PPV extensions with non-closed constants}\label{section-nonclosedconstants}

 In this
section, we discuss two aspects of PPV extension for parameterized
differential fields over an arbitrary differential field $(k,D_k)$
(in contrast to the usual assumption~\cite{PhyllisMichael} that
$(k,D_k)$ be differentially closed).

\subsection{Galois correspondence}\label{section-Galois}

We establish the Galois correspondence for PPV extensions.
Basically, we use the classical differential Galois correspondence
for PV extensions. Also, we use the differential Tannakian
formalism, in particular, Theorem~\ref{theor-PPVdff}.

First, let us recall several notions concerning differential
algebraic groups. Let $(k,D_k)$ be a differential field and $G$ be a
{\it linear $D_k$-group}, that is, $G$ is a group-valued functor on
$\DAlg(k)$ corepresented by a $D_k$-finitely generated $D_k$-Hopf
algebra $U$ over $k$. A~{\it $D_k$-subgroup $H$ in $G$} is a
corepresentable group subfunctor $H$ in $G$ on the category
$\DAlg(k)$. By \cite[Theorem~15.3]{Water}, this corresponds to a
surjective morphism $U\to V$ between $D_k$-Hopf algebras over $k$.
Hence, $H$ is a linear $D_k$-group. 

Suppose that $G$ acts on a
$D_k$-algebra $A$, that is, we have a morphism of $D_k$-algebras
$m:A\to A\otimes_k U$ that satisfies the axioms of a comodule over a
Hopf algebra. Let $A$ be a domain and $L:=\Frac(A)$. We put
$$
L^G:=\left\{a/b\in L\:|\: a,b\in A,\,b\cdot m(a)=a\cdot m(b)\right\}.
$$
It follows that $L^G$ is a $D_k$-subfield in $L$.

Let $(K,D_K)$ be a parameterized differential field over $(k,D_k)$
and let $M$ be a finite-dimensional $D_{K/k}$-module. Suppose that
there exists a PPV extension $L$ for $M$.

\begin{definition}\label{defin-Galois}
The {\it parameterized differential Galois group of $L$ over $K$} is the group functor
$$
\Gal^{D_K}(L/K):\DAlg(k,D_k)\to \Sets,\quad R\mapsto
\Aut^{D_K}(R\otimes_k A/R\otimes_k K),
$$
where $A$ is a PPV ring associated with $L$ (Definition~\ref{defin-PPVring}) and we consider a $D_K$-structure on the extension of scalars
$R\otimes_k K$ as given by Definition~\ref{defin-extscaldiffmod}.
\end{definition}

\begin{lemma}\label{lemma-Galois}
The functor $\Gal^{D_K}(L/K)$ is corepresented by a $D_k$-finitely
generated $D_k$-Hopf algebra, that is, $\Gal^{D_K}(L/K)$ is a linear
$D_k$-group.
\end{lemma}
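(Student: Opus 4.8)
The plan is to recognize $\Gal^{D_K}(L/K)$ as the differential automorphism functor of the fiber functor attached to $L$, and to read off corepresentability and finiteness from results already in hand. Put $\Cat:=\langle M\rangle_{\otimes,D}\subset\DMod(K,D_{K/k})$, which is a neutral $D_k$-Tannakian category: it is rigid with $\End_{\Cat}(\uno)=k$, and the existence of the PPV extension $L$ supplies, via Theorem~\ref{theor-PPVdff}, a differential fiber functor $\omega_0:\Cat\to\Vect(k)$. For a $D_k$-algebra $R$, write $\omega_{0,R}$ for the functor $X\mapsto R\otimes_k\omega_0(X)$. By Remark~\ref{rem-PPVring} there is, for every $R\in\DAlg(k,D_k)$, a canonical isomorphism
$$
\Gal^{D_K}(L/K)(R)=\Aut^{D_K}(R\otimes_k A/R\otimes_k K)\cong\fIsom^{\otimes,D}(\omega_{0,R},\omega_{0,R}),
$$
natural in $R$. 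Thus $\Gal^{D_K}(L/K)$ is isomorphic, as a functor on $\DAlg(k,D_k)$, to $\Isom^{\otimes,D}(\omega_0,\omega_0)$.

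Next I would invoke Proposition~\ref{prop-reprdiffalg} with base ring $k$ and $\omega=\eta=\omega_0$. This provides a $D_k$-algebra $U$ over $k$, equipped with a canonical $D_k$-structure, corepresenting the functor $\Isom^{\otimes,D}(\omega_0,\omega_0):\DAlg(k,D_k)\to\Sets$. Combined with the previous paragraph, $U$ corepresents $\Gal^{D_K}(L/K)$.

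It remains to upgrade $U$ to a $D_k$-Hopf algebra and to check finite generation. Since $\omega_0=\eta$, the functor $\Isom^{\otimes,D}(\omega_0,\omega_0)$ is group-valued, the group law being composition of differential tensor automorphisms. The multiplication, unit, and inversion are natural transformations between corepresentable functors on $\DAlg(k,D_k)$; because $U\otimes_k U$, taken with the tensor $D_k$-structure (Remark~\ref{remark-diffalg}), is the coproduct in commutative $D_k$-algebras and hence corepresents $R\mapsto\Hom_{\DAlg}(U,R)\times\Hom_{\DAlg}(U,R)$, the Yoneda lemma in the category $\DAlg(k,D_k)$ turns these natural transformations into $D_k$-algebra morphisms, i.e.\ into a coproduct, counit, and antipode that are differential. (Equivalently, one checks that the coproduct is a $D_k$-morphism and applies Proposition~\ref{prop-Hopf}.) This makes $U$ a $D_k$-Hopf algebra over $k$ corepresenting $\Gal^{D_K}(L/K)$.

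For finiteness, I would apply Proposition~\ref{prop-difffingencat}: as $\Cat$ is $D_k$-tensor generated by $M$ (Definition~\ref{defin-diffgenercat}), the algebra $U$ is $D_k$-generated over $k$ by the matrix entries of the canonical automorphism of $\omega_0(M)_U$ and of its inverse, with respect to a chosen basis of the finite-dimensional space $\omega_0(M)$. These are finitely many elements, so $U$ is $D_k$-finitely generated over $k$, and $\Gal^{D_K}(L/K)$ is therefore a linear $D_k$-group. The step demanding the most care is the compatibility of the $D_k$-structure on $U$ coming from Proposition~\ref{prop-reprdiffalg} with the Hopf-algebra operations; this is exactly what the functoriality of the corepresentation in $\DAlg(k,D_k)$ guarantees, so the obstacle is bookkeeping rather than a genuine difficulty.
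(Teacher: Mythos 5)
Your proposal is correct and follows essentially the same route as the paper: identify $\Gal^{D_K}(L/K)$ with $\fIsom^{\otimes,D}(\omega_R,\omega_R)$ via Theorem~\ref{theor-PPVdff} and Remark~\ref{rem-PPVring}, corepresent it by Proposition~\ref{prop-reprdiffalg}, and get finite generation from Proposition~\ref{prop-difffingencat}. The only difference is that you spell out the compatibility of the Hopf-algebra operations with the $D_k$-structure (via Yoneda in $\DAlg(k,D_k)$ or Proposition~\ref{prop-Hopf}), which the paper leaves implicit.
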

\begin{proof}
By Theorem~\ref{theor-PPVdff}, 
the PPV extension $L$ corresponds to
a differential functor
$$
\omega:\langle M\rangle_{\otimes,D}\to\Vect(k).$$
By Remark~\ref{rem-PPVring}, the functor $\Gal^{D_K}(L/K)$ 
is canonically isomorphic to the functor
$$
\DAlg(k,D_k)\to \Sets,\quad R\mapsto
\fIsom^{\otimes,D}(\omega_R,\omega_R).
$$
By Proposition~\ref{prop-reprdiffalg}, the latter functor is
corepresentable by a $D_k$-Hopf algebra $U$ over $k$. By
Proposition~\ref{prop-difffingencat}, $U$ is $D_k$-finitely
generated.
\end{proof}

Note that one can also prove Lemma~\ref{lemma-Galois} more
explicitly without using the Tannakian formalism.

\begin{remark}\label{rem-classGalois}
It follows from Proposition~\ref{prop-reprdiffalg},
Theorem~\ref{theor-PPVdff}, and \cite[9.6]{DeligneFS} that
$L$ is a union of (possibly, infinitely many) PV-extensions defined
by the $D_{K/k}$-modules $\big(\At^1\big)^{\circ i}(M)$ and
$\Gal^{D_K}(L/K)$ with forgotten $D_k$-structure is the differential
Galois group $\Gal^{D_{K/k}}(L/K)$.
\end{remark}

Recall the differential Galois correspondence in the case of
arbitrary constants from~\cite[Section 4]{Tobias2008}. Given a Hopf
algebra $U$, an {\it algebraic subgroup} $\Spec(V)$ in $\Spec(U)$ 
corresponds to a surjective homomorphism between Hopf algebras~$U\to V$.

\begin{proposition}\label{prop-Galois}
There is a bijective correspondence between algebraic subgroups
\mbox{$H\subset \Gal^{D_{K/k}}(L/K)$} and $D_{K/k}$-subfields
$K\subset E\subset L$ given by
$$
H\mapsto E:=L^H,\quad E\mapsto H:=\Gal^{D_{E/k}}(L/E).
$$
\end{proposition}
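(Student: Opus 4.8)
The plan is to reduce the statement to the classical differential Galois correspondence for ordinary Picard--Vessiot extensions with arbitrary constants, \cite[Section~4]{Tobias2008}, applied not to $L$ itself but to the tower of genuine PV extensions that exhausts it. By Remark~\ref{rem-classGalois}, the $D_{K/k}$-field $(L,D_{L/k})$ over $(K,D_{K/k})$ is the increasing union $L=\bigcup_i L_i$, where $L_i$ is the PV extension for the $D_{K/k}$-module $\big(\At^1\big)^{\circ i}(M)$, the field of $D_{L/k}$-constants of $L$ equals $k$, and $G:=\Gal^{D_{K/k}}(L/K)$ is the inverse limit $G=\varprojlim_i G_i$ of the linear algebraic groups $G_i:=\Gal^{D_{K/k}}(L_i/K)$ over $k$. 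Thus $G$ is pro-algebraic and, by the paragraph preceding the proposition, its algebraic subgroups $H$ are exactly the closed subgroup schemes, equivalently the inverse limits $H=\varprojlim_i H_i$ of their images $H_i\subset G_i$.

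First I would check that both assignments are well defined. For an algebraic subgroup $H\subset G$ the fixed field $L^H$ is a $D_{K/k}$-subfield of $L$ by the construction of $L^G$ recalled before Definition~\ref{defin-Galois}, and it contains $K$ since $H$ fixes $K$. Conversely, given an intermediate $D_{K/k}$-field $K\subset E\subset L$, squeezing $k=K^{D_{K/k}}\subseteq E^{D_{E/k}}\subseteq L^{D_{L/k}}=k$ shows that $E$ acquires no new constants; the horizontal basis of $M_L$ and the generation property survive over $E$, so $L$ is still a (pro-)PV extension of $(E,D_{E/k})$ with constant field $k$. Hence $\Gal^{D_{E/k}}(L/E)$ is defined, and restriction of automorphisms embeds it into $G$ as a closed subgroup.

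The heart of the argument is that both operations are compatible with the tower and with the directed union. For each $i$ the extension $L_i/K$ is a genuine PV extension with constant field $k$, so \cite[Section~4]{Tobias2008} yields an inclusion-reversing bijection between algebraic subgroups of $G_i$ and $D_{K/k}$-subfields of $L_i$ containing $K$; the same statement applies with $K$ replaced by any intermediate field. Writing $E_i:=E\cap L_i$ and $H_i:=\Gal^{D_{E_i/k}}(L_i/E_i)$, one checks that the restriction maps $G_{i+1}\to G_i$ carry $H_{i+1}$ into $H_i$, so the $H_i$ form a compatible system with $\varprojlim_i H_i=\Gal^{D_{E/k}}(L/E)$, while $L^{\Gal^{D_{E/k}}(L/E)}=\bigcup_i L_i^{H_i}=\bigcup_i E_i=E$ by the level-wise correspondence together with $E=\bigcup_i E_i$. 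In the other direction, for an algebraic subgroup $H=\varprojlim_i H_i$ one has $L^H\cap L_i=L_i^{H_i}$, and the level-wise bijection gives $\Gal^{D_{(L^H)/k}}(L/L^H)=\varprojlim_i\Gal^{D_{E_i/k}}\big(L_i/L_i^{H_i}\big)=\varprojlim_i H_i=H$.

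The step I expect to be the main obstacle is precisely this passage to the limit: verifying that a closed subgroup scheme of the pro-algebraic group $G=\varprojlim_i G_i$ is both recovered from and determined by the compatible system of its images $H_i\subset G_i$ (dually, that the directed union of Hopf algebras $U=\varinjlim_i U_i$ corepresenting $G$ transports surjections $U\to V$ to surjections level-wise), and that the operations $E\mapsto E\cap L_i$ and $H\mapsto H_i$ interchange the two directed systems faithfully, so that $\Gal^{D_{E/k}}(L/E)$ and $L^H$ commute with the union $L=\bigcup_i L_i$. The per-level identities are exactly \cite[Section~4]{Tobias2008}, whose Hopf-algebraic (torsor) formulation is what allows $k$ to be an arbitrary, possibly non-algebraically-closed, field of constants; the remaining work is the bookkeeping assembling these level-wise bijections into a single bijection for $L$.
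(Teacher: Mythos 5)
Your proposal is correct and matches the paper's (implicit) argument: the paper offers no proof of Proposition~\ref{prop-Galois} beyond citing the level-wise correspondence of \cite[Section~4]{Tobias2008} together with Remark~\ref{rem-classGalois}, which presents $L$ as the union of the PV extensions $L_i$ for $\big(\At^1\big)^{\circ i}(M)$ and $\Gal^{D_{K/k}}(L/K)$ as the corresponding pro-algebraic limit. Your level-wise application of the cited bijection followed by the passage to the limit (including the surjectivity of the transition maps $H_{i+1}\to H_i$, which indeed follows from the fixed-field computation at consecutive levels) is exactly the bookkeeping that citation presupposes.
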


The Galois correspondence in the parameterized case is as follows.

\begin{proposition}\label{prop-Galoisparam}
There is a bijective correspondence between $D_k$-subgroups
$H\subset \Gal^{D_K}(L/K)$ and $D_K$-subfields $K\subset E\subset L$
given by
$$
H\mapsto E:=L^H,\quad E\mapsto H:=\Gal^{D_E}(L/E).
$$
\end{proposition}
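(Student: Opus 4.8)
The plan is to establish Proposition~\ref{prop-Galoisparam} by enhancing the differential Galois correspondence of Proposition~\ref{prop-Galois} with the $D_k$-structure coming from the parameterized Tannakian formalism. First I would set up the two maps and check they are well-defined: for a $D_k$-subgroup $H\subset \Gal^{D_K}(L/K)$, the subfield $E:=L^H$ is a $D_K$-subfield (not merely $D_{K/k}$), since $H$ is a $D_k$-group and the derivations from a splitting $\widetilde{D}_k$ act compatibly; conversely, for a $D_K$-subfield $K\subset E\subset L$, the functor $\Gal^{D_E}(L/E)$ is a $D_k$-subgroup of $\Gal^{D_K}(L/K)$ by Lemma~\ref{lemma-Galois} applied to the parameterized differential field $(E,D_E)$, together with the observation that $L$ remains a PPV extension for $M_E$ over $E$ (using $L^{D_{L/k}}=k$ and that $E\subset L$ is generated appropriately).

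Next I would prove that the two assignments are mutually inverse. The containment $H\subseteq \Gal^{D_E}(L/E)$ for $E=L^H$ is immediate from the definition of $L^H$. For the reverse inclusion, and for the identity $L^{\Gal^{D_E}(L/E)}=E$, the key is to reduce to the non-parameterized statement already available in Proposition~\ref{prop-Galois}. Here I would invoke Remark~\ref{rem-classGalois}: the group $\Gal^{D_K}(L/K)$ with its $D_k$-structure forgotten is exactly the differential Galois group $\Gal^{D_{K/k}}(L/K)$, and the same holds over $E$. Thus on the level of the underlying functors on $\DAlg(k)$ the bijection of Proposition~\ref{prop-Galois} gives $L^{\Gal^{D_{E/k}}(L/E)}=E$ as $D_{K/k}$-fields and identifies algebraic subgroups with $D_{K/k}$-subfields. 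What remains is to track the extra structure: a $D_k$-subgroup of $\Gal^{D_K}(L/K)$ is a $D_k$-Hopf algebra quotient of the representing Hopf algebra $U$, and by Proposition~\ref{prop-reprdiffalg} (corepresentability in $\DAlg(k,D_k)$) such quotients correspond precisely to those algebraic subgroups whose fixed field is stable under all of $D_K$ rather than only $D_{K/k}$, i.e. the $D_K$-subfields.

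The cleanest route to the matching of structures is Tannakian. Via Theorem~\ref{theor-PPVdff} the PPV extension $L$ corresponds to a differential functor $\omega:\langle M\rangle_{\otimes,D}\to\Vect(k)$, and an intermediate $D_K$-subfield $E$ corresponds to a full differential Tannakian subcategory of $\langle M\rangle_{\otimes,D}$, equivalently to a $D_k$-quotient Hopf algebra $U\to V$ by Lemma~\ref{lemma-Galois} and the discussion preceding Definition~\ref{defin-Galois}. The fixed-field operation $H\mapsto L^H$ and the Galois-group operation $E\mapsto\Gal^{D_E}(L/E)$ are then inverse because both correspond, under these equivalences, to the evident bijection between $D_k$-Hopf-algebra quotients of $U$ and differential subcategories, which refines the classical bijection already recorded in Proposition~\ref{prop-Galois}.

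The main obstacle, and the step deserving the most care, is verifying that the assignment $E\mapsto\Gal^{D_E}(L/E)$ genuinely lands in $D_k$-subgroups and that its image exhausts all of them --- equivalently, that a $D_k$-stable quotient of $U$ is the same datum as a $D_K$-stable intermediate field. I expect this to hinge on the interaction between the $D_k$-action on the Galois group (Definition~\ref{defin-Galois}, via the $D_K$-structure on $R\otimes_k K$) and the $\widetilde{D}_k$-action on $L$ supplied by a splitting. The delicate point is that requiring $E$ to be a $D_K$-subfield (all derivations, parameters included) is exactly what corresponds to $H$ being closed under the parameter derivations, i.e.\ a $D_k$-group rather than a bare algebraic group; establishing this equivalence rigorously, rather than merely the underlying set-theoretic bijection, is where the argument must do real work, and I would handle it by appealing to the corepresentability in $\DAlg(k,D_k)$ from Proposition~\ref{prop-reprdiffalg} together with Remark~\ref{rem-PPVring}.
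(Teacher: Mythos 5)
Your overall reduction is the same as the paper's: use Remark~\ref{rem-classGalois} to identify the underlying algebraic group with $\Gal^{D_{K/k}}(L/K)$, invoke the classical correspondence of Proposition~\ref{prop-Galois}, and then reduce everything to the single equivalence ``$H$ is a $D_k$-subgroup if and only if $E=L^H$ is a $D_K$-subfield.'' Your treatment of one direction also matches the paper: if $E$ is a $D_K$-subfield, then $L$ is a PPV extension for $M_E$ over $(E,D_E)$, so $\Gal^{D_E}(L/E)$ acquires a canonical $D_k$-structure and is a $D_k$-subgroup of $G$.

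The gap is in the converse direction, and in the ``cleanest route'' you propose for it. First, you justify that $L^H$ is a $D_K$-subfield by saying the derivations ``from a splitting $\widetilde{D}_k$ act compatibly.'' No splitting is assumed in Section~\ref{section-Galois}; the Galois correspondence is stated for an arbitrary parameterized differential field admitting a PPV extension, so this argument is not available. The paper's actual mechanism is different and does not need a splitting: one takes the extension of scalars $G_K$ of the group functor $G$ from $(k,D_k)$ to $(K,D_K)$ (Definition~\ref{defin-extscaldiffmod}), obtains a $D_K$-subgroup $H_K\subset G_K$, and uses the adjunction between extension and restriction of scalars together with Definition~\ref{defin-Galois} to see that $G_K$ acts on $L$ as a $D_K$-field with $L^H=L^{H_K}$; stability of $E=L^H$ under all of $D_K$ then falls out. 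Second, your Tannakian shortcut asserts that an intermediate $D_K$-subfield $E$ corresponds to a full differential Tannakian subcategory of $\langle M\rangle_{\otimes,D}$, equivalently to a $D_k$-Hopf-algebra quotient $U\to V$. That identification holds only for \emph{normal} subgroups (this is exactly the content of Proposition~\ref{prop-normal}): full tensor subcategories of $\Repf(G)$ correspond to quotient groups $G/H$, not to arbitrary subgroups, so this route cannot establish the bijection for general $H$. The fixed-field argument via base change of the group functor is what does the real work here, and it is missing from your proposal.
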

\begin{proof}
By Proposition~\ref{prop-Galois} and Remark~\ref{rem-classGalois},
we only need to show that an algebraic subgroup
$$H\subset\Gal^{D_{K/k}}(L/K)$$ is a $D_k$-subgroup in
$\Gal^{D_{K}}(L/K)$ if and only if the corresponding
$D_{K/k}$-subfield $E\subset L$ is a $D_K$-subfield. Suppose that
$E$ is a $D_K$-subfield. Then $L$ is a PPV extension for the
$D_E$-module~$M_E$ over $E$, where $D_E:=E\otimes_KD_K$. Therefore,
the corresponding Galois group $H$ has a canonical $D_k$-structure
and corepresents a group subfunctor in~$G$ on $\DAlg(k)$ given by
Definition~\ref{defin-Galois}. Thus, $H$ is a $D_k$-subgroup in $G$.

Conversely, suppose that $H$ is a $D_k$-subgroup in
$G:=\Gal^{D_K}(L/K)$. Consider the extension of scalars $G_K$ from
$(k,D_k)$ to~$(K,D_K)$ for $G$
(Definition~\ref{defin-extscaldiffmod}). We have a $D_K$-subgroup
$H_K$ in $G_K$. By the adjunction between restriction and extension
of scalars (Definition~\ref{defin-extrestr}) and Definition~\ref{defin-Galois}, $G_K$ acts on the
$D_K$-field $L$ over $K$ and $L^H=L^{H_K}$. By
Proposition~\ref{prop-Galois}, we have $E=L^H$, whence, $E$ is an
$D_K$-subfield in $L$.
\end{proof}

The proof of the normal subgroup case uses the differential Tannakian formalism.

\begin{proposition}\label{prop-normal}
Under the correspondence from Proposition~\ref{prop-Galoisparam}, a
normal $D_k$-subgroup $H$ corresponds to a PPV extension~$E$ over
$K$, and we have an isomorphism of $D_k$-groups
$$
\Gal^{D_K}(L/K)/H\cong \Gal^{D_K}(E/K).
$$
\end{proposition}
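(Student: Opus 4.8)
The plan is to transport the entire question into the differential Tannakian category $\Cat=\langle M\rangle_{\otimes,D}$ by means of Theorem~\ref{theor-PPVdff} and Proposition~\ref{prop-reprdiffalg}. Write $G:=\Gal^{D_K}(L/K)$ and let $U$ be the $D_k$-finitely generated $D_k$-Hopf algebra corepresenting $G$ (Lemma~\ref{lemma-Galois}). By Remark~\ref{rem-PPVring}, the differential fiber functor $\omega:\Cat\to\Vect(k)$ attached to $L$ identifies $\Cat$ with $\Comodf(U)$ as $D_k$-categories; an automorphism $g\in G(R)$ then acts on every object of $\Cat$, so that statements about $D_K$-subfields of $L$ can be systematically translated into statements about $G$-stable differential subcategories.

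First I would treat the direction ``normal $\Rightarrow$ PPV''. Given a normal $D_k$-subgroup $H\subset G$, the quotient $G/H$ is again a linear $D_k$-group; its $D_k$-Hopf algebra $W$ is the differential sub-Hopf-algebra of $U$ consisting of functions constant along $H$, and it is $D_k$-finitely generated, so that $G/H$ carries a faithful finite-dimensional differential representation $V$. The full subcategory $\Cat_H\subset\Cat$ of objects on which $H$ acts trivially is then a differential Tannakian subcategory equivalent to $\Comodf(W)$, and it is $D_k$-tensor generated by the single object $M'$ corresponding to $V$; thus $\Cat_H=\langle M'\rangle_{\otimes,D}$. Applying Theorem~\ref{theor-PPVdff} to $M'$, the restricted functor $\omega|_{\Cat_H}$ corresponds to a PPV extension $E'\subset L$ for $M'$. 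The identification $E'=L^H$ follows because $E'$ is generated over $K$ by the entries of the horizontal frame of $M'_L$, while $g\in G$ fixes these entries precisely when $g$ acts trivially on $V$; since $V$ is faithful over $G/H$, this holds exactly for $g\in H$. Hence $\Gal^{D_{E'}}(L/E')=H$, and Proposition~\ref{prop-Galoisparam} yields $E'=L^H=E$, so $E$ is a PPV extension over $K$.

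Next I would establish the quotient isomorphism. The inclusion $\Cat_H\hookrightarrow\Cat$ induces, by restriction of differential fiber functors, a morphism of $D_k$-groups $G\to\Gal^{D_K}(E/K)$, where $\Gal^{D_K}(E/K)$ is corepresented by $W$ via Proposition~\ref{prop-reprdiffalg} applied to $\Cat_H\simeq\Comodf(W)$. Its kernel consists of the elements acting trivially on all of $\Cat_H$, i.e.\ trivially on $V$, which is again $H$. Surjectivity I would deduce from the inclusion of $D_k$-Hopf algebras $W\hookrightarrow U$ being injective, hence dual to a faithfully flat morphism of the corresponding group functors. Combining the kernel computation and surjectivity gives $G/H\cong\Gal^{D_K}(E/K)$. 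For the converse (a PPV subextension forces $H$ normal), I would note that when $E$ is a PPV extension over $K$ it is stable under all $D_K$-automorphisms of $L$ over $K$, so restriction defines $G\to\Gal^{D_K}(E/K)$ whose kernel is exactly $\Gal^{D_E}(L/E)=H$; being a kernel, $H$ is normal.

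The main obstacle will be the two points that make this differential Tannakian dictionary rigorous: (i) verifying that the invariant subcategory $\Cat_H$ is $D_k$-tensor generated by a single $D_{K/k}$-module, which rests on the $D_k$-finite generation of $W$ and the existence of a faithful differential representation of $G/H$ (this is where one invokes Cassidy's embedding, as in Lemma~\ref{lemma-finiteHopf}), together with the fact that $\Comodf(W)$ is automatically closed under $\At^1_{\Cat}$ because $W\subset U$ is a differential sub-Hopf-algebra; and (ii) the surjectivity of $G\to\Gal^{D_K}(E/K)$, the differential analogue of the assertion that a sub-Hopf-algebra inclusion induces a faithfully flat quotient of affine group schemes. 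Once these structural facts are in place, the identification $E=L^H$ and the kernel computation are formal consequences of the faithfulness of $V$.
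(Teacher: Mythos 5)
Your proposal is correct and follows essentially the same route as the paper: pass to the differential Tannakian picture via Theorem~\ref{theor-PPVdff} and the equivalence $\langle M\rangle_{\otimes,D}\simeq\Repf(G)$, use Cassidy's result that $G/H$ is a linear $D_k$-group with a faithful finite-dimensional differential representation to produce the tensor-generating module $N$, apply Theorem~\ref{theor-PPVdff} to $\langle N\rangle_{\otimes,D}$ to obtain $E$, identify $E=L^H$ through the kernel of $G\to G/H$ together with Proposition~\ref{prop-Galoisparam}, and obtain the converse from the fact that $\Gal^{D_E}(L/E)$ is the kernel of the restriction homomorphism. The only difference is presentational: where you argue surjectivity of $G\to\Gal^{D_K}(E/K)$ separately via the sub-Hopf-algebra inclusion $W\hookrightarrow U$, the paper gets the isomorphism $G/H\cong\Gal^{D_K}(E/K)$ directly by construction, since $\Gal^{D_K}(E/K)$ is computed from the subcategory $\langle N\rangle_{\otimes,D}\simeq\Repf(G/H)$.
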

\begin{proof}
For short, put $G:=\Gal^{D_K}(L/K)$. Let $$\omega:\langle
M\rangle_{\otimes,D}\to \Vect(k)$$ be the differential functor that
corresponds to the PPV extension $L$ by Theorem~\ref{theor-PPVdff}.
It follows from Theorem~\ref{theor-diffTanncorr} and the proof of
Lemma~\ref{lemma-Galois} that $\omega$ lifts up to an equivalence of
$D_k$-categories
\begin{equation}\label{eq:5194}
\langle M\rangle_{\otimes,D}\stackrel{\sim}\longrightarrow\Repf(G).
\end{equation}
Let $H$ be a normal $D_k$-subgroup in $G$. Then $\Repf(G/H)$ is a
full $D_k$-subcategory in $\Repf(G)$.
By~\cite[Proposition~15]{Cassidy}, $G/H$ is a linear $D_k$-group,
that is, there is a faithful finite-dimensional representation
of~$G/H$ over $k$. Let $N$ be the corresponding $D_{K/k}$-module
over $K$ under the equivalence~\eqref{eq:5194}. Taking the
restriction of $\omega$ to the subcategory $\langle
N\rangle_{\otimes,D}$ in $\langle M\rangle_{\otimes,D}$, by
Theorem~\ref{theor-PPVdff}, we obtain a PPV extension $E$ for~$N$,
which is embedded into $L$ as a $D_K$-subfield. Moreover, by
construction, we have an isomorphism
$$
G/H\cong \Gal^{D_K}(E/K).
$$
We need to show that $E$ is the subfield in $K$ associated with $H$ by the Galois correspondence, that is, $E=L^H$. By Proposition~\ref{prop-Galoisparam}, it is enough to show the equality $\Gal^{D_E}(L/E)=H$. It is implied by the fact that $\Gal^{D_E}(L/E)$
is the kernel of the restriction homomorphism
\begin{equation}\label{eq:5560}
\Gal^{D_K}(L/K)=G\to
\Gal^{D_K}(E/K)=G/H.
\end{equation}
Conversely, if $E$ is a PPV extension of $K$ in $L$, then $H:=\Gal^{D_E}(L/E)$ is the kernel of the group homomorphism~\eqref{eq:5560}, whence $H$ is normal.
\end{proof}

\subsection{Extension of constants in parameterized differential fields}\label{sec:extofscalarsforMichael}

Now let us consider the behavior of PPV extensions and the
corresponding differential categories under extensions of the
differential field $(k,D_k)$. Let $(K,D_K)$ be a parameterized
differential field over $(k,D_k)$ with $\Char k=0$. Let $l$ be a
$D_k$-field over $k$ (Definition~\ref{defin-strictalg}). In
particular, we have a differential field~$(l,D_l)$ with
$D_l:=l\otimes_k D_k$. Since $\Char k=0$, the field $k$ is
algebraically closed in~$K$ and the ring
$$
R:=l\otimes_k K
$$
is a domain (for example, see~\cite[Corollary~1, p.~203]{Jab}).
Denote  the fraction field of $R$ by $L$. By
Definition~\ref{defin-extscaldiffmod}, $R$ is a $D_K$-al\-ge\-bra
over~$K$. Therefore, $L$ is a $D_K$-field over $K$ and we have
morphisms of differential rings $$(l,D_l)\to (R,D_R)\to (L,D_L),$$
where $D_R:=R\otimes_K D_K$ and $D_L:=L\otimes_K D_K$. Also, we have
$$
D_{R/l}:={\rm Ker}(D_R\to R\otimes_l D_l)\cong R\otimes_K D_{K/k}\cong l\otimes_k D_{K/k},\ \ 
D_{L/l}:={\rm Ker}(D_L\to L\otimes_l D_l)\cong L\otimes_K D_{K/k},
$$
because the functors $R\otimes_K -$ and $L\otimes_K-$ are exact.
\begin{lemma}\label{lemma-diffidealsR}
The $D_{K/k}$-algebra $R$ over $K$ has no non-zero $D_{K/k}$-ideals besides $R$ itself.
\end{lemma}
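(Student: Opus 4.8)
The plan is to show that $R=l\otimes_k K$ is $D_{K/k}$-simple (contains no $D_{K/k}$-ideals besides $0$ and $R$) by the classical shortest-relation argument for the differential simplicity of a differential field tensored, over its field of constants, with an arbitrary field extension.

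First I would make the $D_{K/k}$-action on $R$ completely explicit. Since $R$ is the extension of scalars of the $D_k$-module $l$ along the morphism $(k,D_k)\to(K,D_K)$ (Definition~\ref{defin-extscaldiffmod}), and since every $\partial\in D_{K/k}$ satisfies $D_\varphi(\partial)=0$ by the very definition $D_{K/k}=\Ker(D_K\to K\otimes_k D_k)$, the action formula of that definition collapses to $\partial(m\otimes a)=m\otimes\partial(a)$ for $m\in l$ and $a\in K$. In other words, the derivations in $D_{K/k}$ differentiate only the $K$-factor and leave $l$ untouched. The second ingredient is that the ring of $D_{K/k}$-constants of $K$ is exactly $k$: this is precisely the defining property $k=K^{D_{K/k}}$ of a parameterized differential field (Definition~\ref{defin-paramdifffield}).

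Next, given a non-zero $D_{K/k}$-ideal $I\subseteq R$, I would choose $0\neq x\in I$ whose expression $x=\sum_{i=1}^n m_i\otimes a_i$, with $m_1,\dots,m_n\in l$ linearly independent over $k$ and $a_i\in K^\times$, has the least possible number $n$ of terms. Multiplying by $1\otimes a_1^{-1}\in R$, which keeps $x$ inside the ideal, I normalize $a_1=1$. Applying an arbitrary $\partial\in D_{K/k}$ and using the formula above gives $\partial(x)=\sum_{i=2}^n m_i\otimes\partial(a_i)\in I$, an element written with the still linearly independent family $m_2,\dots,m_n$ and hence of length at most $n-1$; by minimality of $n$ it must vanish. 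This forces $\partial(a_i)=0$ for all $i$ and all $\partial$, so that each $a_i\in K^{D_{K/k}}=k$.

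Finally, once $a_i\in k$ I can slide the coefficients across the tensor product over $k$ to get $x=\left(\sum_i a_i m_i\right)\otimes 1=c\otimes 1$ with $c\in l$, and $c\neq 0$ because $x\neq 0$. As $l$ is a field, $c$ is invertible, so $x$ is a unit of $R$ with inverse $c^{-1}\otimes 1$, whence $I=R$. I expect the only point requiring genuine care to be the first step, namely pinning down that the $D_{K/k}$-action fixes the $l$-factor and that the constants are $k$; once this is established the shortest-relation argument is routine and, notably, never uses the $D_k$-structure on $l$, only that $l/k$ is a field extension.
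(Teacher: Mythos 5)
Your proof is correct and follows essentially the same minimal-length argument as the paper: the paper kills the first term by forming $(1\otimes f_1)\partial f-(1\otimes\partial f_1)f$ rather than normalizing $a_1=1$ in advance, but both reduce the length of a shortest element and then use the $k$-linear independence of the $l$-coefficients together with $K^{D_{K/k}}=k$ to conclude the element is a unit. Your preliminary observation that $D_{K/k}$ acts only on the $K$-factor is exactly the right justification and matches the paper's implicit use of Definition~\ref{defin-extscaldiffmod}.
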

\begin{proof}
Let $I$ be a non-zero $D_{K/k}$-ideal in $R$ and consider
$
0\ne
f\in I$ with $$f=\sum_{i=1}^n c_i\otimes f_i,\ \  0\ne c_i\in l,\ 0\ne
f_i\in K$$
such that $c_1\ldots,c_n$ are linearly
independent over $k$. Suppose that $f$ has the minimal possible
number $n$ among all non-zero elements in $I$. Take any $\partial\in
D_{K/k}$. Since $\partial f\in I$, we have $$g:=(1\otimes f_1)\partial f-
(1\otimes \partial f_1)f \in I.$$ On the
other hand,
$$
\partial f=\sum_{i=1}^n c_i\otimes \partial f_i,$$ hence, $$g=\sum_{i=2}^n c_i\otimes (f_1\partial
f_i-\partial f_1\,f_i)$$
has less summands than $f$. Therefore, $g=0$. Since $c_1,\ldots,c_n$
are linearly independent over $k$, we obtain that
$\partial{\left(f_i/f_1\right)}=0$ for all $i=2,\ldots,n$ and for all
$\partial\in D_{K/k}$. Hence, $$h_i:=f_i/f_1\in
k=K^{D_{K/k}}$$ and we have
$$
f=\left(\sum_{i=1}^nc_ih_i\right)\otimes f_1.$$
Thus, $f$ is invertible in $R$ and $I=R$.
\end{proof}

\begin{lemma}\label{lemma-integraldiffmorph}
Let $P$ and $P'$ be $D_{R/l}$-modules over $R$ such that $P$ is a
finitely generated $R$-module and let $\phi:P_L\to P'_L$ be a
morphism between the corresponding differential modules over
$(L,D_{L/l})$. Then we have $$\phi(P\otimes 1)\subseteq P'\otimes 1.$$
\end{lemma}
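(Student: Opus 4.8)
The plan is to reduce the statement to the $D_{K/k}$-simplicity of $R$ established in Lemma~\ref{lemma-diffidealsR}, by means of an ideal-of-denominators argument. Since $P$ is a finitely generated $R$-module, I would fix generators $p_1,\dots,p_m$ of $P$ over $R$; because $\phi$ is $L$-linear and $P'\otimes 1$ is an $R$-submodule of $P'_L$, it suffices to prove $\phi(1\otimes p_j)\in P'\otimes 1$ for each $j$. Write $w_j:=\phi(1\otimes p_j)\in L\otimes_R P'$ as a finite sum $\sum_i a_{ji}\otimes q_{ji}$ with $a_{ji}\in L=\Frac(R)$ and $q_{ji}\in P'$. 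Clearing denominators for the finitely many $a_{ji}$ simultaneously produces $0\ne s\in R$ with $s\,a_{ji}\in R$ for all $i,j$, whence $s\,w_j\in P'\otimes 1$. Thus the set
\[
\mathfrak a:=\{\,s\in R\ :\ s\cdot\phi(1\otimes p_j)\in P'\otimes 1\ \text{for all } j\,\}
\]
is a nonzero ideal of $R$.

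Next I would show that $\mathfrak a$ is stable under $D_{R/l}$. Recall $D_{R/l}\cong R\otimes_K D_{K/k}$ acts on $P_L$ and $P'_L$ through the corresponding elements of $D_{L/l}\cong L\otimes_R D_{R/l}$, and that $\phi$, being a morphism of $D_{L/l}$-modules, commutes with this action (Definition~\ref{defin-diffmod}). Fix $\partial\in D_{R/l}$ and $s\in\mathfrak a$, say $s\cdot\phi(1\otimes p_j)=1\otimes q_j$ with $q_j\in P'$. By Definition~\ref{defin-extscaldiffmod} the action of $\partial$ satisfies $\partial(1\otimes p_j)=1\otimes\partial(p_j)$ and obeys the Leibniz rule for the scalar $s\in R\subset L$, so applying $\partial$ yields
\[
\partial(s)\,\phi(1\otimes p_j)+s\,\phi(1\otimes\partial(p_j))=1\otimes\partial(q_j).
\]
Writing $\partial(p_j)=\sum_i r_{ij}p_i\in P$ with $r_{ij}\in R$ and using the $L$-linearity of $\phi$, the term $s\,\phi(1\otimes\partial(p_j))=\sum_i r_{ij}\big(s\,\phi(1\otimes p_i)\big)$ lies in $P'\otimes 1$; hence $\partial(s)\,\phi(1\otimes p_j)\in P'\otimes 1$ for every $j$, i.e.\ $\partial(s)\in\mathfrak a$. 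Therefore $\mathfrak a$ is a nonzero $D_{R/l}$-ideal of $R$.

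Finally, I would identify $D_{R/l}$-ideals of $R$ with $D_{K/k}$-ideals: since $D_{R/l}$ is generated as an $R$-module by $1\otimes D_{K/k}$, an ideal is $\partial$-stable for all $\partial\in D_{R/l}$ exactly when it is stable under $D_{K/k}$. By Lemma~\ref{lemma-diffidealsR} the $D_{K/k}$-algebra $R$ over $K$ has no nonzero proper $D_{K/k}$-ideals, so $\mathfrak a=R$; in particular $1\in\mathfrak a$, giving $\phi(1\otimes p_j)\in P'\otimes 1$ for all $j$, and the lemma follows. The main obstacle is the middle step: verifying that $\mathfrak a$ is genuinely differential requires that each $\partial(p_j)$ remain inside the finitely generated module $P$ (so that it re-expands in the chosen generators) and that the horizontality of $\phi$ intertwine the $D_{R/l}$-action on $P_L$ with that on $P'_L$. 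Once this is in place, the $D_{K/k}$-simplicity of $R$ finishes the argument, and it is worth noting that no finite presentation of $P$, nor any hypothesis on $P'$, is needed.
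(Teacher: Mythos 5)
Your proposal is correct and follows essentially the same route as the paper: both define the nonzero ideal of "denominators" of $\phi$ on $P\otimes 1$, show it is a differential ideal via the Leibniz rule together with the horizontality of $\phi$ and the stability of $P\otimes 1$ under the derivations, and conclude by the $D_{K/k}$-simplicity of $R$ from Lemma~\ref{lemma-diffidealsR}. The only (harmless) cosmetic differences are that you phrase the ideal in terms of a fixed finite generating set of $P$ (forcing the extra re-expansion of $\partial(p_j)$ in those generators, which the paper avoids by quantifying over all $v\in P\otimes 1$) and that you work with $D_{R/l}$ and then identify $D_{R/l}$-ideals with $D_{K/k}$-ideals.
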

\begin{proof}
Consider the subset $I\subset R$ that consists of all  $f\in R$ such that, for all
$v\in P\otimes 1$, we have $$f\cdot\phi(v)\in P'\otimes 1.$$ It is readily seen that $I$ is an ideal in $R$. Moreover, since the module $P$ is finitely generated over $R$, the ideal $I$ is non-zero. Take any $\partial \in D_{K/k}$. Since the $R$-submodule $P\otimes 1\subset P_L$ is stable under $\partial$ and $\phi$ is $L$-linear and commutes with $\partial$, for all  $f\in I$ and $v\in P\otimes 1$, we get
$$\partial f\cdot\phi(v)=\partial{\left(f\cdot \phi(v)\right)}-f\cdot\phi(\partial v)\in P'\otimes 1.$$ Hence, $I$ is a non-zero $D_{K/k}$-ideal in $R$. By Lemma~\ref{lemma-diffidealsR}, we conclude that $I=R$.
\end{proof}

\begin{corollary}\label{corol-newsolutions}
For all finite-dimensional $D_{K/k}$-modules $M$ and $M'$ over $K$, the natural map
$$
l\otimes_k\Hom_{D_{K/k}}{\left(M,M'\right)}\cong \Hom_{D_{L/l}}{\left(M_L,M'_L\right)}
$$
is an isomorphism, where, for a differential ring $(A,D_A)$,
$\Hom_{D_A}(-,-)$ denotes morphisms between differential modules
over $(A,D_A)$. In particular, we have $$l\otimes_k
M^{D_{K/k}}=M_L^{D_{L/l}}\quad \text{and}\quad l=L^{D_{L/l}}.$$
\end{corollary}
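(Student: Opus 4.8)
The plan is to reduce everything to the single statement that, for any finite-dimensional $D_{K/k}$-module $N$ over $K$, one has $N_L^{D_{L/l}} = l\otimes_k N^{D_{K/k}}$ inside $N_L = L\otimes_K N$. Granting this, the isomorphism of the corollary follows by taking $N := \IntHom_K(M,M')$ with its internal-Hom $D_{K/k}$-structure (Definition~\ref{defin-tenspr}): since $M$ and $M'$ are finite-dimensional, extension of scalars commutes with internal Hom, so $\IntHom_L(M_L,M'_L)\cong N_L$ as $D_{L/l}$-modules, and under this identification horizontal sections are exactly morphisms of differential modules, i.e. $N^{D_{K/k}}=\Hom_{D_{K/k}}(M,M')$ and $N_L^{D_{L/l}}=\Hom_{D_{L/l}}(M_L,M'_L)$. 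A direct check shows the natural map of the corollary matches the natural map $l\otimes_k N^{D_{K/k}}\to N_L^{D_{L/l}}$. The two ``in particular'' equalities are then the special cases $N=M$, giving $l\otimes_k M^{D_{K/k}}=M_L^{D_{L/l}}$, and $N=\uno$, the trivial module $K$, giving $l\otimes_k k = l = L^{D_{L/l}}$, where we use that $(K,D_K)$ is parameterized, so $K^{D_{K/k}}=k$.

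To prove the key statement I would factor the extension $K\to L$ through $R=l\otimes_k K$ (recall $D_{R/l}\cong l\otimes_k D_{K/k}$ and $D_{L/l}\cong L\otimes_K D_{K/k}$, as recorded before Lemma~\ref{lemma-diffidealsR}) and compute in two steps. Over $R$ one has $N_R = R\otimes_K N = l\otimes_k N$, and since $l\subseteq R^{D_{R/l}}$ the structure map forces $\dd c=0$ in $\Omega_{R/l}$ for every $c\in l$; hence the $D_{R/l}$-action on $l\otimes_k N$ is simply $\id_l\otimes\rho_N$. Choosing a $k$-basis $\{c_i\}$ of $l$ and writing an element uniquely as $\sum_i c_i\otimes n_i$, such an element is killed by every $\partial\in D_{K/k}$ precisely when each $n_i\in N^{D_{K/k}}$; therefore $N_R^{D_{R/l}}=l\otimes_k N^{D_{K/k}}$.

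The substantive step is the second one: a horizontal section over $L$ already lies in $N_R$, that is $N_L^{D_{L/l}}\subseteq N_R$. This is exactly where Lemma~\ref{lemma-integraldiffmorph} enters. A vector $v\in N_L^{D_{L/l}}$ is the same as a morphism of differential modules $\phi\colon \uno_L\to N_L$, $1\mapsto v$; applying the lemma with $P:=R$ (the trivial, hence finitely generated, $D_{R/l}$-module) and $P':=N_R$ yields $\phi(P\otimes 1)\subseteq P'\otimes 1$, so $v=\phi(1)\in N_R$. Since $D_{L/l}=L\otimes_R D_{R/l}$, horizontality over $L$ restricted to $N_R$ coincides with $D_{R/l}$-horizontality, whence $N_L^{D_{L/l}}=N_R^{D_{R/l}}$, and combining with the first step finishes the proof. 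The genuine input, that no new horizontal sections appear over the fraction field, is thus entirely supplied by Lemma~\ref{lemma-integraldiffmorph}, which itself rests on the $D_{K/k}$-simplicity of $R$ established in Lemma~\ref{lemma-diffidealsR}; everything else is the bookkeeping of the two reductions above.
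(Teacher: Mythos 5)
Your proof is correct and follows essentially the same route as the paper: both arguments combine Lemma~\ref{lemma-integraldiffmorph} (to pass from $R=l\otimes_k K$ to its fraction field $L$) with the observation that $D_{R/l}\cong l\otimes_k D_{K/k}$ acts trivially on $l$, so that taking $D_{R/l}$-invariants commutes with $l\otimes_k-$. The only cosmetic difference is that you apply the lemma to the morphism $\uno_L\to \IntHom_K(M,M')_L$ attached to a horizontal section (i.e.\ $P=R$, $P'=\IntHom_K(M,M')_R$), whereas the paper applies it directly with $P=M_R$, $P'=M'_R$; these are equivalent by the adjunction defining the internal Hom.
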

\begin{proof}
First, by Lemma~\ref{lemma-integraldiffmorph} with $P=M_R$ and
$P'=M'_R$, the natural morphism $$
\Hom_{D_{R/l}}{\left(M_R,M'_R\right)}\to\Hom_{D_{L/l}}{\left(M_L,M'_L\right)}$$ is an
isomorphism. Since $D_{R/l}\cong l\otimes_k D_{K/k}$ acts trivially on $l$ and $M_R\cong
l\otimes_k M$, we have canonical isomorphisms:
\begin{align*}
\Hom_{D_{R/l}}(M_R,M'_R)=\Hom_R(M_R,M'_R)^{D_{R/l}}\cong
{\left(l\otimes_k\Hom_K(M,M')\right)}^{l\otimes_k D_{K/k}}\cong
 l\otimes_k \Hom_{D_{K/k}}(M,M').\quad\qedhere
\end{align*}
\end{proof}

Thus, we see that $(L,D_L)$ is a parameterized differential field over $(l,D_l)$, and that there are no non-trivial new solutions over $L$ of linear $D_{K/k}$-differential equations given over $K$.
The following result is implied directly by
Corollary~\ref{corol-newsolutions} (more precisely, by its last
assertion $l=L^{D_{L/l}}$).

\begin{corollary}\label{corol-Michael}
Let $M$ be a finite-dimensional $D_{K/k}$-module over $K$, $E$ be a PPV
extension for $M$, and let $A$ be the $D_k$-Hopf algebra of the parameterized Galois group of $E$ over $K$. Then the $D_L$-field $F:=\Frac(l\otimes_k E)$ is
a PPV extension for~$M_L$ and the $D_l$-Hopf algebra of the parameterized Galois group of $F$ over $L$ is $A_l$.
\end{corollary}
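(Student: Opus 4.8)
The plan is to check directly that $F$ meets the three conditions of Definition~\ref{defin-PPV} and then to compute its parameterized Galois group via the differential Tannakian description, in both steps feeding off Corollary~\ref{corol-newsolutions}. First I would record that $E$, being a PPV extension for $M$, is itself a parameterized differential field over $(k,D_k)$ with $E^{D_{E/k}}=k$ (the remark following Definition~\ref{defin-PPV}); since $\Char k=0$ the field $k$ is algebraically closed in $E$, so $l\otimes_k E$ is a domain and $F=\Frac(l\otimes_k E)$ is well defined, exactly as for $K$ at the start of Section~\ref{sec:extofscalarsforMichael}. Running that section with $E$ in place of $K$ shows that $(F,D_F)$ is a parameterized differential field over $(l,D_l)$ with $D_F\cong F\otimes_L D_L$, and the last assertion of Corollary~\ref{corol-newsolutions} applied to $E$ gives $F^{D_{F/l}}=l$. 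Applying the same assertion to $K$ gives $L^{D_{L/l}}=l$, so $L^{D_{L/l}}=F^{D_{F/l}}$, which is condition~(1).

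For condition~(2), note that $(M_L)_F=M_F$. A horizontal basis $\{m_1,\dots,m_n\}$ of $M_E$ over $E$, provided by $E$ being a PPV extension for $M$, maps to a basis of $M_F=F\otimes_E M_E$, and by the extension-of-scalars formula (Definition~\ref{defin-extscaldiffmod}) a $D_{E/k}$-horizontal vector becomes $D_{F/l}$-horizontal, so these images lie in $(M_L)_F^{D_{F/l}}$. For condition~(3), $E$ is $D_K$-generated over $K$ by the coordinates $a_{ij}$ of the $m_i$ in a basis of $M$ over $K$; these are the coordinates of the same vectors in the induced basis of $M_L$ over $L$, and since $D_K\subset D_L$ the $D_L$-subfield of $F$ generated over $L$ by the $a_{ij}$ contains $E$ and contains $L\supseteq l$, hence equals $L\cdot E=\Frac(l\otimes_k E)=F$. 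Thus $F$ is a PPV extension for $M_L$.

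It remains to identify the Galois group. By Theorem~\ref{theor-PPVdff} and Lemma~\ref{lemma-Galois}, $E$ corresponds to the solution functor $\omega\colon\langle M\rangle_{\otimes,D}\to\Vect(k)$ and $A$ corepresents $R\mapsto\fIsom^{\otimes,D}(\omega_R,\omega_R)$ on $\DAlg(k,D_k)$, while $F$ corresponds to $\omega'\colon\langle M_L\rangle_{\otimes,D}\to\Vect(l)$ and the Galois Hopf algebra of $F/L$ corepresents $R'\mapsto\fIsom^{\otimes,D}(\omega'_{R'},\omega'_{R'})$ on $\DAlg(l,D_l)$. The base-change differential functor $L\otimes_K-\colon\langle M\rangle_{\otimes,D}\to\langle M_L\rangle_{\otimes,D}$ (Lemma~\ref{lemma-changeofparam}) sends $M$ to $M_L$, and Corollary~\ref{corol-newsolutions} applied to $E$ gives, for every object $X$, the identification $\omega'(X_L)=l\otimes_k\omega(X)$, whence $\omega'_{R'}(X_L)=\omega_{R'}(X)$ for every $D_l$-algebra $R'$ regarded as a $D_k$-algebra. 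Composition with $L\otimes_K-$ therefore defines a map $\fIsom^{\otimes,D}(\omega'_{R'},\omega'_{R'})\to\fIsom^{\otimes,D}(\omega_{R'},\omega_{R'})$, functorial in $R'$; I would show it is bijective and then conclude, by the adjunction $\Hom_{\DAlg(l,D_l)}(l\otimes_k A,R')\cong\Hom_{\DAlg(k,D_k)}(A,R')$, that the Galois Hopf algebra of $F/L$ is $A_l$.

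The main obstacle is this last bijectivity. Injectivity is routine, since a differential tensor automorphism of $\omega'_{R'}$ is determined by its component at the $D_k$-tensor generator $M_L$ (Remark~\ref{remark-difftensgen}), which is recovered from the restriction along $L\otimes_K-$. Surjectivity is the substantive point and is precisely where Corollary~\ref{corol-newsolutions} is used in its full Hom-form: one must define the extension of an automorphism $\beta$ of $\omega_{R'}$ on the objects $L\otimes_K W$ naturally in the morphisms between them, propagate it to their subquotients (which exhaust $\langle M_L\rangle_{\otimes,D}$ by Remark~\ref{remark-difftensgen}), and check compatibility with the tensor and differential constraints. The identity $\Hom_{D_{L/l}}(M_L,M'_L)=l\otimes_k\Hom_{D_{K/k}}(M,M')$ ensures that no new morphisms, and hence no new relations or constraints, arise after base change, so that $\beta$ extends unambiguously; equivalently, it identifies $\Gal^{D_L}(F/L)$ with the base change $\Gal^{D_K}(E/K)\times_{(k,D_k)}(l,D_l)$, whose coordinate Hopf algebra is $A_l$.
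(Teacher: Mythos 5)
Your proposal is correct and follows essentially the same route as the paper, which disposes of the statement by citing the last assertion of Corollary~\ref{corol-newsolutions} (applied, as you do, with $E$ in place of $K$) for the PPV conditions and Proposition~\ref{prop-extensioncat} together with Theorem~\ref{theor-PPVdff} for the Galois group. The one step you defer --- bijectivity of the restriction map on $\fIsom^{\otimes,D}$ --- is exactly the content of the equivalence $l\otimes_k\langle M\rangle_{\otimes,D}\simeq\langle M_L\rangle_{\otimes,D}$ of Proposition~\ref{prop-extensioncat}, whose proof handles the essential-surjectivity/subquotient issue you flag via the $\varinjlim$ over finite-dimensional $k$-subspaces of $l$.
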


By Theorem~\ref{theor-PPVdff}, Corollary~\ref{corol-Michael} also
follows from the following categorical statement, which makes sense
without the assumption of the existence of a PPV extension (or,
equivalently, the existence of a differential functor) and has
interest on its own right.

\begin{proposition}\label{prop-extensioncat}
Let $M$ be a finite-dimensional $D_{K/k}$-module over $K$. Then the
differential functor from a $D_k$-category over $k$ to a
$D_l$-category over $l$ (Definition~\ref{defin-diffgenercat},
Proposition~\ref{prop-diffextscal}, and
Theorem~\ref{teor-paramAtiayh})
$$
\langle M\rangle_{\otimes,D}\to \langle M_L\rangle_{\otimes,D},\quad X\mapsto X_L
$$
induces an equivalence $D_l$-categories
$$
\Phi:l\otimes_k\langle M\rangle_{\otimes,D}\stackrel{\sim}\longrightarrow \langle M_L\rangle_{\otimes,D}.$$
\end{proposition}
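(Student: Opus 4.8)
The plan is to first dispose of the differential structure and reduce to a statement about ordinary Tannakian categories. Since $\Phi$ is produced from the extension-of-scalars functor $X\mapsto X_L$ through the universal property of $l\otimes_k\langle M\rangle_{\otimes,D}$ (Proposition~\ref{prop-diffextscal}), it is automatically a differential functor between $D_l$-categories; hence it suffices to prove that the underlying $l$-linear abelian tensor functor is an equivalence, because the quasi-inverse of such an equivalence inherits a canonical differential structure. By Remark~\ref{remark-difftensgen}, $\langle M\rangle_{\otimes,D}=\varinjlim_i\langle N_i\rangle_\otimes$ with $N_i:=\big(\At^1\big)^{\circ i}(M)$, an increasing union of ordinary Tannakian subcategories of $\DMod(K,D_{K/k})$, each $k$-linear (since $K^{D_{K/k}}=k$) with fibre functor the forgetful functor to $\Vect(K)$. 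The functor $X\mapsto X_L$ commutes with $\At^1$ by Lemma~\ref{lemma-changeofparam}, so $(N_i)_L\cong\big(\At^1\big)^{\circ i}(M_L)$ and $\langle M_L\rangle_{\otimes,D}=\varinjlim_i\langle (N_i)_L\rangle_\otimes$; moreover $l\otimes_k-$ commutes with this filtered colimit, the relevant extensions of scalars existing by Theorem~\ref{theor-extscal}. Thus the problem reduces to showing, for a fixed finite-dimensional $D_{K/k}$-module $N$, that the induced tensor functor $l\otimes_k\langle N\rangle_\otimes\to\langle N_L\rangle_\otimes$, $\,l\otimes X\mapsto X_L$, is an equivalence of ordinary Tannakian categories over $l$; the original statement then follows by passing to the filtered colimit.

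For full faithfulness I would first treat the generating objects. For $X,Y\in\langle N\rangle_\otimes$ one has $\Hom_{l\otimes_k\langle N\rangle_\otimes}(l\otimes X,l\otimes Y)\cong l\otimes_k\Hom_{\langle N\rangle_\otimes}(X,Y)$, the flatness property of extension of scalars along the field extension $k\subset l$ (reducing to finite subextensions and passing to the colimit). Since $\Hom_{\langle N\rangle_\otimes}(X,Y)=\Hom_{D_{K/k}}(X,Y)$, Corollary~\ref{corol-newsolutions} identifies this with $\Hom_{D_{L/l}}(X_L,Y_L)=\Hom_{\langle N_L\rangle_\otimes}(X_L,Y_L)$, and one checks the resulting bijection is exactly the map induced by $\Phi$. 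To pass from generators to arbitrary objects I would use that $\Phi$ and $l\otimes_k-$ are exact: presenting an object $Y_1$ as a cokernel $l\otimes A\to l\otimes B\to Y_1\to 0$ and, using rigidity, coresolving $Y_2$ as $0\to Y_2\to l\otimes C\to l\otimes D$, the five lemma reduces $\Hom(Y_1,Y_2)$ to the already-known generator-to-generator case.

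For essential surjectivity I would argue that the essential image of $\Phi$ generates the target and is closed under subquotients, whence equals $\langle N_L\rangle_\otimes$. Every object of $l\otimes_k\langle N\rangle_\otimes$ is a subquotient of some $l\otimes X$, so every object of $\langle N_L\rangle_\otimes$ is a subquotient of an object $\Phi(l\otimes X)=X_L$; by the Tannakian dictionary this says that the homomorphism $\psi\colon G'\to G$ of affine group schemes dual to $\Phi$ (after neutralizing via the faithfully flat base change along $l\to L$, where both categories acquire fibre functors to $\Vect(L)$, and using that $L\otimes_l-$ reflects equivalences) is a closed immersion. On the other hand, full faithfulness forces the $G'$-invariants and the $G$-invariants of every finite-dimensional representation to coincide, which for affine group schemes over $L$ forces a closed immersion to be an isomorphism. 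Hence $\psi$ is an isomorphism and $\Phi$ is an equivalence.

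The main obstacle is this essential-surjectivity step, i.e. proving that the essential image of $\Phi$ is closed under subquotients. Because $\langle N_L\rangle_\otimes$ need not be semisimple, one cannot descend an arbitrary $D_{L/l}$-subobject of $X_L$ to $K$ by a naive projector argument; the descent is controlled precisely by the identities $l=L^{D_{L/l}}$ and by the absence of proper $D_{K/k}$-ideals in $l\otimes_k K$ (Corollary~\ref{corol-newsolutions} and Lemma~\ref{lemma-diffidealsR}), and it is cleanest to package this through Tannakian duality as above rather than to manipulate submodules directly. Alternatively, one may run the entire comparison through differential Tannaka duality (Theorem~\ref{theor-diffTanncorr}), identifying the two differential Hopf algebroids by means of Corollary~\ref{corol-newsolutions}, which bypasses the reduction to the $\At^1$-filtration altogether.
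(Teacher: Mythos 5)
Your full-faithfulness argument is essentially the paper's own: Hom-spaces in $l\otimes_k\langle M\rangle_{\otimes,D}$ are obtained by applying $l\otimes_k-$ to the Hom-spaces in $\langle M\rangle_{\otimes,D}$, and Corollary~\ref{corol-newsolutions} identifies the result with the Hom-spaces in $\langle M_L\rangle_{\otimes,D}$; the five-lemma extension to non-generating objects is fine, and the preliminary reduction to the ordinary Tannakian pieces $\langle N_i\rangle_\otimes$ is harmless though unnecessary.

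The essential-surjectivity step, however, contains a genuine gap. The principle you invoke --- that a closed immersion $\psi\colon G'\to G$ of affine group schemes whose restriction functor $\Rep(G)\to\Rep(G')$ is fully faithful must be an isomorphism --- is false. Take $G=\SL_2$ and $G'=B$ a Borel subgroup: the ring of $B$-invariants of $\mathcal O(\SL_2)$ under right translation is $\mathcal O({\mathbb P}^1)$, i.e.\ the constants, so the tensor identity gives $(V^\vee\otimes W)^B=(V^\vee\otimes W)^{\SL_2}$ for all $\SL_2$-representations $V,W$; hence restriction is fully faithful, $B\hookrightarrow \SL_2$ is a closed immersion, every $B$-representation is a subquotient of a restricted one, and yet $B\ne \SL_2$ (the nontrivial characters of $B$ are subobjects of restricted representations but do not lie in the essential image). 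What full faithfulness plus closed immersion fails to deliver is exactly the assertion you state without proof, namely that the essential image of $\Phi$ is closed under subquotients; in Tannakian terms this is the faithful flatness of $\psi$, which is independent of the other two properties. This descent of $D_{L/l}$-subobjects is the real content of the proposition, and the paper proves it directly: given $N\cong N_2/N_1$ with $N_1\subset N_2\subset Q_L$, it intersects $N_i$ with the lattice $l\otimes_k Q\subset Q_L$, writes the intersection as a filtered union of $D_{K/k}$-subobjects $N_i\cap(V\otimes_k Q)$ over finite-dimensional $k$-subspaces $V\subset l$ (so that it becomes an $l$-module in ind-objects of $\langle M\rangle_{\otimes,D}$, hence an object of $l\otimes_k\langle M\rangle_{\otimes,D}$), and recovers $N_i$ after tensoring with $L$ because $L=\Frac(l\otimes_k K)$. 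Your closing alternative (identifying the two differential Hopf algebroids via Corollary~\ref{corol-newsolutions}) does not repair this: that corollary only controls Hom-spaces, i.e.\ full faithfulness, and the same surjectivity issue reappears as surjectivity of the induced map of Hopf algebroids.
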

\begin{proof}
It is known that Hom-spaces in the extension of scalars category
$l\otimes_k\langle M\rangle_{\otimes,D}$ are obtained by taking
$l\otimes_k-$ from the Hom-spaces in the category $\langle
M\rangle_{\otimes,D}$ (\cite[p.407]{MilneMotives},~\cite{Stalder}).
Thus, it follows from Corollary~\ref{corol-newsolutions} that $\Phi$
is fully faithful. Let us show that $\Phi$ is essentially
surjective. Any object $N$ in $\langle M_L\rangle_{\otimes,D}$ is a
subquotient of $Q_L$ for some object $Q$ in $\langle
M\rangle_{\otimes,D}$, that is, there are $D_{L/l}$-submodules
$$N_1\subset N_2\subset Q_L\quad \text{such that}\quad N\cong N_2/N_1.$$ Indeed, this
is true for $M_L$, and also this property is preserved under taking
direct sums, tensor products, duals, subquotients, and the functor
$\At^1$. Put
$$
P_i:=N_i\cap (l\otimes_k Q)\subset Q_L,\quad i=1,\:2,
\quad\text{and}\quad P:=P_1/P_2.$$
We have
$$
P_i=\varinjlim\left(P_i\cap (V\otimes_k Q)\right),$$ 
where the limit is taken over all finite-dimensional over $k$
subspaces $V$ in $l$. Recall that objects in $$l\otimes_k
\langle M\rangle_{\otimes,D}$$ are $l$-modules in the category of
ind-objects in $\langle M\rangle_{\otimes,D}$
(\cite[p.~407]{MilneMotives},~\cite{Stalder}). Therefore, $P_i$
and~$P$ are objects in $l\otimes_k \langle M\rangle_{\otimes,D}$.
Finally, $\Psi(P)=N$, because $L=\Frac(l\otimes_k K)$, whence
$L\otimes_R P\cong N$.
\end{proof}

\setcounter{secnumdepth}{-1}
\section{Acknowledgments} 
The authors thank P.~Cassidy, P.~Deligne,  L.~Di Vizio, C.~Hardouin, M.~Kamensky,
N.~Markarian, D.~Osipov, K.~Shramov, M.~Singer, D.~Trushin, and M.~Wibmer for
the very helpful conversations and comments. The authors also appreciate the suggestions of the referee.
During the work on a
part of the paper, the second author enjoyed hospitality of
Forschunginstitut f\"ur Mathematik (FIM), ETH, Z\"urich, which he is
very grateful for.

H. Gillet was supported by the grants NSF DMS-0500762 and DMS-0901373. S. Gorchinskiy was supported
by the grants RFBR 11-01-00145-a, NSh-4713.2010.1, MK-4881.2011.1,
and AG Laboratory GU-HSE, RF government grant, ag. 11
11.G34.31.0023. A. Ovchinnikov was supported by the grants: NSF  CCF-0952591 and  PSC-CUNY  No.~60001-40~41.

\setcounter{secnumdepth}{0}
\begin{appendix}
\def\appendixname{}
\section{Appendix}\label{sec:appendix}

Here we recollect several known definitions and results and fix
some notation we extensively use in the paper.

\setcounter{secnumdepth}{2}
\setcounter{section}{1}
\subsection{Hopf algebroids}\label{subsection-prelHopfalg}

There are many references concerning Hopf algebroids, for example,
see~\cite[1.6,~1.14]{DeligneFS}. Also, the book~\cite{deJong} is very
useful. A {\it Hopf algebroid} is a pair of rings $(R,A)$ with the
following data and properties. First, there are two ring
homomorphisms $l:R\to A$ and $r:R\to A$, that is, $A$ is an algebra
over $R\otimes R$. In particular,~$A$ is an $R$-bimodule with the
left and the right $R$-module structures given by the
homomorphisms~$l$ and $r$, respectively. Further, there are
morphisms of algebras over~$R\otimes R$:
$$
\Delta:A\to A\otimes_R A, \quad e:A\to R,\quad
\imath:A\to A^s.
$$
According to our notation, the tensor product $A\otimes_R A$
involves both left and right $R$-module structures on~$A$. The ring
$R$ is considered as an algebra over $R\otimes R$ via the
multiplication on $R$. In particular, we have the identities $e\circ
l=\id_R$ and $e\circ r=\id_R$. Also, $A^s$ denotes the same ring $A$
with the right and left $R$-module structures being the initial left
and right $R$-module structures on $A$, respectively, that is, we
have $$\imath\big(l(f)a\big)=\imath(a)\,r(f)\quad \text{and}\quad
\imath\big(ar(f)\big)=l(f)\imath(a)\quad \text{for all}\ \ f\in R,\ a\in A.$$ The
morphisms $(l,r,\Delta,e,\imath)$ are required satisfy the following set
of axioms, which are similar to the axioms in the definition of a
Hopf algebra. The coassociativity axiom requires the equality of the
compositions
$$
\begin{xy}(0,0)*+{A}="ab";(17,0)*+{A\otimes_R A}="a"; (50,0)*+{A\otimes_R A\otimes_R A}="b";
{\ar@<0ex>"ab";"a"^<<<<<{\Delta}};
{\ar@<0.7ex>"a";"b"^>>>>>>>>>{\Delta\otimes\id_A}};
{\ar@<-0.7ex>"a";"b"_>>>>>>>>>{\id_A\otimes\Delta}};
\end{xy}.
$$
The counit axiom requires that both compositions
$$
\begin{xy}(0,0)*+{A}="ab";(17,0)*+{A\otimes_R A}="a"; (40,0)*+{A}="b";
{\ar@<0ex>"ab";"a"^<<<<<{\Delta}};
{\ar@<0.7ex>"a";"b"^>>>>>>>>{e\otimes\id_A}};
{\ar@<-0.7ex>"a";"b"_>>>>>>>>{\id_A\otimes e}};
\end{xy}
$$
are equal to the identity. Finally, the antipode axiom requires that the following diagrams commute:
$$
\begin{CD}
A@>\Delta>> A\otimes_R A\\
@VVeV @VV\imath\cdot\id_A V\\
R@>r>>A,
\end{CD}\quad\quad\quad\quad
\begin{CD}
A@>\Delta>> A\otimes_R A\\
@VVeV @VV\id_A\cdot\imath V\\
R@>l>>A.
\end{CD}
$$
In particular, it follows that $\imath$ is an involution and that $e\circ\imath=e$. Also, $\imath$ is uniquely defined by $\Delta$ and $e$.
Note that a Hopf algebroid $(R,A)$ with $l=r$ is the same as a Hopf
algebra $A$ over $R$.

A Hopf algebroid $(R,A)$ defines a Hopf algebra $B$ over
$R$ by the formula
$$
B:=R\otimes_{(R\otimes R)}A.$$
Further, by the extension of scalars, $B$ defines a Hopf algebra
$B\otimes R$ over $R\otimes R$. It follows from the
definition of a Hopf algebroid that $\spec(A)$ is a pseudo-torsor
under the group scheme $\spec(B\otimes R)$ over
$\spec(R\otimes R)$ (for example, see Definition~\ref{def:pseudotorsor}).

A {\it Hopf algebroid over a ring $\kappa$} is a Hopf algebroid
$(R,A)$ such that $A$ and $R$ are $\kappa$-al\-gebras, the
morphisms~$l$ and $r$ are morphisms of $\kappa$-algebras and the
morphisms $(\Delta,e,\imath)$ are morphisms of algebras over
$R\otimes_{\kappa} R$. In this case, $\spec(A)$ is a pseudo-torsor
under the group scheme $\spec(R\otimes_{\kappa} B)$ over
$\spec(R\otimes_{\kappa} R)$, where $B$ is defined as above.

A {\it comodule} over a Hopf algebroid $(R,A)$ is an $R$-module $M$
together with a morphism of \mbox{$A$-mo\-dules}
$$
\epsilon_M:M\otimes_R A\to A\otimes_R M
$$
that satisfies two axioms, which are similar to the axioms in the
definition of a comodule over a Hopf algebra. The first axiom
requires the equality $$R\otimes_A\epsilon_M=\id_M,$$ where the
$A$-module structure on $R$ is defined by the ring homomorphism
$e:A\to R$ and we use that $$R\otimes_A(M\otimes_R A)\cong
R\otimes_A(A\otimes_R M)\cong M.$$ The second axiom requires the
equality of the composition
$$
\begin{CD}
M\otimes_R A\otimes_R A@>\epsilon_M\otimes_R A>> A\otimes_R
M\otimes_R A@>A\otimes_R\epsilon_M>>A\otimes_R A\otimes_R M
\end{CD}
$$
to the extension of scalars
$$
(A\otimes_R A)\otimes_A \epsilon_M:M\otimes_R A\otimes_R A\to
A\otimes_R A\otimes_R M,
$$
where the $A$-module structure on $A\otimes_R A$ is given by the
ring homomorphism $\Delta$ and we use that
$$
(A\otimes_R A)\otimes_A (M\otimes_R A)\cong M\otimes_R A\otimes_R A
\quad \text{and} \quad (A\otimes_R\nolinebreak A)\otimes_A
(A\otimes_R M)\cong A\otimes_R A\otimes_R M.
$$
One proves that $\epsilon_M$ is an isomorphism. By adjunction
between extension and restriction of scalars, one obtains a left
\mbox{$R$-linear} morphism $$\phi_M:M\to A\otimes_R M,$$ and one can give an
equivalent definition of a comodule in terms of $\phi_M$. Denote the
category of comodules over a Hopf algebroid $(R,A)$ by
$\Comod(R,A)$. Denote the full subcategory of comodules over~$(R,A)$
that are finitely generated as $R$-modules by $\Comodf(R,A)$.

\begin{remark}
Given a Hopf algebroid $(R,A)$ over a ring $\kappa$, the pair
$(\Spec(A),\Spec(R))$ defines a category~$\mathcal G$ fibred in
groupoids over $\kappa$-schemes. A comodule over $(R,A)$ is the same
as a quasi-coherent sheaf on $\mathcal G$, or, equivalently, a
morphism of fibred categories from $\mathcal G$ to the fibred
category of quasi-coherent sheaves, \cite[3.3]{DeligneFS}.
\end{remark}

Given a morphism of rings $R\to S$ and a Hopf algebroid
$(R,A)$, there is a canonical structure of a Hopf algebroid on the
extension of scalars $(S,{}_SA_S)$, where
$$
_SA_S:=(S\otimes S)\otimes_{(R\otimes R)} A.$$
The extension of scalars also induces a functor $$\Comod(R,A)\to\Comod(S,{}_SA_S),\quad
 M\mapsto S\otimes_R M.$$
If $(R,A)$ is a Hopf algebroid over a ring $\kappa$ and $\kappa'$ is
an algebra over $\kappa$, then $(R_{\kappa'},A_{\kappa'})$ is a Hopf
algebroid over $\kappa'$ with
$$R_{\kappa'}:=\kappa'\otimes_{\kappa}R\quad \text{and}\quad
A_{\kappa'}:={\kappa'}\otimes_{\kappa} A.$$

For a Hopf algebroid $(R,A)$ over a ring $\kappa$, suppose that $A$ is a
{\it faithfully flat} module over $R\otimes_{\kappa} R$, that is,
$$
N\mapsto A\otimes_{(R\otimes_{\kappa} R)} N
$$
is a faithful exact
functor on the category of modules over~$R\otimes_{\kappa} R$. Then
a very important fact is that any $R$-finitely generated comodule
$M$ in $\Comodf(R,A)$ is a projective~$R$-module,~\cite[1.9,3.5]{DeligneFS}. It follows that $\Comodf(R,A)$ is a
Tannakian category with the forgetful fiber functor
$$\omega:\Comodf(R,A)\to\Mod(R)$$ (Section~\ref{subsection-prelTann}). Further, given a morphism
of \mbox{$\kappa$-algebras} $R\to S$, the extension of scalars $_S
A_S$ is faithfully flat over $S\otimes_{\kappa} S$ and the functor
$$S\otimes_R-:\nolinebreak\Comodf(R,A)\to\nolinebreak\Comodf(S,{}_SA_S)$$
is an equivalence of categories,~\cite[1.8,3.5]{DeligneFS}.

\subsection{Tannakian categories}\label{subsection-prelTann}

General references for Tannakian categories are~\cite{DeligneFS} and
\cite{Deligne}; also, an outline is given in~\cite[B.3]{Michael}. By
a {\it tensor category}, we mean a category $\Cat$ together with a
functor
\begin{equation}\label{eq:tenprodfun}
\Cat\times\Cat\to \Cat,\quad
(X,Y)\mapsto X\otimes Y,
\end{equation}
a unit object $\uno_{\Cat}$, and functorial isomorphisms
$$
X\otimes \uno_{\Cat}\cong X,\quad X\otimes Y\cong Y\otimes X,\quad
X\otimes(Y\otimes Z)\cong(X\otimes Y)\otimes Z
$$
that satisfy a set of axioms, which can be found in the references
above. A functor $F:\Cat\to\Dop$ between tensor categories is {\it
tensor} if there are functorial isomorphisms
\begin{equation}\label{eq-tensfunct}
F(X)\otimes F(Y)\cong F(X\otimes Y),\quad
F(\uno_{\Cat})\cong\uno_{\Dop}
\end{equation}
that are compatible with the commutativity and associativity
isomorphisms above. A {\it morphism between tensor functors}, $F\to
G$, is a morphism between functors that commutes with the
isomorphisms~\eqref{eq-tensfunct} for $F$ and $G$. Denote the
category of tensor functors between tensor categories $\Cat$ and
$\Dop$ by $ \Fun^{\otimes}(\Cat,\Dop)$.

An {\it internal Hom object} $\Homc_{\Cat}(X,Y)$ in a tensor
category $\Cat$ is an object that represents the functor from $\Cat$
to the category of sets $ U\mapsto \Hom_{\Cat}(U\otimes X,Y)$, that
is, there is a functorial isomorphism
$$
\Hom_{\Cat}(U,\Homc_{\Cat}(X,Y))\cong \Hom_{\Cat}(U\otimes X,Y).
$$
An internal Hom object $\Homc_{\Cat}(X,Y)$ is unique up to a
canonical isomorphism if it exists. Denote the internal Hom object
$\Homc_{\Cat}(X,\uno)$ by $X^\vee$. An object $X$ in $\Cat$ is {\it
dualizable} if, for any object $Y$, there exists the internal Hom
object $\Homc_{\Cat}(X,Y)$ and the natural morphism $$ X^\vee\otimes
Y\to \Homc_{\Cat}(X,Y)$$ is an isomorphism,~\cite[2.3]{DeligneFS}. A tensor category $\Cat$ is {\it rigid} if all
objects in $\Cat$ are dualizable.

Let $\Cat$ and $\Dop$ be tensor categories. Then, for any tensor
functor $F:\Cat\to\Dop$ and any dualizable object~$X$ in $\Cat$, the
object~$F(X)$ is also dualizable and the natural morphism
$$
F(\Homc_{\Cat}(X,Y))\to\Homc_{\Dop}(F(X),F(Y))
$$
is an isomorphism for any object $Y$ in $\Cat$, \cite[2.7]{DeligneFS}. If $\Cat$ is rigid,
then any morphism between tensor functors from $\Cat$ to $\Dop$ is
an isomorphism,~\cite[2.7]{DeligneFS}.

Recall that, in an {\it abelian category}, morphisms between objects
form abelian groups, there is a zero object, there are finite direct
sums of objects, and there are kernels and cokernels of morphisms,
satisfying some conditions. In particular, an analogue of the
homomorphism theorem for groups is satisfied. Also, in an abelian
category, exact sequences are well-defined. A functor $F:\Cat\to\Dop$
between abelian categories is {\it (left, right)-exact} if it sends
(left, right)-exact sequences to (left, right)-exact sequences. 

By
an {\it abelian tensor category}, we mean a tensor category such that
the tensor product functor is additive and right-exact on both
arguments. Let $F:\Cat\to \Dop$ be a right-exact tensor functor
between abelian tensor categories with~$\Cat$ being rigid. Then, the
functor~$F$ is exact,~\cite[2.10(i)]{DeligneFS}, and {\it faithful},
that is, injective on morphisms with the same source and
target~\cite[2.13(ii)]{DeligneFS}.

For an abelian rigid tensor category
$\Cat$ and an object $X$ in $\Cat$, denote the minimal full rigid tensor subcategory in~$\Cat$ that contains $X$ and is closed under taking subquotients by $ \langle
X\rangle_{\otimes} $. We
say that {\it $\langle
X\rangle_{\otimes}$ is tensor generated by $X$}.
It follows that~$\langle X\rangle_{\otimes}$ is an abelian subcategory in $\Cat$.

Let $R$ be a commutative ring. An {\it $R$-linear category $\Cat$}
is an additive category $\Cat$ such that, for all objects $X$, $Y$ in
$\Cat$, the group of morphisms $\Hom_{\Cat}(X,Y)$ is given with an
$R$-module structure and the composition of morphisms is
$R$-bilinear, that is, induces morphisms of $R$-modules
$$
\Hom_{\Cat}(X,Y)\otimes_R \Hom_{\Cat}(Y,Z)\to \Hom_{\Cat}(X,Z)
$$
for all object $X$, $Y$, and $Z$ in $\Cat$. A functor
$F:\Cat\to\Dop$ between $R$-linear categories is {\it $R$-linear} if
it induces $R$-linear maps
$$
\Hom_{\Cat}(X,Y)\to\Hom_{\Dop}(F(X),F(Y)).$$
Denote
the category of $R$-linear functors between $R$-linear categories $\Cat$ and $\Dop$ by $
\Fun_R(\Cat,\Dop)$.

Given a tensor category $\Cat$, a ring homomorphism $R\to
\End_{\Cat}(\uno_{\Cat})$ induces an $R$-linear category structure
on $\Cat$. By an {\it $R$-linear tensor category} we mean a tensor
category with an $R$-linear structure obtained as above.
Equivalently, one requires that the tensor product
functor~\eqref{eq:tenprodfun} is $R$-linear in both variables. For
example, for a finite commutative group $G$, the tensor category
$\Rep(G)$ is $k[G]$-linear, but it is not a $k[G]$-linear tensor
category with the tensor structure given by the usual tensor product
of representations. On the other hand, $\Rep(G)$ is a $k$-linear
tensor category.

A {\it Tannakian category over a field $k$} is an abelian rigid
tensor category $\Cat$ with a fixed isomorphism
$\End_{\Cat}(\uno_{\Cat})\cong k$ such that there exist a
$k$-algebra $R$ and a right-exact $k$-linear tensor functor
$\omega:\Cat\to \Mod(R)$. The functor $\omega$ is called a {\it
fiber functor}. It follows from the above that $\omega$ is exact and
faithful. A Tannakian category $\Cat$ is {\it neutral} if, in the
above notation, one can take $R=k$, that is, there exists a fiber
functor \mbox{$\omega:\Cat\to\Vect(k)$}.

Given a Tannakian category $\Cat$ and two fiber functors
$$\omega,\eta:\Cat\to\Mod(R),$$ denote the set of all tensor
isomorphisms between $\omega$ and $\eta$ by
$\fIsom^{\otimes}(\omega,\eta)$. Given an $R$-algebra $S$, one has
the fiber functor
$$
\omega_S:\Cat\to\Mod(S),\quad X\mapsto S\otimes_R\omega(X).
$$
Note that the functor $\omega_S$ is denoted by $S\otimes_R\omega$ in \cite{DeligneFS}. It is more convenient for us to reserve the notation $S\otimes_R\omega$ for the extension of scalars of the functor defined in~Section~\ref{subsection-prelextscal}.
The functor
$$
\Isom^{\otimes}(\omega,\eta):\Alg(R)\to \Sets,\quad S\mapsto \fIsom^{\otimes}(\omega_S,\eta_S),
$$
is corepresented by an $R$-algebra $A$,~\cite{DeligneFS}. In particular, the identity map from $A$ to itself corresponds to a canonical isomorphism of tensor functors $\omega_A\stackrel{\sim}\longrightarrow\eta_A$.

\begin{proposition}\label{prop-Tanngen}
In the above notation, suppose that $\Cat$ is tensor generated by an object $X$. Then the $R$-algebra $A$ is generated by the matrix entries of the canonical isomorphism
$$
\omega(X)_A\stackrel{\sim}\longrightarrow \eta(X)_A
$$
and the matrix entries of its inverse with respect to any choice of
systems of generators of $\omega(X)_A$ and~$\eta(X)_A$ over $A$.
\end{proposition}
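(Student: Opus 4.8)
The plan is to work with Deligne's explicit presentation $A=F/T$ recalled in Section~\ref{subsection-isomfibfunct}, where $F=\bigoplus_{Y\in\Ob(\Cat)}\Hom_R(\omega(Y),\eta(Y))$. For $f\in\Hom_R(\omega(Y),\eta(Y))$ write $\langle Y,f\rangle\in A$ for its class; then $A$ is spanned over $R$ by the $\langle Y,f\rangle$, its multiplication is $\langle Y,f\rangle\cdot\langle Z,g\rangle=\langle Y\otimes Z,f\otimes g\rangle$ with unit $\langle\uno,\id\rangle$, and the defining relations read $\langle Y,\psi\circ\omega(\phi)\rangle=\langle Z,\eta(\phi)\circ\psi\rangle$ for every $\phi\in\Hom_{\Cat}(Y,Z)$ and $\psi\in\Hom_R(\omega(Z),\eta(Y))$. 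Let $A'\subseteq A$ be the $R$-subalgebra generated by the entries of the matrices of $u_X$ and $u_X^{-1}$, where $u\colon\omega_A\to\eta_A$ is the universal isomorphism. Since the $\langle Y,f\rangle$ span $A$ over $R$ and $R\subseteq A'$, it suffices to prove $\langle Y,f\rangle\in A'$ for every object $Y$, which I would do by reducing the object $Y$ and then the map $f$.

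First I would reduce the object. Because $\Cat=\langle X\rangle_{\otimes}$ is rigid and tensor generated by $X$, every $Y$ is a subquotient of a finite direct sum $P=\bigoplus_k X^{\otimes m_k}\otimes(X^{\vee})^{\otimes n_k}$, and I claim each $\langle Y,f\rangle$ then equals a class $\langle P,\tilde f\rangle$. By \cite[1.9]{DeligneFS} each $\omega(Y)$ and $\eta(Y)$ is a finitely generated projective $R$-module; hence if $i\colon Y\hookrightarrow P$ is a subobject, the monomorphism $\omega(i)$ splits by a retraction $\rho$, and the relation applied to $\phi=i$, $\psi=f\circ\rho$ gives $\langle Y,f\rangle=\langle Y,\psi\circ\omega(i)\rangle=\langle P,\eta(i)\circ\psi\rangle$. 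Dually, if $p\colon P\twoheadrightarrow Y$ is a quotient, $\eta(p)$ splits by a section $\sigma$, and the relation applied to $\phi=p$, $\psi=\sigma\circ f$ yields $\langle Y,f\rangle=\langle P,\sigma\circ f\circ\omega(p)\rangle$; composing the two handles an arbitrary subquotient. Applying the same relations to the inclusions and projections of $P=P_1\oplus P_2$ kills the off-diagonal blocks of any $f$, so this reduces the problem to a single $P=X^{\otimes m}\otimes(X^{\vee})^{\otimes n}$.

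Next I would reduce the map. On such a $P$ the projectivity of $\omega(X),\eta(X)$ identifies $\Hom_R(\omega(P),\eta(P))$ with the corresponding tensor product of copies of $\Hom_R(\omega(X),\eta(X))$ and $\Hom_R(\omega(X^{\vee}),\eta(X^{\vee}))$, so $f$ is an $R$-combination of elementary tensors; by multiplicativity $\langle Y\otimes Z,g\otimes h\rangle=\langle Y,g\rangle\langle Z,h\rangle$ every $\langle P,f\rangle$ is an $R$-combination of products of classes $\langle X,g\rangle$ and $\langle X^{\vee},h\rangle$. It remains to place these in $A'$. By the construction of $A$ the matrix entries of $u_X$ are the classes $\langle X,\epsilon_{ji}\rangle$ of the elementary maps $\epsilon_{ji}$ determined by the chosen generating systems, and these span $\Hom_R(\omega(X),\eta(X))$ over $R$; hence every $\langle X,g\rangle\in A'$. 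For $X^{\vee}$ one uses that $u$ is a tensor isomorphism and that fiber functors commute with duals (Section~\ref{subsection-prelTann}), so that $u_{X^{\vee}}=(u_X^{-1})^{\vee}$; thus the entries of the matrix of $u_{X^{\vee}}$ are, up to transposition, the entries of $u_X^{-1}$, and the same argument gives $\langle X^{\vee},h\rangle\in A'$. Combining the three reductions yields $A'=A$.

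The main obstacle is the passage through subquotients in the second step: a priori a matrix coefficient on a subobject or on a quotient need not be a coefficient on the ambient object. What makes it work is precisely the interplay of the coend relations defining $T$ with the fact that every $\omega(Y),\eta(Y)$ is finitely generated projective, so that the relevant inclusions and surjections of $R$-modules split and the auxiliary maps $\psi=f\circ\rho$, $\psi=\sigma\circ f$ exist; without projectivity (say for a fiber functor into a badly behaved $\Mod(R)$) this construction would fail. The only further point needing care is the duality identity $u_{X^{\vee}}=(u_X^{-1})^{\vee}$, which is exactly where the entries of $u_X^{-1}$ genuinely enter and explains why they must be included among the generators of $A'$.
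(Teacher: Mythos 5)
Your proof is correct, but it runs in the opposite direction from the paper's. You work inside Deligne's explicit presentation $A=F/T$ and push every generating class $\langle Y,f\rangle$ down to products of classes on $X$ and $X^{\vee}$, using the coend relations together with splittings of $\omega(i)$ and $\eta(p)$. The paper instead lets $B\subseteq A$ be the subalgebra generated by the matrix entries and argues via corepresentability: it shows the canonical isomorphisms $\omega(Y)_A\stackrel{\sim}{\to}\eta(Y)_A$ are all \emph{defined over} $B$ (the closure under subquotients being handled by the injectivity of $\Hom_B(P,Q)\to\Hom_A(P_A,Q_A)$ for projective $B$-modules, rather than by splittings), whence the universal property forces $B=A$. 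Both arguments traverse the same closure properties (sums, tensor products, duals, subquotients) and both lean on \cite[1.9]{DeligneFS}; yours is more hands-on and makes visible exactly where the entries of $u_X^{-1}$ enter (through $u_{X^{\vee}}=(u_X^{-1})^{\vee}$), while the paper's is shorter because the universal property absorbs the bookkeeping of which classes span $A$.

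One small repair: in the subobject step you justify the splitting of the monomorphism $\omega(i):\omega(Y)\to\omega(P)$ by the projectivity of $\omega(Y)$ and $\omega(P)$, but a monomorphism between projective modules need not split (e.g.\ $2\Z\subset\Z$). What you actually need is that the cokernel is projective, which holds here because $\omega$ is exact and the cokernel is $\omega(P/Y)$, again finitely generated projective by \cite[1.9]{DeligneFS}. The epimorphism case is fine as written, since a surjection onto a projective module always splits.
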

\begin{proof}
Let $B$ be a $k$-subalgebra in $A$ generated by the matrix entries as in the proposition. We need to show that $B=A$. Given projective $B$-modules $P$ and $Q$, the extension of scalars map
$$
\Hom_B(P,Q)\to \Hom_A(P_A,Q_A)
$$
is injective, because $P$ and $Q$ are direct summands in free
$B$-modules and $B$ is embedded into $A$. Therefore, by the
universal property of $A$, it is enough to prove that the canonical
isomorphisms $$\omega(Y)_A\stackrel{\sim}\longrightarrow \eta(Y)_A$$
are defined over $B$, where $Y$ runs through all objects in $\Cat$.
By the construction of $B$, this is true for the tensor generator
$X$. Further, this property is preserved under taking direct sums,
tensor products, and duals of objects in $\Cat$. 

It remains to show
that this property is preserved under taking subquotients, for which
it is enough only to consider subobjects. Assume that there is an
isomorphism of $B$-modules
\mbox{$\lambda:\omega(Y)_B\stackrel{\sim}\longrightarrow \eta(Y)_B$}
whose extension of scalars $\lambda_A$ is equal to the canonical
isomorphism $$\omega(Y)_A\stackrel{\sim}\longrightarrow
\eta(Y)_A.$$ Given a subobject $Z\subset Y$, consider the
composition
$$
\mu:\omega(Z)_B\to\omega(Y)_B\stackrel{\lambda}\longrightarrow \eta(Y)_B\to \eta(Y/Z)_B.
$$
Since $\mu_A=0$ and $\omega(Z)_B$, $\eta(Y/Z)_B$ are projective
$B$-modules, we have that $\mu=0$. Hence,
\mbox{$\lambda\big(\omega(Z)_B\big)=\eta(Z)_B$}, which implies the
needed condition for $Z$.
\end{proof}

\begin{theorem}\label{theor-HopfalgDeligne}
Let $\Cat$ be a Tannakian category over $k$ and let $\omega:\Cat\to
\Mod(R)$ be a fiber functor. Then there exists a Hopf algebroid
$(R,A)$ over $k$ such that $A$ is faithfully flat over $R\otimes_k
R$ and $\omega$ lifts up to a tensor $k$-linear equivalence of
tensor categories $$\Cat\stackrel{\sim}\longrightarrow\Comodf(R,A).$$ That
is, for any object $X$ in $\Cat$, there is a functorial in $X$
structure of a comodule over $(R,A)$ on $\omega(X)$ giving the above
equivalence (\cite[1.12]{DeligneFS}).
\end{theorem}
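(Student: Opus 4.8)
The plan is to carry out the standard Tannakian reconstruction through the $\Isom$-functor, reproducing in the non-differential setting the skeleton already used in the proof of Theorem~\ref{theor-diffTanncorr}. Set $\tilde R:=R\otimes_k R$ and form the two fiber functors $\omega_{R\otimes R},\,{}_{R\otimes R}\omega:\Cat\to\Mod(\tilde R)$ obtained by extending scalars along the right and left homomorphisms $R\to\tilde R$. By the corepresentability of $\Isom^{\otimes}$ recalled in Section~\ref{subsection-prelTann}, there is an $\tilde R$-algebra $A$ corepresenting the functor $\Isom^{\otimes}(\omega_{R\otimes R},{}_{R\otimes R}\omega)$ on $\Alg(\tilde R)$. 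The $\tilde R$-algebra structure provides the two unit maps $l,r:R\to A$; the identity isomorphism of $\omega$, read over $R$ via the multiplication $\tilde R\to R$, provides the counit $e:A\to R$ with $e\circ l=e\circ r=\id_R$; composition of tensor isomorphisms provides the coproduct $\Delta:A\to A\otimes_R A$; and inversion of tensor isomorphisms provides the antipode $\imath:A\to A^s$.

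Next I would check that $(l,r,\Delta,e,\imath)$ satisfy the Hopf algebroid axioms of Section~\ref{subsection-prelHopfalg}. This is routine and formal: coassociativity, the counit identities, and the antipode diagrams are the categorical shadows of the associativity of composition, the unit law, and the inverse law for isomorphisms of tensor functors. To obtain the comodule functor, evaluate the universal isomorphism $(\omega_{R\otimes R})_A\stackrel{\sim}{\longrightarrow}({}_{R\otimes R}\omega)_A$ at an object $X$; this yields a map $\epsilon_{\omega(X)}:\omega(X)\otimes_R A\to A\otimes_R\omega(X)$ satisfying the two comodule axioms, so that $\omega$ lifts to a tensor functor $\Cat\to\Comod(R,A)$ whose composition with the forgetful functor is $\omega$. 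Since $\Cat$ is rigid, each $\omega(X)$ is a finitely generated projective $R$-module, so the lift lands in $\Comodf(R,A)$.

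The crux, and the step I expect to be the main obstacle, is the faithful flatness of $A$ over $\tilde R=R\otimes_k R$; the deduction that the lift is an equivalence rests on it. The reduction is as follows. Writing $\Cat=\bigcup_X\langle X\rangle_{\otimes}$ and correspondingly $A=\varinjlim_X A_X$, Proposition~\ref{prop-Tanngen} shows each $A_X$ is of finite type over $\tilde R$, and since both flatness and faithful flatness are stable under filtered colimits it suffices to treat a single $A_X$. Here $\Spec(A_X)$ is a pseudo-torsor under the group scheme $\Spec(B_X\otimes R)$ attached to the associated Hopf algebra $B_X:=R\otimes_{\tilde R}A_X$ over $R$ (Section~\ref{subsection-prelHopfalg}). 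To establish faithful flatness I would pass to a faithfully flat base change of $\tilde R$ over which the two fiber functors become isomorphic, trivializing the pseudo-torsor so that $\Spec(A_X)$ becomes isomorphic to the group scheme itself, which is flat and surjective over the base; faithful flatness of $A_X$ over $\tilde R$ then descends. Carrying out this descent when $R$ is not a field and the two fiber functors are only locally isomorphic is precisely the delicate point, and it is exactly the content of \cite[1.9, 1.12, 3.5]{DeligneFS}.

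Once faithful flatness is available, the equivalence follows formally and this is the final step. Fully faithfulness is the statement that morphisms of comodules $\omega(X)\to\omega(Y)$ are exactly the $R$-linear maps defined over $A$, which, via the matrix-entry description underlying Proposition~\ref{prop-Tanngen}, are exactly the morphisms coming from $\Cat$. For essential surjectivity I would invoke the faithful-flatness fact recalled in Section~\ref{subsection-prelHopfalg}: every object of $\Comodf(R,A)$ is a projective $R$-module and a subquotient of a comodule in the image of $\omega$, whereupon the abelianness and rigidity of $\Cat$ produce a preimage. This yields the desired tensor $k$-linear equivalence $\Cat\stackrel{\sim}{\longrightarrow}\Comodf(R,A)$ compatible with $\omega$ and the forgetful functor, matching \cite[1.12]{DeligneFS}.
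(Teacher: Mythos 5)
This theorem is quoted background: the paper supplies no proof of its own and defers entirely to \cite[1.12]{DeligneFS}, merely recording afterwards that $A$ corepresents the functor $\Isom^{\otimes}({}_{R\otimes R}\,\omega,\omega_{R\otimes R})$ on $\Alg(R\otimes_k R)$. Your outline reproduces exactly that construction --- $l,r$ from the $(R\otimes_k R)$-algebra structure, $e$, $\Delta$, $\imath$ from the unit, composition and inversion of tensor isomorphisms, and the comodule structure from the universal isomorphism evaluated at $X$ --- so on the formal side you match the source, and you correctly locate the crux in the faithful flatness of $A$ over $R\otimes_k R$. Be aware, however, that your sketched argument for that crux is very close to circular: you propose to pass to a faithfully flat base change of $R\otimes_k R$ over which the two fiber functors become isomorphic, but by the corepresentability of $\Isom^{\otimes}$ the universal such base change is $\Spec(A)$ itself, so the existence of a faithfully flat trivializing cover is essentially equivalent to what is being proved. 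The genuinely hard input is Deligne's theorem that any two fiber functors are locally isomorphic, which does not follow from the pseudo-torsor formalism or from anything in this paper's appendix; you flag this honestly and cite the right place, which puts your proposal on the same footing as the paper, but the descent step should not be described as routine.
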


In particular, for a neutral Tannakian category $(\Cat,\omega)$,
there exists a Hopf algebra $A$ over~$k$ such that~$\omega$ lifts up
to an equivalence between $\Cat$ and $\Comodf(A)$ (equivalently,
there exists an affine group scheme~$G$ over $k$ such that $\omega$
induces an equivalence between $\Cat$ and $\Repf(G)$). The Hopf
algebroid $A$ from Theorem~\ref{theor-HopfalgDeligne} corepresents
the functor
$$
\Isom^{\otimes}({}_{R\otimes R}\,\omega,\omega_{R\otimes
R}):\Alg(R\otimes_k R)\to \Sets,
$$
where, as above, we put
$$
(_{R\otimes R}\,\omega)(X):=(R\otimes_k R)\otimes_R\omega(X)\cong
R\otimes_k \omega(X),\quad (\omega_{R\otimes
R})(X):=\omega(X)\otimes_R(R\otimes_k R)\cong\omega(X)\otimes_k R.
$$
\end{appendix}

\bibliographystyle{model1b-num-names}
\bibliography{difftanncat}

\end{document}